\definecolor{bf}{rgb}{0,0,0.6} 
\providecommand{\keywords}[1]
{
	\small	
	\textbf{\textit{Keywords---}} #1
}
\newcommand\reallywidehat[1]{%
	\savestack{\tmpbox}{\stretchto{%
			\scaleto{%
				\scalerel*[\widthof{\ensuremath{#1}}]{\kern.1pt\mathchar"0362\kern.1pt}%
				{\rule{0ex}{\textheight}}
			}{\textheight}% 
		}{2.4ex}}%
	\stackon[-6.9pt]{#1}{\tmpbox}%
}
\newtheorem{dft}{Definition}[section]
\newtheorem{te}[dft]{Theorem}
\newtheorem{rem}[dft]{Remark}
\newtheorem{prop}[dft]{Proposition}
\newtheorem{nota}[dft]{Notation}
\newtheorem{cor}[dft]{Corollary}
\newtheorem{lemm}[dft]{Lemma}
\newtheorem{ex}[dft]{Example}
\DeclareMathAlphabet{\mathdutchcal}{U}{dutchcal}{m}{n}
\title{Long-time behaviour of a multidimensional age-dependent branching process with a singular jump kernel modelling telomere shortening}
\author{Jules \textsc{Olayé}$^{1,*}$\and Milica \textsc{Tomašević}$^{2}$}
\date{}
\begin{document}
	
\maketitle
\maketitle
\let\thefootnote\relax\footnotetext{$^{1}$ Institut de Mathématiques de Toulouse, CNRS, Université de Toulouse, 31062 Toulouse, France.}
\let\thefootnote\relax\footnotetext{$^{2}$ CMAP, CNRS, INRIA, École polytechnique, Institut Polytechnique de Paris, 91120 Palaiseau, France.}
\let\thefootnote\relax\footnotetext{$^{*}$ Corresponding author: \href{mailto:jules.olaye@math.univ-toulouse.fr}{jules.olaye@math.univ-toulouse.fr}}
\begin{abstract}
	In this article, we investigate the  ergodic behaviour of a multidimensional age-dependent branching process with a singular jump kernel, motivated by studying the phenomenon of telomere shortening in cell populations. Our model tracks individuals evolving within a continuous-time framework indexed by a binary tree, characterised by age and a multidimensional trait. Branching events occur with rates dependent on age, where offspring inherit traits from their parent with random  increase or decrease in some coordinates, while the most of them are left unchanged. Exponential ergodicity is obtained at the cost of an exponential normalisation, despite the fact that we have an unbounded age-dependent birth rate that may depend on the multidimensional trait, and a non-compact transition kernel. These two difficulties are respectively treated by stochastically comparing our model to Bellman-Harris processes, and by using a weak form of a Harnack inequality. We conclude this study by giving examples where the assumptions of our main result are verified. 
\end{abstract} 
\keywords{Branching processes, ergodicity, long-time behaviour, jump Markov processes, telomere shortening}
	{\hypersetup{hidelinks} \tableofcontents }
	
	\section{Introduction}\label{sect:introduction}
	In this work we consider a population of individuals evolving in continuous time indexed by a binary tree and characterised by their label in the tree, and a trait $(x,a)\in\left(\mathbb{R}_+^{2k}\times\mathbb{R}_+\right)\cup\left\{(\partial,\partial)\right\}$, where $x$ is the trait that motivates our study, $a$ is the age of the individual, and $(\partial,\partial)$ is a cemetery state. Here, $\partial$ is an arbitrary element outside of both $\mathbb{R}_+^{2k}$ and $\mathbb{R}_+$, and we are interested in cases where~$k\in\mathbb{N}^*$ is large. The individuals branch with a rate depending on their age and at branching events the two daughters inherit the mother's trait $x$ randomly perturbed as follows.  We choose a random number of coordinates of $x$ that go through a random elongation procedure according to a given distribution, and $k$ coordinates of $x$ that go to a shortening procedure according to another distribution, while the rest of the coordinates remain unchanged. Between these jump events, the trait remains also unchanged. The hack is that eventually, due to the shortening procedure, some coordinates of the individual traits will hit zero. When this happens, the individual is moved to the cemetery state~$(\partial,\partial)$.
	
	Our main goal is to establish the long-time convergence of the distribution of the trait in the population towards a stationary profile. Obtaining this result is motivated by the biological phenomenon of telomere shortening.
	
	In what follows, we will structure the introduction in three parts. First, we present in Section~\ref{subsect:motivs}  the biological phenomenon mentioned above and how it motivated us to study the stationary profile of our model. Then in Section~\ref{subsect:introresults}, we discuss in more detail  our long-time convergence result, the main difficulties we encountered, and  the strategy to handle them. Thereafter, in Section~\ref{subsect:example_model}, we present a simple model on which we have established our main result. Finally, in Section~\ref{subsect:paper_organisation}, we provide the organisation of the rest of the paper. 
	
	\subsection{Motivations}\label{subsect:motivs}
	%\paragraph{Informal definition of the model.} %Motivated by studying telomere shortening in a cell population, our model aims to  propose a mathematical framework for it, completing the different models of the phenomenon developed in the recent years \cite{arino_mathematical_1995,benetos_stochastic_2024,benetos_branching_2023,lee_stationary_2020,spoljaric_mathematical_2019,wattis_mathematical_2020}.
	\paragraph{Telomere shortening.}  A telomere is a highly repetitive, and {\it a priori} non-coding, region of DNA situated at both ends of a chromosome. Its role is to protect the terminal regions of chromosomal DNA from progressive shortening and ensure the integrity of chromosomes. During DNA replication that precedes the cell division, enzymes essential for it fail to copy the last nucleotids on one end of each chromosome in the cell. Thus, the presence of telomeres at the ends of chromosomes prevents the rapid loss of genetic information crucial to the functioning of the cell. Instead of losing the nucleotides linked to the genetic information, it is the telomeres that are "shortened".
	
	When the telomeres of a cell are not long enough to protect against the loss of coding DNA, the cell becomes senescent, which is characterised by an irreversible cell cycle arrest. It has been deduced experimentally and with the help of simulations that the shortest telomere of all the chromosomes is responsible for senescence onset in cells, see~\cite{abdallah2009,bourgeron_2015,hemann_2001,Martin2021,Xu2013}. To avoid the state of senescence, certain cells such as budding yeast cells have an enzyme, the telomerase, that regenerates telomere sequence by~"lengthening". It acts just before the cell division, and it is yet unclear whether its action concerns both daughter cells or only one of them. Cells with sufficient telomerase activity are somehow considered immortal. In human somatic cells, this lengthening mechanism is not activated, except for cancer cells~\cite{robinson_telomerase_2022}.
	
	\paragraph{Stationary profile.} One experiment done by biologists is to inactivate the enzyme telomerase of budding yeast cells. This allows them to study cells with a similar functioning as the ones of human somatic cells, in the absence of cancer. When this experiment is done, biologists first wait for the telomere lengths distribution to stabilise towards a stationary profile before inactivating the enzyme. The advantage is that it provides a similar starting point between the different experiments, making them easier to compare and to reproduce.
	
	%bourgeron_2015,eugene_effects_2017,Martin2021,rat_individual_2023
	In the biological/biomathematical studies where this experiment is performed or simulated \cite{bourgeron_2015,eugene_effects_2017,Martin2021,rat_individual_2023,Xu2013,xu_two_2015}, only few theoretical results have been obtained to justify this convergence towards a stationary profile. Precisely, a theoretical result has only been established in~\cite{eugene_effects_2017}.
	%but it was on a simple model.}
In this article, the number of removed nucleotides at each division is assumed deterministic. However, it may be assumed stochastic as in~\cite{bourgeron_2015,grassman_stochastic_20,olofsson_branching_2000,Xu2013}, and this seems more realistic for other species than the budding yeast, in which the number of shortened telomeres is more variable~\cite{Ohki2001}. In addition, in~\cite{eugene_effects_2017}, the lengthening value distribution is modelled in a very simple way with a geometric random variable. In fact, no clear distribution has been identified to represent the telomere lengthening value, and the way telomeres are lengthened is very complex compared to shortening~\cite{srinivas_telomeres_2020,wellinger_everything_2012}. Obtaining a justification for the existence of a stationary profile with only this way to model lengthening is thus quite restrictive.

That is why our goal in this work is to provide a theoretical justification for the long-time convergence of the telomere lengths distribution towards a stationary distribution on a general model  for telomere shortening. 
The interest is that this result will be robust as the research in the field advances and the models for telomere shortening evolve. %with respect to the evolution of telomere shortening models, for which the complexity increases as research in the field advances. 
We also believe that our result may be useful in other contexts than telomere shortening, where individuals also jump at a certain rate depending on their age. For example, we have these dynamics in selection-mutation models structured in age, see~\cite{calsina_steady_2013,meleard_2019,roget_2019}. 
We mention here that our work is done in parallel to~\cite{benetos_stochastic_2024}, for which the main difference is the discrete setting and the mathematical difficulties encountered, as discussed in Section~\ref{subsect:introresults}. 

%        Hence, there is also mathematical interest in considering a general~model.\jules{Citer Fritsch/Villemonais ici ?}

%1 chromosome 2 telo et pointer vers la section 5 pour des cas plus realistes. 
%\paragraph{\answer{PARAGRAPHE SUR NOS CHOIX DE MODELISATION ? PRESENTATION MODELE AVANT ?}}	

\subsection{Main result, literature and proof strategy}\label{subsect:introresults}

\paragraph{Main result.} The main result of the paper is the long-time behaviour of the population trait distribution of our model: We prove that the first moment semigroup of our process converges exponentially fast towards a stationary profile at the cost of an exponential normalisation, see Theorem~\ref{te:main_result}. 
This long time behaviour of a jump process in a non-compact and branching setting allows us to extend the results of \cite{velleret_exponential_2023} to branching jump processes, with jump rate depending both on the age and of the position of the particle, and which require Lyapunov functions to obtain the ergodic behaviour.
%Furthermore, our careful and sometimes long computations when obtaining the long time behaviour enable us to directly get a density representation of the stationary profile with respect to the Lebesgue measure, without checking additional statements.

Furthermore, we are able to directly get the density representation of the stationary profile with respect to the Lebesgue measure without checking any additional statements. This provides a new method to establish a density representation when applying the results of~\cite{velleret_exponential_2023}.

Finally, we present models for studying the biological phenomenon of telomere shortening (see Section~\ref{subsect:motivs}) with a continuous state space and non-exponential division times. In particular, we exhibit conditions for the parameters of these models which imply a convergence towards a stationary profile, and we compare them with the biological reality. 

\paragraph{Difficulties.} The main difficulty we face is that jumps occur in a continuous trait space, such that the jump kernel is not absolutely continuous with respect to the Lebesgue measure, due to the fact that certain coordinates of our trait stay fixed during jumps. This implies that the operator associated with the infinitesimal generator of our model is not compact. Hence, it is impossible to use methods based on Krein-Rutman theorem as for instance in \cite{doumic07} or \cite{perthame07}, or even \cite{bertoin_2019}. In addition, for methods based on non-conservative versions of Harris' ergodic theorem as those presented in \hbox{\cite{champagnat_2016,champagnat_2020,champagnat_2021,champagnat_general_2023,bansaye2022}}, and applied for example in \cite{tomasevic2022}, the control of the asymptotic comparison of survival is not trivial. For more information, we refer to Section~\ref{subsubsect:assumption_(A3)F}. 

The second difficulty we face is the age structure of our model, and the fact that the birth rate is not bounded from above in the age variable, nor is it bounded away from $0$. Therefore, obtaining fine exponential estimates for the growth of the total number of individuals or for the rate at which individuals come back to compact sets is arduous. The usual strategies of dominating the jump times with exponential random variables, as done in~\cite{velleret_unique_2022,velleret_exponential_2023,champagnat_general_2023,martinez_existence_2014}, or of bounding the infinitesimal generator and then applying Gronwall’s lemma, see \cite[Prop.~$2$]{bansaye2022} and~\cite[Theo.~$5.1$]{champagnat_general_2023}, fail in our context. Moreover, even in the bounded rate case, these strategies do not give precise enough estimates necessary for our purpose, and we detail why in the introduction of Section~\ref{subsubsect:assumption_(A2)}. The main tool to circumvent this issue is  to use Bellman-Harris processes, see~\cite[Chap.~IV]{athreya_1972}, and more precisely the renewal structure behind these processes. The way these processes are used is presented in the paragraph on our proof strategy.%see~\cite[Chap. IV]{athreya_1972} and
%, which correspond to the most classical age-dependent branching processes

% For more information, we refer to \answer{the proofs given in} Sections~\ref{subsubsect:proof_prop_renewal_set},~\ref{subsubsect:proof_bound_tail_probability} and~\ref{subsubsect:proof(A3)F_control_time}.

%The way these processes are used in . 

\paragraph{Literature.} 

Studying long time behaviour of Markov processes has received considerable attention in the literature since the seminal work of \cite{meyn_markov_1993}. Usually, the authors search for one criteria to verify in a general setting in order to obtain the ergodic behaviour with exponential speed of convergence in total variation norm, see e.g.~\cite{bansaye_ergodic_2020,bansaye2022,champagnat_2020,champagnat_2021,champagnat_general_2023,del_moral_contraction_2003,douc_quantitative_2004,ferre_more_2021,hairer_asymptotic_2011,hairer_yet_2011,kontoyiannis_large_2005,meleard_2012,meyn_markov_1993,velleret_unique_2022}. The problem of the discontinuities with respect to the Lebesgue measure is very recent in the literature, and few articles have tackled it, either on the probability or PDE side. On the probability side, the only paper we have found dealing with this type of jump Markov process is the article of Velleret \cite{velleret_exponential_2023}. This paper is related to other works, by the same author and collaborators, see~\hbox{\cite{mariani2022,velleret_mesures_2020,velleret_unique_2022,velleret_adaptation_2023}}. In these works, it is proposed to use stopping times to couple trajectories, with conditions on it allowing the control of the "rare events" on which  the coupling of trajectories fails. This result can be understood as a weak form of Harnack inequality. As in the example presented in~\hbox{\cite[Section~$5$]{velleret_exponential_2023}}, the rare events in our model are cases where the typical particle representing our branching process has not jumped in all the coordinates. On the PDE side, the recent and fine results obtained in \cite{sanchez2023} also handle the issues we have linked to the discontinuities of our jump kernel with respect to the Lebesgue measure. In particular, we refer to~\cite[Section~$12.2$]{sanchez2023} where this type of model has been studied. %In a forthcoming paper, an alternative proof from the PDE side combining regularisation approach and some ideas of the present work will be proposed. 

Many studies about the ergodic behaviour of age-dependent models have been done in the recent years, see for example \cite{bansaye_ergodic_2020,benetos_branching_2023,cormier_renewal_2024,fonte_long_2022,gabriel_measure_2018,gabriel_steady_2019,madrid2023exponential}. In most cases where the model is structured by a second trait, the birth/jump rate is bounded from above and/or below \cite{benetos_branching_2023,cormier_renewal_2024,fonte_long_2022,gabriel_measure_2018,madrid2023exponential}. This condition, for example, enables one to use the infinitesimal generator and prove with ease that a Foster-Lyapunov criterion is satisfied when the Harris's ergodic theorem for conservative semigroups is used~\cite{fonte_long_2022,madrid2023exponential}. We can identify at least two differences between our model and those of these studies. The first one is that as we use a non-conservative version of the Harris's ergodic theorem, we need to check additional assumptions for verifying the Lyapunov criterion, see the condition between $\gamma_1$ and $\gamma_2$ in~\hbox{\cite[Assump. $(F_2)$]{champagnat_general_2023}}. These additional assumptions correspond to the exponential estimates presented before. The second one is that we work with an unbounded birth rate. It is therefore not possible to compare birth/jump times with exponential random variables, and the computations with the infinitesimal generator are more difficult. This also means that we need to study a distorted version of the process in the age variable. We mention that in \hbox{\cite[Section~$3.3.2$]{bansaye_ergodic_2020}}, a non-conservative version of the Harris's ergodic theorem is used to obtain the stationary profile of a model with an unbounded birth rate. However, Lyapunov techniques were not required for their model as there is no other trait than the age, and as the factor before the exponential that bounds the convergence error is independent of the initial distribution of the trait. We also mention that in~\cite{gabriel_steady_2019}, a stationary profile for a model with an unbounded birth rate is also obtained using operator theory and entropy methods. As entropy methods do not give information about the speed of convergence, there was no need to obtain exponential estimates as precise as we require here. To the best of our knowledge, our method based on stochastic comparisons by Bellman-Harris processes to obtain such exponential estimates has not yet been studied. 

As mentioned in Section~\ref{subsect:motivs}, the study of the long-time behaviour of a model representing the same phenomenon as the one considered here has been done in~\cite{benetos_branching_2023}, in parallel to our work. The setting is quite different there, since the trait belongs to a discrete space and since there is no age~structure. An interesting point in~\cite{benetos_branching_2023} is that they propose the way to efficiently  simulate the stationary profile with a particle system.

%The main difference from a mathematical point of view is that the model is in a discrete state space and without age structure. 

\paragraph{Proof strategy.} As mentioned, the result in \cite{velleret_exponential_2023}  works for absorbing Markov processes with only one particle. Moreover, in our model, the factor before the exponential that bounds the convergence error depends on the initial distribution of the trait, see~\eqref{eq:main_result}. To solve these issues, we first weight the first moment semigroup of our branching process with a Lyapunov function, then normalise it, and finally create an auxiliary particle representing this weighted-normalised semigroup. The creation of the auxiliary particle follows the ideas presented in \cite{champagnat_2020}. Another way to represent our semigroup with an auxiliary particle is the one presented in \cite{bansaye2022}. However, it seems less adapted for a birth and death framework with jumps in a multidimensional space like ours. Once we have our auxiliary process, we verify the assumptions presented in \cite{velleret_exponential_2023} to get the ergodic behaviour of this auxiliary particle and then conclude by coming back to our population model. 

One of the main arguments of our proof is to use results coming from renewal theory to obtain precise exponential bounds when verifying the assumptions of~\cite[Theorem~$2.8$]{velleret_exponential_2023}, stated here in Theorem~\ref{te:assumptions_velleret}. To apply the latter, we first return to the branching representation of our model. Then, we do a coupling of this branching representation with Bellman-Harris processes, see Lemma~\ref{lemm:inequality_for_means}, allowing us to stochastically compare the number of individuals in our model by the number of individuals of Bellman-Harris processes. Precisely, this coupling is done by exploiting the fact that the times between each branching in our model and those in the Bellman-Harris processes can be simulated using the same uniform random variables, see the proof in Section~\ref{subsubsect:proof_inequality_means}. Thereafter, using the exponential growth property of Bellman-Harris processes, which comes from the renewal theory (see~\cite[Chap.~IV.4]{athreya_1972}), we establish exponential bounds for the branching representation of our model. Finally, we transfer these bounds to the typical particle. We believe that this method can be applied in many other settings with an age structure. For more information, we refer to the proofs given in Sections~\ref{subsubsect:proof_prop_renewal_set},~\ref{subsubsect:proof_bound_tail_probability} and~\ref{subsubsect:proof(A3)F_control_time}.
%During our proofs, we always juggle between the branching and the particle representation of our model. To be more precise, we return to the branching representation to do our stochastic comparisons to Bellman-Harris processes. For example, to obtain the accumulation of the mass of the semigroup on a compact set, we first bound from above or from below the (weighted) branching process by Bellman-Harris processes, and then we transmit the bounds we have obtained to the auxiliary process. This procedure is also used to obtain the weak form of Harnack inequality, where we also need exponential estimates. Going back and forth from the branching representation to the particle representation is an interesting way to verify the assumptions presented in \cite{velleret_unique_2022,velleret_exponential_2023}, or in \cite{champagnat_2016,champagnat_2020,champagnat_2021,champagnat_general_2023,bansaye2022}.

\subsection{Example of a model}\label{subsect:example_model}
We present here a particular case of the general model we study, introduced later in Section~\ref{subsect:algorithm_model}, as well as the convergence result obtained on this particular case. The first interest is to give to the reader a more concrete idea of the models discussed in this work, since the description given at the beginning of Section~\ref{sect:introduction} was very informal. The second interest is to present the convergence results we aim to establish. The example presented in this section corresponds to a simplified version of the illustrative model given in Section~\ref{subsect:model_renewal_several_generations} (case where $k = 1$). We refer to this illustrative model and to the one presented in Section~\ref{subsect:model_telomere_shortening} for more complex models. 

%The model we developed in Section~\ref{subsect:algorithm_model} is very general, due to our motivations presented in Section~\ref{subsect:motivs}. To facilitate the understanding of it, we present here
% We present here a particular case of this model, which is not necessarily the most biologically relevant, to give an idea to the reader of the type of model we study here. It corresponds to a simplified version of the model presented in Section~\ref{subsect:model_renewal_several_generations} (case where $k = 1$).} 
The model we present is a branching process, where each individual corresponds to a cell. In this model, each cell has only one chromosome, so each cell has $2$ telomeres. We assume that each cell has a trait that belongs to $\left(x,a\right)\in\left(\mathbb{R}_+^{2}\times\mathbb{R}_+\right)\cup\left\{(\partial,\partial)\right\}$. When the trait of the cell is equal to $\left(\partial,\partial\right)$, it means that the cell is senescent. Otherwise, the cell is non-senescent, and $x$ corresponds to the telomere lengths of the cell, whereas $a$ corresponds to its age (which increases linearly with time). In this model, each cell evolves according to the following dynamics:
\begin{itemize}[leftmargin=0.45cm]
\item[$1.$] At a rate $b(a) = a1_{\{a\geq1\}}$, the cell divides. The rate has this form as we wish to mimic the fact that young cells cannot divide and that the rate seems to increase with age~\cite{tzur_cell_2009}.
\item[$2.$] At each division, telomere lengths are updated. To do this, we first draw a pair of random variables $\left(U,U'\right)\in\left([0,\delta]^2\right)^2$, where $\delta >0$, representing the shortening values in the two daughter cells.  This pair is distributed as follows:
\begin{itemize}[leftmargin=*]
	\item  With probability $\frac{1}{2}$, the variables $U_1$ and $U'_2$ are distributed according to an uniform distribution over $[0,\delta]$, while $U_2 = U'_1 = 0$ a.s..
	\item With probability $\frac{1}{2}$, the variables $U_2$ and $U'_1$ are distributed according to an uniform distribution over $[0,\delta]$, while $U_1 = U'_2 = 0$ a.s..
\end{itemize}
The probability measure $\mu^{(S)}$ describing the law of $(U,U')$ is the following:
%$$
%\begin{aligned}
%	\mu^{(S)}\left(du,du'\right) &= \frac{1}{2}\left[\frac{1}{\delta}1_{[0,\delta]}\left(u_1\right)du_1\delta_0\left(du'_1\right)\right]\left[\frac{1}{\delta}1_{[0,\delta]}\left(u_2\right)du_2\delta_0\left(du'_2\right)\right] \\
%	&+ \frac{1}{2}\left[\frac{1}{\delta}1_{[0,\delta]}\left(u'_1\right)du'_1\delta_0\left(du_1\right)\right]\left[\frac{1}{\delta}1_{[0,\delta]}\left(u'_2\right)du'_2\delta_0\left(du_2\right)\right] .
%\end{aligned}
%$$	
$$
\begin{aligned}
	\mu^{(S)}\left(d\alpha_1,d\alpha_2\right) = \frac{1}{2}&\left[\frac{1}{\delta}1_{[0,\delta]}\left((\alpha_1)_1\right)d(\alpha_1)_1\,\delta_0\left(d(\alpha_1)_2\right)\right]\\
	\times&\left[\frac{1}{\delta}1_{[0,\delta]}\left((\alpha_2)_2\right)d(\alpha_2)_2\,\delta_0\left(d(\alpha_2)_1\right)\right] \\
	+ \frac{1}{2}&\left[\frac{1}{\delta}1_{[0,\delta]}\left((\alpha_1)_2\right)d(\alpha_1)_2\,\delta_0\left(d(\alpha_1)_1\right)\right]\\
	\times&\left[\frac{1}{\delta}1_{[0,\delta]}\left((\alpha_2)_1\right)d(\alpha_2)_1\,\delta_0\left(d(\alpha_2)_2\right)\right] .
\end{aligned}	
$$		
%\end{comment}   
At the end of the shortening, the daughter cell $A$ has for telomere lengths $x-U$, and the daughter cell $B$ has for telomere lengths $x-U'$.

Then, we draw a pair of random variables $\left(V,V'\right)\in\left([0,\Delta]^2\right)^2$, where $\Delta >0$. These random variables represent the lengthening values in the two daughter cells. For all~$i\in\{1,2\}$, we have that $V_i$~is distributed according to an uniform distribution over the set~$\left[0,\Delta\max\left(1,x_i-U_i+1\right)\right]$, and that $V'_i$ is distributed according to an uniform distribution over~$\left[0,\Delta\max\left(1,x_i-U'_i+1\right)\right]$. Hence, the probability measure~$\mu_{(x-U_1,x-U_2)}^{(E)}$ describing the distribution of $(V,V')$ is
$$
\begin{aligned}%\times
	\mu_{\left(x-U,x-U'\right)}^{(E)}\left(d\beta_1,d\beta_2\right) &= \left[\prod_{i = 1}^2\frac{1_{\left[0,\Delta\max\left(1,x_i-U_i+1\right)\right]}\left((\beta_1)_i\right)d(\beta_1)_i}{\Delta\times\max\left(1,x_i-U_i+1\right)}\right]\\
	&\times\left[\prod_{i = 1}^2\frac{1_{\left[0,\Delta\max\left(1,x_i-U'_i+1\right)\right]}\left((\beta_2)_i\right)d(\beta_2)_i}{\Delta\times\max\left(1,x_i-U'_i+1\right)}\right].
\end{aligned}
$$
We obtain after this step that the telomere lengths of the daughter cells $A$ and~$B$ are respectively~$x-U+V$ and~$x-U'+V'$.

\item[$3.$] After telomere lengths have been updated, we first check if $x-U+V\in \mathbb{R}_+^{2}$ or~not. If this is the case, then the trait of the daughter cell $A$ at birth is $\left(x-U+V,0\right)$. Otherwise, the trait of this cell is~$\left(\partial,\partial\right)$. 

Then, we do the same for the daughter cell $B$. If $x-U'+V'\in \mathbb{R}_+^{2}$, then the trait of the daughter cell $B$ at birth is $\left(x-U'+V',0\right)$ . Otherwise, the trait of this cell is~$(\partial,\partial)$.

\end{itemize}
According to our computations done in Section~\ref{subsect:model_renewal_several_generations}, and in particular to the condition given in~\eqref{eq:assumption_Delta_delta_first_model}, we are  able to prove that the distribution of the trait in this model converges towards a stationary profile, as $t\to \infty$, when $\Delta > \left[1-4^{-\frac{1}{6}}\right]^{-1}\delta$. Precisely, for fixed $t\geq0$, $A$ Borel set in $\mathbb{R}_+^2\times\mathbb{R}$ and $(x,a)\in\mathbb{R}_+^2\times\mathbb{R}_+$, let us denote the quantity $M_t(1_A)(x,a)$, that corresponds to the expected number of cells with trait in $A$ when the model starts from one cell with trait $(x,a)$. Then, we are  able to prove that when it holds~$\Delta > \left[1-4^{-\frac{1}{6}}\right]^{-1}\delta$, there exist  $\lambda >0$ and two functions $N : \mathbb{R}_+^2\times\mathbb{R}_+ \rightarrow \mathbb{R}_+$ and~$\phi : \mathbb{R}_+^2\times\mathbb{R}_+ \rightarrow \mathbb{R}_+^*$  verifying 
$$
\int_{ \mathbb{R}_+^2\times\mathbb{R}_+} N(y,s) dy ds = \int_{\mathbb{R}_+^2\times\mathbb{R}_+} N(y,s)\phi(y,s) dy ds = 1,
$$
such that 
$$
\lim_{t\rightarrow+\infty} e^{-\lambda t}M_t(1_A)(x,a) = \phi(x,a) \int_{\mathbb{R}_+^2\times\mathbb{R}_+} 1_A(y,s)N(y,s) dy ds.
$$
Moreover, we are able to prove that the convergence holds exponentially fast. This is the type of result we can obtain, even in more general models, see Theorem~\ref{te:main_result}. It characterises the convergence of the distribution of the trait towards a stationary profile $N$, since in view of the definition of~$\left(M_t\right)_{t\geq0}$, this collection gives information about the distribution of the trait over time.

\noindent \subsection{Organisation of the paper}\label{subsect:paper_organisation}
We now turn to the core of the paper. First, in Section \ref{sect:preliminaries_results}, we present the main notions we use in the article, we give in detail the model under consideration, and  we present the main result. Then, in Section~\ref{sect:auxiliary_process} we present the auxiliary processes we use to apply the result of \cite{velleret_exponential_2023}, and prove the main result of the article  in Section \ref{sect:long_time_behaviour}. Thereafter, in Section~\ref{sect:assumptions_verified}, we give conditions for which the assumptions necessary to use our theorem are satisfied, and present two models where they are verified. Finally, in Section~\ref{sect:discussion}, we discuss the limits of our results and present the perspectives of this work. The appendices~\hyperlink{appendix:everything}{A-D} are devoted to the proof of certain auxiliary statements given during this paper.

\section{Notations, preliminaries and main result}\label{sect:preliminaries_results}
Taking into account our biological motivation, from now on, the individuals in the population process described at the beginning of Section~\ref{sect:introduction} are called cells, as in Section~\ref{subsect:example_model}. Now, each cell has $k$ chromosomes, and each extremity of each chromosome corresponds to one telomere, so the cell has $2k$ telomeres. The traits~\hbox{$(x,a) \in \mathbb{R}_+^{2k}\times\mathbb{R}_+$} of the cells that are not in the cemetery state are the lengths of their telomeres and their age. Branching events are called divisions, and at each division, certain telomere lengths are updated~("shortened" or "lengthened") with jumps in a continuous trait, while the others stay fixed. When the length of a telomere is below zero, we usually say that the cell becomes senescent, meaning that the individual is moved to the cemetery state.

The aim of this section is to present the main result of the paper: the exponential convergence of the first moment semigroup towards a stationary profile. To present this result, it is necessary to introduce a certain number of notions. Hence, the first part of this section is devoted to the presentation of the notations and the model that we use throughout the paper (Sections \ref{subsect:notations} and \ref{subsect:algorithm_model}). We then present the assumptions and the main result in Section \ref{subsect:main_result}.

\subsection{Notations and first definitions}\label{subsect:notations}
We present here the notations and mathematical notions we use throughout the paper. The notations and notions are presented in nine parts.  In each of these parts,  $\left(\mathbb{X},Top(\mathbb{X})\right)$ denotes an arbitrary topological space. The first part we give introduces our probability space, as well as the notations we use for the Borel $\sigma$-algebra of $\mathbb{X}$ and the distribution of a random variable.

\paragraph{$(N_1)\!:$ Probability space and related notations.}

\begin{enumerate}[start=1,label={$(N\textsubscript{1.\arabic*})\!:$},ref={$(N\textsubscript{1.\arabic*})$}, leftmargin=*]
\item We work on the probability space $(\Omega,\mathcal{A},\mathbb{P})$. \label{notation:probability_space}

\item We denote by $\mathcal{B}(\mathbb{X})$ the $\sigma-$algebra of the Borel sets of $(\mathbb{X},Top(\mathbb{X}))$.

\item For any random variable $Z : (\Omega,\mathcal{A},\mathbb{P}) \longrightarrow (\mathbb{X},\mathcal{B}(\mathbb{X}))$, we denote $\mathbb{P}_Z$ the probability measure such that $\mathbb{P}_Z = \mathbb{P} \circ Z^{-1}$. \label{notation:pushforward_measure}
\end{enumerate}
\noindent The second part introduces our trait space and the Ulam-Harris-Neveu notation, which is classical to label individuals in a branching process.
\paragraph{$(N_2)\!:$ Trait space and Ulam-Harris-Neveu notation.}
\begin{enumerate}[start=1,label={$(N\textsubscript{2.\arabic*})\!:$}, ref={$(N\textsubscript{2.\arabic*})$},leftmargin=*]
\item We consider $\mathcal{X} = \mathbb{R}_+^{2k}\times\mathbb{R}_+$. This space represents the trait space for non-senescent cells. 

\item We consider $\partial$ an arbitrary element outside of $\mathbb{R}^d$ for all $d\in \mathbb{N}^*$. \label{nota:cemetery}

\item Let us denote for all $n\in\mathbb{N}^*$ the tuple \hbox{$(\partial)_n := (\partial,\hdots,\partial)$} where there is $n$ times the term $\partial$. Each time it is used, $(\partial)_n$ represents a cemetery for the Cartesian product between $n$ sets $\left(A_i\right)_{1\leq i \leq n}$. We use the set \hbox{$\left(A_1\times\hdots\times A_n\right)\cup\{(\partial)_n\}$} instead of~\hbox{$\left(A_1\times\hdots\times A_n\right)\cup\{\partial\}$}, because this is meaningless to consider a tuple~$(a_1,\hdots,a_n)\in \{\partial\}$.

\item We consider $\mathcal{X}_{\partial} = \mathcal{X}\cup \left\{(\partial)_2\right\}$, such that $(\partial)_2$ is an isolated point of $\mathcal{X}_{\partial}$. This space represents the trait space of individuals, and $(\partial)_2$ represents the trait of senescent individuals.

\item Let $\mathcal{U} = \bigcup_{n\in\mathbb{N}}\{\mathbb{N}^{*}\times\{1,2\}^n\}$. As we use a branching process, this set allows us to denote the individuals in the tree thanks to the classical Ulam-Harris-Neveu notation. In particular, for all $\mathdutchcal{u} = \left(\mathdutchcal{u}_1,\hdots,\mathdutchcal{u}_{m}\right)\in \mathcal{U}$ and $i\in\{1,2\}$, where $m\in\mathbb{N}^*$, we write $\mathdutchcal{u}i = \left(\mathdutchcal{u}_1,\hdots,\mathdutchcal{u}_{m},i\right)$. \label{nota:ulam_harris_neveu}
\end{enumerate}
The third part introduces all the spaces and sets of functions we use during this work.
\paragraph{$(N_3)\!:$ Spaces/sets of functions.}
\begin{enumerate}[start=1,label={$(N\textsubscript{3.\arabic*})\!:$},ref={$(N\textsubscript{3.\arabic*})$},leftmargin=*]

\item We denote by $M\left(\mathbb{X}\right)$ the space of Borel functions $f : \mathbb{X} \longrightarrow \mathbb{R}$, by $M_b\left(\mathbb{X}\right)$ the space of bounded Borel functions $f : \mathbb{X} \longrightarrow \mathbb{R}$, and by  $M_b^{loc}\left(\mathbb{X}\right)$ the space of locally bounded Borel functions \mbox{$f : \mathbb{X} \longrightarrow \mathbb{R}$}.

\item For all $n\in\mathbb{N}^n$ and $S \subset \mathbb{R}^n$, we denote by $L^1(S)$ the set of the functions \hbox{$f : S \longrightarrow \mathbb{R}$} that are integrable with respect to the Lebesgue measure. We also denote by $L_{\text{loc}}^1(S)$ the space of the functions $f : S \longrightarrow \mathbb{R}$ that are locally integrable with respect to the Lebesgue measure.		

\item For all $n\in\mathbb{N}^n$ and $S \subset \mathbb{R}^n$, we denote by $L_{\text{p.d.f.}}^1(S)$ the set of the measurable functions $f : S \longrightarrow \mathbb{R}_+$ such that~$\int_{S} f(x) dx = 1$. \label{notations:L1_pdf}

\item For all $n\in\mathbb{N}^n$ and $S \subset \mathbb{R}^n$, we denote by $\mathcal{C}_c^{\infty}(S)$ the space of $\mathcal{C}^{\infty}$ functions defined on $S$, taking values in $\mathbb{R}$, and with compact support.

\item For all $n\in\mathbb{N}^n$, $S \subset \mathbb{R}^n$, and Borel function $\psi: S \longrightarrow \mathbb{R}$, we denote by $\mathdutchcal{B}(\psi)$ the space of the Borel functions $f: S\longrightarrow \mathbb{R}$ such that
$$
\sup_{x\in S} \left|\frac{f(x)}{\psi(x)}\right|< +\infty.
$$
We endow $\mathdutchcal{B}(\psi)$  with the norm $||f||_{\mathdutchcal{B}(\psi)} := \sup_{x\in S}\left|\frac{f(x)}{\psi(x)}\right|$. \label{notation:start_nota_space_distortion}

\item For all $n\in\mathbb{N}^n$, $S \subset \mathbb{R}^n$  and Borel function $\psi : S \longrightarrow \mathbb{R}$, we denote
$$
L^1(\psi) := \left\{f:S \longrightarrow \mathbb{R}\,\big|\,\int_{S}|f(z)\psi(z)| dz < +\infty\right\}.
$$

\item For all $S \subset\mathcal{X}_{\partial}$ we denote by $\mathcal{C}_b^{m,1}\left(S\right)$ the space of bounded Borel functions \hbox{$f : (x,a)\in S \longrightarrow \mathbb{R}$} verifying the following properties: \label{nota:Cm1}
\begin{itemize}[leftmargin=0.5cm]
	\item $f$ is measurable in the first variable,
	\item The restriction of $f$ on $\mathcal{X}$ is continuously differentiable in the second variable, with bounded derivative in the second variable.
\end{itemize}

\item For all $S \subset\mathcal{X}_{\partial}$ we denote by $\mathcal{C}_b^{1,m,m,1}(\mathbb{R}_+\times\mathcal{U}\times S)$ the space of bounded Borel functions \hbox{$f:(t,\mathdutchcal{u},x,a)\in \mathbb{R}_+\times\mathcal{U}\times S\longrightarrow \mathbb{R}$}, verifying the following properties:
\begin{itemize}[leftmargin=0.5cm]
	\item $f$ is continuously differentiable in the first variable, with bounded derivative in the first variable.
	
	\item $f$ is measurable in the second and third variables.
	
	\item  The restriction of $f$ on $\mathbb{R}_+\times\mathcal{U}\times \mathcal{X}$ is continuously differentiable in the fourth variable, with bounded derivative in the fourth variable.
\end{itemize}
\end{enumerate}
The fourth part presents operations we apply to functions during this work.
\paragraph{$(N_4)\!:$ Operations on functions.}
\begin{enumerate}[start=1,label={$(N\textsubscript{4.\arabic*})\!:$},ref={$(N\textsubscript{4.\arabic*})$},leftmargin=*]
\item We consider for all $f : \mathbb{R}_+  \longrightarrow \mathbb{R}$, $g : \mathbb{R}_+  \longrightarrow \mathbb{R}$ and $n\in\mathbb{N}$, the operation $f*^{(n)}g = f*f*\hdots*f*g$, where there is $n$ times the function $f$. We use the convention that $f*^{(0)}g = g$.\label{notation:convolution} 

\item For any measurable function $f : \mathbb{R}_+ \longrightarrow \mathbb{R}$, we denote by $\mathcal{L}(f)$ its Laplace transform, defined such that~$\mathcal{L}(f)(p) = \int_0^{+\infty}e^{-pt}f(t)dt$ for all $p\in\mathbb{C}$ such that the integral converges. \label{nota:laplace_transform}

\item For any probability density function $F : \mathbb{R}_+ \longrightarrow \mathbb{R}_+$, we denote by $\overline{F}$ its associated complementary cumulative distribution function.
\end{enumerate}
The fifth part provides all the spaces and sets of measures we work with, as well as notations related to them.
\paragraph{$(N_5)\!:$ Spaces/sets of measures, and related notations.}
\begin{enumerate}[start=1,label={$(N\textsubscript{5.\arabic*})\!:$},ref={$(N\textsubscript{5.\arabic*})$},leftmargin=*]
\item We denote by $\mathcal{M}(\mathbb{X})$ the set of positive measures on $(\mathbb{X},\mathcal{B}(\mathbb{X}))$.

\item We denote by $\mathcal{M}_1(\mathbb{X}) \subset \mathcal{M}(\mathbb{X})$ the set of probability measures on $(\mathbb{X},\mathcal{B}(\mathbb{X}))$.

\item We denote by $\mathcal{M}_P\left(\mathbb{X}\right)$ the set of all finite non-negative point measures on $\mathbb{X}$.

\item \label{notation:end_nota_space_distortion} For any Borel function $\psi: \mathbb{X} \longrightarrow \mathbb{R}$ such that $\inf_{x\in \mathbb{X}} \psi(x) >0$, we denote by~$\mathdutchcal{M}_+(\psi)$ the cone of positive measures that integrate $\psi$. We also denote by \hbox{$\mathdutchcal{M}(\psi) = \mathdutchcal{M}_+(\psi) - \mathdutchcal{M}_+(\psi)$} the set containing all the differences of measures in~$\mathdutchcal{M}_+(\psi)$. We endow this set with the norm~$||.||_{\mathdutchcal{M}(\psi)}$. This norm is defined for all~\hbox{$\mu = \mu_+ - \mu_-\in\mathdutchcal{M}(\psi)$} as
$$
||\mu||_{\mathdutchcal{M}(\psi)} := \sup_{f\in\mathdutchcal{B}(\psi),\,||f||_{\mathdutchcal{B}(\psi)}\leq 1} |\mu(f)| = \mu_+(\psi) + \mu_-(\psi).
$$

%\end{enumerate}
%The sixth set of notations corresponds to notations related to measures we use in this work.
%\paragraph{$(N_6):$ Notations related to measures.}
%\begin{enumerate}[start=1,label={$(N\textsubscript{6.\arabic*}):$},ref={$(N\textsubscript{6.\arabic*})$},leftmargin=1.75cm]

\item For every $\mu\in\mathcal{M}\left(\mathbb{X}\right)$ and $f : \mathbb{X}\longrightarrow \mathbb{R}$ integrable with respect to $\mu$, we write
$$
\mu(f) := \int_{x\in\mathbb{X}} f(x) \mu(dx).
$$
We also write $\mu(1) := \int_{x\in\mathbb{X}}1\mu(dx)$.

\item $||.||_{TV,\mathbb{X}}$ is the total variation norm on $\mathcal{M}(\mathbb{X})$. For any $\mu\in\mathcal{M}(\mathbb{X})$, 
$$
||\mu||_{TV,\mathbb{X}} := \sup_{f : \mathbb{X}\longrightarrow [-1,1],\,f\text{ mes.}} \left|\mu(f)\right|.
$$

\item Let $(P_t)_{t\geq 0}$ a family of linear maps from a set $S \subset M\left(\mathbb{X}\right)$ to itself. Then, for all measure \hbox{$\mu \in \mathcal{M}(\mathbb{X})$}, $f \in S$ non-negative and $t\geq0$, we write
$$
\mu P_t(f) :=  \int_{z \in \mathbb{X}} P_t(f)(z) \mu(dz).
$$
The above notation can be extended to the case where $P_t(f)$ is integrable with respect to $\mu$, and $f$ not necessarily non-negative.
\end{enumerate}
The sixth part corresponds to notations we use when we work with sets or subsets.
\paragraph{$(N_6)\!:$ Notations related to set theory.}
\begin{enumerate}[start=1,label={$(N\textsubscript{6.\arabic*})\!:$}, ref={$(N\textsubscript{6.\arabic*})$},leftmargin=*]

\item For any $S \subset \mathbb{X}$, we denote $S^c = \mathbb{X} \backslash S$.

\item For any finite set $S \subset \mathbb{X}$, we denote $\#(S)$ the cardinal of $S$.

\item For all $S \subset \mathbb{X}$, $\text{int}(S)$ is the interior of $S$. 

\item For all $(a,b)\in\mathbb{Z}^2$ such that $a < b$, we denote 
$$
\llbracket a,b\rrbracket := \left\{a,a+1,\hdots,b-1,b\right\}.
$$

\item For any set $X$, we denote $\mathcal{P}(X)$ the power set of $X$ i.e. the set containing all subsets of $X$. 

\item For any finite set $X$, and $l\in\llbracket0,\#(X)\rrbracket$, we denote \hbox{$\mathcal{P}_l(X) \hspace{-0.05mm}= \hspace{-0.05mm} \{S\in\mathcal{P}(X)|\#S = l\}$} the set containing all subsets of $X$ of size $l$.
\end{enumerate}
The seventh part presents notations we use when we work with stochastic processes, as for example in Section~\ref{sect:long_time_behaviour}.
\paragraph{$(N_7)\!:$ Notations related to stochastic processes.}
\begin{enumerate}[start=1,label={$(N\textsubscript{7.\arabic*})\!:$}, ref={$(N\textsubscript{7.\arabic*})$},leftmargin=*]
\item For any stochastic process $(X_t)_{t\geq 0}$ defined on $\mathbb{X}$, we write for all $z\in\mathbb{X}$, $f \in M(\mathbb{X})$ non-negative, $t\geq 0$,
$$
\mathbb{E}_{z}\left[f(X_t)\right] := \mathbb{E}\left[f(X_t)\,|\,X_0 = z\right],
$$
and for any $\mu\in\mathcal{M}_1(\mathbb{X})$
$$
\mathbb{E}_{\mu}\left[f(X_t)\right] := \int_{z\in\mathbb{X}}\mathbb{E}_{z}\left[f(X_t)\right] \mu(dz).
$$
These notations can be extended to the case where $f\in M_b\left(\mathbb{X}\right)$ and not necessarily non-negative.

\item  Let $(X_t)_{t\geq 0}$ a stochastic process defined on $\mathbb{X}$, $z\in\mathbb{X}$ and $t\geq0$. Then, for any random variable~$W$, $A$ a subset of the set where $W$ takes its values, and $f\in M(\mathbb{X})$ non-negative, we denote
$$
\mathbb{E}_{z}\left[f(X_t); W\in A\right] := \mathbb{E}_{z}\left[f(X_t)1_{\{W\in A\}}\right].
$$	
This notation can be extended to the case where $f\in M_b\left(\mathbb{X}\right)$ and not necessarily non-negative.
\end{enumerate}
The eighth part introduces the initial condition of our model, as well as the Poisson point measure used to define it, see~\eqref{eq:SDE_model_intro}.
\paragraph{$(N_8)\!:$ Initial condition and Poisson point measure.}
\begin{enumerate}[start=1,label={$(N\textsubscript{8.\arabic*})\!:$}, ref={$(N\textsubscript{8.\arabic*})$},leftmargin=*]
\item We consider $Y_0$ a $\mathcal{M}_P\left(\mathcal{U}\times\mathcal{X}_{\partial}\right)$-valued random variable. This random variable represents the initial condition of the population. Its distribution will be specified when needed. \label{notation:initial_condition_process}

\item We consider $K\in\mathbb{N}^*$ and $N(ds, d\mathdutchcal{u}, dz, d\theta)$ a Poisson point measure on the set $\mathbb{R}_+\times\mathcal{U}\times\mathbb{R}_+\times[0,1]^K$ with intensity measure $ds\times\left(\sum_{\mathdutchcal{u}\in\mathcal{U}} \delta_{\mathdutchcal{u}}(du)\right)\times dz\times d\theta$. This random measure allows us to describe the jumps that occur in our dynamics. The integer $K$ is the number of uniform random variables needed to update telomere~lengths. \label{notation:poisson_process}

\item We consider $\left(\mathcal{F}_t\right)_{t\geq0}$ the canonical filtration generated by $N(ds, d\mathdutchcal{u}, dz, d\theta)$ and~$Y_0$. 
\end{enumerate}
The last part corresponds to notations that cannot be classified in any of the other parts.
\paragraph{$(N_{9})\!:$ Other notations.}
\begin{enumerate}[start=1,label={$(N\textsubscript{9.\arabic*})\!:$}, ref={$(N\textsubscript{9.\arabic*})$},leftmargin=*]
\item We use the convention that $\prod_{\emptyset} = 1$ and $\inf(\emptyset) = +\infty$ in all the paper.

\item Let $n\in\mathbb{N}^*$. For all $i\in\llbracket1,n\rrbracket$, $e_i$ is the $i-th$ vector in the canonical basis of~$\mathbb{R}^n$.
\end{enumerate}
\color{black}
\subsection{Presentation of the model}\label{subsect:algorithm_model}
We work in continuous time, and we use an individual-based approach where each individual is a cell. When a cell divides, it gives birth to two daughter cells $A$ and $B$ (the choice of which daughter cell is $A$ or $B$ is arbitrary). The traits of each individual are the lengths of its telomeres and its age, or~$(\partial)_2$ when it is a senescent cell. Each cell has $2k$ telomeres, one per chromosome end. We associate each coordinate with the length of a telomere, such that for all $i\in\llbracket1,k\rrbracket$, the coordinates $i$ and $i+k$ of a vector in~$\mathbb{R}_+^{2k}$ represent the lengths of the two telomeres of the same chromosome. Let us now introduce the following objects needed to construct our model.
\begin{itemize}[leftmargin=*]
\item A division rate $b : (x,a) \in \mathbb{R}_+^{2k}\times\mathbb{R}_+  \longrightarrow \mathbb{R}_+$, and constants $\tilde{b} >0$, $d_b\in\mathbb{N}^*$, $b_0>0$,~$a_0> 0$. We assume that the birth rate satisfies
\begin{equation}\label{eq:birth_rate_assumption}
	\begin{aligned}
		&\forall (x,a)\in\mathbb{R}_+^{2k}\times\mathbb{R}_+:\hspace{1.5mm}\,|b(x,a)| \leq \tilde{b}(1+a^{d_b}),\\
		&\forall (x,a)\in\mathbb{R}_+^{2k}\times[a_0,+\infty[:\hspace{1.5mm}b(x,a)\geq b_0.
	\end{aligned}
\end{equation}

\item A probability measure $\mu^{(S)}\in \mathcal{M}_1\left(\mathbb{R}_+^{2k}\times\mathbb{R}_+^{2k}\right)$ representing the distribution of telomere shortening at each cell division. It is assumed independent of telomere lengths. When we draw a pair $(U,U')$ distributed according to $\mu^{(S)}$, $U$ corresponds to the shortening in the daughter cell $A$, and $U'$ the shortening in the daughter cell~$B$. The complete expression of this measure is given later, see~\eqref{eq:measure_shortening}.

\item For all $(s_1,s_2)\in\mathbb{R}^{2k}\times\mathbb{R}^{2k}$, a probability measure $\mu_{(s_1,s_2)}^{(E)}\in\mathcal{M}_1\left(\mathbb{R}_+^{2k}\times\mathbb{R}_+^{2k}\right)$ which represents the distribution of telomere elongation by telomerase, when the two daughter cells $A$ and $B$ have respective telomere lengths $s_1$ and $s_2$ after shortening. When we draw a pair $(V,V')$ distributed according to $\mu_{(s_1,s_2)}^{(E)}$, $V$ corresponds to the lengthening in the daughter cell $A$, and $V'$ the lengthening in the daughter cell $B$. As for $\mu^{(S)}$, its complete expression is given later, see \eqref{eq:measure_elongation}.

\item For every $x\in\mathbb{R}_+^{2k}$, we define the measure $\Pi_x$ such that $\forall C\in\mathcal{B}((\mathbb{R}_+^{2k})^2)$:
\begin{equation}\label{eq:kernel_telomere_updating}
	\begin{aligned}
		\Pi_x(C) := &\int_{(y_1,y_2)\in\left(\mathbb{R}_+^{2k}\right)^2} \bigg[\int_{(z_1,z_2)\in\left(\mathbb{R}_+^{2k}\right)^2} 1_{C}(-y_1 + z_1,-y_2+z_2)\\
		&\times d\mu^{(E)}_{(x - y_1,x-y_2)}(z_1,z_2)\bigg] d\mu^{(S)}(y_1,y_2).
	\end{aligned}
\end{equation}
This is the kernel used to represent the distribution of telomere lengths variation (shortening + lengthening), when a cell with telomere lengths $x\in \mathbb{R}_+^{2k}$ divides.

\item For all $x\in\mathbb{R}_+^{2k}$, we assume that there exists a function \hbox{$R(x,.) : [0,1]^K \longrightarrow \left(\mathbb{R}^{2k}\right)^2$}, where $K$ is defined in Notations~\hyperref[notation:poisson_process]{\ref*{notation:poisson_process}}, such that for all $f: \left(\mathbb{R}^{2k}\right)^2 \longrightarrow \mathbb{R}$ bounded measurable
\begin{equation}\label{eq:uniform_random_variables_assumption}
	\begin{aligned}
		\int_{[0,1]^K} f\left(R(x,\theta)\right) d\theta &=
		\int_{\left(w_1,w_2\right)\in\left(\mathbb{R}^{2k}\right)^2} f(w_1,w_2)  d\Pi_x(w_1,w_2).
	\end{aligned}
\end{equation}
The latter represents a procedure to simulate random variables distributed according to $\Pi_x$ with uniform random variables, see \cite[p. 2655]{marguet2019}.
\end{itemize}
We now give the complete expression for the measures $\mu^{(S)}$ and $\left(\mu_{(s_1,s_2)}^{(E)}\right)_{(s_1,s_2)\in\left(\mathbb{R}^{2k}\right)^2}$. The expression of these measures comes from biological modelling. 

We start with $\mu^{(S)}$. At each cell division, for each chromosome $i\in\llbracket1,k\rrbracket$, we have in view of~\hbox{\cite[Fig.$1$]{eugene_effects_2017}} that:
\begin{itemize}[leftmargin=0.4cm]
\item One of the daughter cells, which can be $A$ or $B$, has the telomere represented by the coordinate $i$ shortened (corresponds to one end of the chromosome $i$), and the telomere represented by the coordinate $i+k$ not shortened (corresponds to the other end of the chromosome $i$).

\item The other daughter cell has the telomere represented by the coordinate $i+k$ shortened, the telomere represented by the coordinate $i$ not shortened.
\end{itemize}
Then, to write an explicit expression for $\mu^{(S)}$, we denote
\begin{equation}\label{eq:set_combination_shortening}
\mathcal{I}_{k} := \bigg\{I\in \mathcal{P}_k\left(\llbracket1,2k\rrbracket\right)  \left.\big|\right. \forall (i,j)\in I^2: i\neq j \implies \left(i \,\text{mod}\, k\right) \neq \left(j\,\text{mod}\,k\right)\bigg\}.
\end{equation}
This corresponds to the set containing all the possible combinations of telomere coordinates that can be shortened at each division in one of the daughter cell (we refer to Example \ref{ex:shortening_set} for an expression of $\mathcal{I}_k$ when $k$ is small). Indeed, the condition~"$\forall (i,j)\in I^2: i\neq j \implies \left(i \,\text{mod}\, k\right) \neq \left(j\,\text{mod}\,k\right)$" corresponds to the fact that for each chromosome $i\in\llbracket1,k\rrbracket$, it is either the coordinate $i$ or the coordinate $i+k$ that is shortened during a cell division, but not both. We also denote $g\in L^1_{\text{p.d.f.}}(\mathbb{R}_+)$, that corresponds to the probability density function of the shortening size. Then, we use the following expression for~$\mu^{(S)}$ 
\begin{equation}\label{eq:measure_shortening}
\begin{aligned}
	d\mu^{(S)}(\alpha_1,\alpha_2) &= \frac{1}{\#\left(\mathcal{I}_k\right)}\sum_{I\in\mathcal{I}_{k}}  \prod_{i\in I}\left[g\left((\alpha_1)_{i}\right)d(\alpha_1)_{i}\delta_0(d(\alpha_2)_{i})\right]\\ 
	&\times\prod_{i'\in\llbracket1,2k\rrbracket\backslash\,I}\left[g\left((\alpha_2)_{i'}\right)d(\alpha_2)_{i'}\delta_0\left(d(\alpha_1)_{i'}\right)\right] =: \frac{1}{\#\left(\mathcal{I}_k\right)}\sum_{I\in\mathcal{I}_{k}} d\mu^{(S;I)}(\alpha_1,\alpha_2).
\end{aligned}
\end{equation}
Qualitatively, this expression means that at each division, we draw one set $I\in \mathcal{I}_k$ uniformly. This set gives us which coordinates are shortened in the daughter cell $A$ (terms $g\left((\alpha_1)_{i}\right)d(\alpha_1)_{i}$), and that are not shortened in the daughter cell $B$ (terms~$\delta_0(d(\alpha_2)_{i})$). Considering now the set $\llbracket1,2k\rrbracket\backslash I$, we also have the coordinates that are shortened in the daughter cell $B$ (terms $g\left((\alpha_2)_{i'}\right)d(\alpha_2)_{i'}$), and not shortened in the daughter cell $A$ (terms $\delta_0\left(d(\alpha_1)_{i'}\right)$).
\begin{ex}\label{ex:shortening_set}
When $k = 2$, we have
$$
\mathcal{I}_k = \{\{1,2\},\{1,4\},\{2,3\},\{3,4\}\}.
$$
If at a cell division we have drawn the set $\{1,4\} \in \mathcal{I}_k$, then the coordinates where there is a shortening in the daughter cell $A$ are $1$ and $4$, and the coordinates where there is a shortening in the daughter cell~$B$ are $2$ and $3$.
\end{ex}
\begin{rem}
To better understand the measure $\left(\mu^{(S;I)}\right)_{I\in\mathcal{I}_k}$ defined in~\eqref{eq:measure_shortening}, let us take  $I\in\mathcal{I}_k$ and $f = f_0\times f_1$, where $\left(f_0,f_1\right)\in \left(M_b\left(\mathbb{R}_+^{2k}\right)\right)^2$. Then, for this choice of test function we have (recall that $I^c = \llbracket1,2k\rrbracket\backslash I$)
\begin{equation}\label{}
	\begin{aligned}
		\mu^{(S;I)}(f) &= \int_{\left(\left(\alpha_1\right)_i,i\in I\right)\in\mathbb{R}_+^k} f_0\left(\sum_{i \in I} \left(\alpha_1\right)_ie_i\right)\prod_{i\in I}\left[g\left((\alpha_1)_{i}\right)d(\alpha_1)_{i}\right]\\
		&\times \int_{\left(\left(\alpha_2\right)_{i'},i'\in I^c\right)\in\mathbb{R}_+^k} f_1\left(\sum_{i' \in I^c} \left(\alpha_2\right)_{i'}e_{i'}\right)\prod_{i'\in I^c}\left[g\left((\alpha_2)_{i'}\right)d(\alpha_2)_{i'}\right] .
	\end{aligned}
\end{equation}
A similar alternative representation for the measure $\mu^{(S)}$ can easily be obtained from the above and~\eqref{eq:measure_shortening}.
\end{rem}

Now, we deal with $\left(\mu_{(s_1,s_2)}^{(E)}\right)_{(s_1,s_2)\in\left(\mathbb{R}^{2k}\right)^2}$. We denote 
$$
\mathcal{J}_{k} := \bigg\{(J,M) \in\mathcal{P}_l\left(\llbracket1,2k\rrbracket\right)\times\mathcal{P}_{m}\left(\llbracket1,2k\rrbracket\right)\,|\,(l,m)\in\llbracket1,2k\rrbracket^2\bigg\}.
$$
This corresponds to the set containing  all the possible combinations of telomere coordinates that can be lengthened at each division. For any pair $(J,M)\in \mathcal{J}_{k}$, $J$~corresponds to the coordinates that are lengthened in the daughter cell $A$, and $M$ corresponds to the coordinates that are lengthened in the daughter cell $B$. Short telomeres are more susceptible to be lengthened compared to large telomeres. Then, the choice of which coordinates are lengthened in the two daughter cells depends on the values of telomere lengths of the daughter cells after shortening. To take this into account, for any~$(s_1,s_2)~\in~(\mathbb{R}^{2k})^2$, we introduce $\left(p_{J,M}(s_1,s_2)\right)_{(J,M)\in \mathcal{J}_{k}}$ a probability mass function on $\mathcal{J}_{k}$ that corresponds to the distribution of the choice of which telomeres are lengthened at each division, when telomere lengths of the daughter cells $A$ and $B$ are respectively $s_1$ and $s_2$ after shortening. As the choice of which daughter cell is the daughter cell $A$ or the daughter cell $B$ is arbitrary, the distribution of elongation must be symmetric. Thus, we assume that for all $(J,M)\in\mathcal{J}_{k}$, $(s_1,s_2)\in(\mathbb{R}^{2k})^2$, we have
\begin{equation}\label{eq:symmetry_proba}
p_{J,M}\left(s_1,s_2\right) = p_{M,J}\left(s_2,s_1\right).
\end{equation}
We finally introduce for all $x\in\mathbb{R}$ the function $h(x,.)\in L_{\text{p.d.f.}}^1(\mathbb{R}_+)$. This function represents the distribution of the lengthening value, when the telomere that is lengthened has a length $x$. We are now able to write the expression we use for~$\mu_{(s_1,s_2)}^{(E)}$. For all $(s_1,s_2)\in\left(\mathbb{R}^{2k}\right)^2$ (recall that $\prod_{\emptyset} = 1$), we have
%\begin{equation}\label{eq:measure_elongation}
\begin{align}
d\mu_{(s_1,s_2)}^{(E)}(\beta_1,\beta_2) &= \hspace{-1.5mm} \sum_{(J,M)\in\mathcal{J}_{k}}\hspace{-0.5mm} p_{J,M}\left(s_1,s_2\right)\left[\prod_{j\in J}h\left((s_1)_j,(\beta_1)_{j}\right)d(\beta_1)_{j}\right]\hspace{-0.1mm}\left[\prod_{j'\in J^c} \delta_0(d(\beta_1)_{j'})\right] \nonumber\\
&\times \left[\prod_{m\in M}h\left((s_2)_m,(\beta_2)_{m}\right)d(\beta_2)_{m}\right]\hspace{-0.1mm}\left[\prod_{m'\in M^c} \delta_0(d(\beta_2)_{m'})\right]\label{eq:measure_elongation} \\ 
&=: \sum_{(J,M)\in\mathcal{J}_{k}}p_{J,M}\left(s_1,s_2\right) d\mu_{(s_1,s_2)}^{\left(E;J,M\right)}(\beta_1,\beta_2). \nonumber
\end{align}
%\end{equation}
Qualitatively, this expression means that at each division, just after shortening, if the telomere lengths of the daughter cells $A$ and $B$ are $s_1$ and $s_2$ respectively, we first draw a pair $(J,M)\in \mathcal{J}_{k}$, according to the distribution $\left(p_{J',M'}(s_1,s_2)\right)_{(J',M')\in \mathcal{J}_{k}}$ to know the coordinates that are lengthened in these daughter cells. Then, for any $j\in J$, the coordinate $j$ of the daughter cell $A$ is lengthened according to a distribution given by the density $h((s_1)_j,.)$, and for any $m\in M$, the coordinate~$m$ of the daughter cell $B$ is lengthened according to a distribution given by the density~$h((s_2)_m,.)$.
\begin{ex}\label{ex:lengthening_set}
We suppose that $k = 1$. If after the shortening we have drawn the pair of sets $(\{1,2\},\{2\}) \in \mathcal{J}_{k}$, then there is a lengthening in the coordinates $1$ and~$2$ of the daughter cell $A$, and a lengthening in the coordinate $2$ of the daughter cell~$B$.
\end{ex}
\begin{rem}
Let us consider a pair $\left(s_1,s_2\right)\in\left(\mathbb{R}^{2k}\right)^2$. To better understand the measures~$\left(\mu_{\left(s_1,s_2\right)}^{(E;J,M)}\right)_{\left(J,M\right)\in\mathcal{J}_k}$ defined in~\eqref{eq:measure_elongation}, we fix $\left(J,M\right)\in\mathcal{J}_k$ and a function~\hbox{$f = f_0\times f_1$}, where $\left(f_0,f_1\right)\in \left(M_b\left(\mathbb{R}_+^{2k}\right)\right)^2$. Then, with this choice of test functions, we have 
%Then, another representation of the measures~$\left(\mu_{\left(s_1,s_2\right)}^{(E;J,M)}\right)_{\left(J,M\right)\in\mathcal{J}_k}$ defined in~\eqref{eq:measure_elongation} is that for all $\left(J,M\right)\in\mathcal{J}_k$ and \hbox{$f = f_0\times f_1$}, where $\left(f_0,f_1\right)\in \left(M_b\left(\mathbb{R}_+^{2k}\right)\right)^2$, we have
$$
\begin{aligned}
	\mu_{(s_1,s_2)}^{(E;J,M)}&(f) = \int_{\left(\left(\beta_1\right)_j,j\in J\right)\in\left(\mathbb{R}_+\right)^{\#(J)}} f_0\left(\sum_{j \in J} \left(\beta_1\right)_je_j\right)\left[\prod_{j\in J}h\left((s_1)_j,(\beta_1)_{j}\right)d(\beta_1)_{j}\right]\\
	&\hspace{-4mm}\times 	\int_{\left(\left(\beta_2\right)_m,m\in M\right)\in\left(\mathbb{R}_+\right)^{\#(M)}} f_1\left(\sum_{m \in M} \left(\beta_2\right)_me_m\right)\left[\prod_{m\in M}h\left((s_2)_m,(\beta_2)_{m}\right)d(\beta_2)_{m}\right].
\end{aligned}
$$
A similar alternative representation for the measure $\mu^{(E)}_{(s_1,s_2)}$ can easily be obtained from the above and~\eqref{eq:measure_elongation}.
\end{rem}
\begin{rem}\label{rem:telomere_shortening_below_0}
One can notice that $\mu_{(s_1,s_2)}^{(E)}$ is defined for ~$(s_1,s_2)\in\left(\mathbb{R}^{2k}\right)^2$, meaning that we authorize negative telomere lengths. One should actually think about this in a different way. When telomere lengths of a cell go under a certain threshold, this cell becomes senescent and thus inactive. Hence, zero should be seen as this threshold, which has a biological interpretation. \\Of course, zero has been taken here for simplicity. For example, in a recent study, this threshold has been estimated at $28$ base pairs for the budding yeast~\cite{rat_individual_2023}.
%This is not very limiting as in our model, the criteria for determining if a cell becomes senescent is that this cell has a telomere with a length below zero. 
\end{rem}
\begin{rem}
A symmetry property holds for the measures $\left(\Pi_x\right)_{x\in\mathbb{R}^{2k}}$ defined in~\eqref{eq:kernel_telomere_updating}. To show it, we denote for all $C\in\mathcal{B}\left(\mathbb{R}_+^{2k}\times\mathbb{R}_+^{2k}\right)$ the set
$$
\underline{C} = \left\{(y_1,y_2)\in\mathbb{R}_+^{2k}\times\mathbb{R}_+^{2k}\,|\, (y_2,y_1)\in C\right\}.
$$
Eq.~\eqref{eq:symmetry_proba} and the fact that $\left\{\llbracket1,2k\rrbracket\backslash I\,|\,I\in \mathcal{I}_k\right\} = \mathcal{I}_k$ imply that for all $(s_1,s_2)\in(\mathbb{R}^{2k})^2$
\begin{equation}\label{eq:symmetry_assumption}
	\mu_{(s_1,s_2)}^{(E)}(C) = \mu_{(s_2,s_1)}^{(E)}(\underline{C}), \text{ and }\mu^{(S)}(C) = \mu^{(S)}(\underline{C}).
\end{equation}
Then, for all measurable function $f : (\mathbb{R}_+^{2k})^2 \longrightarrow \mathbb{R}$ and $(s_1,s_2)\in(\mathbb{R}^{2k})^2$, it holds
$$
\begin{aligned}
	\int_{(\beta_1,\beta_2)\in (\mathbb{R}_+^{2k})^2} f(\beta_1,\beta_2) d\mu_{(s_1,s_2)}^{(E)}(\beta_1,\beta_2) &= \int_{(\beta_1,\beta_2)\in (\mathbb{R}_+^{2k})^2} f(\beta_2,\beta_1) d\mu_{(s_2,s_1)}^{(E)}(\beta_1,\beta_2), \\
	\int_{(\beta_1,\beta_2)\in (\mathbb{R}_+^{2k})^2} f(\alpha_1,\alpha_2) d\mu^{(S)}(\alpha_1,\alpha_2) &= \int_{(\alpha_1,\alpha_2)\in (\mathbb{R}_+^{2k})^2} f(\alpha_2,\alpha_1) d\mu^{(S)}(\alpha_1,\alpha_2),
\end{aligned}
$$
which implies by~\eqref{eq:kernel_telomere_updating} that for all \hbox{$f : \left(\mathbb{R}^{2k}\right)^2  \longrightarrow \mathbb{R}$} measurable and $x\in\mathbb{R}^{2k}$
\begin{equation}\label{eq:symetry_measure}
	\int_{(w_1,w_2)\in(\mathbb{R}^{2k})^2} f(w_1,w_2) d\Pi_x(w_1,w_2) = \int_{(w_1,w_2)\in(\mathbb{R}^{2k})^2} f(w_2,w_1) d\Pi_x(w_1,w_2).
\end{equation}
This symmetry property is the consequence of the fact that the two daughter cells are not distinguished.
\end{rem}

\noindent We now have everything we need to give the algorithm that builds our model. Recall that~$(\partial)_2$ represents the senescent state, and that the trait space is $\mathcal{X}_{\partial} = \left(\mathbb{R}_+^{2k}\times\mathbb{R}_+\right)\cup\{(\partial)_2\}$. Let $(x,a)\in\mathcal{X}_{\partial}$ the trait of a cell, and $\mathdutchcal{u}\in\mathcal{U}$ its label. 
\begin{enumerate}[leftmargin=*]
\item  At a rate $b(x,a)1_{\{(x,a) \neq (\partial,\partial)\}}$, the cell divides. 

\item At each division, we draw a pair of random variables $(U,U')\in\mathbb{R}_+^{2k}\times\mathbb{R}_+^{2k}$ following a distribution given by $\mu^{(S)}$, and a pair of random variables $(V,V')\in\mathbb{R}_+^{2k}\times\mathbb{R}_+^{2k}$ following a distribution given by  $\mu_{(x-U, x-U')}^{(E)}$. The telomeres of the daughter cell $A$ are shortened by $U$, lengthened by $V$. The telomeres of the daughter cell $B$ are shortened by $U'$, lengthened by $V'$. Thus, the telomeres and the labels of the two daughter cells are  updated as follows ($\mathdutchcal{u}1$ and~$\mathdutchcal{u}2$ are defined in Notation~\ref{nota:ulam_harris_neveu})
$$
x \longrightarrow \begin{cases}
	x - U + V, \\
	x - U'+ V',
\end{cases} \hspace{18mm}\mathdutchcal{u} \longrightarrow \begin{cases}
	\mathdutchcal{u}1, \\
	\mathdutchcal{u}2.
\end{cases}
$$
Let  $\theta =\left(\theta_1, \theta_2,\hdots, \theta_K\right)$ a tuple of $K$ independent uniform random variables on~$[0,1]$. A~consequence of~\eqref{eq:uniform_random_variables_assumption},~\eqref{eq:kernel_telomere_updating} and of the way $(U,U')$ and $(V,V')$ are distributed is that $R\left(x,\theta\right) \overset{d}{=} \left(-U+V, - U' + V'\right)$. Then, the updating of the telomeres and the labels can be rewritten as	
$$
x \longrightarrow \begin{cases}
	x + (R(x,\theta))_1, \\
	x + (R(x,\theta))_2,
\end{cases} \hspace{14.5mm}\mathdutchcal{u} \longrightarrow \begin{cases}
	\mathdutchcal{u}1, \\
	\mathdutchcal{u}2.
\end{cases}
$$

\item After the lengthening, if $\min\left(x_i + (R(x,\theta))_i,\,i\in\llbracket1,2k\rrbracket\right)<0$ for one of the daughter cells, then the trait of this daughter cell at birth is $(\partial)_2$. Otherwise, the trait of this daughter cell is $(x - U + V,0)$ or $(x - U' + V',0)$. It is also possible to use a rate to enter in senescence rather than a threshold. However, recent results coming from parameter calibration of telomere shortening models seem to support the fact that a deterministic threshold is a good approximation~\cite{rat_individual_2023}.
\end{enumerate}
Assume that we start from $Y_0$ defined in Notation \hyperref[notation:initial_condition_process]{\ref*{notation:initial_condition_process}}, and denote by $(Y_t)_{t\geq0}$ the  $\mathcal{M}_P\left(\mathcal{U}\times\mathcal{X}_{\partial}\right)$-valued random process representing the dynamics described above. For all $t\geq0$, $V_t \subset \mathcal{U}$ is the set that contains the labels of the individuals at time $t$ with a trait different from $(\partial)_2$. For all $\mathdutchcal{u}\in V_t$, we denote by $x^{\mathdutchcal{u}}$ the telomere lengths of $\mathdutchcal{u}$, by~$T_b(\mathdutchcal{u})$ the birth time of $\mathdutchcal{u}$, and by $a_t^{\mathdutchcal{u}} := t - T_b(\mathdutchcal{u})$ the age of $\mathdutchcal{u}$ at time $t$. Then $(Y_t)_{t\geq0}$ is a solution of the following equation, for all $f\in\mathcal{C}_b^{1,m,m,1}(\mathbb{R}_+\times\mathcal{U}\times\mathcal{X}_{\partial})$, $t\geq 0$,
\begin{equation}\label{eq:SDE_model_intro}
\begin{aligned}
	&\int_{\mathcal{U}\times\mathcal{X}_{\partial}} f(t,\mathdutchcal{u},x,a)Y_t(d\mathdutchcal{u},dx,da) = \int_{\mathcal{U}\times\mathcal{X}_{\partial}} f(0,\mathdutchcal{u},x,a)Y_0(d\mathdutchcal{u},dx,da)  \\
	&+ \int_0^t\int_{\mathcal{U}\times\mathcal{X}_{\partial}} \bigg[\frac{\partial f}{\partial s}\left(s,\mathdutchcal{u},x,a\right) + \frac{\partial f}{\partial a}\left(s,\mathdutchcal{u},x,a\right)1_{\{\mathdutchcal{u}\in V_s\}}\bigg] Y_s(d\mathdutchcal{u},dx,da)ds \\
	&+ \int_{[0,t]\times\mathcal{U}\times\mathbb{R}_+\times[0,1]^K} 1_{\{ z \leq b(x^\mathdutchcal{u},a_{s-}^\mathdutchcal{u}),\mathdutchcal{u}\in\,V_{s-}\}} \Bigg[\sum_{i=1}^{2} \left[f\left(s,\mathdutchcal{u}i,\partial,\partial\right)1_{\{x^\mathdutchcal{u} + (R(x^\mathdutchcal{u},\theta))_{i} \notin\mathbb{R}_+^{2k}\}} \right.   \\ 
	&+\left.f\left(s,\mathdutchcal{u}i,x^\mathdutchcal{u} + (R(x^\mathdutchcal{u} ,\theta))_{i},0\right)1_{\{x^\mathdutchcal{u} + (R(x^\mathdutchcal{u},\theta))_{i} \in\mathbb{R}_+^{2k}\}}\right] - f(s,\mathdutchcal{u},x^\mathdutchcal{u},a_{s-}^\mathdutchcal{u})\bigg]N(ds, d\mathdutchcal{u}, dz, d\theta). 
\end{aligned}
\end{equation}
The following theorem guarantees that the above model is well-posed, and that the number of individuals does not explode in finite time.

\begin{te}[Well-posedness and non-explosion]\label{te:existence_uniqueness}
Let us assume that \eqref{eq:birth_rate_assumption} and \eqref{eq:uniform_random_variables_assumption} hold, and that the variable $Y_0$ defined in Notation \hyperref[notation:initial_condition_process]{\ref*{notation:initial_condition_process}} verifies~$\mathbb{E}\left[Y_0(1)\right] < +\infty$. Then, there \hbox{exists a $\left(\mathcal{F}_t\right)_{t\geq0}$-adapted} càdlàg process $(Y_t)_{t\geq0}$ taking values in $\mathcal{M}_P\left(\mathcal{U}\times\mathcal{X}_{\partial}\right)$, with initial condition $Y_0$, which is solution of~\eqref{eq:SDE_model_intro}. Moreover, the trajectorial uniqueness holds for the solution of~\eqref{eq:SDE_model_intro}.
\end{te}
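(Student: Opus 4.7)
The plan is to construct $(Y_t)_{t\ge 0}$ pathwise by recursion on the firings of the driving Poisson point measure $N$, and then to rule out accumulation of these firings in finite time via a stochastic comparison with a Bellman--Harris process.

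Set $T_0 = 0$ and, assuming $(Y_s)_{s \le T_n}$ has been built, define $T_{n+1}$ as the first time $s > T_n$ at which an atom $(s,\mathdutchcal{u},z,\theta)$ of $N$ satisfies $\mathdutchcal{u} \in V_{s-}$ and $z \le b(x^\mathdutchcal{u}, a_{s-}^\mathdutchcal{u})$, where $V_{s-}$, $x^\mathdutchcal{u}$ and $a_{s-}^\mathdutchcal{u}$ are read off deterministically from $Y_{T_n}$ by ageing. At $T_{n+1}$ the firing atom prescribes unambiguously the dividing cell and the traits of its two daughters through $R(x^\mathdutchcal{u},\theta)$; the sign test $x^\mathdutchcal{u} + (R(x^\mathdutchcal{u},\theta))_i \in \mathbb{R}_+^{2k}$ then decides whether each daughter enters the cemetery state $(\partial)_2$. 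This produces an $(\mathcal{F}_t)_{t\ge 0}$-adapted càdlàg process on $[0, T_\infty)$, with $T_\infty := \lim_n T_n$, that solves \eqref{eq:SDE_model_intro} on this interval. Pathwise strong uniqueness is immediate since each step is a deterministic functional of $Y_0$ and of the atoms of $N$.

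The main task is then to show $T_\infty = +\infty$ almost surely. Because \eqref{eq:birth_rate_assumption} only yields the polynomial bound $b(x,a) \le \tilde b(1 + a^{d_b})$, the usual argument of dominating inter-jump times from below by i.i.d.\ exponentials fails. The plan is instead to dominate $Y_t(1)$ by a Bellman--Harris process $(Z_t)_{t\ge 0}$ started from $Z_0 = Y_0(1)$ ancestors (carrying the initial ages present in $Y_0$), in which each individual dies at rate $\tilde b(1 + a^{d_b})$ and is replaced by two age-$0$ daughters. A thinning coupling on the same measure $N$ matches each accepted atom of $N$ to a death of the corresponding $Z$-individual, while additional atoms with $b(x^\mathdutchcal{u}, a^\mathdutchcal{u}_{s-}) < z \le \tilde b(1 + (a^\mathdutchcal{u}_{s-})^{d_b})$ trigger deaths in $Z$ only. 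This produces an injection from $V_t$ into the $Z$-population at each time $t$, so that the number of accepted atoms of $N$ in $[0,t]$ is bounded above by $Z_t - Z_0$.

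The crux is therefore to show $Z_t < +\infty$ almost surely for every $t$, which is where the main obstacle lies since one cannot invoke an exponential bound on lifetimes. Since $\overline G(t) := \exp\bigl(-\int_0^t \tilde b(1 + u^{d_b})\,du\bigr)$ decays faster than any exponential, the associated lifetime has all moments finite, and the classical renewal equation for the mean of a Bellman--Harris process (see \cite[Chap.~IV]{athreya_1972}) gives that $t \mapsto \mathbb{E}[Z_t]$ is finite and locally bounded on $[0,+\infty)$. Combined with $\mathbb{E}[Y_0(1)] < +\infty$, this yields $Z_t < +\infty$ a.s.\ for every $t$, whence the number of accepted atoms of $N$ in $[0,t]$ is a.s.\ finite, $T_\infty = +\infty$ a.s., and \eqref{eq:SDE_model_intro} is satisfied on all of $\mathbb{R}_+$.
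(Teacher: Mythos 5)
Your overall strategy is the paper's: a pathwise recursive construction driven by $N$, non-explosion via domination by the age-dependent binary branching process with rate $\tilde b(1+a^{d_b})$, and finiteness of that process's mean via Athreya--Ney. The construction and uniqueness part is fine. The gap is in the coupling step. As described, your thinning matches each living individual of $Y$ to a distinct individual of $Z$, and lets atoms in the band $b(x^{\mathdutchcal{u}},a^{\mathdutchcal{u}}_{s-})< z\le \tilde b(1+(a^{\mathdutchcal{u}}_{s-})^{d_b})$ kill only the $Z$-individual. After such a ``$Z$-only'' death, the still-alive $Y$-individual of age $a>0$ must be re-matched to one of the two age-$0$ daughters in $Z$, and from that moment the domination breaks: if the $Z$-daughter's death hazard is $\tilde b(1+a_Z^{d_b})$ with $a_Z$ its own (younger) age, a later accepted atom with $z\in(\tilde b(1+a_Z^{d_b}),\,b(x^{\mathdutchcal{u}},a_Y)]$ produces a $Y$-division with no simultaneous $Z$-death, so the injection and the bound ``number of accepted atoms $\le Z_t-Z_0$'' fail; if instead you drive the $Z$-deaths by the band built from the $Y$-age (as your formula suggests), then the lifetimes in $Z$ are no longer i.i.d.\ with tail $\overline G$, $Z$ is not the Bellman--Harris process you analyse, and the renewal-equation bound on $\mathbb{E}[Z_t]$ no longer applies to it. Either way the key inequality $Y_t(1)\le Z_t(1)$ is not established.

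The standard repair, and essentially what the paper does, is to compare along the \emph{same} Ulam--Harris tree rather than by matching living individuals: every node $\mathdutchcal{u}$ of the full binary tree gets a lifetime in $Y$ with hazard $b(x^{\mathdutchcal{u}},\cdot)$ (age measured from its own birth) and a lifetime in the dominating process with hazard $\tilde b(1+a^{d_b})$, coupled through a common uniform by inverse transform sampling; since $b(x,a)\le\tilde b(1+a^{d_b})$ for every $x$, each node's lifetime, hence each node's birth and branching time, is larger in $Y$ than in the dominating process, so the number of division events of $Y$ up to time $t$ is dominated node by node and $Y_t(1)\le Z_t(1)$. This is exactly the device the paper formalises in Lemma \ref{lemm:inequality_for_means} (proved in Section \ref{subsubsect:proof_inequality_means}), and its Theorem \ref{te:existence_uniqueness} sketch performs the comparison at the level of first moments (via Marguet's lemmas and \cite[Cor.~3.10]{bansaye_ergodic_2020}) before invoking \cite[Cor.~1, p.~153]{athreya_1972}, with the factor $2\mathbb{E}[Y_0(1)]$ absorbing the nonzero initial ages. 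With that substitution for your coupling, the rest of your argument (local finiteness of accepted atoms, $T_\infty=+\infty$ a.s., finiteness of the Bellman--Harris mean because the lifetime distribution puts no mass at $0$) goes through.
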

\noindent This type of result has been obtained as the starting point of many works in the literature, see for example \cite{bansaye2015,bansaye2010,champagnat_2006,fournier2004,marguet2019,tran2006}. The proof has nowadays become standard and we do not detail it here. We just present briefly the main steps of the proof. First, we adjust the proof of~\hbox{\cite[Lemmas $2.1$ and $2.3$]{marguet2019}} to a setting where we need several uniform random variables to update offspring traits. Then, in view of~\eqref{eq:birth_rate_assumption}, we bound from above~$\mathbb{E}\left[\sup_{t\in[0,T]}\left(Y_t(1)\right)\right]$ for all $T >0$ by $2\mathbb{E}\left[Y_0(1)\right]$ times the mean number of individuals of a dynamics of individuals that branches at a rate \hbox{$\tilde{b}(1+a^{d_b})$} and gives birth to $2$ offspring, as done in \hbox{\cite[Corollary $3.10$]{bansaye_ergodic_2020}}. Finally, we use \cite[Corollary $1$,p.$153$]{athreya_1972} to justify that the latter is finite. 

We end this subsection by presenting a deterministic representation of the model. To do this, we first introduce $(n_t(dx,da))_{t\geq0}$ a family of measures such that for all $t\geq0$, $f\in\mathcal{C}_b^{m,1}\left(\mathcal{X}_{\partial}\right)$,
\begin{equation}\label{eq:dft_mean_measure}
n_t(f) := \mathbb{E}\left[\int_{\mathcal{U}\times\mathcal{X}_{\partial}} f(x,a)Y_t(d\mathdutchcal{u},dx,da)\right].
\end{equation}
This measure is called the mean measure of $Y_t$. For all $C\in\mathcal{B}\left(\mathbb{R}_+^{2k}\right)$, $x\in\mathbb{R}_+^{2k}$, we also introduce the following measure, that is used frequently in this paper
\begin{equation}\label{eq:kernel_branching}
\begin{aligned}
	\mathdutchcal{K}(C)(x) &:= \int_{(w_1,w_2)\in \left(\mathbb{R}_+^{2k}\right)^2} 1_C\left(x + w_1\right)1_{\left\{x + w_1 \in\mathbb{R}_+^{2k}\right\}} d\Pi_x(w_1,w_2) \\
	&+ \int_{(w_1,w_2)\in \left(\mathbb{R}_+^{2k}\right)^2} 1_C\left(x+ w_2\right)1_{\left\{x+ w_2 \in\mathbb{R}_+^{2k}\right\}} d\Pi_x(w_1,w_2) \\
	&= 2\int_{
		(w_1,w_2)\in \left(\mathbb{R}_+^{2k}\right)^2} 1_C\left(x + w_1\right)1_{\left\{x + w_1 \in\mathbb{R}_+^{2k}\right\}} d\Pi_x(w_1,w_2).
\end{aligned}
\end{equation}
The last equality is a consequence of \eqref{eq:symetry_measure}. The above represents the transition probability for telomere lengths of the daughter cells of a cell with telomere lengths $x$ that divides. Finally, we introduce the following system
\begin{equation}\label{eq:PDE_model_intro}
\begin{cases}
	\partial_t n(t,dx,da) = - \partial_a\,n(t,dx,da)  -b(x,a)\,n(t,dx,da), & (x,a)\in \mathcal{X},\,t\geq 0, \\
	n(t,dx, a = 0) = \int_{(y,s)\in\mathcal{X}} \mathdutchcal{K}(dx,y) b(y,s)n(t,dy,ds), & x\in \mathbb{R}_+^{2k},\,t\geq 0,\\
	\partial_t n(t,(\partial)_2) = \int_{(y,s)\in\mathcal{X}} (2-\mathdutchcal{K}(1)(y))b(y,s)n(t,dy,ds), & t\geq 0, \\
	n(0,dx,da) = n_0(dx,da), & (x,a)\in \mathcal{X},
\end{cases}
\end{equation}
and present below what we call a measure solution to this system. Our definition of measure solution to~\eqref{eq:PDE_model_intro} is inspired by the one given in~\cite[Def.~$3$]{gabriel_measure_2018}.
\begin{dft}[Measure solution to~\eqref{eq:PDE_model_intro}]\label{dft:measure_solution}
The family $(n_t(dx,da))_{t\geq0}$ is a measure solution to~\eqref{eq:PDE_model_intro} if for all $t\geq0$, $f\in\mathcal{C}_b^{m,1}\left(\mathcal{X}\right)$,
$$
n_t\left(f1_{\mathcal{X}}\right) = n_0\left(f1_{\mathcal{X}}\right)+ \int_0^{t}\left[n_s\left(\frac{\partial f}{\partial a}1_{\mathcal{X}}\right) + n_s\left(\mathdutchcal{K}\left(f(.,0)\right)b1_{\mathcal{X}}\right) - n_s\left(fb1_{\mathcal{X}}\right)\right]ds,
$$
and
$$
n_t\left(1_{\left(\partial,\partial\right)}\right) = n_0\left(1_{\left(\partial,\partial\right)}\right)+ \int_0^{t} n_s\left(\left[2-\mathdutchcal{K}\left(1\right)\right]b1_{\mathcal{X}}\right)ds.
$$
\end{dft}
\noindent Then, the following theorem gives us that~\eqref{eq:PDE_model_intro} is a deterministic representation of~\eqref{eq:SDE_model_intro}.  Again, we do not detail the proof of this result as it has become standard, \hbox{see \cite{bansaye2015,fournier2004,tomasevic2022,tran2006}}.
\begin{te}[Deterministic representation of~\eqref{eq:SDE_model_intro}]\label{te:PDE}
Let us assume that the assumptions of Theorem~\ref{te:existence_uniqueness} hold, and that 
$$
\mathbb{E}\left[\int_{\mathcal{U}\times\mathcal{X}_{\partial}} (1+a^{d_b})Y_0(d\mathdutchcal{u},dx,da)\right]< +\infty.
$$
Then, the family $(n_t(dx,da))_{t\geq0}$ is a measure solution to~\eqref{eq:PDE_model_intro}. Moreover, $n_t\left(f1_{\mathcal{X}}\right)$ is derivable at $t = 0$ for all $f\in \mathcal{C}_b^{m,1}(\mathcal{X})$, with derivative 
$$
\frac{d}{d t} n_t\left(f1_{\mathcal{X}}\right)\Big|_{t = 0} = n_0\left(\frac{\partial f}{\partial a}1_{\mathcal{X}}\right) + n_0\left(b\left[\mathdutchcal{K}(f(.,0))-f\right]1_{\mathcal{X}}\right).
$$
\end{te}

\subsection{Assumptions and main result}\label{subsect:main_result}

Let us remind the reader that from now on, $k$ is fixed. The main result of this article is the convergence of the first moment semigroup of a solution of \eqref{eq:SDE_model_intro} towards a stationary profile. To write the assumptions that we need to verify to obtain this convergence, we introduce the following notations related to the measures $(\mathdutchcal{K}(.)(y))_{y\in\mathbb{R}_+^{2k}}$.
\begin{itemize}[leftmargin=*]
\item For all $x\in\mathbb{R}_+^{2k}$, $n\in\mathbb{N}^*$, for all measurable function $f:\mathbb{R}_+^{2k} \longrightarrow \mathbb{R}$, we denote 
$$
\mathdutchcal{K}^n(f)(x) := (\underbrace{\mathdutchcal{K} \circ \hdots \circ \mathdutchcal{K}}_{n \text{ times}})(f)(x).
$$
We use the convention that $\mathdutchcal{K}^0(f)(x)= 1$.
\item For all $x\in\mathbb{R}_+^{2k}$, $C \subset \mathbb{R}_+^{2k}$ a measurable set, and $f:\mathbb{R}_+^{2k} \longrightarrow \mathbb{R}$ a measurable function, we denote 
\begin{equation}\label{eq:measure_k_restriction}
	\mathdutchcal{K}_{C}(f)(x) := \mathdutchcal{K}(f1_{C})(x).
\end{equation}
\end{itemize}
We now write the assumptions on our model that imply the main result of this paper. They will be discussed at the end of the section, see p.\hyperlink{paragraph:discussion_assumptions}{\pageref*{paragraph:discussion_assumptions}}. We distinguish three sets of assumptions. The first set corresponds to lower/upper bounds on the shortening distribution $g$, the lengthening distributions $\left(h(x,.)\right)_{x\in\mathbb{R}}$, and the lengthening probabilities~$\left(p_{J,M}(s_1,s_2)\right)_{(J,M)\in\mathcal{J}_k,\left(s_1,s_2\right)\in\left(\mathbb{R}^{2k}\right)^2}$, all introduced in Section~\ref{subsect:algorithm_model}. 

\paragraph{$(S_1)\!:$ Lower/upper bounds.}
\hypertarget{paragraph:long_time_behaviour}{}
\hypertarget{paragraph:long_time_behaviour_S1}{}
\begin{enumerate}[start=1,label={$(S\textsubscript{1.\arabic*})\!:$},leftmargin=*]

\item The following four statements hold. 
\hypertarget{paragraph:long_time_behaviour_S1.1}{} 
\begin{enumerate}[$i)$, leftmargin=0.5cm]
	
	\item There exists $\delta>0$ such that 
	$$
	\text{supp}(g) := \{x\in\mathbb{R}_+, g(x) \neq 0\} = [0, \delta].
	$$
	
	\item There exists $g_{\min} > 0$ such that for all $x \in [0,\delta]$
	$$
	g(x) \geq g_{\min}.
	$$

	\item There exists $\Delta > 0$, such that for all $x\in\mathbb{R}$, there exists $\Delta_x \in[0,\Delta]$ such that \hypertarget{paragraph:long_time_behaviour_S1.1.iv}{} 
	$$
	\begin{aligned}
		\text{supp}(h(x,.)) &:= \{y\in\mathbb{R}_+, h(x,y) \neq 0\} = [0, \Delta_{x}].
	\end{aligned}
	$$
	Moreover, the function $x \mapsto \Delta_x$ is bounded from below on compact sets.
	\item There exists $h_{\min} >0$ such that for all $x\in \mathbb{R}$, $y \in [0,\Delta_{x}]$
	$$
	h(x,y) \geq h_{\min}.
	$$
\end{enumerate}
\item \hypertarget{paragraph:long_time_behaviour_S1.2}{} There exist $m_0\in\mathbb{N}\backslash\{0,1\}$ and a sequence of pairs $(\mathbb{I}^n,\mathbb{J}^n,\mathbb{M}^n)_{n \in \llbracket1,m_0\rrbracket}$ taking values in $\mathcal{I}_k\times\mathcal{J}_k$ verifying 
$$
\begin{aligned}
	&\bigcup_{i = 1}^{m_0} \mathbb{I}^i = \llbracket1,2k\rrbracket, \hspace{6mm} \forall i \in\llbracket1,m_0\rrbracket:\, \mathbb{I}^i \subset \mathbb{J}^i,
\end{aligned}
$$ 
such that for all $A > 0$, there exists $p_{\min} >0$ verifying:
$$
\inf_{\substack{i \in \llbracket1,m_0\rrbracket,\\ (s_1,s_2) \in [-\delta,A + m_0\Delta]^2}}\left(p_{\mathbb{J}^i,\mathbb{M}^i}(s_1,s_2)\right) \geq p_{\min}.
$$

\item \hypertarget{paragraph:long_time_behaviour_S1.3}{} There exists $\overline{g} >0$ such that
$$
\forall x\in\mathbb{R}_+: g(x) \leq \overline{g}.
$$
Moreover, the function $h$ is bounded from above on compact sets.
\end{enumerate}
Now, we need assumptions that allow us to justify the existence of a Lyapunov function.
\paragraph{$(S_2)\!:$ Existence of a Lyapunov function.}\ 
\hypertarget{paragraph:long_time_behaviour_S2}{ }
\begin{enumerate}[start=1,label={$(S\textsubscript{2.\arabic*})\!:$},leftmargin=*]
\item \hypertarget{paragraph:long_time_behaviour_S2.1}{} There exist $G\in\mathbb{N}^*$, $\varepsilon_0>0$,\,$B_{\max}> 0$, and a set $K_{\text{ren}} \subset [0,B_{\max}]^{2k}$ non-negligible with respect to the Lebesgue measure, such that for all $x\in K_{\text{ren}}$
$$
\left(\mathdutchcal{K}_{[0,B_{\max}]^{2k}}\right)^{G}\left(1_{\{K_{\text{ren}}\}}\right)(x)  \geq (1 + \varepsilon_0).
$$

\item \hypertarget{paragraph:long_time_behaviour_S2.2}{} There exist $L_{\text{ret}}\in\mathbb{N}^*$, $\mathdutchcal{V} : \mathbb{R}_+^{2k} \longrightarrow (\mathbb{R}_+)^*$ measurable and  $\varepsilon_1 >0$ such that with~$B_{\max}$ defined above:
\begin{enumerate}[$i)$, leftmargin=0.5cm]
	\item For all $x\in \mathbb{R}_+^{2k}$
	$$
	\mathdutchcal{K}_{\left(\left[0,B_{\max} L_{\text{ret}}\right]^{2k}\right)^c}\left(\mathdutchcal{V}\right)(x) \leq (1+ \varepsilon_1)\mathdutchcal{V}(x).
	$$
	
	\item For all $x\in\mathbb{R}_+^{2k}$, $I\in\mathcal{I}_k$
	$$
	\begin{aligned}
		&\hspace{-4.6mm}\sum_{(J,M)\in \mathcal{J}_{k}}\int_{(\alpha_1,\alpha_2)\in (\mathbb{R}_+^{2k})^2} p_{J,M}\left(x - \alpha_1,x - \alpha_2\right)\bigg[\int_{(\beta_1,\beta_2)\in (\mathbb{R}_+^{2k})^2}\mathdutchcal{V}(x-\alpha_1+\beta_1)\\ 
		&\hspace{-4.6mm}\times 1_{\{x-\alpha_1+\beta_1\in\mathbb{R}_+^{2k}\}} d\mu_{(x-\alpha_1,x-\alpha_2)}^{\left(E;J,M\right)}(\beta_1,\beta_2)\bigg] d\mu^{\left(S;I\right)}(\alpha_1,\alpha_2)\leq (1+\varepsilon_1)\mathdutchcal{V}(x).
	\end{aligned} 
	$$
	\item $\mathdutchcal{V}$ is bounded from above on compact sets and there exists $\mathdutchcal{V}_{\min} >0$ such that for all $x\in\left(\mathbb{R}_+\right)^{2k}$ it holds $\mathdutchcal{V}(x)~\geq~\mathdutchcal{V}_{\min}$.
	
	\item There exists $C_{\mathdutchcal{V}} >0$ such that 
	$$
	\sup_{x\in\mathbb{R}_+^{2k}}\left(\sup_{y\in\mathbb{R}_+^{2k}\text{ s.t. }||y-x||_{\infty} \leq \max(\delta,\Delta)} \frac{\mathdutchcal{V}(y)}{\mathdutchcal{V}(x)}\right) \leq C_{\mathdutchcal{V}}. 
	$$
	
\end{enumerate}
\end{enumerate}
Finally, we need assumptions to handle the age structure of our model. To present them, we recall that $\mathcal{L}$ corresponds the the Laplace transform operator, see Notation~\ref{nota:laplace_transform}.
\paragraph{$(S_3)\!:$ From the generational level to the temporal level.}
\hypertarget{paragraph:long_time_behaviour_S3}{ }
\begin{enumerate}[start=1,label={$(S\textsubscript{3.\arabic*})\!:$},leftmargin=*]
\item \hypertarget{paragraph:long_time_behaviour_S3.1}{} There exist $\underline{b} : \mathbb{R}_+ \longrightarrow \mathbb{R}_+$ and $\overline{b} : \mathbb{R}_+ \longrightarrow \mathbb{R}_+$, such that for all $(x,a)\in\mathcal{X}$
$$
\underline{b}(a) \leq b(x,a) \leq \overline{b}(a),
$$
and such that $\int_0^{+\infty} \underline{b}(a) da = +\infty$.

\item \hypertarget{paragraph:long_time_behaviour_S3.2}{} We denote for all $a,\,s\geq0$ 
$\mathdutchcal{F}_a(s) := \underline{b}(a+s)\exp\left(-\int_a^{a+s} \underline{b}\left(u\right) du\right)$ and its associated cumulated distribution function $\overline{\mathdutchcal{F}_a}(s) := \exp\left(-\int_a^{a+s} \underline{b}\left(u\right) du\right)$. Then, there exists~$\alpha >0$ such that
$$
\mathcal{L}(\mathdutchcal{F}_{0})(\alpha) = \frac{1}{(1+\varepsilon_0)^{\frac{1}{G}}}, \hspace{3mm}\text{ and } \hspace{3mm}\underset{t\geq0}{\sup}\,\underset{a\geq0}{\inf}\left(\int_0^t e^{-\alpha s} \mathdutchcal{F}_a(s) ds\right) > 0.
$$

\item \hypertarget{paragraph:long_time_behaviour_S3.3}{} We denote for all $a,s\geq0$ $\mathdutchcal{J}_a(s) := \overline{b}(a+s)\exp\left(-\int_a^{a+s} \overline{b}\left(u\right) du\right)$, and its associated cumulative distribution function $\overline{\mathdutchcal{J}_a}(s) := \exp\left(-\int_a^{a+s} \overline{b}\left(u\right) du\right)$.
Then, there exists~$\beta\in(0,\alpha)$ such that
$$
\mathcal{L}(\mathdutchcal{J}_{0})(\beta) = \frac{1}{1+\varepsilon_1}.
$$

\end{enumerate}

\noindent In Section \ref{sect:assumptions_verified}, we give criteria allowing to check \hyperlink{paragraph:long_time_behaviour_S2.2}{$(S_{2.2})$} and \hyperlink{paragraph:long_time_behaviour_S3}{$(S_{3})$}, and present two models where all the above assumptions are verified. We now give the main result of the paper. 
\paragraph{Main result.}  
We consider the following semigroup, for all $(x,a)\in\mathcal{X}$, $f \in M\left(\mathcal{X}\right)$ non-negative, $t\geq 0$:
\begin{equation}\label{eq:first_moment_semigroup}
M_t(f)(x,a) = \mathbb{E}\left[\sum_{\mathdutchcal{u}\in V_t} f(x^\mathdutchcal{u},a_t^{\mathdutchcal{u}})\,\bigg|\, Y_0 = \delta_{(1,x,a)}\right]\in[0,+\infty].
\end{equation}%  &\answermath{:= \mathbb{E}\left[\int_{\mathcal{U}\times\mathcal{X}} f(x,a)Y_t(d\mathdutchcal{u},dx,da)\,\bigg|\, Y_0 = \delta_{(1,x,a)}\right]}\\
We easily see that $(M_t)_{t\geq0}$ is similar to a solution of \eqref{eq:PDE_model_intro} starting from a Dirac measure, and without the individuals in the cemetery. Under Assumption \hyperlink{paragraph:long_time_behaviour_S2.2}{$(S_{2.2})$}, we also consider the following space ($d_b$ and $\mathdutchcal{V}$ are respectively defined in~\eqref{eq:birth_rate_assumption} and Assumption \hyperlink{paragraph:long_time_behaviour_S2.2}{$(S_{2.2})$})
\begin{equation}\label{eq:defintion_all_distortion_functions}
\Psi = \left\{\psi \in  M_b^{loc}\left(\mathcal{X}\right)\,|\,\exists \,d_{\psi}\in\mathbb{N},\,d_{\psi}\geq d_b \text{ s.t. } \forall (x,a)\in\mathcal{X}:\,\psi(x,a) = (1+a^{d_{\psi}})\mathdutchcal{V}(x) \right\}.
\end{equation}
This space contains all the functions used to distort the space when we obtain our convergence result. We use several functions to distort the space to be able to obtain the convergence of the semigroup defined on a large space of functions. In Lemma~\ref{lemma:well_definition_semigroup}, we prove that we can extend the definition of $(M_t)_{t\geq 0}$ to functions in $\cup_{\psi\in\Psi} \mathdutchcal{B}(\psi)$, where $\left(\mathdutchcal{B}(\psi)\right)_{\psi\in\Psi}$ are introduced in Notation~\ref{notation:start_nota_space_distortion}. We also denote $\mathdutchcal{B}(\Psi) = \cap_{\psi\in\Psi} \mathdutchcal{B}(\psi)$ and $L^1(\Psi) = \cap_{\psi\in\Psi} L^1(\psi)$. We are ready to give the main result. 
\begin{te}[Main result]\label{te:main_result}
Let us assume that \eqref{eq:birth_rate_assumption} and Assumptions \hyperlink{paragraph:long_time_behaviour}{$(S_1)-(S_{3})$} hold. Then, there exists a unique $(N,\phi,\lambda)\in L^1(\Psi)\times\mathdutchcal{B}(\Psi)\times[\alpha,+\infty[$ satisfying $N \geq 0$, $\phi > 0$ and
$$
\int_{(x,a)\in\mathcal{X}} N(x,a) dxda = \int_{(x,a)\in\mathcal{X}} N(x,a)\phi(x,a) dxda = 1,
$$
such that for all $\psi\in \Psi$, there exist $C,\omega >0$ such that for all $t>0$, $\mu\in \mathdutchcal{M}(\psi)$
\begin{equation}\label{eq:main_result}
	\sup_{f\in \mathdutchcal{B}(\psi),||f||_{\mathdutchcal{B}(\psi)} \leq 1}\left|e^{-\lambda t}\mu M_t(f)  - \mu(\phi)\int_{(x,a)\in\mathcal{X}}f(x,a) N(x,a)dx da\right| \leq C||\mu||_{\mathdutchcal{M}(\psi)}e^{-\omega t}.
\end{equation}
Moreover, there exists $N_0\in L^1(\mathbb{R}_+^{2k})$ non-negative such that for all $(x,a)\in\mathcal{X}$ it holds 
\begin{equation}\label{eq:form_stationary_profile}
	N(x,a) = N_0(x)\exp\left(- \int_0^a b(x,s) ds-\lambda a \right),
\end{equation}
and such that denoting $\mathdutchcal{Y}_0(w,s) = b(w,s)\exp\left(-\int_0^s b(w,u) d u\right)$ for all $(w,s)\in\mathcal{X}$, $N_0$ is solution of the following equation
\begin{equation}\label{eq:equation_N_0}
	\forall f\in M_b\left(\mathbb{R}_+^{2k}\right): \hspace{2mm} \int_{x\in\mathbb{R}_+^{2k}} f(x)N_0\left(x\right) dx = \int_{x\in\mathbb{R}_+^{2k}} \mathdutchcal{K}\left(f\right)(x)\mathcal{L}\left(\mathdutchcal{Y}_0(x,.)\right)\left(\lambda\right) N_0(x) dx.	
\end{equation}
\end{te}
To prove this theorem, we first weight the space by $\psi$ and study the weighted-normalised semigroup 
\begin{equation}\label{eq:weighted_renormalised_semigroup}
M^{(\psi)}_t(f)(x,a) := e^{-\lambda_{\psi}t}\frac{M_t (f\psi)(x,a)}{\psi(x,a)},\hspace{6mm} \forall t\geq0,\,\forall f\in M_b(\mathcal{X}),\,\forall (x,a)\in\mathcal{X},
\end{equation}
where $\lambda_{\psi} >0$ is chosen in Lemma~\ref{lemma:inequalities_psi} such that for all \hbox{$(x,a)\in\mathcal{X}$} and $t\geq0$ we have 
$$
e^{-\lambda_{\psi}t}\frac{M_t(\psi)(x,a)}{\psi(x,a)} \leq 1.
$$
This new semigroup satisfies the same equation as the first moment semigroup $\big(P_t^{(\psi)}\big)_{t\geq0}$ of a jump Markov process with an absorbing state. As this equation has a unique solution~(see Lemma~\ref{lemma:unique_solution_equation_semigroup}), we can study the ergodic behaviour of~$\big(P_t^{(\psi)}\big)_{t\geq0}$ to obtain the one of $\big(M^{(\psi)}_t\big)_{t\geq0}$. This kind of method has been presented in~\cite{champagnat_2020}. We then apply~\cite[Theorem~$2.8$]{velleret_exponential_2023} to obtain the convergence of $\big(M_t^{(\psi)}\big)_{t\geq0}$ towards a stationary profile through~$\big(P_t^{(\psi)}\big)_{t\geq0}$. We conclude by the fact that the convergence of $\big(M^{(\psi)}_t\big)_{t\geq 0}$ to a stationary profile on $\mathcal{M}(\mathcal{X})$ implies the convergence of $(M_t)_{t\geq0}$ on $\mathdutchcal{M}(\psi)$. The delicate/most interesting points are the following.
\begin{itemize}[leftmargin=*]
\item We need to handle the age structure of our model. To do so, we stochastically compare our process with Bellman-Harris processes, which allows us to use results from the renewal theory~\hbox{\cite[Chap.~IV.4]{athreya_1972}} to obtain exponential estimates where we need them~(Sections \ref{subsubsect:proof_prop_renewal_set}, \ref{subsubsect:proof_bound_tail_probability} and~\ref{subsubsect:proof(A3)F_control_time}).

\item As the birth rate depends on the multidimensional trait, we do not have independent identically distributed times between jumps, which is one of the main characteristics of Bellman-Harris processes. To retrieve independent identically distributed jumps, we use the fact that the jump rate is bounded (from above or from below) by another jump rate, independent of the multidimensional trait. Then, we control the times these jumps occur using the inverse transform sampling to simulate jumps (Section~\ref{subsubsect:proof_inequality_means}).

\item During the proof that a renewal in several generations implies an exponential growth, we also need to handle the fact that not only telomere lengths can renew, but also the age. This implies that we need to restrict ourselves to truncated versions of the distribution of jump times, such that jump times of this truncated version do not exceed a certain value. We prove that even for a truncated version of the distribution of jump times, we can still make a stochastic comparison of our process with a Bellman-Harris process by considering an alternative birth rate~(Section~\ref{subsubsect:proof_prop_renewal_set}).

\item The irregularities of the jump kernel with respect to the Lebesgue measure make difficult the control of the asymptotic comparisons of survival needed to obtain the stationary profile of the auxiliary particle representing our branching process. To handle this, we use a weak form of a Harnack inequality (Section \ref{subsubsect:assumption_(A3)F}).

\end{itemize}
\noindent We finish this section with a  discussion about the consequences of our assumptions, and why we made them.

%\subsection{\answer{Discussion about our main result and the assumptions}}
%\answer{We finish this section by discussing the limitations of our main results, and explaining our assumptions. We begin by presenting how the statement pres}
%\paragraph{\answer{Are the results closed to optimality ?}} \answer{.}

%\paragraph{\answer{Possible extensions of the model.}} \answer{bb.}

\paragraph{Discussion about the assumptions.}\label{paragraph:discussion_assumptions}\hypertarget{paragraph:discussion_assumptions}{}\

%\answer{Let us now present the consequences of our assumptions, how they are useful in our proofs, and why we made them.}\
%\paragraph{Perspectives.} To continue this study, a new question can be raised: Under which conditions the marginals of the stationary profile of our model over each coordinate are, at least approximately, independent and identically distributed~? In other words, recalling the function $N_0~\in~L^1(\Psi)$ in the expression of the stationary profile, see Theorem~\ref{te:main_result}, under which condition we have the existence of $n_0\in L^1(\mathbb{R}_+)$, such that for all~$x\in\mathbb{R}_+^{2k}$
%$$
%N_0(x) \approx \prod_{i = 1}^{2k} n_0(x_i).
%$$
%This approximation was made in the previous studies of this phenomenon~\cite{Xu2013,bourgeron_2015,Martin2021,rat_individual_2023}, so obtaining theoretical guarantees for it will further improve the rigour behind these studies. However, it should be noted that telomeres on the same chromosome (at coordinates $i$ and $i +k$) are not independent during shortening: both telomeres cannot be shortened simultaneously. Consequently, it seems not trivial to justify that such an approximation is possible, and further numerical/mathematical studies must be done to obtain a good justification. This question is the subject of a future work.
%\vspace{0.5mm}
\noindent \underline{\textbf{Discussion about \hyperlink{paragraph:long_time_behaviour_S1}{$(S_{1})$}}:}\label{paragraph:discussion_S1}\hypertarget{paragraph:discussion_S1}{}\
\begin{itemize}[leftmargin=*]
\item \hyperlink{paragraph:long_time_behaviour_S1}{$(S_{1}):$} All the assumptions of this set are easy to verify, as they only correspond to lower/upper bounds assumptions.
\item \hyperlink{paragraph:long_time_behaviour_S1.1}{$(S_{1.1}):$}  $1)$ This assumption implies that the cumulative distribution functions of $g$ and $(h(x,))_{x\in\mathbb{R}^{2k}}$ are bijective on their support. Thus, we can simulate shortening and lengthening values with the inverse transform sampling method, and then obtain that Eq.~\eqref{eq:uniform_random_variables_assumption} is verified (the expression of the procedure $R(.,.)$ is a bit complicated, so we do not make it explicit here). From the latter, we get that Theorem~\ref{te:existence_uniqueness} holds, so that the model is well-posed and that the number of individuals does not explode in finite~time. 

\item \hyperlink{paragraph:long_time_behaviour_S1.1}{$(S_{1.1}):$} $2)$ The lower bounds for $g$ and $h$ are useful to prove the Doeblin condition. Having such lower bound assumptions is relatively strong, since it implies a sharp transition at the limit of the support of $g$ and of the functions $\left(h(x,.)\right)_{x\in\mathbb{R}}$. %Any other hypothesis would imply more complicated calculations, that we do not do here since we prefer to focus on the intrinsic difficulties of the model.} %We prefer to focus on the intrinsic difficulties of the model.

\item \hyperlink{paragraph:long_time_behaviour_S1.1}{$(S_{1.1}):$} $3)$ The constant $\Delta > 0$ corresponds to the upper bound of the maximum value that can have the support of a function $h(x,.)$, where $x\in\mathbb{R}$. Having this upper bound simplifies the computations when we obtain the weak form of the Harnack inequality.

\item \hyperlink{paragraph:long_time_behaviour_S1.1}{$(S_{1.1}):$} $4)$  The fact that $x \mapsto \Delta_x$ is bounded from below on compact sets simplify the computations when we obtain the Doeblin condition.

\item \hyperlink{paragraph:long_time_behaviour_S1.2}{$(S_{1.2}):$} $1)$ This assumption means that the probability that every coordinate has been shortened and lengthened after~$m_0$ cell divisions is bounded from below on every compact set. It is useful to obtain the Doeblin condition. 

\item \hyperlink{paragraph:long_time_behaviour_S1.2}{$(S_{1.2}):$} $2)$ The assumption that for all $i\in\llbracket1,m_0\rrbracket$ it holds $\mathbb{I}^i \subset \mathbb{J}^i$ means that when a telomere is shortened at a cell division, it is also lengthened. This allows us to simplify a lot of computations. For more complex models where this assumption is not verified, by slightly changing the assumption, the computations made in this paper can be adapted to obtain the existence of a stationary profile. However, it will be much more laborious. 

\item \hyperlink{paragraph:long_time_behaviour_S1.3}{$(S_{1.3}):$} This assumption is useful to obtain the weak form of the Harnack inequality. 
\end{itemize}

\noindent \underline{\textbf{Discussion about \hyperlink{paragraph:long_time_behaviour_S2}{$(S_{2})$}}:}
\begin{itemize}[leftmargin=*]
\item \hyperlink{paragraph:long_time_behaviour_S2.1}{$(S_{2.1}):$} This assumption means that there is a renewal of the number of individuals, with telomere lengths that stay in $[0,B_{\max}]^{2k}$, after $G$ generations. In particular, if we study the dynamics at generations that are multiples of $G$, and starting from an initial condition in $K_{\text{ren}}$, then the population grows exponentially. As we are in a multidimensional setting, there are cases where it is easier in practice to verify that a renewal occurs in several generations rather than in one generation. The simplest examples are cases where the procedure of lengthening acts on too many coordinates. In~Section~\ref{subsect:model_renewal_several_generations}, this type of model is studied.

\item \hyperlink{paragraph:long_time_behaviour_S2.2}{$(S_{2.2}):$} $1)$ The first statement of this assumption means that for the space distortion defined with~$\mathdutchcal{V}$, a cell does not give birth to too many offspring with telomere lengths outside of $\left[0,B_{\max} L_{\text{ret}}\right]^{2k}$.

\item \hyperlink{paragraph:long_time_behaviour_S2.2}{$(S_{2.2}):$} $2)$ The second statement of this assumption means that the space distortion does not change too much the mean of the reproduction law of the branching process, uniformly in~$I\in\mathcal{I}_k$. In particular, this implies that for all $x\in\mathbb{R}_+^{2k}$ we have
\begin{align}
\mathdutchcal{K}\left(\mathdutchcal{V}\right)(x) &= \frac{2}{\#\left(\mathcal{I}_k\right)}\sum_{I\in\mathcal{I}_k} \sum_{(J,M)\in \mathcal{J}_{k}}\int_{(\alpha_1,\alpha_2)\in (\mathbb{R}_+^{2k})^2} p_{J,M}\left(x - \alpha_1,x - \alpha_2\right)\bigg[\int_{(\beta_1,\beta_2)\in (\mathbb{R}_+^{2k})^2} \nonumber\\ 
&\times\mathdutchcal{V}(x-\alpha_1+\beta_1)1_{\{x-\alpha_1+\beta_1\in\mathbb{R}_+^{2k}\}} d\mu_{(x-\alpha_1,x-\alpha_2)}^{(E;J,M)}(\beta_1,\beta_2)\bigg] \nonumber d\mu^{(S;I)}(\alpha_1,\alpha_2) \\
&\leq 2(1+\varepsilon_1)\mathdutchcal{V}(x). \label{eq:upperbound_fullkernel}
\end{align}
It is important to have this condition uniformly in $I\in\mathcal{I}_k$ to be able to apply the weak form of the Harnack inequality. 

\end{itemize}

\noindent \underline{\textbf{Discussion about \hyperlink{paragraph:long_time_behaviour_S3}{$(S_{3})$}}:}
\begin{itemize}[leftmargin=*]
\item \hyperlink{paragraph:long_time_behaviour_S3.2}{$(S_{3.2}):$} $1)$ The first condition, with the Laplace transform, means that the number of individuals grows exponentially, and at a rate larger than $\alpha$. Briefly, this comes from the fact that by \hyperlink{paragraph:long_time_behaviour_S2.1}{$(S_{2.1})$} and \hyperlink{paragraph:long_time_behaviour_S3.1}{$(S_{3.1})$}, the number of individuals of our model can be bounded from below by the expectation of a Bellman-Harris process whose parameters depends on $\underline{b}(a)$, $1+\varepsilon_0$ and $G$ (see Section~\ref{subsubsect:explanation_proof_A2} for more information). The point is that our condition with the Laplace transform implies an exponential growth of this Bellman-Harris process, at a rate $\alpha$. Briefly, this comes from the fact that the Laplace transform of a Bellman-Harris process is explicit, as it satisfies a renewal equation. Hence, performing an inverse Laplace transform, in view of our condition, gives directly this exponential growth. We refer to~\hbox{\cite[Lemma~$7.3$]{iksanov_asymptotic_2024}} for a proof of this result for Crump-Mode-Jagers processes, which correspond to a generalisation of Bellman-Harris processes, see~\cite[Section~$1$]{iksanov_asymptotic_2024}.

\item \hyperlink{paragraph:long_time_behaviour_S3.2}{$(S_{3.2}):$} $2)$ To understand the second condition, notice that if we are interested in a cell that divides at a rate $\underline{b}$, and starting from an initial age $a$, then $\mathdutchcal{F}_a$ is the probability density function of the division time of this cell. Notice also that if this  cell dies at a rate $\alpha$, then the probability that it is still alive at time $s\geq0$ is $e^{-\alpha s}$. From these interpretations, the second condition means that for our cell, there exists a time $t\geq0$ for which, uniformly with respect to its initial age, the probability that it divides before the time $t$ is positive. This is verified when there exists $a_1,\,b_1 >0$ such that $\underline{b}(a) \geq b_1$ for all~$a\geq a_1$, see the proof of~Corollary~\ref{cor:birth_rate_age}.

\item \hyperlink{paragraph:long_time_behaviour_S3.2}{$(S_{3.2}):$} $3)$ If we slightly modify assumption \hyperlink{paragraph:long_time_behaviour_S2}{$(S_{2})$} and \hyperlink{paragraph:long_time_behaviour_S3}{$(S_{3})$}, and write $\varepsilon_0$ instead of $1+\varepsilon_0$, and $\varepsilon_1$ instead of $\varepsilon_1$ each time these terms appear, then it is possible to relax the condition that $\alpha >0$ in~\hyperlink{paragraph:long_time_behaviour_S3.2}{$(S_{3.2})$}. Relaxing this condition implies that Theorem~\ref{te:main_result} can be true in a subcritical regime (i.e. for $\lambda \leq 0$), so allows us to have a stronger result. We do not relax it here because it seems impossible to us to verify the following condition for~$\varepsilon_1 <1$:
\begin{equation}\label{eq:new_S2.2}
\forall x\in\mathbb{R}_+^{2k}:\hspace{2mm} 	\mathdutchcal{K}_{(\left[0,B_{\max} L_{\text{ret}}\right]^{2k})^c}\left(\mathdutchcal{V}\right)(x) \leq \varepsilon_1\mathdutchcal{V}(x),
\end{equation}
which corresponds to the modified version of \hyperlink{paragraph:long_time_behaviour_S2.2}{$\left(S_{2.2}\right)$-$i)$}. We explain below why.

Recall that the condition in~\eqref{eq:new_S2.2} means that there exists a space distortion $\mathdutchcal{V}$ such that individuals create in average $\varepsilon_1$ new offspring with telomere lengths outside of~$\left[0,B_{\max} L_{\text{ret}}\right]^{2k}$. By our biological constraint, for each telomere, there is at least one daughter cell for which this telomere is not shortened (see the explanation in Section~\ref{subsect:algorithm_model} above Eq.~\eqref{eq:set_combination_shortening}). This leads that if a cell with telomere lengths $x\in\mathbb{R}_+^{2k}$ for which $\exists i_0 \in\llbracket1,2k\rrbracket$ with $x_{i_0} > B_{\max}L_{\text{ret}}$ divides, then this cell gives birth to at least one cell with telomere lengths outside of $\left[0,B_{\max} L_{\text{ret}}\right]^{2k}$. This is in contradiction with the explanation of~\eqref{eq:new_S2.2} we have given at the beginning of the paragraph~(as~$\varepsilon_1 < 1$). Thus, we think this is impossible to verify~\eqref{eq:new_S2.2} for $\varepsilon_1 < 1$.

\item \hyperlink{paragraph:long_time_behaviour_S3.3}{$(S_{3.3}):$} $1)$ This assumption means, in view of~\hyperlink{paragraph:long_time_behaviour_S2.2}{$(S_{2.2})$}, that the number of individuals with telomere lengths outside of $\left[0,B_{\max}L_{\text{ret}}\right]^{2k}$ grows at a rate strictly smaller than~$\alpha$. Similarly to \hyperlink{paragraph:long_time_behaviour_S3.1}{$(S_{3.1})$}, this comes from the fact that by \hyperlink{paragraph:long_time_behaviour_S2.2}{$(S_{2.2})$} and~\hyperlink{paragraph:long_time_behaviour_S3.2}{$(S_{3.2})$}, the number of individuals outside of $\left[0,B_{\max}L_{\text{ret}}\right]^{2k}$ can be bounded from above by the expectation of a Bellman-Harris process whose parameters depends on $\overline{b}(a)$ and~$1+\varepsilon_1$ (see Section~\ref{subsubsect:explanation_proof_A2} for more information). Then, as this Bellman-Harris process grows at a rate $\beta$ by the condition on the Laplace transform (see the explanation given in the first point of the discussion about \hyperlink{paragraph:long_time_behaviour_S3.2}{$(S_{3.2})$}), which is strictly smaller than~$\alpha$, the interpretation presented at the beginning of the paragraph comes.

$2)$ The assumption that $\beta < \alpha$ is crucial here, and implies that \hyperlink{paragraph:long_time_behaviour_S3.2}{$(S_{3.2})$} and \hyperlink{paragraph:long_time_behaviour_S3.3}{$(S_{3.3})$} are closely related. Due to the fact that there is the same constant $\varepsilon_0$ in~\hyperlink{paragraph:long_time_behaviour_S2.1}{$(S_{2.1})$} and~\hyperlink{paragraph:long_time_behaviour_S3.2}{$(S_{3.2})$}, and the same constant $\varepsilon_1$ in~\hyperlink{paragraph:long_time_behaviour_S2.2}{$(S_{2.2})$} and~\hyperlink{paragraph:long_time_behaviour_S3.3}{$(S_{3.3})$}, these four assumptions must often be verified together. Criteria allowing to verify \hyperlink{paragraph:long_time_behaviour_S2.2}{$(S_{2.2})$}, \hyperlink{paragraph:long_time_behaviour_S3.2}{$(S_{3.2})$} and~\hyperlink{paragraph:long_time_behaviour_S3.3}{$(S_{3.3})$} from~\hyperlink{paragraph:long_time_behaviour_S2.1}{$(S_{2.1})$} are provided in~Sections~\ref{subsubsect:general_criteria_lyapunov} and~\ref{subsubsect:practical_criteria_lyapunov}.

$3)$ As stated in Corollary~\ref{cor:birth_rate_age}, the condition~$\beta < \alpha$ is equivalent to $1 +\varepsilon_1 < (1+\varepsilon_0)^{\frac{1}{G}}$ when  $\underline{b} \equiv \overline{b} \equiv b$.

\end{itemize}

\section{Space distortion and auxiliary pure jump Markov processes}\label{sect:auxiliary_process}
This section focuses on the introduction of the weighted-normalised semigroup $\big(M_t^{(\psi)}\big)_{t\geq 0}$ and on its associated auxiliary process $\big(Z_t^{(\psi)}\big)_{t\geq0}$, for all $\psi\in\Psi$. First, in Section \ref{subsection:first_moment_semigroup}, we study the well-posedness of $(M_t)_{t\geq0}$ for functions in $\cup_{\psi\in\Psi}\mathdutchcal{B}(\psi)$. Then, we introduce the weighted-normalised semigroup $\big(M_t^{(\psi)}\big)_{t\geq0}$, and prove that the equation satisfied by the latter has a unique solution in Section \ref{subsect:weighted_renormalised_auxiliary}. Finally, we construct the auxiliary process $\big(Z_t^{(\psi)}\big)_{t\geq0}$ thanks to the equation of $\big(M^{(\psi)}_t\big)_{t\geq0}$ for all $\psi\in\Psi$ in Section~\ref{subsect:construction_auxiliary}. Throughout this section, Assumptions \hyperlink{paragraph:long_time_behaviour_S1.1}{$(S_{1.1})$} and \hyperlink{paragraph:long_time_behaviour_S2.2}{$(S_{2.2})$} hold. We also assume for the rest of the paper that~\eqref{eq:birth_rate_assumption} holds. Until Lemma \ref{lemma:well_definition_semigroup} is stated, the semigroup $(M_t)_{t\geq 0}$ introduced in \eqref{eq:first_moment_semigroup} is only defined for non-negative measurable functions. Except Lemmas~\ref{lemm:generalized_duhamel_n=1} and~\ref{lemm:generalized_duhamel}, all the statements given in this section are proved in Appendix~\ref{appendix:proof_auxiliary}. 
\subsection{Well-posedness of weighted-normalised semigroups}\label{subsection:first_moment_semigroup}

To obtain the ergodic behaviour of our semigroup $(M_t)_{t\geq0}$, a naive approach is to construct an absorbing Markov process with first moment semigroup $\big(e^{-\tilde{\lambda}t}M_t\big)_{t\geq0}$ where~$\tilde{\lambda} > 0$, then apply~\hbox{\cite[Theorem~$2.8$]{velleret_exponential_2023}} to this process, and finally conclude by multiplying by $e^{\tilde{\lambda}t}$. However, two problems occur with this approach.
\begin{itemize}[leftmargin=*]
\item The first issue is that when the birth rate $b$ is unbounded, there is no absorbing Markov process with first moment semigroup $\big(e^{-\tilde{\lambda}t}M_t\big)_{t\geq0}$.  Indeed, a necessary condition for the existence of such a process is that for all $t \geq 0$ and~\hbox{$(x,a)\in\mathcal{X}$},  
\begin{equation}\label{eq:bound_number_individuals}
e^{-\tilde{\lambda}t}M_t(1)(x,a) \leq 1. 
\end{equation}
The reason is that this implies that the number of individuals of such a process is in average lower than $1$ at any~time, so can be represented by one individual that may be absorbed by a cemetery state. A consequence of~\eqref{eq:bound_number_individuals} is that for all $(x,a)\in\mathcal{X}$
$$
\frac{d}{dt}\left(M_t(1)(x,a)\right)\Big|_{t=0} = \lim_{t\rightarrow0_+} \frac{1}{t}\left(M_t(1)(x,a)-1\right) \leq \lim_{t\rightarrow0_+} \frac{1}{t}\left(e^{\tilde{\lambda} t}-1\right) = \tilde{\lambda},
$$
so that the following must hold to construct our absorbing Markov process
$$
\sup_{(x,a)\in\mathcal{X}}\left[\frac{d}{dt}\left(M_t(1)(x,a)\right)\Big|_{t=0}\right] < +\infty.
$$
The problem is that in view of Theorem~\ref{te:PDE} and the fact that $\left(M_t\right)_{t\geq0}$ is similar to~$\left(n_t\right)_{t\geq0}$ starting from a Dirac (see~\eqref{eq:dft_mean_measure}-\eqref{eq:first_moment_semigroup}), we have for all $(x,a)\in\mathcal{X}$
$$
\frac{d}{dt}\left(M_t(1)(x,a)\right)\Big|_{t=0} = b(x,a)\left[\mathdutchcal{K}\left(1\right)(x)-1\right],
$$
for which the supremum is not finite when $b$ is not bounded. Hence, this is impossible that~\eqref{eq:bound_number_individuals} holds when $b$ is unbounded, which implies that we cannot find an absorbing Markov process with semigroup $\big(e^{-\tilde{\lambda}t}M_t\big)_{t\geq0}$.
\item The second issue is that the speed of convergence of the semigroup $(M_t)_{t\geq0}$ towards a stationary profile depends on the initial distribution of the trait (the prefactor, not the exponential decay). Indeed, it will take more time for dynamics starting from a cell with very large telomere lengths to converge towards a stationary distribution compared to starting from cells with smaller telomere lengths. Hence, applying directly \cite[Theorem~$2.8$]{velleret_exponential_2023} will not work as this theorem only handle models with a speed of convergence independent of the initial distribution of the trait.
\end{itemize}
In view of these problems, we weight the semigroup $(M_t)_{t\geq0}$ by a function \hbox{$\psi \in \Psi$}, which gives us the semigroup $\big(M_t^{(\psi)}\big)_{t\geq0}$. Studying $\big(M_t^{(\psi)}\big)_{t\geq0}$ rather than $(M_t)_{t\geq0}$ allows us to manage the problems explained above for the following reasons (we recall that~\hbox{$\psi(x,a) = (1+a^{d_{\psi}})\mathdutchcal{V}(x)$} with $d_{\psi}\in\mathbb{N}^*$):
\begin{itemize}[leftmargin=*]
\item In view of the definition of $(M_t^{(\psi)})_{t\geq0}$ given in~\eqref{eq:weighted_renormalised_semigroup}, and Theorem~\ref{te:PDE} combined to the fact that $\psi(,0) = \mathdutchcal{V}$ by~\eqref{eq:defintion_all_distortion_functions} (one need to extend the definition of $(n_t)_{t\geq0}$ to unbounded function as done below), we have for all $(x,a)\in\mathcal{X}$ that
$$
\frac{d}{dt} M_t^{(\psi)}(1)(x,a)\Big|_{t=0} = \frac{\frac{\partial}{\partial a}\psi(x,a)}{\psi(x,a)}+ b(x,a)\left(\frac{\mathdutchcal{K}\left(\mathdutchcal{V}\right)(x)}{\psi(x,a)} - 1\right).
$$
As the latter is bounded, see Lemma~\ref{lemma:inequalities_psi}, the first problem presented above is solved.

\item In view of Assumptions \hyperlink{paragraph:long_time_behaviour_S2}{$(S_2)-(S_{3})$}, weighting the space by $\mathdutchcal{V}$ implies that the time before returning to a cell with telomere lengths in $\left[0,B_{\max} L_{\text{ret}}\right]^{2k}$, even starting from telomere lengths far from this set, is accelerated (for a typical particle conditioned to non-extinction). Therefore, we do not have issues linked to the speed of convergence with this weighting. In Theorem \ref{te:main_result}, the impact of the speed of convergence is reflected in the term $||\mu||_{\mathdutchcal{M}(\psi)}$.
\end{itemize}
In order to use $\big(M_t^{(\psi)}\big)_{t\geq0}$ to obtain the ergodic behaviour of $(M_t)_{t\geq0}$, we first must be sure that $(M_t)_{t\geq0}$ is well-posed for functions in $\cup_{\psi\in\Psi}\mathdutchcal{B}(\psi)$. For that purpose, let us give the following statement that is proved in Appendix~\ref{subsect:proof_inequality_exponential}.
\begin{lemm}[Well-posedness of $(M_t)_{t\geq0}$]\label{lemma:inequality_exponential}
Assume that \hyperlink{paragraph:long_time_behaviour_S1.1}{$(S_{1.1})$} and \hyperlink{paragraph:long_time_behaviour_S2.2}{$(S_{2.2})$} hold. Let us consider $\psi_e(x,a) = e^a\mathdutchcal{V}(x)$ with $\mathdutchcal{V}$ defined in~\hyperlink{paragraph:long_time_behaviour_S2.2}{$(S_{2.2})$}. Then there exists $c >0$ such that for all $T \geq0$, $(x,a)\in\mathcal{X}$,	
$$
M_T (\psi_e)(x,a) \leq \psi_e(x,a)\exp\left(cT\right).
$$
\end{lemm}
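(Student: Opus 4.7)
The plan is to derive the pointwise generator bound $\mathcal{A}\psi_e \leq c\,\psi_e$ and then transfer it to the semigroup by a Gronwall argument, using a truncation-localization scheme to handle the unboundedness of $\psi_e$.

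\textbf{Step 1 (generator bound).} From the PDE description of Theorem~\ref{te:PDE}, the infinitesimal generator acting on a regular function $f$ reads
\[
\mathcal{A}f(x,a) = \partial_a f(x,a) + b(x,a)\bigl[\mathdutchcal{K}(f(\cdot, 0))(x) - f(x,a)\bigr].
\]
Applied to $\psi_e(x,a) = e^a\mathdutchcal{V}(x)$ (extended by $0$ at $(\partial)_2$), this gives
\[
\mathcal{A}\psi_e(x,a) = \psi_e(x,a) + b(x,a)\bigl[\mathdutchcal{K}(\mathdutchcal{V})(x) - e^a\mathdutchcal{V}(x)\bigr].
\]
I would combine the kernel bound $\mathdutchcal{K}(\mathdutchcal{V})(x) \leq 2(1+\varepsilon_1)\mathdutchcal{V}(x)$ from~\eqref{eq:upperbound_fullkernel} with the estimate $b(x,a)\mathdutchcal{V}(x) \leq \tilde b(1+a^{d_b})\mathdutchcal{V}(x) = \tilde b(1+a^{d_b})e^{-a}\psi_e(x,a) \leq C_1 \psi_e(x,a)$, where $C_1 := \sup_{a\geq 0}\tilde b(1+a^{d_b})e^{-a}$ is finite since the polynomial in $a$ is dominated by $e^a$. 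Using in addition $-b(x,a)\psi_e(x,a)\leq 0$, this yields $\mathcal{A}\psi_e \leq c\,\psi_e$ with $c := 1 + 2(1+\varepsilon_1)C_1$.

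\textbf{Step 2 (truncation and localization).} Since $\psi_e$ is unbounded, it cannot be plugged directly into the stochastic equation~\eqref{eq:SDE_model_intro}, which requires bounded test functions. I would pick a smooth non-decreasing cutoff $\xi_n \in \mathcal{C}^\infty(\mathbb{R})$ with $\xi_n(r)=r$ for $r\leq n$, $\xi_n(r)=n+1$ for $r\geq n+2$, and $\xi_n'\in[0,1]$, then set $\psi_e^{(n)} := \xi_n\circ\psi_e$; this function is bounded, lies in $\mathcal{C}_b^{m,1}(\mathcal{X}_\partial)$, satisfies $\psi_e^{(n)}\uparrow\psi_e$, and the computation of Step~1 (using $\xi_n'\leq 1$, monotonicity of $\mathdutchcal{K}$, and discarding the nonpositive term $-b\psi_e^{(n)}$) still gives $\mathcal{A}\psi_e^{(n)}\leq c\,\psi_e$. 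In parallel, I would introduce the stopping time
\[
\tau_n := \inf\bigl\{t\geq 0 :\, Y_t(1) > n \ \text{ or }\ \sup_{u\in V_t}\psi_e(x^u,a_t^u) > n \bigr\}
\]
and argue that $\tau_n\to+\infty$ almost surely: non-explosion (Theorem~\ref{te:existence_uniqueness}) controls the first condition, and on any finite horizon each lineage undergoes only finitely many divisions a.s., so traits stay in a bounded region while ages are bounded by $t$, keeping $\psi_e$ bounded over the (a.s. finite) population.

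\textbf{Step 3 (Gronwall and passage to the limit).} Applying~\eqref{eq:SDE_model_intro} to the time-independent test function $\psi_e^{(n)}$ and taking expectation at the stopping time $t\wedge\tau_n$, I set $\phi_n(t) := \mathbb{E}[Y_{t\wedge\tau_n}(\psi_e^{(n)})]$, which is finite because $\psi_e^{(n)}\leq n+1$ and $Y_{t\wedge\tau_n}(1)$ is bounded. Using the generator bound from Step~2 together with the crucial observation that on $\{s<\tau_n\}$ one has $\psi_e^{(n)} = \psi_e$ on the support of $Y_s$, I obtain
\[
\phi_n(t) \;\leq\; \psi_e^{(n)}(x,a) + c\int_0^t \mathbb{E}\bigl[Y_s(\psi_e)\,\mathbf{1}_{s<\tau_n}\bigr]\,ds \;\leq\; \psi_e(x,a) + c\int_0^t \phi_n(s)\,ds.
\]
Gronwall yields $\phi_n(t)\leq \psi_e(x,a)e^{ct}$, and Fatou's lemma as $n\to+\infty$ (using that $Y_{t\wedge\tau_n}(\psi_e^{(n)})\to Y_t(\psi_e)$ a.s.) concludes $M_T(\psi_e)(x,a)\leq \psi_e(x,a)e^{cT}$. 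The main obstacle is precisely the apparent circularity in Step~3: the generator bound $\mathcal{A}\psi_e^{(n)}\leq c\,\psi_e$ mixes a truncated and an untruncated quantity, which would derail any naive Gronwall argument. The role of the localization $\tau_n$ is exactly to bridge this gap by forcing $\psi_e^{(n)}=\psi_e$ on the support of the stopped process, thereby producing a self-contained inequality for the finite quantity $\phi_n$.
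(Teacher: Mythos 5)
Your proposal is correct and follows essentially the same route as the paper: bound the action of the generator on $\psi_e$ using \eqref{eq:upperbound_fullkernel} together with $1+a^{d_b}\leq C e^{a}$, feed a bounded truncation of $\psi_e$ into \eqref{eq:SDE_model_intro}, and close with Gronwall's lemma. The only difference is in how the truncation is removed — the paper chooses truncations $h_p$ dominated (together with $\partial_a h_p$) by $\psi_e$, passes to the limit by monotone convergence and then applies Gronwall directly to $M_t(\psi_e)$, whereas you localize with the stopping times $\tau_n$ and conclude by Fatou, which has the minor advantage that Gronwall is only ever applied to manifestly finite quantities.
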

\noindent Thanks to the above statement, we are now able to extend the definition of $(M_t)_{t\geq0}$ to functions in $\mathdutchcal{B}(\psi)$. Before extending the definition of $(M_t)_{t\geq0}$, let us introduce some notations. First, we denote the set
$$
\mathcal{Q}_{k} := \mathcal{I}_k\times\mathcal{P}\left(\llbracket1,2k\rrbracket\right).
$$
Then, for all $(I,J) \in \mathcal{Q}_{k}$, we introduce the measure $\pi_x^{I,J}$ defined for all $C\in\mathcal{B}\left(\mathbb{R}^{2k}\right)$ as
\begin{equation}\label{eq:measure_by_event}
\begin{aligned}
\pi_x^{I,J}(C) &:= \frac{1}{\#\left(\mathcal{I}_k\right)}\sum_{\substack{M \in \mathcal{P}(\llbracket1,2k\rrbracket)}}\int_{(\alpha_1,\alpha_2)\in (\mathbb{R}_+^{2k})^2} p_{J,M}\left(x- \alpha_1,x - \alpha_2\right)\\ 
&\times\left[\int_{(\beta_1,\beta_2)\in (\mathbb{R}_+^{2k})^2}1_{C}(x-\alpha_1+\beta_1) d\mu_{(x-\alpha_1,x-\alpha_2)}^{(E;J,M)}(\beta_1,\beta_2)\right]d\mu^{(S;I)}(\alpha_1,\alpha_2), \\
\end{aligned}
\end{equation}
where the measures $\mu^{(S;I)}$ and $\left(\mu_{(s_1,s_2)}^{(E;J,M)}\right)_{(s_1,s_2)\in\mathbb{R}^{2k}}$ are defined in \eqref{eq:measure_shortening} and \eqref{eq:measure_elongation}. This measure represents the kernel for updating telomere lengths of the daughter cell $A$ during the cell division when the indices of the coordinates that are shortened in this daughter cell are those in $I$, and the indices of the coordinates that are lengthened are those in~$J$. All the measures introduced above are helpful to construct a particle~$\big(Z_t^{(\psi)}\big)_{t\geq0}$, for which it is sufficient to only have information of what happens to the daughter cell~$A$, and not to both daughter cells.

\begin{rem}\label{rem:equality_kernel}
Under \hyperlink{paragraph:long_time_behaviour_S1.1}{$(S_{1.1})$}, for all $x\in\mathbb{R}_+^{2k}$, $C\in\mathcal{B}(\mathbb{R}_+^{2k})$
$$
\begin{aligned}
2\sum_{(I,J)\in\mathcal{Q}_{k}} \int_{u\in\mathbb{R}^{2k}}1_{\{x+u\in C\}}d\pi_x^{I,J} &= 2\int_{(w_1,w_2)\in(\mathbb{R}^{2k})^2} 1_{\{x + w_1\in C\}} d\Pi_x(w_1,w_2) = \mathdutchcal{K}(C).
\end{aligned}
$$
\end{rem} 
\noindent We extend the definition of $(M_t)_{t\geq0}$ given in \eqref{eq:first_moment_semigroup} to functions in $\cup_{\psi\in\Psi}\mathdutchcal{B}(\psi)$ with the following lemma that is proved in Appendix~\ref{subsect:proof_well_definition_semigroup}. 

\begin{lemm}[Extension of the definition of $(M_t)_{t\geq0}$]\label{lemma:well_definition_semigroup}
Let us assume that \hyperlink{paragraph:long_time_behaviour_S1.1}{$(S_{1.1})$} and \hyperlink{paragraph:long_time_behaviour_S2.2}{$(S_{2.2})$} hold. Then:
\begin{enumerate}[leftmargin=0.5cm]
\item For any non-negative $f\in\cup_{\psi\in\Psi}\mathdutchcal{B}(\psi)$, $(x,a)\in\mathcal{X}$, $t\geq0$, $M_tf(x,a)$ is finite. In particular, if we consider for all $f\in\cup_{\psi\in\Psi}\mathdutchcal{B}(\psi)$: $f_+$ its positive part and $f_-$ its negative part, then we can extend the definition of $M$ to $\cup_{\psi\in\Psi}\mathdutchcal{B}(\psi)$ as follows
$$
M_t(f)(x,a) := M_t(f_+)(x,a) - M_t(f_-)(x,a).
$$

\item $(M_t)_{t\geq0}$ defined above is a positive semigroup which satisfies the following equation for every $f\in\cup_{\psi\in\Psi}\mathdutchcal{B}(\psi)$ and $(x,a)\in\mathcal{X}$:
\begin{equation}\label{eq:equation_integral_semigroup}
	\begin{split}
		M_t(f)(x,&a) = f(x,a+t)\exp\Bigg(-\int_a^{a+t}b(x,s)ds\Bigg)  + 2\sum_{(I,J)\in\mathcal{Q}_{k}}\int_0^tb(x,a+s)\\ 
		&\times\exp\left(-\int_a^{a+s}b(x,v)dv\right)\left[\int_{u\in\mathbb{R}^{2k}} M_{t-s}f(x+u,0) 1_{\{x+u\in\mathbb{R}_+^{2k}\}}d\pi^{I,J}_x(u)\right]ds.
	\end{split}
\end{equation}
\end{enumerate}
\end{lemm}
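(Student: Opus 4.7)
The plan is to derive part (1) from Lemma \ref{lemma:inequality_exponential} via domination of every $\psi \in \Psi$ by $\psi_e$, and to obtain the integral equation in part (2) from a standard first-division decomposition, extended from bounded functions by monotone convergence.

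For (1), the key point is that any $\psi(x,a) = (1+a^{d_\psi})\mathdutchcal{V}(x) \in \Psi$ satisfies $\psi \leq K_\psi \psi_e$ for some constant $K_\psi > 0$, since $1 + a^{d_\psi} \leq K_\psi e^{a}$. For a nonnegative $f \in \mathdutchcal{B}(\psi)$ with $\|f\|_{\mathdutchcal{B}(\psi)} \leq M$, the truncations $f_N := f \wedge N$ are bounded, so $M_t(f_N)(x,a)$ is well-defined by \eqref{eq:first_moment_semigroup}. Theorem \ref{te:existence_uniqueness} guarantees that $V_t$ is a.s. finite, so monotone convergence yields $M_t(f_N)(x,a) \uparrow \mathbb{E}[\sum_{\mathdutchcal{u}\in V_t} f(x^{\mathdutchcal{u}}, a_t^{\mathdutchcal{u}}) \mid Y_0 = \delta_{(1,x,a)}]$, and Lemma \ref{lemma:inequality_exponential} together with positivity of the bounded-$f$ semigroup gives $M_t(f_N) \leq M K_\psi M_t(\psi_e) \leq M K_\psi \psi_e \, e^{ct}$, showing the limit is finite. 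Setting $M_t(f) := M_t(f_+) - M_t(f_-)$ extends the definition to signed $f$.

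For (2), when $f$ is bounded the identity comes from a first-jump decomposition. By the Poisson construction underlying \eqref{eq:SDE_model_intro}, starting from the single cell $(1,x,a)$, the first division time $\tau$ has density $s \mapsto b(x,a+s)\exp(-\int_a^{a+s}b(x,v)dv)$ on $\mathbb{R}_+$. On $\{\tau > t\}$ the process consists of the single cell $(x,a+t)$, contributing $f(x,a+t)\exp(-\int_a^{a+t}b(x,v)dv)$. On $\{\tau = s < t\}$, the strong Markov property combined with the independence and distributional identity of the two daughter subtrees (each a branching process with the same law as $Y$ starting from the daughter's trait) yields a contribution of
\begin{equation*}
b(x,a+s)e^{-\int_a^{a+s}b(x,v)dv}\!\int_{(\mathbb{R}^{2k})^2}\!\bigl[M_{t-s}f(x+w_1,0)1_{\{x+w_1\in\mathbb{R}_+^{2k}\}} + M_{t-s}f(x+w_2,0)1_{\{x+w_2\in\mathbb{R}_+^{2k}\}}\bigr] d\Pi_x(w_1,w_2).
\end{equation*}
Invoking the symmetry \eqref{eq:symetry_measure} to fold the two terms into $2\int M_{t-s}f(x+w_1,0)1_{\{x+w_1\in\mathbb{R}_+^{2k}\}}d\Pi_x$, and then rewriting this integral via Remark \ref{rem:equality_kernel} as a sum over $(I,J)\in\mathcal{Q}_k$ of integrals against $\pi_x^{I,J}$, produces exactly \eqref{eq:equation_integral_semigroup}. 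The semigroup property and linearity on bounded functions then transfer to $\cup_{\psi\in\Psi}\mathdutchcal{B}(\psi)$ via the same truncation procedure.

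The extension of \eqref{eq:equation_integral_semigroup} to unbounded $f \in \cup_{\psi\in\Psi}\mathdutchcal{B}(\psi)$ is then obtained by applying the bounded-$f$ identity separately to $f_+ \wedge N$ and $f_- \wedge N$ and passing to the limit $N \to \infty$: on the left-hand side, monotone convergence of $M_t(f_\pm \wedge N) \uparrow M_t(f_\pm)$ is immediate from the construction in part (1); on the right-hand side, the first term converges trivially while the integrand of the second term is pointwise increasing in $N$ and nonnegative, so monotone convergence applied inside the $\pi_x^{I,J}$-integrals and then in $s$ yields the full identity. The main obstacle will be justifying the strong Markov step at $\tau$ under an unbounded birth rate: the naive exponential-clock domination is unavailable, but the issue is bypassed by the Poisson construction, where $\tau$ appears as the first atom of $N(ds,\{1\},dz,d\theta)$ below the curve $z = b(x,a+\cdot)$, and the independence of the restrictions of $N$ to $[0,\tau]$ and $(\tau,\infty)$ is exactly what underwrites the conditional independence of the two daughter subtrees.
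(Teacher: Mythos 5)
Your proposal is correct and follows essentially the same route as the paper: part (1) via the domination $|f|\leq C_f\,\mathdutchcal{V}(x)e^a$ and Lemma \ref{lemma:inequality_exponential}, and part (2) by conditioning on the first division in \eqref{eq:SDE_model_intro}, using the strong Markov/branching property, the symmetry \eqref{eq:symetry_measure}, and Remark \ref{rem:equality_kernel} to pass from $\Pi_x$ to the measures $\pi_x^{I,J}$. Your extra care with truncation, monotone convergence, and the Poisson-construction justification of the first-jump decomposition is sound but simply makes explicit what the paper leaves implicit.
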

\noindent We are now able to study the long time behaviour of $(M_t)_{t\geq0}$ on $\mathdutchcal{M}(\psi)$. We first choose the value of $\lambda_{\psi}$ allowing to construct an auxiliary process and then apply \cite[Theorem~$2.8$]{velleret_exponential_2023}.
\subsection{Choice of normalisations and equations of auxiliary semigroups}\label{subsect:weighted_renormalised_auxiliary}
To be able to represent our weighted-renormalised semigroup $\big(M_t^{(\psi)}\big)_{t\geq0}$ with a Markov process, we must choose $\lambda_{\psi} > 0$ such that for all $(x,a)\in\mathcal{X}$ and $t\geq0$ it holds~$M_t^{(\psi)}(1)(x,a) \leq 1$. The following lemma allows us to do so, and is proved in Appendix~\ref{subsect:proof_inequalities_psi}.
\begin{lemm}[Inequalities related to $\psi$]\label{lemma:inequalities_psi}
Let us assume that \hyperlink{paragraph:long_time_behaviour_S1.1}{$(S_{1.1})$} and \hyperlink{paragraph:long_time_behaviour_S2.2}{$(S_{2.2})$} hold. Then, for all $\psi\in\Psi$, there exist~$\lambda_{\psi}>0$ and~$\overline{\psi} >1$ such that for all $(x,a)\in \mathcal{X}$
\begin{enumerate}[$i)$,leftmargin=0.7cm]
\item $b(x,a)\frac{\mathdutchcal{K}\left(\mathdutchcal{V}\right)(x)}{\psi(x,a)} < \lambda_{\psi} + b(x,a) - \frac{\partial_a\psi(x,a)}{\psi(x,a)}$,
\item $2\frac{b(x,a)}{\psi(x,a)} < \lambda_{\psi} + b(x,a) - \frac{\partial_a\psi(x,a)}{\psi(x,a)}$,
\item $\sup_{s\geq 0}\left|\frac{\psi(x,s+a)}{\psi(x,s)}\right| \leq \overline{\psi}\times\left(1+a^{d_{\psi}}\right)$,
\end{enumerate}
where $\mathdutchcal{K}$ is defined by \eqref{eq:kernel_branching}.
\end{lemm}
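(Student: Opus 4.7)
\medskip

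\noindent\textbf{Proof plan.} The plan is to unpack $\psi(x,a)=(1+a^{d_\psi})\mathdutchcal{V}(x)$ with $d_\psi\geq d_b$, bound each expression pointwise, and pick $\lambda_\psi$ larger than the (finite) supremum over $\mathcal{X}$ of the left-hand sides in $i)$ and $ii)$.

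For $i)$, I would first observe that
\[
\frac{\partial_a\psi(x,a)}{\psi(x,a)}=\frac{d_\psi a^{d_\psi-1}}{1+a^{d_\psi}}
\]
is bounded on $\mathbb{R}_+$ by an explicit constant depending only on $d_\psi$ (it vanishes at $0$ and at $+\infty$). Next, since $\psi(\cdot,0)=\mathdutchcal{V}$, Remark~\ref{rem:equality_kernel} together with inequality \eqref{eq:upperbound_fullkernel} — itself a consequence of $(S_{2.2})\,ii)$ — gives $\mathdutchcal{K}(\psi(\cdot,0))(x)\leq 2(1+\varepsilon_1)\mathdutchcal{V}(x)$, whence
\[
b(x,a)\left(\frac{\mathdutchcal{K}(\psi(\cdot,0))(x)}{\psi(x,a)}-1\right)\leq b(x,a)\left(\frac{2(1+\varepsilon_1)}{1+a^{d_\psi}}-1\right).
\]
Using $b\geq 0$ one may drop the $-1$ and apply \eqref{eq:birth_rate_assumption}: the right-hand side is then dominated by $2\tilde b(1+\varepsilon_1)\,\frac{1+a^{d_b}}{1+a^{d_\psi}}$, which is bounded in $a$ because $d_\psi\geq d_b$. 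Summing the two contributions produces a finite supremum, and $\lambda_\psi$ is chosen strictly above it.

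For $ii)$, the same decomposition works and is in fact easier: the derivative term is identical, while
\[
\frac{2b(x,a)}{\psi(x,a)}\leq \frac{2\tilde b(1+a^{d_b})}{(1+a^{d_\psi})\mathdutchcal{V}_{\min}}
\]
by $(S_{2.2})\,iii)$ and \eqref{eq:birth_rate_assumption}, which is bounded on $\mathcal{X}$, again because $d_\psi\geq d_b$; and $-b(x,a)\leq 0$ is harmless. Possibly enlarging the $\lambda_\psi$ obtained in $i)$ yields a single constant that works for both.

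For $iii)$, only the age component matters since $\mathdutchcal{V}$ cancels in the ratio, so that
\[
\frac{\psi(x,s+a)}{\psi(x,s)}=\frac{1+(s+a)^{d_\psi}}{1+s^{d_\psi}}.
\]
Using the convexity inequality $(s+a)^{d_\psi}\leq 2^{d_\psi-1}(s^{d_\psi}+a^{d_\psi})$ one gets
\[
\frac{1+(s+a)^{d_\psi}}{1+s^{d_\psi}}\leq 2^{d_\psi-1}\,\frac{1+s^{d_\psi}+a^{d_\psi}}{1+s^{d_\psi}}\leq 2^{d_\psi-1}(1+a^{d_\psi}),
\]
uniformly in $s\geq 0$, so $\overline\psi:=2^{d_\psi}$ (ensuring $\overline\psi>1$) does the job.

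\medskip

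\noindent\textbf{Main obstacle.} The only genuinely delicate point is $i)$: the three ingredients $\partial_a\psi/\psi$, $\mathdutchcal{K}(\mathdutchcal{V})/\mathdutchcal{V}$ and $b$ are each potentially unbounded or not obviously controllable in isolation, and the inequality is tight at $a=0$. The balance relies precisely on the choice $d_\psi\geq d_b$ combined with $(S_{2.2})\,ii)$, and the routine but somewhat careful computation is to check that the polynomial growth of $b$ in $a$ is absorbed by the denominator $1+a^{d_\psi}$, while $(S_{2.2})\,ii)$ controls the $x$-dependence through $\mathdutchcal{V}$.
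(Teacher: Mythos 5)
Your proof is correct and follows essentially the same route as the paper: the same decomposition of $\partial_a\psi/\psi$, the bound $\mathdutchcal{K}(\mathdutchcal{V})\le 2(1+\varepsilon_1)\mathdutchcal{V}$ from \eqref{eq:upperbound_fullkernel}, the use of \eqref{eq:birth_rate_assumption} together with $d_\psi\ge d_b$ to absorb the polynomial growth of $b$, and $\mathdutchcal{V}\ge\mathdutchcal{V}_{\min}$ for $ii)$. The only (immaterial) difference is in $iii)$, where you use the convexity bound $(s+a)^{d_\psi}\le 2^{d_\psi-1}(s^{d_\psi}+a^{d_\psi})$ while the paper expands $(a+s)^{d_\psi}-s^{d_\psi}$ by the binomial theorem; both give a uniform constant $\overline{\psi}$.
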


\noindent This lemma ensures that the probabilities given later in Eq.~\eqref{eq:probabilities_type_jump_particle} take indeed values in~$[0,1]$, see Remark~\ref{rem:justification_sum_probabilities}. The latter implies that an absorbing Markov process with semigroup~$\left(M_t^{(\psi)}\right)_{t\geq0}$ can be constructed (see Section~\ref{subsect:construction_auxiliary}). This now implies that
$$
M_t^{(\psi)}(1)(x,a) = e^{-\lambda_{\psi}t}\frac{M_t(\psi)(x,a)}{\psi(x,a)} \leq 1.
$$ 
In the rest of the paper, $\lambda_{\psi}$ and $\overline{\psi}$ refer to the bounds in this lemma. In particular, for all $\psi\in\Psi$, the semigroup~$\big(M_t^{(\psi)}\big)_{t\geq0}$
is defined with this $\lambda_{\psi}$. We now give preliminaries to obtain a representation of this semigroup through a process with a particle $\big(Z_t^{(\psi)}\big)_{t\geq0}$. In the rest of this section, we have fixed~$\psi\in\Psi$. By \eqref{eq:equation_integral_semigroup}, $(M^{(\psi)}_t)_{t\geq0}$ satisfies for all $t\geq0$, $f\in M_b(\mathcal{X})$, and~$(x,a)\in\mathcal{X}$ ($d\pi_x^{I,J}$ has been defined in~\eqref{eq:measure_by_event})
\begin{equation}\label{eq:semigroup_renormalised_equation}
\begin{aligned}
M_t^{(\psi)}(f)(x,a) &= f(x,a+t)\frac{\psi(x,a+t)}{\psi(x,a)}\exp\left(-\int_a^{a+t}b(x,s)ds  - \lambda_{\psi}t\right)  \\ 
&+ 2\sum_{(I,J) \in \mathcal{Q}_{k}}\int_0^t\frac{b(x,a+s)}{\psi(x,a)}\exp\left(-\int_a^{a+s}b(x,v)dv -\lambda_{\psi}s\right)\\
&\times\left[\int_{u\in\mathbb{R}^{2k}} M_{t-s}^{(\psi)}(f)(x+u,0) \mathdutchcal{V}(x+u)1_{\{x+u\in\mathbb{R}_+^{2k}\}}d\pi^{I,J}_x(u)\right]ds. 
\end{aligned}
\end{equation}
We can prove that this equation has a unique solution, and thus justify that we can use an auxiliary process $(Z_t^{(\psi)})$ to study $\big(M_t^{(\psi)}\big)_{t\geq0}$. This gives us the following statement that is proved in Appendix~\ref{subsect:proof_unique_solution_equation_semigroup}.
\begin{lemm}[Unique solution to~\eqref{eq:semigroup_renormalised_equation}]\label{lemma:unique_solution_equation_semigroup}
Let us assume that \hyperlink{paragraph:long_time_behaviour_S1.1}{$(S_{1.1})$} and \hyperlink{paragraph:long_time_behaviour_S2.2}{$(S_{2.2})$} hold. We fix \hbox{$T > 0$}, and introduce for all $f\in M_b(\mathcal{X})$ the operator $\Gamma_f : M_b([0,T]\times\mathcal{X}) \rightarrow M_b([0,T]\times\mathcal{X})$, defined for all~$F\in M_b([0,T]\times\mathcal{X})$ and~\hbox{$ (t,x,a)\in[0,T]\times\mathcal{X}$} as:
\begin{equation}\label{eq:operator_semigroup_unique}
\begin{aligned}
	\Gamma_f(F)(t,x,a) &= f(x,a+t)\frac{\psi(x,a+t)}{\psi(x,a)}\exp\left(-\int_a^{a+t}b(x,s)ds  - \lambda_{\psi}t\right)\\
	&+2\sum_{(I,J) \in \mathcal{Q}_{k}}\int_0^t\frac{b(x,a+s)}{\psi(x,a)}\exp\left(-\int_a^{a+s}b(x,v)dv -\lambda_{\psi}s\right)\\
	&\times\left[\int_{u\in\mathbb{R}^{2k}} F(t-s,x+u,0)\mathdutchcal{V}(x+u) 1_{\{x+u\in\mathbb{R}_+^{2k}\}}d\pi^{I,J}_x(u)\right]ds.
\end{aligned}
\end{equation}
Then, there exists a unique $\overline{f}\in M_b\big([0,T]\times\mathcal{X}\big)$ that is solution to $\Gamma_f(\overline{f}) = \overline{f}$.
\end{lemm}
\noindent By Lemma \ref{lemma:unique_solution_equation_semigroup}, if we construct an auxiliary process such that its first moment semigroup satisfies \eqref{eq:semigroup_renormalised_equation}, then the first moment semigroup of this auxiliary process and $\big(M_t^{(\psi)}\big)_{t\geq0}$ have the same values.  Obtaining the ergodic behaviour of this auxiliary process is then equivalent to obtaining the ergodic behaviour of~$\big(M_t^{(\psi)}\big)_{t\geq0}$. Before doing the construction of our auxiliary process, we give a more suitable expression for \eqref{eq:semigroup_renormalised_equation}. We denote for all $(I,J) \in \mathcal{Q}_{k}$ and  $(x,a)\in\mathcal{X}$ the following
\begin{equation}\label{eq:definition_probabilities}
d^{I,J}(x) := \int_{u\in\mathbb{R}^{2k}} \mathdutchcal{V}(x+u)1_{\{x + u\in\mathbb{R}_+^{2k}\}}d\pi^{I,J}_x(u), \hspace{2mm}\text{  and  }\hspace{2mm}q_{\psi}^{I,J}(x,a) = \frac{\frac{2d^{I,J}(x)b(x,a)}{\psi(x,a)}}{\lambda_{\psi} + b(x,a) - \frac{\partial_a \psi(x,a)}{\psi(x,a)}}.
\end{equation}
Then, \eqref{eq:semigroup_renormalised_equation} can be rewritten as follows 
\begin{equation}\label{eq:semigroup_renormalised_equation_algorithm}
\begin{aligned}
&M_t^{(\psi)}(f)(x,a) = f(x,a+t)\exp\left(-\int_a^{a+t}b(x,s)ds - \lambda_{\psi}t + \int_a^{a+t} \frac{\frac{\partial \psi}{\partial a}(x,s)}{\psi(x,s)}ds\right) \\
&+ \int_0^t\bigg[\lambda_{\psi} + b(x,a+s) - \frac{\frac{\partial \psi}{\partial a}(x,a+s)}{\psi(x,a+s)}\bigg]\exp\bigg[-\int_a^{a+s}b(x,v)dv  - \lambda_{\psi}s  + \int_a^{a+s} \frac{\frac{\partial \psi}{\partial a}(x,u)}{\psi(x,u)} du\bigg] \\ 
&\times\sum_{(I,J) \in \mathcal{Q}_{k}}q_{\psi}^{I,J}(x,a+s)\bigg[\frac{1}{d^{I,J}(x)} \int_{u\in\mathbb{R}^{2k}} M_{t-s}^{(\psi)}(f)(x+u,0)\mathdutchcal{V}(x+u)1_{\{x+u\in\mathbb{R}_+^{2k}\}}d\pi^{I,J}_x(u)\bigg]ds.
\end{aligned}
\end{equation}
This means that we can see \eqref{eq:semigroup_renormalised_equation} as the Duhamel's formula of a jump Markov process with an absorbing state such that:
\begin{itemize}[leftmargin=*]
\item The process jumps at a rate 
$$
(x,a)\in\mathcal{X}\mapsto\lambda_{\psi} + b(x,a) - \frac{\frac{\partial}{\partial a}\psi(x,a)}{\psi(x,a)}.
$$

\item At each jump, if we denote $(x,a)\in\mathcal{X}$ is the trait of the particle at the jump, then:
\begin{itemize}[leftmargin=0.4cm]
\item For any $(I,J)\in\mathcal{Q}_{k}$, with probability $q_{\psi}^{I,J}(x,a)$, the trait of the particle is updated such that
$$
(x,a) \longrightarrow (x+U,0),
$$
where the random variable $U$ is distributed according the probability measure
$$
\frac{1}{d^{I,J}(x)}\mathdutchcal{V}(x+u)1_{\{x+u\in\mathbb{R}_+^{2k}\}} d\pi_x^{I,J}(u).
$$
\item With probability $1 - \sum_{(I,J)\in\mathcal{Q}_{k}} q^{I,J}(x,a)$, the jump Markov process jumps to a cemetery.
\end{itemize}
\end{itemize}
We now construct an auxiliary process that follows these dynamics.

\subsection{Algorithmic construction of auxiliary processes associated to weighted-normalised semigroups}\label{subsect:construction_auxiliary}

Let $\psi\in\Psi$. We keep the same $\psi$ in this whole subsection. Even if all the objects introduced in this subsection depend on $\psi$, we will drop the index to mark the dependence in $\psi$ to simplify notations, except for $\big(Z_t^{(\psi)}\big)_{t\geq0}$. Our aim in this subsection is to construct an absorbing Markov process such that its first moment semigroup satisfies \eqref{eq:semigroup_renormalised_equation_algorithm}. To do this, we introduce the following mathematical objects:
\begin{itemize}[leftmargin=*]
\item For all $y\in\mathbb{R}_+^{2k}$, $r,s\geq0$ we define the following functions, that are the complementary cumulative distribution functions and the probability density functions for times between jumps
\begin{align}    
\overline{\mathdutchcal{H}}_{s}(y,r) &= \exp\left(-\int_s^{s+r} b\left(y,u\right) du - \lambda_{\psi}r + \int_s^{s+r}\frac{\frac{\partial \psi}{\partial a}(y,u)}{\psi(y,u)} du\right),\label{eq:tail_events} \\
\mathdutchcal{H}_{s}(y,r) &= \left[\lambda_{\psi} + b(y,s+r) - \frac{\frac{\partial \psi}{\partial a}(y,s+r)}{\psi(y,s+r)}\right]\overline{\mathdutchcal{H}}_{s}(y,r)\label{eq:density_events}.  
\end{align}

\item For all $(y,s)\in\mathcal{X}$, we introduce $\left(T_n(y,s)\right)_{n\in\mathbb{N}}$ an i.i.d. sequence of random variables taking values in~$\mathbb{R}_+$ and distributed according to $\mathdutchcal{H}_{s}(y,.)$. This sequence of random variables is used to describe jump times of the auxiliary process. We assume here that the sequence $\left(T_n(y,s)\right)_{n\in\mathbb{N}}$ is constructed thanks to an inverse transform sampling. The latter means that we first consider a sequence of i.i.d. random variables~$\left(W_n\right)_{n\in\mathbb{N}}$ following an uniform distribution over $[0,1]$, and then define~$T_n(y,s)$ as follows, for all~$n\in\mathbb{N}$,
$$
T_n(y,s) = \inf\left\{r\geq0,\,|\,1 - \overline{\mathdutchcal{H}}_{s}(y,r) \geq W_n\right\}.
$$
We also assume that for all 
$n\in\mathbb{N}$, we use the same uniform variable $W_n$ to generate all the random variables in the collection~$\left(T_n(y,s)\right)_{\left(y,s\right)\in\mathcal{X}}$. The latter allows us to say that our model can be generated by a countable sequence of random variables, which is required later, see Remark~\ref{rem:hilbert_cube}. As such, for all~$n\in\mathbb{N}$, $(y,s)\in\mathcal{X}$ and $(y',s')\in\mathcal{X}$, the random variables~$T_n(y,s)$ and $T_n(y',s')$ are not independent. The latter does not create any issue since these random variables are never drawn together when we construct our particle.

\item For all $(y,s)\in\mathcal{X}_{\partial}$, we introduce $\Big(\big(\tilde{I}_j,\tilde{J}_j\big)(y,s)\Big)_{j\in\mathbb{N}^*}$ an i.i.d. sequence of random variables taking values in the set $\left(\mathcal{Q}_{k}\cup\left\{(\partial)_2\right\}\right)$. Recalling that $q_{\psi}^{I,J}$ is defined in~\eqref{eq:definition_probabilities}, their distribution is the following, for all $j\in\mathbb{N}^*$,
\begin{equation}\label{eq:probabilities_type_jump_particle}
\begin{aligned}
	\forall(y,s)\in\mathcal{X}:\hspace{2.25mm} \mathbb{P}\left[\big(\tilde{I}_j,\tilde{J}_j\big)(y,s) = (I,J)\right] &= \begin{cases}
		q_{\psi}^{I,J}(y,s), & \text{if }(I,J)\in\mathcal{Q}_k,\\
		1 - \underset{(I,J) \in \mathcal{Q}_{k}}{\sum} q_{\psi}^{I,J}(y,s), &\text{if } (I,J) =(\partial)_2,
	\end{cases} \\
	\mathbb{P}\left[\big(\tilde{I}_j,\tilde{J}_j\big)(\partial,\partial) = \left(I,J\right)\right] &= \begin{cases}
		0, & \hspace{30.22mm}\text{if }(I,J)\in\mathcal{Q}_k,\\
		1, &\hspace{30.22mm}\text{if } (I,J) =(\partial)_2.
	\end{cases}
\end{aligned}
\end{equation}
These random variables are constructed with the same method as the random variables~$\left(T_n(y',s')\right)_{(y',s')\in\mathcal{X},n\in\mathbb{N}}$. The only difference is that we now use a sequence of i.i.d. uniform random variables~$\left(W_n'\right)_{n\in\mathbb{N}}$ independent of $\left(W_n\right)_{n\in\mathbb{N}}$ to do the inverse transform sampling. As such, the variables in the collection $\Big(\big(\tilde{I}_j,\tilde{J}_j\big)(y,s)\Big)_{j\in\mathbb{N}^*}$ are independent of the variables in the collection~$\Big(T_n(y',s')\Big)_{(y',s')\in\mathcal{X},n\in\mathbb{N}}$. In addition, for all $j\in\mathbb{N}^*$,~$(y,s)\in\mathcal{X}_{\partial}$ and~$(y',s')\in\mathcal{X}_{\partial}$, the random variables $\big(\tilde{I}_j,\tilde{J}_j\big)(y,s)$ and $\big(\tilde{I}_j,\tilde{J}_j\big)(y',s')$ are not independent. As above, this is not an issue.

We use the variables $\Big(\big(\tilde{I}_j,\tilde{J}_j\big)(y,s)\Big)_{j\in\mathbb{N}^*}$ to describe the coordinates where there is a jump (shortening and/or lengthening), or if the particle jumps in the cemetery. Assume that the trait of the particle at the $j-$th jump is $(y,s)$. If~$\big(\tilde{I}_j,\tilde{J}_j\big)(y,s) \neq\left(\partial\right)_2$, then the coordinates where there is a shortening at the $j-$th jump are indexed by~$\tilde{I}_j(y,s)$, and the coordinates where there is a lengthening are indexed by $\tilde{J}_j(y,s)$. Otherwise, we have $\big(\tilde{I}_j,\tilde{J}_j\big)(y,s) = \left(\partial\right)_2$ and the particle jumps to the cemetery at the $j-$th jump. 
\begin{rem}\label{rem:justification_sum_probabilities}
In view of~\eqref{eq:definition_probabilities},~Remark~\ref{rem:equality_kernel}, and the first statement of Lemma~\ref{lemma:inequalities_psi}, we have for all $(y,s)\in\mathcal{X}$ that
$$
\underset{(I,J) \in \mathcal{Q}_{k}}{\sum} q_{\psi}^{I,J}(y,s) = \frac{\frac{\mathdutchcal{K}\left(\mathdutchcal{V}\right)(x)b(x,a)}{\psi(x,a)}}{\lambda_{\psi} + b(x,a) - \frac{\partial_a \psi(x,a)}{\psi(x,a)}} < 1.
$$
As such, the probabilities defined in~\eqref{eq:probabilities_type_jump_particle} take indeed values in $[0,1]$.
\end{rem}

\item For all $(I,J,y)\in \mathcal{Q}_{k}\times\mathbb{R}_+^{2k}$, we introduce $\big(\tilde{U}_{j,I,J}(y)\big)_{j\in\mathbb{N}^*}$ an i.i.d. sequence of random variables taking values in $\mathbb{R}_+^{2k}$. For all $j\in\mathbb{N}^*$, the random variable $\tilde{U}_{j,I,J}(y)$ is distributed according to the probability measure
\begin{equation}\label{eq:distribution_increment_length}
d\mathbb{P}_{\tilde{U}_{j,I,J}(y)}(u) = \frac{1}{d^{I,J}(y)} \mathdutchcal{V}(y+u)1_{\{y+u\in\mathbb{R}_+^{2k}\}}d\pi^{I,J}_y(u).
\end{equation}
These random variables are used to give the jump value at each jump of the auxiliary process. They are constructed with the same method as~$\left(T_n(y,s)\right)_{(y,s)\in\mathcal{X},n\in\mathbb{N}}$ and~$\Big(\big(\tilde{I}_j,\tilde{J}_j\big)(y,s)\Big)_{(y,s)\in\mathcal{X}_{\partial},j\in\mathbb{N}^*}$. The only difference is that we use a new sequence of i.i.d. uniform random variables~$\left(W_n''\right)_{n\in\mathbb{N}}$ independent of $\left(W_n\right)_{n\in\mathbb{N}}$ and $\left(W_n'\right)_{n\in\mathbb{N}}$ to do the inverse transform sampling.

\item We consider the process~$\left(X_n,A_n,I_n,J_n,\mathdutchcal{T}_n,U_n\right)_{n\in\mathbb{N}}$ taking values on the following set~$ \left(\mathcal{X}\times\mathcal{Q}_k\times\mathbb{R}_+\times\mathbb{R}^{2k}\right)\cup\{(\partial)_6\}$ ($\mathcal{X}$ and $\mathcal{Q}_k$ are considered as the cartesian product between two sets), with initial condition $(X_0,A_0,I_0,J_0,0,0)$, and such that for all~$n\in \mathbb{N}$:
\begin{equation}\label{eq:algorithm_process_jump}
\begin{cases}
	(I_{n+1},J_{n+1}) = \big(\tilde{I}_{n+1},\tilde{J}_{n+1}\big)\left(X_{n},\,T_n\left(X_n, A_n\right)\right), \\
	\\
	U_{n+1}= 
	\begin{cases}
		\tilde{U}_{n+1,I_{n+1},J_{n+1}}\left(X_{n}\right), & \text{if }(I_{n+1},J_{n+1}) \neq (\partial,\partial),\\
		\partial, & \text{otherwise,} 
	\end{cases}\\
	\\
	(X_{n+1},A_{n+1}) = \begin{cases}
		(X_n + U_{n+1},0), & \text{if }  (I_{n+1},J_{n+1}) \neq (\partial,\partial),\\
		(\partial,\partial), & \text{otherwise,}
	\end{cases}\\
	
	\\
	\mathdutchcal{T}_{n+1} = 
	\begin{cases}
		\mathdutchcal{T}_{n} + T_n\left(X_n,  A_n\right), & \text{if }(I_{n+1},J_{n+1}) \neq (\partial,\partial),\\
		\partial, & \text{otherwise.} 
	\end{cases}
\end{cases}
\end{equation}

In the above, $X_0,A_0,I_0,J_0$ are random variables for which the distribution is given when needed (for example when we define semigroups). We also consider the random variables $\mathcal{N}_{\partial}$ and $\tau_{\partial}$ defined as
$$
\begin{aligned}
\mathcal{N}_{\partial} &:= \inf\left\{l\in\mathbb{N},\, (I_l,J_l) = \left(\partial,\partial\right)\right\},\\
\tau_{\partial} &:= \begin{cases}
	\mathdutchcal{T}_{\mathcal{N}_{\partial} - 1} + T_{\mathcal{N}_{\partial} - 1}\left(X_{\mathcal{N}_{\partial}-1},A_{\mathcal{N}_{\partial} - 1}\right), & \text{if }\mathcal{N}_{\partial} \geq 1, \\
	0, & \text{otherwise},
\end{cases}
\end{aligned}
$$
and the process $\left(N_t\right)_{t\geq0}$, defined for all $t\geq0$ as
$$
N_t :=\begin{cases}
\sup\left\{m\in\llbracket 0, \mathcal{N}_{\partial}-1\rrbracket\,|\,\mathdutchcal{T}_m \leq t\right\}, & \text{ if }t < \tau_{\partial}, \\
\mathcal{N}_{\partial}, & \text{ otherwise.}
\end{cases}
$$

The random variables $\mathcal{N}_{\partial}$ and $\tau_{\partial}$ describe the number of jumps and the time before extinction of the dynamics, respectively. For all $t\geq0$, $N_t$ describes the number of jumps that have occurred up to time~$t$. For all $n < \mathcal{N}_{\partial}$, $X_{n}$ describes the values of telomere lengths after $n$ jumps, and $A_n$ is the age of the particle right after the $n$-th jump has occurred. 
For all $n\in \mathbb{N}^*$, $I_{n}$ is the random variable that describes where the coordinates of telomeres of the process~$(X_m)_{m\in \mathbb{N}}$ that are shortened at the $n-$th jump. Similarly, $J_{n}$ describes the coordinates of the telomeres of $(X_m)_{m\in \mathbb{N}}$ that are lengthened at the $n-$th jump. $U_{n}$ describes the value of the jump that the process~$(X_m)_{m\in \mathbb{N}}$ make at the $n-$th jump. 

In order to apply Lemma~\ref{lemma:unique_solution_equation_semigroup} later in the proof, see Section~\ref{subsect:preliminaries_theorem}, we need to have a version of the process $(N_t)_{t\geq 0}$ with an initial condition that can vary, and a cemetery state. Then, we finally introduce a variant to $(N_t)_{\geq0}$ named $(\tilde{N}_t)_{t\geq0}$, that is a process with initial condition $\tilde{N}_0\in\mathbb{N}$, and such that for all $t\geq 0$
\begin{equation}\label{eq:definition_tilde_N}
\tilde{N}_t := \begin{cases}
	\tilde{N}_0 +N_t, & \text{if }t<\tau_{\partial}, \\
	\partial, & \text{otherwise}.
\end{cases}
\end{equation}
This variant is only introduced to be more rigorous, and is not useful in practice.

\begin{rem}\label{rem:age_generation}
One can easily see by~\eqref{eq:algorithm_process_jump} that if $a\in\mathbb{R}_+$ is the initial condition of~$(A_n)_{n\in\mathbb{N}}$, then for all $n \in \mathbb{N}$ it holds 
$$
A_n = \begin{cases}
	a, & \text{ when }n=0, \\
	0, & \text{ when }1\leq n <\mathcal{N}_{\partial},\\
	\partial, & \text{ when }n \geq \mathcal{N}_{\partial} .
\end{cases}
$$
\end{rem}
\begin{rem}\label{rem:initial_condition_discrete_process}
In view of~\eqref{eq:algorithm_process_jump}, one can easily see that the values of the initial conditions $I_0$ and $J_0$ do not influence the values of $\left(X_n,A_n,I_n,J_n,\mathdutchcal{T}_n,U_n\right)_{n\in\mathbb{N}}$. However, as for $(\tilde{N}_t)_{t\geq0}$, we do not fix an initial condition for these two random variables to be able to define rigorously a Markov process later in~Section~\ref{subsect:preliminaries_theorem}.
\end{rem}
\end{itemize}
\noindent Now, we define $\big(Z_t^{(\psi)}\big)_{t\geq0}$ a process taking values in $\mathcal{X}_{\partial}$, such that for all $t\geq0$
\begin{equation}\label{eq:expression_particle}
\begin{aligned}
Z_t^{(\psi)} &= \left(X_{N_t}, A_{N_t} + (t-\mathdutchcal{T}_{N_t})1_{\left\{A_{N_t} \neq \partial\right\}}\right).
\end{aligned}
\end{equation}
One can observe that $(X_0,A_0)$ is the initial condition of $(Z_t^{(\psi)})_{t\geq 0}$. We also consider the following notations, that are very useful for the proof of the main theorem.
\begin{nota}
For all Borel set $A \subset \mathcal{X}$, we denote in the rest of the paper
$$
\tau_{A} := \inf\left\{t>0\,|\,Z_t^{(\psi)}\in A\right\}, \text{ and } T_{A} := \inf\left\{t>0\,|\,Z_t^{(\psi)}\notin A\right\}.
$$
\end{nota}
\noindent We finally introduce the semigroup $(P_t^{(\psi)})_{t\geq0}$ such that for all $f\in M_b(\mathcal{X})$, $t\geq0$, $(x,a)\in\mathcal{X}$
\begin{equation}\label{eq:first_moment_particle}
P_t^{(\psi)}(f)(x,a) = \mathbb{E}\left[f(Z_t^{(\psi)})1_{\{t < \tau_{\partial}\}}\,|\,(X_0,A_0) = (x,a)\right].
\end{equation}
If we condition the expectation with respect to the first jump time, we obtain that this semigroup satisfies~\eqref{eq:semigroup_renormalised_equation_algorithm}. Then, as \eqref{eq:semigroup_renormalised_equation} is equivalent to \eqref{eq:semigroup_renormalised_equation_algorithm}, Lemma \ref{lemma:unique_solution_equation_semigroup} implies that for all $f\in M_b(\mathcal{X})$, $t\geq0$, $(x,a)\in\mathcal{X}$
\begin{equation}\label{eq:equality_semigroup}
P_t^{(\psi)}(f)(x,a) = M_t^{(\psi)} (f)(x,a).
\end{equation}

We now give statements that facilitate the computation of the distribution of the auxiliary process during the proof of Theorem \ref{te:main_result}. First, we introduce the following function that is essential to simplify notations during the proof of the main theorem. For all $(x,a,t)\in\mathcal{X}\times\mathbb{R}_+$
\begin{equation}\label{eq:useful_notation_third}
\begin{aligned}
\mathdutchcal{G}_{a}(x,t) &= 2\frac{b(x,a+t)}{\psi(x,a+t)}\exp\left(-\int_{a}^{a+t} b\left(x,u\right) du - \lambda_{\psi}t + \int_{a}^{a+t} \frac{\frac{\partial }{\partial a}\psi(x,u)}{\psi(x,u)}du\right) \\ 
&= 2\frac{b(x,a+t)}{\psi(x,a)}\exp\left(-\int_{a}^{a+t} b\left(x,u\right) du - \lambda_{\psi}t\right).
\end{aligned}
\end{equation}
Then, we have the following equality, that we easily prove.

\begin{lemm}[Distribution at one jump]\label{lemm:generalized_duhamel_n=1}
Let us assume that \hyperlink{paragraph:long_time_behaviour_S1.1}{$(S_{1.1})$} and \hyperlink{paragraph:long_time_behaviour_S2.2}{$(S_{2.2})$} hold. We consider $t\geq 0$, $B\in\mathcal{B}(\mathbb{R}_+^{2k})$, $(i,j) \in \mathcal{Q}_{k}$ and $C\in\mathcal{B}([0,t])$. Then for any $f \in M_b(\mathcal{X})$ and~$(x,a)\in\mathcal{X}$, we have
$$
\begin{aligned}
&\mathbb{E}_{(x,a)}\left[f\big(Z_t^{(\psi)}\big)\,;\,X_1 \in B,\,N_t = 1,\,(I_1,J_1) = (i,j)\,,\,\mathdutchcal{T}_1 - \mathdutchcal{T}_0 \in C\right] \\
&= \int_{s\in \mathbb{R}_+}\int_{u \in \mathbb{R}^{2k}} f(x + u,t - s) 1_{\{x+u \in B\}} 1_{\{s\in C\}}\mathdutchcal{V}(x+u) \mathdutchcal{G}_a(x,s)\overline{\mathdutchcal{H}}_0(x+u,t-s) ds d\pi_x^{i,\,j}(u).
\end{aligned}
$$
\end{lemm}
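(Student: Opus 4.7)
The plan is to compute directly from the algorithmic construction~\eqref{eq:algorithm_process_jump} of $(Z_t^{(\psi)})_{t\ge 0}$. I would first observe that, since $(i,j)\in\mathcal{Q}_k$, the event $\{N_t=1\}\cap\{(I_1,J_1)=(i,j)\}$ forces $\mathcal{N}_\partial>1$, so $A_1=0$ and $X_1=x+U_1$, and is equivalent to
$$
\{T_0(x,a)\le t\}\cap\{(I_1,J_1)=(i,j)\}\cap\{T_1(X_1,0)>t-T_0(x,a)\}.
$$
On this event, definition~\eqref{eq:expression_particle} simplifies to $Z_t^{(\psi)}=(X_1,\,t-T_0(x,a))$, so $f(Z_t^{(\psi)})=f(X_1,t-T_0(x,a))$.

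Next, I would exploit the mutual independence, built into~\eqref{eq:probabilities_type_jump_particle}--\eqref{eq:distribution_increment_length}, of the first-jump time $T_0(x,a)$, the type $(\tilde{I}_1,\tilde{J}_1)$, the displacement $\tilde{U}_{1,i,j}(x)$ and the second jump clock $T_1(\cdot,0)$. Conditioning sequentially: $T_0(x,a)$ has density $\mathdutchcal{H}_a(x,s)$ by~\eqref{eq:density_events}; the probability of selecting type $(i,j)$ is $q_\psi^{i,j}(x,a+s)$ (the relevant age at the jump being $a+s$); the law of $U_1$ conditional on $(I_1,J_1)=(i,j)$ is the measure in~\eqref{eq:distribution_increment_length}; and $\mathbb{P}[T_1(x+u,0)>t-s]=\overline{\mathdutchcal{H}}_0(x+u,t-s)$ by~\eqref{eq:tail_events}. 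Fubini then yields
\begin{align*}
&\mathbb{E}_{(x,a)}\!\left[f(Z_t^{(\psi)})\,;\,X_1\in B,\,N_t=1,\,(I_1,J_1)=(i,j),\,\mathdutchcal{T}_1-\mathdutchcal{T}_0\in C\right]\\
&=\int_0^t 1_{\{s\in C\}}\mathdutchcal{H}_a(x,s)\,q_\psi^{i,j}(x,a+s)\int_{u\in\mathbb{R}^{2k}}\!\!f(x+u,t-s)1_{\{x+u\in B\}}\overline{\mathdutchcal{H}}_0(x+u,t-s)\\
&\qquad\times\frac{\mathdutchcal{V}(x+u)1_{\{x+u\in\mathbb{R}_+^{2k}\}}}{d^{i,j}(x)}\,d\pi_x^{i,j}(u)\,ds.
\end{align*}

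To conclude, I would apply the algebraic identity
$$
\mathdutchcal{H}_a(x,s)\,q_\psi^{i,j}(x,a+s)=d^{i,j}(x)\,\mathdutchcal{G}_a(x,s),
$$
obtained by cancelling the common factor $\lambda_\psi+b(x,a+s)-\partial_a\psi(x,a+s)/\psi(x,a+s)$ between $\mathdutchcal{H}_a$ (see~\eqref{eq:density_events}) and $q_\psi^{i,j}$ (see~\eqref{eq:definition_probabilities}) and then absorbing the distortion through $\exp\bigl(\int_a^{a+s}\partial_a\psi(x,u)/\psi(x,u)\,du\bigr)=\psi(x,a+s)/\psi(x,a)$, which is exactly the step equating the two lines of~\eqref{eq:useful_notation_third}. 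Substituting this identity cancels the $d^{i,j}(x)$ factor against the one in~\eqref{eq:distribution_increment_length}, and since $C\subset[0,t]$ the integration range $[0,t]$ may equivalently be written as $\mathbb{R}_+$, delivering the formula claimed in the lemma. I do not foresee any substantial obstacle: the whole argument is routine bookkeeping of the independence structure plus a one-line algebraic simplification, consistent with the authors' claim that the lemma is easily proved.
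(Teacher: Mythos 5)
Your proposal is correct and follows essentially the same route as the paper: condition on the first jump time (density $\mathdutchcal{H}_a$), the jump type (probability $q_\psi^{i,j}(x,a+s)$), the displacement law \eqref{eq:distribution_increment_length} and the survival of the second clock $\overline{\mathdutchcal{H}}_0$, then collapse the product via the identity $\mathdutchcal{H}_a(x,s)\,q_\psi^{i,j}(x,a+s)\,d\mathbb{P}_{\tilde U_{1,i,j}(x)}(u)=\mathdutchcal{V}(x+u)\mathdutchcal{G}_a(x,s)1_{\{x+u\in\mathbb{R}_+^{2k}\}}d\pi_x^{i,j}(u)$, which is exactly the paper's computation.
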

\begin{proof}

We denote $\mathcal{E} = \mathbb{E}_{(x,a)}\big[f(Z_t^{(\psi)})\,;\,X_1 \in B,\,N_t = 1,\,(I_1,J_1) = (i,j)\,,\,\mathdutchcal{T}_1 - \mathdutchcal{T}_0  \in C\big]$ in this proof. First, by \eqref{eq:algorithm_process_jump} and~\eqref{eq:expression_particle}, we have on the event $\{N_t = 1,(I_1,J_1) = (i,j)\}$ that it holds \hbox{$Z_t^{(\psi)} =(x + \tilde{U}_{1,I_1,J_1}(x), t - T_0(x,a))$}. Second, we know by the construction of the particle that \hbox{$T_0(x,a) = \mathdutchcal{T}_1 - \mathdutchcal{T}_0$} is the time before the first jump occurs and is distributed according to the density~$\mathdutchcal{H}_a(x,.)$. Finally, on the event $\{N_t = 1\}$, the second jump has not yet occurred at time $t$, implying that $\mathdutchcal{T}_2 - \mathdutchcal{T}_1 = T_1\big(x+\tilde{U}_{1,I_1,J_1}(x),0\big) > t - T_0(x,a)$. From these three points, it comes
\begin{equation}\label{eq:proof_duhamel_generalized_equation}
\begin{aligned}
	\mathcal{E} &= \int_{s\in \mathbb{R}_+} \mathbb{P}_{(x,a)}[(\tilde{I}_1,\tilde{J}_1)(x,a+s) = (i,j)]\left[\int_{u \in \mathbb{R}^{2k}} f(x+u,t - s)1_{\{x+u \in B\}}\right.\\
	&\times\left.1_{\{s\in C \}}\mathbb{P}[T_1(x+u,0) > t-s] d\mathbb{P}_{\tilde{U}_{1,i,j}(x)}(u)\right] \mathdutchcal{H}_a(x,s)ds.    
\end{aligned}
\end{equation}
In addition,  one can easily develop the expression of the following mathematical objects, using~\eqref{eq:density_events}, \eqref{eq:probabilities_type_jump_particle} combined with~\eqref{eq:definition_probabilities}, and~\eqref{eq:distribution_increment_length}, to obtain
$$
\mathdutchcal{H}_a(x,s)\mathbb{P}_{(x,a)}[(\tilde{I}_1,\tilde{J}_1)(x,a+s) = (i,j)]d\mathbb{P}_{\tilde{U}_{1,i,j}(x)}(u) = \mathdutchcal{V}(x + u)\mathdutchcal{G}_a(x,s)1_{\{x+u\in\mathbb{R}_+^{2k}\}}d\pi_x^{i,j}(u).
$$
Plugging the latter in \eqref{eq:proof_duhamel_generalized_equation}, and using the fact that $\mathbb{P}[T_1(x+u,0) > t-s] = \overline{\mathdutchcal{H}}_0(x+u,t-s)$ allows us to conclude that the lemma is true. 
\end{proof}
\noindent This statement can be extended to the event $\{N_t = n\}$, where $n\in\mathbb{N}\backslash\{0,1\}$, by iterating what we have done to obtain Lemma \ref{lemm:generalized_duhamel_n=1}:

\begin{lemm}[Distribution at $n$ jumps]\label{lemm:generalized_duhamel}
Let us assume that \hyperlink{paragraph:long_time_behaviour_S1.1}{$(S_{1.1})$} and \hyperlink{paragraph:long_time_behaviour_S2.2}{$(S_{2.2})$} hold. Let $t\geq 0$ and $n\in\mathbb{N}\backslash\{0,1\}$. We consider $(B_1,\hdots,B_n)\in \left(\mathcal{B}(\mathbb{R}_+^{2k})\right)^n$, $((i_1,j_1),\hdots,(i_n,j_n)) \in\left(\mathcal{Q}_{k}\right)^n$ and $(C_1,\hdots,C_n)\in \left(\mathcal{B}([0,t])\right)^n$. Then, for any $f \in M_b(\mathcal{X})$ and $(x,a)\in\mathcal{X}$, we have
$$
\begin{aligned}
&\mathbb{E}_{(x,a)}\left[f\big(Z_t^{(\psi)}\big)\,;\,N_t = n,\,\forall p \in \llbracket 1, n \rrbracket: X_p \in B_p,\,(I_p,J_p) = (i_p,j_p)\,,\,\mathdutchcal{T}_{p} - \mathdutchcal{T}_{p-1} \in C_p\right] \\
&= \int_{s_1\in [0,t]}\int_{s_2\in [0,t-s_1]} \hdots \int_{s_n \in \left[0,t-\sum_{i = 1}^{n-1} s_i\right]} \int_{u_1 \in\mathbb{R}^{2k}} \hdots \int_{u_n \in\mathbb{R}^{2k}} f\left(x + \sum_{i = 1}^n u_i,t - \sum_{i = 1}^n s_i\right)  \\
&\times\mathdutchcal{V}(x+u_1)\hdots\mathdutchcal{V}\left(x+\sum_{i = 1}^n u_i\right)\mathdutchcal{G}_a(x,s_1)\mathdutchcal{G}_0(x+u_1,s_2)\hdots \mathdutchcal{G}_0\left(x+\sum_{i = 1}^{n-1}u_i ,s_n\right)  \\
&\times \overline{\mathdutchcal{H}}_0\left(x+\sum_{i = 1}^{n}u_i,t-\sum_{i = 1}^n s_i\right)1_{\left\{\forall p \in \llbracket 1,n \rrbracket\,:\,x+\sum_{i = 1}^p u_l \in B_p,\,s_p \in C_p\right\}}\\
&\times\left(ds_1ds_2\hdots ds_n\right) \left(d\pi_x^{i_1,\,j_1}(u_1)\hdots d\pi_{x+\sum_{i = 1}^{n-1} u_i}^{i_n,\,j_n}(u_n)\right).
\end{aligned}
$$
\end{lemm}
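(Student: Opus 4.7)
The plan is to prove the formula by induction on $n\ge 1$, with the base case $n=1$ being exactly Lemma~\ref{lemm:generalized_duhamel_n=1}. Fix $n\ge 2$ and assume the formula holds for $n-1$ jumps and any starting point, any choice of Borel sets, of $\mathcal{Q}_k$-values, and of subsets of $[0,t']$ for any $t'>0$.

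To carry out the inductive step, I would peel off the first jump of the trajectory started from $(x,a)$. Note that on the event $\{N_t=n\}$ (with $n\ge 2$), the times between successive jumps satisfy $\mathdutchcal{T}_1-\mathdutchcal{T}_0=T_0(x,a)$ and, by the very construction~\eqref{eq:algorithm_process_jump}, the subsequent increments $(\mathdutchcal{T}_{p}-\mathdutchcal{T}_{p-1})_{p\ge 2}$ depend only on the i.i.d. families $(T_m(y,s))_m$, $((\tilde I_j,\tilde J_j)(y,s))_j$, $(\tilde U_{j,I,J}(y))_j$ evaluated at arguments indexed by the post-first-jump trajectory, which are by assumption independent of the data used to produce the first jump. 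This delivers a Markov-type decomposition: conditionally on $\{(I_1,J_1)=(i_1,j_1),\,\mathdutchcal{T}_1-\mathdutchcal{T}_0=s_1,\,U_1=u_1\}$ (with $x+u_1\in\mathbb{R}_+^{2k}$), the process $(Z^{(\psi)}_{s_1+r})_{r\ge 0}$ has the law of a fresh copy of the auxiliary process started from $(x+u_1,0)$.

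Writing down the law of $(\mathdutchcal{T}_1-\mathdutchcal{T}_0,(I_1,J_1),U_1)$ exactly as in the proof of Lemma~\ref{lemm:generalized_duhamel_n=1} — i.e. using \eqref{eq:density_events}, \eqref{eq:probabilities_type_jump_particle}, \eqref{eq:definition_probabilities} and \eqref{eq:distribution_increment_length} to recognise that the joint density against $ds_1\,d\pi_x^{i_1,j_1}(u_1)$ is precisely $\mathdutchcal{V}(x+u_1)\mathdutchcal{G}_a(x,s_1)$ — and then applying the induction hypothesis on the residual expectation in time $t-s_1$, with the new starting point $(x+u_1,0)$, the Borel sets $(B_2-u_1,\ldots,B_n-u_1)$ translated accordingly (or equivalently with $B_p$ applied to $x+\sum_{i=1}^p u_i$), the remaining types $((i_2,j_2),\ldots,(i_n,j_n))$, and the truncated time-intervals $(C_2\cap[0,t-s_1],\ldots)$, gives
\begin{align*}
\mathcal{E}&=\int_{s_1\in[0,t]}\int_{u_1\in\mathbb{R}^{2k}} 1_{\{x+u_1\in B_1,\,s_1\in C_1\}}\,\mathdutchcal{V}(x+u_1)\,\mathdutchcal{G}_a(x,s_1)\\
&\quad\times\mathbb{E}_{(x+u_1,0)}\!\left[f(Z^{(\psi)}_{t-s_1})\,;\,N_{t-s_1}=n-1,\,\text{(residual conditions)}\right]ds_1\,d\pi_x^{i_1,j_1}(u_1),
\end{align*}
and substituting the $(n-1)$-fold integral provided by the induction hypothesis yields exactly the claimed expression.

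The main (and essentially only) obstacle is to make rigorous the Markov-type decomposition at the first jump, since the auxiliary process is not defined via a Poisson point measure but via parametric families $(T_m(y,s))_m,((\tilde I_j,\tilde J_j)(y,s))_j,(\tilde U_{j,I,J}(y))_j$ indexed by initial data. One must observe that the law of the process after the first jump uses only the components of these families indexed by $(x+u_1,0)$ and $I,J=(i_1,j_1)$ for the second-jump coordinates and those indexed by later visited states, all of which are by construction independent of the variables $T_0(x,a)$, $(\tilde I_1,\tilde J_1)(x,a+s_1)$, $\tilde U_{1,i_1,j_1}(x)$ driving the first jump. Once this independence is recorded, the conditional law of $(Z^{(\psi)}_{s_1+r})_{r\ge 0}$ given the first-jump data is that of a fresh auxiliary process from $(x+u_1,0)$, and Fubini combined with the induction hypothesis closes the argument.
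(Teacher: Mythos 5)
Your induction is exactly the iteration the paper has in mind: it sketches no separate proof of Lemma~\ref{lemm:generalized_duhamel}, stating only that one extends Lemma~\ref{lemm:generalized_duhamel_n=1} "by iterating" its argument, which is precisely your peel-off-the-first-jump step combined with the renewal/Markov property at the first jump time guaranteed by the independence built into the families $(T_m(y,s))_m$, $((\tilde I_j,\tilde J_j)(y,s))_j$, $(\tilde U_{j,I,J}(y))_j$. Your write-up is correct and follows essentially the same approach, merely formalising it as an induction on $n$.
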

\section{Long time behaviour: Proof of the main theorem}\label{sect:long_time_behaviour}
To obtain the ergodic behaviour of our model, we apply \cite[Theorem~$2.8$]{velleret_exponential_2023} to the auxiliary processes constructed in Section~\ref{subsect:construction_auxiliary}. For the reader's convenience, we give it below slightly rewritten to fit our framework and notations. 

\begin{te}[Theorem~$2.8$ in \cite{velleret_exponential_2023}]\label{te:assumptions_velleret}
Let $\Omega$ be of path type (see~\cite[Def. $(23.10)$]{sharpe1988}), $\mathbb{X}$~a~Polish space, $\partial$ an element outside of $\mathbb{X}$, and $(Z_t)_{t\geq 0}$ a strong Markov process for a complete and right-continuous filtration~$\left(\overline{\mathcal{F}}_t\right)_{t\geq 0}$, taking values in $\mathbb{X}\cup\{\partial\}$ and absorbed at $\partial$. Let us assume that there exist $(D_l)_{l\in\mathbb{N}^*}$ a sequence of closed subsets of $\mathbb{X}$, a probability measure $\nu$ on $\mathbb{X}$, a closed measurable set $E\subset \cup_{l \geq 1}D_l$ and a constant $\rho >0$ such that
\begin{itemize}[leftmargin=0.5cm]

\item \hypertarget{te:assumptions_velleret_A0}{$(\underline{A}_0):$}  For any $l\in\mathbb{N}^*$, $D_{l} \subset \text{int}\left(D_{l+1}\right)$.

\item \hypertarget{te:assumptions_velleret_A1}{$(A_1):$}  For any $l\in\mathbb{N}^*$, there exist $L > l$, $c,t >0$ such that for all $z\in D_{l}$
$$
\mathbb{P}_{z}\left[Z_t \in dz',\,t < \min\left(\tau_{\partial}, T_{D_L}\right)\right] \geq c\nu(dz'),
$$

where $\tau_{\partial}$ is the extinction time of $(Z_t)_{t\geq0}$ and $T_{D_L}$ the first time it exits $D_L$. 

\item \hypertarget{te:assumptions_velleret_A2}{$(A_2):$} It holds 
$$
\rho > \sup \left\{\gamma \in \mathbb{R}\,|\, \sup_{L\geq 1} \inf_{t>0} e^{\gamma t}\mathbb{P}_{\nu}\left[t < \min\left(\tau_{\partial}, T_{D_L}\right)\right] = 0\right\} =: \rho_S,
$$
and
$$
\sup_{z\in\mathbb{X}} \left[\mathbb{E}_{z}\left[\exp\left(\rho\min\left(\tau_{\partial}, \tau_E\right)\right)\right]\right] < +\infty.
$$

\item \hypertarget{te:assumptions_velleret_A3}{$(A_3)_F:$} For all $\varepsilon \in(0,1)$, there exist $t_F,c' >0$ such that for any $z\in E$ there exist two stopping times $U_H$ and $V$ such that
\begin{equation}\label{eq:first_statement_(A3F)}
	\mathbb{P}_{z}\left[Z_{U_H} \in dz';\,U_H< \tau_{\partial}\right] \leq c'\mathbb{P}_{\nu}\left[Z_{V} \in dz';\,V< \tau_{\partial}\right],
\end{equation}
and
\begin{equation}\label{eq:second_statement_(A3F)}
	\left\{\min(\tau_{\partial},t_F) < U_H\right\}= \left\{U_H = \infty\right\}, 
\end{equation}
\begin{equation}\label{eq:third_statement_(A3F)}
	\mathbb{P}_{z}\left[U_H = \infty,\,t_F < \tau_{\partial}\right] \leq \varepsilon\exp\left(-\rho t_F\right).
\end{equation}
\end{itemize}
Then, there exist a unique $(\tilde{\gamma},\tilde{\phi},\tilde{\lambda})\in\mathcal{M}_1(\mathbb{X})\times M_b(\mathbb{X})\times\mathbb{R}_+$, and $C,\omega >0$, such that for all $t>0$, and for all $\tilde{\mu}\in \mathcal{M}(\mathbb{X})$ with $||\tilde{\mu}||_{TV,\mathbb{X}} \leq 1$
$$
\left|\left|e^{\tilde{\lambda} t}\tilde{\mu} P_t - \tilde{\mu}(\tilde{\phi})\tilde{\gamma}\right|\right|_{TV,\mathbb{X}} \leq Ce^{-\omega t},
$$
where $\tilde{\mu} P_t(dy) = \mathbb{P}_{\tilde{\mu}}\left(Z_t\in dy,\,t < \tau_{\partial}\right)$.
\end{te}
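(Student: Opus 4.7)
The statement is Velleret's quasi-stationary convergence theorem for a strong Markov process absorbed at $\partial$, and my plan is to follow the standard scheme by which $(A_1)$, $(A_2)$ and $(A_3)_F$ together force exponential contraction of the (renormalised) survival semigroup. First I would introduce the asymptotic survival rate $\tilde{\lambda} := -\limsup_{t\to\infty} t^{-1}\log \mathbb{P}_\nu[t < \tau_\partial]$, and verify using $(A_2)$ (first inequality) that $\tilde{\lambda} \le \rho_S < \rho$. The candidate quasi-stationary distribution $\tilde{\gamma}$ and the eigenfunction $\tilde{\phi}$ would then be constructed as limits of Cesàro-type averages of the conditional laws $\mathbb{P}_\nu[Z_t \in \cdot \mid t<\tau_\partial]$ and of the normalised survival functions $z \mapsto e^{\tilde{\lambda} t}\mathbb{P}_z[t<\tau_\partial]$ respectively, postponing uniqueness until the contraction is established.

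The core of the proof is a contraction estimate of the form
\[
\bigl\|e^{\tilde{\lambda} t}\tilde{\mu}_1 P_t - e^{\tilde{\lambda} t}\tilde{\mu}_2 P_t\bigr\|_{TV,\mathbb{X}} \le C e^{-\omega t}\,\bigl(\tilde{\mu}_1(\tilde{\phi}) + \tilde{\mu}_2(\tilde{\phi})\bigr),
\]
obtained by chaining three ingredients. (i) Lyapunov-type entrance to $E$: by $(A_2)$, starting from any $z \in \mathbb{X}$, the stopping time $\min(\tau_\partial,\tau_E)$ has an exponential moment strictly larger than $\rho$; hence up to a small exponentially decaying error we may assume the trajectory has visited $E$ before a moderate time $t_F$. (ii) Harnack-type reduction from $E$ to $\nu$: the stopping times $U_H, V$ produced by $(A_3)_F$ convert the law of $Z_{U_H}$ starting from any $z \in E$ into the law of $Z_V$ starting from $\nu$, up to the factor $c'$ in \eqref{eq:first_statement_(A3F)} and the tail error $\varepsilon e^{-\rho t_F}$ in \eqref{eq:third_statement_(A3F)}; the assumption that $\Omega$ is of path type is what allows these stopping times to be composed with the Markov shifts. (iii) Doeblin minorisation at $\nu$: applying $(A_1)$ on a chain $D_l \subset \operatorname{int} D_{l+1}$ produced by $(\underline{A}_0)$ to the law $\nu P_t$ gives a uniform lower bound $c\nu(dz')$ on the transition density on events of bounded exit from $D_L$, yielding a genuine coupling on a positive fraction of mass.

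Combining (i)–(iii) gives, for every $\varepsilon \in (0,1)$ and the associated $t_F$, a one-step contraction $\|\cdot\|_{TV} \to \kappa \|\cdot\|_{TV} + O(\varepsilon e^{-\rho t_F})$ for the renormalised semigroup on the cone of nonnegative measures, with $\kappa < 1$. Iterating this over windows of length $t_F$, choosing $\varepsilon$ small compared to $\kappa$, and using the lower bound on $\mathbb{P}_\nu[t < \min(\tau_\partial, T_{D_L})]$ from $(A_1)$ to replace the trivial normalisation, one extracts an exponential rate $\omega \in (0, \rho - \tilde{\lambda})$ and the stated bound with constant $C$ depending on $\|\tilde{\mu}\|_{TV,\mathbb{X}} \le 1$. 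Existence and uniqueness of $(\tilde{\gamma},\tilde{\phi},\tilde{\lambda})$ follow from the contraction applied to differences of candidate invariant objects: $\tilde{\gamma}$ is the unique fixed point of $\tilde{\mu}\mapsto e^{\tilde{\lambda} t}\tilde{\mu} P_t/\tilde{\mu}P_t(1)$ on $\mathcal{M}_1(\mathbb{X})$, and $\tilde{\phi}$ is recovered by duality, $\tilde{\phi}(z) = \lim_{t\to\infty} e^{\tilde{\lambda} t}\mathbb{P}_z[t<\tau_\partial]$, the limit existing by the same contraction.

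The hardest step will be implementing (ii) rigorously: $(A_3)_F$ is weaker than a true Harnack inequality — it only controls the law of $Z_{U_H}$, not the full trajectory, and the event $\{U_H = \infty\}$ carries mass that must be absorbed into the $\varepsilon e^{-\rho t_F}$ error term via \eqref{eq:third_statement_(A3F)}. Carefully tracking this error through the iteration so that it does not accumulate faster than $e^{-\omega t}$ decays requires choosing $\varepsilon$, $t_F$, and the depth $L$ in $(A_1)$ in the correct order, which is the delicate combinatorial part of Velleret's argument. The remaining verification, that the rate $\omega$ is strictly positive and independent of the initial measure $\tilde{\mu}$ provided $\|\tilde{\mu}\|_{TV,\mathbb{X}} \le 1$, follows from the uniformity in $z\in E$ built into $(A_3)_F$ and in $z\in D_l$ built into $(A_1)$.
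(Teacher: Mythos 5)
There is an important mismatch here: the paper does not prove this statement at all. It is Theorem~$2.8$ of Velleret's article, quoted verbatim (modulo notation) precisely so that it can be \emph{applied} to the auxiliary process $(Z_t^{(\psi)})_{t\geq0}$; the entire content of Section~4 of the paper is the verification of the hypotheses $(\underline{A}_0)$--$(A_3)_F$ for that process, not a proof of the implication stated in the theorem. So there is no ``paper's own proof'' to compare your argument with, and if your task was to reprove what this paper establishes, you have proved the wrong thing: the work lies in checking the hypotheses, which you have not touched.

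Taken on its own terms, your sketch does reproduce the correct architecture of the cited proof (Doeblin minorisation from $(A_1)$, exponential moments of the entrance time to $E$ from $(A_2)$, the weak Harnack step through the stopping times $U_H$ and $V$ from $(A_3)_F$, iterated to contract the renormalised semigroup), but it remains a roadmap rather than a proof. The step you yourself flag as hardest is exactly where the argument is not yet an argument: a one-step estimate of the form $\kappa\|\cdot\|_{TV}+O(\varepsilon e^{-\rho t_F})$ only yields exponential convergence of the \emph{conditioned} or renormalised dynamics if the additive error is small relative to the surviving mass, i.e.\ relative to $\mathbb{P}_\nu[t_F<\min(\tau_\partial,T_{D_L})]$, which by the definition of $\rho_S$ decays no faster than $e^{-\gamma t_F}$ for any $\gamma>\rho_S$; the whole point of requiring $\rho>\rho_S$ in $(A_2)$ is to make $\varepsilon e^{-\rho t_F}$ negligible against this mass. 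You assert this comparison but attribute the survival lower bound to $(A_1)$ rather than to $(A_2)$, and you do not specify the order in which $\varepsilon$, $t_F$ and $L$ are chosen, which is where the proof actually lives. Likewise the existence, boundedness and strict positivity of $\tilde\phi$, and the uniqueness of the triplet, are stated as consequences of ``the same contraction'' without exhibiting the uniform-in-$z$ bounds that make the limit $e^{\tilde\lambda t}\mathbb{P}_z[t<\tau_\partial]$ converge. As a blind reconstruction of Velleret's theorem the plan is sound; as a proof it has a genuine gap at the quantitative heart of the iteration, and as a contribution to \emph{this} paper it addresses none of what the paper actually proves.
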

\begin{rem}
In~\cite[Theorem~$2.1$]{velleret_unique_2022}, the condition $\mathbb{X} = \cup_{l\geq 1} D_l$ is added to Theorem~\ref{te:assumptions_velleret} to ensure that~$\tilde{\gamma}$ is the unique quasi-stationary distribution of the process $\left(Z_t\right)_{t\geq0}$. As we are only interested in the quasi-stationary distribution corresponding to the Yaglom limit of our process (see~\cite[Def.~2]{meleard_2012}), this uniqueness result is not useful here. 
\end{rem}
\noindent Now, we prove Theorem~\ref{te:main_result}. Let us start with preliminaries where we first give the framework we use~(Polish space, probability space, process, filtration), then check that these objects verify the properties required, thereafter prove~\hyperref[te:assumptions_velleret]{$(\underline{A}_0)$}, and finally present the plan of the rest of the proof.
\subsection{Preliminaries}\label{subsect:preliminaries_theorem}
\paragraph{Framework.} We work with $\mathbb{X} = \mathcal{X}$ endowed with the Euclidean topology. It is well-known that this is a Polish space. We also work on the probability space $\left(\Omega,\mathcal{A},\mathbb{P}\right)$ introduced in Notation~\hyperref[notation:probability_space]{\ref*{notation:probability_space}}. Let $\psi\in\Psi$ and $\big(Z_t^{(\psi)}\big)_{t\geq0}$ its associated auxiliary process constructed in Section \ref{subsect:construction_auxiliary}. From now on, to simplify the notations, we drop the index $\psi$ in $\big(Z_t^{(\psi)}\big)_{t\geq0}$. We keep in mind that $\psi$ is fixed up to the second paragraph of Section~\ref{subsect:existence_stationary_profile}. $(Z_t)_{t\geq 0}$ corresponds to the process we are interested in.

Recall the process $\big(\tilde{N}_t\big)_{t\geq0}$ introduced in~\eqref{eq:definition_tilde_N}. Then, we work with the filtration~$\left(\mathcal{G}_t\right)_{t\geq0}$, defined as the augmentation of the canonical filtration (called here augmented filtration) of the process \hbox{$\left(\overline{Z}_t\right)_{t\geq0}:= \big(Z_t,\tilde{N}_t,I_{N_t},J_{N_t}\big)_{t\geq0}$}, see~\cite[Def. $(6.1)$ - $(ii)$]{sharpe1988}. It is essential to use this filtration because it is on it that hitting times are stopping times~\hbox{\cite[Section~$10$]{sharpe1988}}. It is also essential to consider a filtration induced by $\left(\overline{Z}_t\right)_{t\geq0}$ instead of only $\left(Z_{t}\right)_{t\geq0}$ because we use information coming from the other random variables in the proof. We point out that the random variables $\left(\mathdutchcal{T}_{n}\right)_{0\leq n \leq N_t}$, are induced by $(\tilde{N}_t)_{t\geq0}$ because they are hitting times for the process $(N_t)_{t\geq0}$. We also point out that the random variables~$(U_n)_{1\leq n\leq N_t}$ are induced by the process $\left(X_{N_t}\right)_{t\geq0}$, as for all~$n\in\mathbb{N}^*$ it holds~$U_n = \left(X_{n}-X_{n-1}\right)_{n\in\mathbb{N}}$ on the event~$\{N_t = n\}$ (we do not have information for $U_0$, but we do not need it).

\paragraph{Properties of $\Omega$, $\left(\mathcal{G}_t\right)_{t\geq0}$, and $\left(Z_t\right)_{t\geq0}$.} To apply Theorem~\ref{te:assumptions_velleret}, we need to prove that we work on a sample space $\Omega$ of path type. We also need to prove that $\left(\mathcal{G}_t\right)_{t\geq0}$ is right-continuous, and that $\left(Z_t\right)_{t\geq0}$ satisfies the strong Markov property relative to $\left(\mathcal{G}_t\right)_{t\geq0}$. Having a sample space $\Omega$ of path type is necessary to prove that the stopping time $U_H$ presented in Theorem~\ref{te:assumptions_velleret} is regular with respect to the Markov property, see~\hbox{\cite[Prop. $2.2$]{velleret_exponential_2023}}. In~\hbox{\cite[Sect. A.$3$]{velleret_exponential_2023}}, it is suggested to use~\hbox{\cite[Prop.~$8.8$]{kallenberg_foundations_2021}} to justify this. The problem is that this criterion does not apply when we work with the augmented filtration. We thus proceed in another way here to justify that we work on a sample space of path type. The method we propose is based on the proposition presented below, which is proved in Appendix~\ref{sect:proof_prop_path_type}. It also allows us to prove the right-continuity of $\left(\mathcal{G}_t\right)_{t\geq0}$, and the fact that $\left(Z_t\right)_{t\geq0}$ satisfies the strong Markov property relative to $\left(\mathcal{G}_t\right)_{t\geq0}$. What is interesting with this method is that it can be applied to a large class of Markov processes, namely the piecewise deterministic Markov processes (PDMPs), presented for example in~\hbox{\cite[Chapitre~$2$]{davis1993}.}
\begin{prop}\label{prop:path_type}
Let us consider $\mathbb{X}$ a Lusin space, $\partial$ an element isolated from~$\mathbb{X}$, and~\hbox{$\left(\Omega_1,\mathcal{A}_1\right) = \big(\left[0,1\right]^{\mathbb{N}},\mathcal{A}_1\big)$} a measurable space. We introduce $\left(X_t\right)_{t\geq0}$ a PDMP verifying~\cite[$\left(24.8\right)$]{davis1993}, defined on $\left(\Omega_1,\mathcal{A}_1\right)$, taking values on~$\mathbb{X}\cup\{\partial\}$, such that $\partial$ is an absorbing state. We also consider for all $x\in\mathbb{X}\cup\{\partial\}$ the probability measure $\mathbb{P}_x$, which corresponds to the measure describing the law of $\left(X_t\right)_{t\geq0}$ starting from $x$. Then, there exists a Markov process $\big(\Tilde{X}_t\big)_{t\geq0}$ defined on a measurable space~$\big(\Omega_2,\mathcal{A}_2\big)$, taking values on $\mathbb{X}\cup\{\partial\}$, such that $\big(\Tilde{X}_t\big)_{t\geq0}$ is absorbed at $\partial$, with the following properties:
\begin{enumerate}[leftmargin=0.5cm]
\item There exists $\Omega'_1 \subset \Omega_1$ and $\Phi : \Omega'_1 \rightarrow \Omega_2$ such that $\Omega\backslash \Omega'_1$ is negligible, and such that for all $\omega\in\Omega'_1$ and $t\geq0$ we have $X_t\left(\omega\right) = \Tilde{X}_t\left(\Phi(\omega)\right)$.
%\item For all $x\in\left(\mathbb{X}\cup\{\partial\}\right)$, the law of $\big(\Tilde{X}_t\big)_{t\geq0}$ starting from $x$ is given by the measure $\Tilde{\mathbb{P}}_x = \mathbb{P}_x \circ \Phi^{-1}$.
\item The semigroups of $\left(X_t\right)_{t\geq0}$ and $\big(\Tilde{X}_t\big)_{t\geq0}$ are the same.
\item $\big(\tilde{X}_t\big)_{t\geq0}$ satisfies the strong Markov property relative to its augmented filtration. In addition, this filtration is right-continuous.
\item $\Omega_2$ is of path type (i.e. verifies each point of~\cite[Def.~$\left(23.10\right)$]{sharpe1988}).
\end{enumerate}
\end{prop}
\begin{rem}\label{rem:hilbert_cube}
The assumption that the sample space is $\Omega_1 = \left[0,1\right]^{\mathbb{N}}$ means that $\left(X_t\right)_{t\geq0}$ can be simulated thanks to a sequence of uniform random variables on $\left[0,1\right]$.
\end{rem}
\noindent Let us explain how this proposition allows us to verify that $\Omega$ is of path type, the right-continuity of~$\left(\mathcal{G}_t\right)_{t\geq0}$, and the fact that $\left(Z_t\right)_{t\geq0}$ satisfies the strong Markov property. Assume that the assumptions of Proposition~\ref{prop:path_type} are verified for $\left(X_t\right)_{t\geq0} = \big(\overline{Z}_t\big)_{t\geq0}$, where~$\big(\overline{Z}_t\big)_{t\geq0}$ is defined at the beginning of the subsection. Then, by the above proposition there exists a realisation of our semigroup in another sample space, such that the sample space is of path type.  In addition, this other realisation satisfies the strong Markov property relative to its augmented filtration, and this last filtration is right-continuous. Thus, by saying that we work with this other realisation instead of the one on the original sample space, we have that $\Omega$ and~$\left(\mathcal{G}_t\right)_{t\geq0}$ satisfy the properties we need, and that $\big(\overline{Z}_t\big)_{t\geq0}$ satisfies the strong Markov property relative to~$\left(\mathcal{G}_t\right)_{t\geq0}$. The strong Markov property for~$\big(Z_t\big)_{t\geq0}$ then comes from the fact that the strong Markov property of $\big(\overline{Z}_t\big)_{t\geq0}$ implies for all $f\in M_b\left(\mathcal{X}\right)$, $T$ stopping time of $\left(\mathcal{G}_t\right)_{t\geq0}$, and $t\geq0$,
\begin{equation}\label{eq:strong_markov_property}
\mathbb{E}\left[f\left(Z_{T+t}\right);\,T<+\infty\,|\,\mathcal{G}_T\right] = 1_{\left\{T < +\infty\right\}}\mathbb{E}\left[f\left(Z'_{t}\right)\,|\,\overline{Z}'_{0} = \overline{Z}_{T}\right],
\end{equation}
where $\left(\overline{Z}'_{t}\right)_{t\geq0} = \left(Z_t',\Tilde{N}_t',I_t',J_t'\right)_{t\geq0}$ is a process with the same distribution as~$\left(\overline{Z}_t\right)_{\geq0}$, and independent of it. Indeed, as the distribution of $\left(\overline{Z}'_{t}\right)_{t\geq0}$ depends on its initial condition only through $Z_0'$ (see Section~\ref{subsect:construction_auxiliary}), Eq.~\eqref{eq:strong_markov_property} implies that for all $f\in M_b\left(\mathcal{X}\right)$, $T$ stopping time of $\left(\mathcal{G}_t\right)_{t\geq0}$ and~$t\geq0$,
$$
\mathbb{E}\left[f\left(Z_{T+t}\right);\,T<+\infty\,|\,\mathcal{G}_T\right] = 1_{\left\{T < +\infty\right\}}\mathbb{E}\left[f\left(Z'_{t}\right)\,|\,Z'_{0} = Z_{T}\right].
$$

%for $f\in M_b\left(\mathcal{X}\right)$, $T$ stopping time oh , and $t\geq0$, it holds
%$$
%\mathbb{P}\left[f\left(\overline{Z}_{T+t}\right),\,T<+\infty\,|\,\mathcal{G}_T\right] = 1_{\left\{T < +\infty\right\}}\mathbb{P}\left[f\left(\overline{Z}'_{t}\right),\,|\,\overline{Z}'_{0} = \overline{Z}_{T}\right].
%$$
%We thus have

%We then obtain that $\left(Z_t\right)_{t\geq0}$ also satisfies the Strong Markov property relative to~$\left(\mathcal{G}_t\right)_{t\geq0}$, as this is a coordinate of $\left(\overline{Z}_t\right)_{\geq0}$.

It thus only remains to verify that the assumptions of Proposition~\ref{prop:path_type} are verified for~$\left(X_t\right)_{t\geq0} = \big(\overline{Z}_t\big)_{t\geq0}$. First, in view of the construction of the processes $\left(Z_t\right)_{t\geq0}$, $\big(\Tilde{N}_t\big)_{t\geq0}$, and $\left(I_n,J_n\right)_{n\in\mathbb{N}}$ in Section~\ref{subsect:construction_auxiliary}, one can simulate the process $\big(\overline{Z}_t\big)_{t\geq0}$ thanks to a sequence of uniform random variables on~$[0,1]$. Thus, in view of Remark~\ref{rem:hilbert_cube}, we can assume  without loss of generality that $\left(\overline{Z}_t\right)_{t\geq0}$ has been constructed on a sample space $\Tilde{\Omega} = \left[0,1\right]^{\mathbb{N}}$. Moreover, one can easily see that $\left(\overline{Z}_t\right)_{t\geq0}$ corresponds to a piecewise Markovian deterministic process, and that~\hbox{\cite[$\left(24.8\right)$ - $1.$, $2.$, $3.$]{davis1993}} are verified for this process. To verify~\cite[$\left(24.8\right)$ - $4.$]{davis1993}, we need to prove that starting from any initial condition, the expectation of the number of jumps of~$\big(\overline{Z}_t\big)_{t\geq0}$ is finite at every time. To do so, recall that the jump rate of~$\left(\overline{Z}_t\right)_{t\geq0}$ (before extinction) is 	by Section~\ref{subsect:construction_auxiliary} (in particular~\eqref{eq:tail_events}-\eqref{eq:density_events})
$$
(x,a) \in\mathcal{X} \mapsto \lambda_{\psi} + b(x,a) - \frac{\partial_a\psi(x,a)}{\psi(x,a)},
$$
which is bounded on compact sets by~\eqref{eq:birth_rate_assumption}. Then, by this last property, the number of jumps of~$\left(\overline{Z}_t\right)_{t\geq0}$ on intervals of the form $[0,T]$, where $T>0$, can be bounded from above by the number of jumps of a homogeneous Poisson point process. As this Poisson point process has a finite expectation (see~\cite[Theorem $1.1.1$]{tijms_first_2003}), we obtain that $\left(\overline{Z}_t\right)_{t\geq0}$ verifies~\cite[$\left(24.8\right)$ - $4.$]{davis1993}. Hence, all the assumptions of Proposition~\ref{prop:path_type} are satisfied for~$\left(X_t\right)_{t\geq0} = \big(\overline{Z}_t\big)_{t\geq0}$, which implies that $\Omega$, $\big(\mathcal{G}_t\big)_{t\geq0}$ and~$\left(Z_t\right)_{t\geq0}$ satisfy the properties we need.
%, which implies that $\Omega$, $\big(Z_t\big)_{t\geq0}$ and~$\left(\mathcal{G}_t\right)_{t\geq0}$ satisfy the properties we need
%In addition, \cite[$\left(24.8\right)$ - $4$]{davis1993} is verified from the fact that before jumping to the cemetery state $\left(\partial\right)_5$, the jump rate of $\left(\overline{Z}_t\right)_{t\geq0}$ is $\lambda_{\psi} + b(x,a)$ by Section~\ref{subsect:construction_auxiliary},

\paragraph{Assumption~\hyperref[te:assumptions_velleret]{$(\underline{A}_0)$}.} As this is short, we now give the sequence~$(D_l)_{l\geq 1}$ we use and verify~\hyperref[te:assumptions_velleret]{$(\underline{A}_0)$}. To simplify the future computations, we need that the marginal over telomere lengths in $D_1$ is a subset of~$[0,B_{\max}l]^{2k}$, defined in~\hyperlink{paragraph:long_time_behaviour_S2.2}{$(S_{2.2})$}. For the same reason, we also need to be able to compare the age variable to $a_0$, defined above~\eqref{eq:birth_rate_assumption}. That is why we take for all $l\geq 1$ 
\begin{equation}\label{eq:Dl_expression}
D_l = [0,B_{\max}l]^{2k}\times[0,a_0l], \hspace{6mm} \mathcal{D}_l = [0,B_{\max}l]^{2k}. 
\end{equation}
The set $\mathcal{D}_l$ corresponds to the marginal of $D_l$ over telomere lengths, and is frequently used in this paper. It is trivial to see that this is a sequence of closed subsets such that for all $l \geq 1$: $D_l \subset \text{int}(D_{l+1})$. Thus,~\hyperlink{te:assumptions_velleret_A0}{$(\underline{A}_0)$} is verified for this choice of $(D_l)_{l\geq 1}$.

\paragraph{Plan of the proof.} We now verify Assumptions \texorpdfstring{\hyperref[te:assumptions_velleret]{$(A_1)-(A_2)-(A_3)_F$}}{(A0)-(A1)-(A2)-(A3)F}, which require more computations, in Sections~\ref{subsubsect:assumption_A1}, \ref{subsubsect:assumption_(A2)}, and~\ref{subsubsect:assumption_(A3)F} respectively. Then, in Section \ref{subsect:existence_stationary_profile} we prove that the latter implies the existence of a stationary profile for $(M_t)_{t\geq0}$ in~$\mathdutchcal{M}(\psi)$, and that the stationary profile is the same for all the functions in $\Psi$. Finally, in Section~\ref{subsect:density_stationary_profile}, we obtain a representation of the stationary profile by a function.

\subsection{Assumption \texorpdfstring{\protect\hyperlink{te:assumptions_velleret_A1}{$(A_1)$}}{(A1)} : Doeblin condition on the event \texorpdfstring{$\{t < T_{D_L}\}$}{\{t < t\_\{DL\}\}}}\label{subsubsect:assumption_A1}

Assumption \hyperlink{te:assumptions_velleret_A1}{$(A_1)$} corresponds to a Doeblin condition on the event $\{t < T_{D_L}\}$. Our aim is to show that such a condition is verified, for any initial condition supported on one of the sets~$(D_l)_{l\geq1}$. The following proposition implies \hyperlink{te:assumptions_velleret_A1}{$(A_1)$} with $\nu$ defined by 
$$
\nu(dx,da) := 
\frac{1}{C_{\nu}}\left(\prod_{i = 1}^{2k} (x_i)^{m_0}\right)1_{D_1}(x,a)dxda,$$ where~$C_{\nu}=\int_{(u,v)\in D_1}\left(\prod_{i = 1}^{2k} (u_i)^{m_0}\right)dudv$ and $m_0$ has been introduced in~\hyperlink{paragraph:long_time_behaviour_S1.2}{$(S_{1.2})$}.

\begin{prop}[Doeblin condition]\label{prop:to_prove_A1}
Assume that \hyperlink{paragraph:long_time_behaviour_S1.1}{$(S_{1.1})$}, \hyperlink{paragraph:long_time_behaviour_S1.2}{$(S_{1.2})$} and \hyperlink{paragraph:long_time_behaviour_S2.2}{$(S_{2.2})$} hold. Then for all $l\geq 1$ there exist $L \geq l+2$, and $t,c_F>0$, such that for all $(x,a)\in D_l$
$$
\begin{aligned}
\mathbb{P}_{(x,a)}\left[Z_t\in dx'da';\,t < \min\left(\tau_{\partial},\,T_{D_L}\right)\right] \geq c_F\left(\prod_{i = 1}^{2k} (x'_i)^{m_0}\right)1_{D_l}(x',a')dx'da'.\\
\end{aligned}
$$
\end{prop}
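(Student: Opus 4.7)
The plan is to force the auxiliary process $(Z_t^{(\psi)})_{t\geq 0}$ to perform exactly $m_0$ jumps in a short time window $[0,t]$, with the $j$-th jump having type $(I_j, J_j) = (\mathbb{I}^j, \mathbb{J}^j)$ provided by~\hyperlink{paragraph:long_time_behaviour_S1.2}{$(S_{1.2})$}. Since $\bigcup_{j}\mathbb{I}^j = \llbracket 1, 2k\rrbracket$ and $\mathbb{I}^j\subset \mathbb{J}^j$ for each $j$, after these $m_0$ jumps every telomere coordinate has been both shortened (according to the density $g$, bounded below by $g_{\min}$) and lengthened (according to $h$, bounded below by $h_{\min}$) at least once. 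This is precisely what produces a Lebesgue density in the telomere component of $Z_t$, and the polynomial factor $\prod_{i=1}^{2k}(x'_i)^{m_0}$ will arise from the non-absorption constraints at every intermediate jump.

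To set up the geometry, I would fix $L$ large enough that every sequence of $m_0$ jumps with shortenings in $[0,\delta]$ and lengthenings in $[0,\Delta]$ keeps telomere lengths inside $\mathcal{D}_L = [0, B_{\max}L]^{2k}$ when starting in $\mathcal{D}_l$, and large enough that $a_0 L$ exceeds the maximal age reached along such a trajectory. I would then pick $t > 0$ small (depending only on $l$) and a common window $[\eta_0, \eta_0 + \eta]$ for the $m_0$ inter-jump times so that on the considered event the residual age $a' = t - \mathdutchcal{T}_{m_0}$ lies in $[0, a_0 l]$. On this parameter region, the jump-rate density $\mathdutchcal{G}_a$ from~\eqref{eq:useful_notation_third} and the survival weight $\overline{\mathdutchcal{H}}_0$ from~\eqref{eq:tail_events} are bounded below by a positive constant depending only on $l$, using~\eqref{eq:birth_rate_assumption} and Lemma~\ref{lemma:inequalities_psi}; similarly $\mathdutchcal{V}\geq \mathdutchcal{V}_{\min}$ by~\hyperlink{paragraph:long_time_behaviour_S2.2}{$(S_{2.2})$}.

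The next step is to apply Lemma~\ref{lemm:generalized_duhamel} to the event $\{N_t = m_0,\ (I_j, J_j) = (\mathbb{I}^j, \mathbb{J}^j),\ \mathdutchcal{T}_j - \mathdutchcal{T}_{j-1}\in [\eta_0, \eta_0 + \eta]\}$ and to unfold each kernel $d\pi_y^{\mathbb{I}^j, \mathbb{J}^j}$ via~\eqref{eq:measure_by_event},~\eqref{eq:measure_shortening} and~\eqref{eq:measure_elongation}. The $\delta_0$ factors on coordinates outside $\mathbb{I}^j$ guarantee that only the coordinates in $\mathbb{I}^j$ are updated at the $j$-th jump, the weight $p_{\mathbb{J}^j, \mathbb{M}^j}$ is bounded below by $p_{\min}$ (applying~\hyperlink{paragraph:long_time_behaviour_S1.2}{$(S_{1.2})$} with $A = B_{\max} L$), and on the compact region $g$ and $h$ are bounded below by $g_{\min}$ and $h_{\min}$ by~\hyperlink{paragraph:long_time_behaviour_S1.1}{$(S_{1.1})$}.

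The final step is a change of variables: for each coordinate $i$, replace the $q_i := \#\{j : i\in \mathbb{I}^j\}\geq 1$ pairs of shortening/lengthening increments by the $q_i$ successive intermediate positions of that coordinate, of which only the last one is $x'_i$. Integrating out the $q_i - 1$ intermediate positions under the non-absorption constraints (every intermediate value must remain non-negative) yields a Lebesgue density in $x'_i$ bounded below by a positive multiple of $(x'_i)^{q_i - 1}$ near $0$, which, after slightly enlarging the exponent uniformly in $i$, absorbs into $(x'_i)^{m_0}$. Integrating the jump times over their window then produces the $da'$-density on $[0, a_0 l]$. The main obstacle is precisely this last step: although the kernels $d\mu^{(S,I)}$ and $d\mu^{(E,J,M)}$ factor coordinatewise, the non-absorption constraints couple all $2k$ coordinates through the shared sequence of jump times, and one must isolate a subregion of the parameter space on which every positivity constraint holds while producing a clean product lower bound of the form $\prod_{i=1}^{2k}(x'_i)^{m_0}$.
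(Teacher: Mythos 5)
Your sketch reproduces, essentially correctly, the \emph{local} ingredient of the paper's argument (Lemma~\ref{lemma:inequality_doeblin_velleret}): force exactly $m_0$ jumps with prescribed types $(\mathbb{I}^j,\mathbb{J}^j)$, use $g_{\min}$, $h_{\min}$, $p_{\min}$ and the boundedness of $b$, $\psi$, $\mathdutchcal{V}$ on $D_L$ to lower-bound the Duhamel expansion from Lemma~\ref{lemm:generalized_duhamel}, and extract a polynomial factor $\prod_i (x'_i)^{m_0}$ from the positivity constraints at intermediate jumps. But there is a genuine gap: this single pass of $m_0$ jumps cannot give a density bounded below on \emph{all} of $D_l$, uniformly over starting points $(x,a)\in D_l$, which is what the statement (the factor $1_{D_l}(x',a')$) requires. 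Each shortening moves a coordinate by at most $\delta$ and each lengthening by at most $\Delta_x\leq\Delta$ (and $\Delta_x$ may be much smaller than $\Delta$; it is only bounded below on compacts by $\kappa=\min_{x\in D_l}\Delta_x$), so after $m_0$ jumps the displacement of any coordinate is confined to a bounded interval of length at most $m_0(\delta+\Delta)$, while two points of $\mathcal{D}_l=[0,B_{\max}l]^{2k}$ can differ by $B_{\max}l$ in each coordinate. Worse, even inside the reachable set the relevant density is a convolution of finitely many compactly supported densities and therefore vanishes at the edges of its support, so no uniform lower bound of the form $c\prod_i(x'_i)^{m_0}$ over the whole reachable range is available; this is exactly why the paper's Lemma~\ref{lemma:inequality_doeblin_velleret} restricts the target to a ball of radius $r=\min\big(\tfrac{B_{\max}}{2},\tfrac{\min(\delta,\kappa)}{10}\big)$ around the starting point, where displacements $|s|\leq 3r$ keep the convolution density bounded below.

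The missing idea is the globalization: the paper proves the local $(r,l,L)$-Doeblin condition only between nearby points, then uses the Markov property to \emph{transfer} it one ball further at the cost of a fixed multiplicative constant $c_a$ and an extra time $t_a$ (Lemma~\ref{lemma:vicinity_velleret}), and finally chains these transfers along an arithmetic sequence of intermediate points and a finite cover of the compact set $\mathcal{D}_l$ by balls of radius $r$ (Appendix~\ref{subsect:proof_prop_to_prove_A1}). The resulting time $t_F=t_I+Nt_a$ and constant $c_F=c_I(c_a)^N$ involve many more than $m_0$ jumps, in contrast with your claim that a small $t$ and exactly $m_0$ jumps suffice. Two further, more minor, points: your inter-jump window $[\eta_0,\eta_0+\eta]$ must sit above $a_0$ so that $b\geq b_0$ applies (the paper takes $[a_0,2a_0]$), and the heuristic that integrating out intermediate positions yields a density $\gtrsim (x'_i)^{q_i-1}$ which can be ``absorbed into $(x'_i)^{m_0}$'' needs the careful treatment of the coupling between coordinates through the events $\{x_i-(\alpha_1)_i\geq 0\}$ that the paper carries out via the explicit functions $g_{j,1},g_{j,2},F_j$ in the proof of Lemma~\ref{lemma:inequality_doeblin_velleret}; as written, your argument only sketches this locally and does not yield the stated global bound.
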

\noindent The above statement is stronger than what we need to have \hyperlink{te:assumptions_velleret_A1}{$(A_1)$}, as we have $1_{D_l}$ instead of $1_{D_1}$ in the right-hand side term. This is because we need it later when we prove \hyperlink{te:assumptions_velleret_A3}{$(A_3)_F$}, see Section~\ref{subsubsect:assumption_(A3)F}. Our aim in the current section is to prove Proposition~\ref{prop:to_prove_A1}.

\noindent Let us denote for all $x\in\mathbb{R}_+^{2k},\,r>0$,
$$
\begin{aligned}
&B(x,r) = \left\{y\in\mathbb{R}_+^{2k}\,\big|\, ||y-x||_{\infty} < r\right\}.
\end{aligned}
$$
Let us define the notions of $(r,l,L,t,c)$- and $(r,l,L,t)$-local Doeblin conditions. We remind that $\mathcal{D}_l$, defined in~\eqref{eq:Dl_expression}, corresponds to the marginal of $D_l$ over telomere lengths. 
\begin{dft}[Local Doeblin condition]\label{dft:local_doeblin_condition}
Let $l,\,L\geq 1$, $r >0$. 
\begin{itemize}[leftmargin=0.5cm]
\item For all $t > 0$, $c > 0$, we say that a $(r,l,L,t,c)$-local Doeblin condition holds from $x_I\in \mathcal{D}_l$ to $x_F\in \mathcal{D}_l$ when for all $(x,a)\in \left[B(x_I,r)\cap \mathcal{D}_l\right]\times[0,a_0l]$ we have
$$
\mathbb{P}_{(x,a)}\left[Z_t\in dyds;\,t< \min(\tau_{\partial},T_{D_L})\right] 
\geq c\left(\prod_{i = 1}^{2k} (y_i)^{m_0}\right)1_{\left\{(y,s)\in B(x_F,r)\cap \mathcal{D}_l\times[0,a_0l]\right\}}dyds.
$$

\item For all $t>0$, we say that a $(r,l,L,t)$-local Doeblin condition holds from $x_I\in \mathcal{D}_l$ to $x_F\in \mathcal{D}_l$ when there exists $c > 0$ such that a $(r,l,L,t,c)$-local Doeblin condition holds from $x_I$ to~$x_F$. 
\end{itemize}
\end{dft}
%\noindent As $\mathcal{D}_l$ is a compact set, for all $r >0$ we can find $N\in\mathbb{N}^*$ and $(x_j)_{j\in \llbracket1,N\rrbracket}\in (\mathcal{D}_l)^{N}$ such that \hbox{$\mathcal{D}_l=\cup_{j\in \llbracket1,N\rrbracket} \left[B(x_j,r)\cap \mathcal{D}_l\right]$}. Therefore, if we prove that there exists $t_F>0$ such that for all $(j,j')\in (\llbracket1,N\rrbracket)^2$, a $(r,l,L,t_F)$-local Doeblin conditions holds from~$x_j$ to~$x_{j'}$, then Proposition \ref{prop:to_prove_A1} will be true.
\noindent We fix 
\begin{equation}\label{eq:dft_constant_r}
r = \min\left(\frac{B_{\max}}{2},\frac{|\min_{x\in D_l}(\Delta_x)-\delta|}{10}\right)
\end{equation}
for the radius of the balls we use to cover~$\mathcal{D}_l$. The constants in the definition of $r$ have been introduced in \hyperlink{paragraph:long_time_behaviour_S1.1}{$(S_{1.1})$} and \hyperlink{paragraph:long_time_behaviour_S2.1}{$(S_{2.1})$}. We now provide the steps of the proof, and then give comments on the first step, which is the most crucial step. %\answer{, and then briefly intuit how local Doeblin conditions can be obtained from $x_I\in\mathcal{D}_l$ to $x_F\in\mathcal{D}_l$, where $x_I$ and $x_F$ are two points in a same ball.}

\paragraph{Steps of the proof.} To obtain Proposition \ref{prop:to_prove_A1}, we follow \cite[Section~$4$]{velleret_exponential_2023}. There are three steps in the proof that we present below. The full proof is detailed in Appendix~\ref{appendix:proof_statements_to_obtain_A1} for the reader’s convenience. 
\begin{enumerate}[leftmargin=*]
\item We prove that there exists $t_I >0$ such that if two points are contained in a same ball of radius $r$, then a $(r,l,L,t_I)$-local Doeblin condition holds between them (see Lemma~\ref{lemma:inequality_doeblin_velleret}). %This gives us at the end Lemma~\ref{lemma:inequality_doeblin_velleret}, stated and proved in Appendix~\ref{subsubsect:inequality_doeblin_velleret}.

\item We use Lemma~\ref{lemma:inequality_doeblin_velleret} to prove that there exists $t_a >0$ such that if a $(r,l,L,t)$-local Doeblin condition holds from $x_I \in \mathcal{D}_l$ to $x_F\in\mathcal{D}_l$ for some $t\geq0$, then for all \hbox{$y\in \left(B(x_F,r)\cap\mathcal{D}_l\right)$}, a $(r,l,L,t+t_a)$-local Doeblin condition also holds from $x_I$ to~$y$ (see Lemma~\ref{lemma:vicinity_velleret}). 
%This gives us at the end Lemma~\ref{lemma:vicinity_velleret}, stated and proved in Appendix~\ref{subsubsect:proof_transfer_localDoeblin}.

\item From the latter, we prove that there exists $t_F >0$ such that a $(r, l, L, t_F)$-local Doeblin condition holds for each pair of points of the set~$\mathcal{D}_l$, even if the points are far apart. Then, as $\mathcal{D}_l$ can be covered by a finite union of balls of radius $r$, the proposition is proved. This step corresponds to the proof of Proposition~\ref{prop:to_prove_A1}, and is detailed in Appendix~\ref{subsect:proof_prop_to_prove_A1}.
\end{enumerate}

\paragraph{Comments on the first step.} The first step corresponds, in fact, to the main argument to obtain Proposition~\ref{prop:to_prove_A1}. The two other steps follow quite naturally from it. The biological interpretation of the result obtained in the first step (see Lemma~\ref{lemma:inequality_doeblin_velleret}) is that in a cell lineage, after a certain amount of time, the last cell may have any telomere lengths and age within a neighbourhood of those of the initial cell. This property is observed here mainly as a consequence of three assumptions.
\begin{itemize}[leftmargin=*]
\item The first of these assumptions is~\hyperlink{paragraph:long_time_behaviour_S1.2}{$(S_{1.2})$}. Biologically, it means that for a cell lineage, it is possible to have at least one shortening and lengthening in all the telomeres after $m_0$ divisions. As a result, the cell of the $m_0$-th generation of the lineage can have telomere lengths within a neighbourhood of those of the initial cell. 
\item The second assumption is the second line of~\eqref{eq:birth_rate_assumption}. It implies biologically that any cell with age greater than $a_0$ will divide after a finite amount of time. It is crucial to have this property to be able to control the probability of reaching the~$m_0$-th generation sufficiently fast.
\item The third assumption is the one from the first line of~\eqref{eq:birth_rate_assumption}. It ensures that the division rate of cells is bounded from above on finite time intervals. This condition guarantees that the time between two successive divisions is not excessively short. This property is essential to ensure that the time at which a lineage reaches the $m_0$-th generation can be large. It is also essential to ensure that a lineage can stay at the $m_0$-th generation during an arbitrary long duration.
\end{itemize}
These assumptions imply constraints on the time $t_I$ at which we obtain local Doeblin conditions in the first step. More precisely, we necessarily have $t_I > a_0m_0 + a_0l$. The~first reason is that the second assumption mentioned above only allows us to ensure that a lineage can reach the $m_0$-th generation after the time $a_0m_0$. The second reason is that once this age is reached, one must wait an additional time~$a_0l$ to be able to attain an age of $a_0l$. In our case, the local Doeblin conditions have only been obtained for~$t_I\in \left[(l+2(m_0-1)+1)a_0,(l+2m_0)a_0\right]$ (see Lemma~\ref{lemma:inequality_doeblin_velleret}). %. The reason is that obtaining local Doeblin conditions for all the possible values of $t_I$ is not the main purpose of this work. By considering intervals of the form $[a_0,a_0+\varepsilon]$, for any $\varepsilon \in (0,a_0]$, instead of the intervals $[a_0,2a_0]$ used in Appendix~\ref{subsubsect:inequality_doeblin_velleret}, it seems possible to obtain local Doeblin conditions for any~$t_I\in \left(a_0m_0 + a_0l,(l+2m_0)a_0\right]$. Then, by taking a larger values for $L$ in Appendix~\ref{subsubsect:inequality_doeblin_velleret}, it seems possible to extend this result for any~$t_I> a_0m_0 + a_0l$.

%Nous référons à la preuve donnée dans la Section~\ref{} pour voir comment chacune de ces hypothèses est utilisée.
%Cette hypothèse est importante car elle permet de justifier qu’à partir d’un instant $2a_0m_0$, la probabilité que l’on soit à la génération $m_0$, et donc que la dernière cellule de la lignée est rallongés et raccourcis tous ces télomères, est non-nulle. Ces hypothèses vont donc induire des contraintes sur le temps auquel nous allons obtenir nos premières estimées locales (temps $t_I$ présenté plus haut), qui devra être plus grand que $2a_0m_0 + a_0 l$. En effet, il faut d'abord attendre un temps $2a_0m_0$ pour être capable de borner la probabilité d'être à la $m_0$-ème génération, et ensuite attendre un temps $a_0l$ pour que la cellule de la $m_0$-eme génération puisse avoir un âge de $a_0$. Cela implique également des restrictions sur le nombre de saut que la particule doit avoir fait pour obtenir ces estimées, qui sera nécessairement égal à $m_0$. Pour plus d'information, nous référons à la preuve donnée dans l'Appendix~\ref{subsubsect:inequality_doeblin_velleret}.}

%\answer{Ces hypothèses impliquent des restrictions sur le temps sur lequel les} 

\subsection{Assumption \texorpdfstring{\protect\hyperlink{te:assumptions_velleret_A2}{$(A_2)$}}{(A2)} : Concentration of the mass of the semigroup conditioned to the non-extinction on one of the \texorpdfstring{$D_l$}{Dl}}\label{subsubsect:assumption_(A2)}

As the set $\mathcal{X}$ is not compact, we need to prove that the mass of the semigroup accumulates on a
compact set. The latter can be proved by obtaining exponential estimates that we present in Section~\ref{subsubsect:explanation_proof_A2}. The usual ways to obtain these estimates is to bound from below/above stopping times involved in the dynamics by exponential variables, as done in~\cite{velleret_unique_2022,velleret_exponential_2023,champagnat_general_2023,martinez_existence_2014}, or to bound the infinitesimal generator and then apply Gronwall's lemma, as presented in~\cite[Prop.~$2$]{bansaye2022} and~\cite[Theo.~$5.1$]{champagnat_general_2023}. However, as mentioned in the introduction, these strategies fail for our model. This is due to the fact that the birth rate is neither bounded from above nor bounded away from $0$, and due to the fact that the renewal of individuals in a compact set may occur in several generations (when \hyperlink{paragraph:long_time_behaviour_S2.1}{$(S_{2.1})$} holds with~$G > 1$). Moreover, even when the birth rate is bounded and $G = 1$, the estimates obtained with these methods are not precise enough to verify the condition $\rho_S < \rho$ in~\hyperlink{te:assumptions_velleret_A2}{$(A_2)$}. Indeed, checking this condition comes down to imposing a restriction on $\varepsilon_0$ and~$\varepsilon_1$ ($\varepsilon_1$ and $\varepsilon_0$ are introduced in \hyperlink{paragraph:long_time_behaviour_S2.1}{$(S_{2.1})$}). This restriction is not optimal due to the loss of information that occurs when the birth rate, which depends on the age, is compared with age-independent rates.

We present here a method to handle these issues based on Bellman-Harris processes, which are the most classical age-dependent branching processes \cite[Chap. IV]{athreya_1972}. In our method, the birth rate only needs to be bounded from below/above by other age-dependent rates (see~\hyperlink{paragraph:long_time_behaviour_S3.1}{$(S_{3.1})$}). The restriction on $\varepsilon_0$ and $\varepsilon_1$ is also better optimised, as information on the evolution of the birth rate with age is used~\hbox{(see~\hyperlink{paragraph:long_time_behaviour_S3.2}{$(S_{3.2})$}-\hyperlink{paragraph:long_time_behaviour_S3.3}{$(S_{3.3})$})}. We are therefore able to manage more cases than when the above strategies are used. 

First, in Section~\ref{subsubsect:explanation_proof_A2}, we present a condition to verify Assumption \hyperlink{te:assumptions_velleret_A2}{$(A_2)$}, as well as the notion of Bellman-Harris process. Then, in Section~\ref{subsubsect:useful_statements_A2}, we present auxiliary statements that allow us to check the condition we have introduced, and verify \hyperlink{te:assumptions_velleret_A2}{$(A_2)$}. Finally, we prove all these auxiliary statements in Sections~\ref{subsubsect:proof_inequality_means},~\ref{subsubsect:proof_prop_renewal_set} and \ref{subsubsect:proof_bound_tail_probability}.
\subsubsection{Condition to verify~\texorpdfstring{\protect\hyperlink{te:assumptions_velleret_A2}{$(A_2)$}}{(A2)} and Bellman-Harris processes}\label{subsubsect:explanation_proof_A2}

Qualitatively, Assumption~\hyperlink{te:assumptions_velleret_A2}{$(A_2)$} combined with Assumption~\hyperlink{te:assumptions_velleret_A1}{$(A_1)$} implies that there exists~$l_{\circ} \in \mathbb{N}^*$ such that the mass of the semigroup (conditioned to the non-extinction) is concentrated on $D_{l_{\circ}}$ after a certain time \cite[Theorem $5.1$]{velleret_unique_2022}. A sufficient condition for this to hold should be that there exists $L_1 \in \mathbb{N}^*$ such that when the particle $\left(Z_t\right)_{t\geq0}$ starts from~$\left(D_{L_1}\right)^c$, the rate at which it leaves this set is larger than the rate at which the particle comes back to~$D_{L_1}$ when it starts from it. Rigorously, this occurs for example when there exist $\alpha',\,\beta'\in\mathbb{R}$ such that $\alpha' > \beta'$, $t_1,\,t_2 >0$, a set $A \subset D_1$ non negligible with respect to~$\nu$, and $L_1\in\mathbb{N}^*$ such that
\begin{align}
\label{eq:renewal_explanation}
&\forall (x,a) \in A:\,\hspace{6.8mm} \mathbb{P}_{(x,a)}\left[Z_{t_1}\in A, \min(\tau_{\partial},T_{D_{L_1}}) > t_1\right] \geq e^{\alpha' t_1},\\
\label{eq:tail_stay_Ec_explanation}
&\forall (x,a) \in (D_{L_1})^c,\,\forall t \geq t_2:\,\hspace{1.9mm}\mathbb{P}_{(x,a)}\left[\min(\tau_{D_{L_1}},\tau_{\partial}) > t\right] \leq e^{\beta' t}.
\end{align}
Indeed, as $A \subset D_{1} \subset D_{L_1}$, the rate at which the particle leaves  $(D_{L_1})^c$ is strictly larger than the rate at which the particle renews in the set $D_{L_1}$. 
Let us prove that~\eqref{eq:renewal_explanation} and~\eqref{eq:tail_stay_Ec_explanation} indeed imply Assumption~\hyperlink{te:assumptions_velleret_A2}{$(A_2)$}. 

\begin{prop}[Condition to verify \hyperlink{te:assumptions_velleret_A2}{$(A_2)$}]\label{prop:simplification_A2}
We suppose that there exist $\alpha',\,\beta'\in\mathbb{R}$ such that $\alpha' > \beta'$, $t_1,\,t_2 >0$, a set $A \subset D_1$ non negligible with respect to $\nu$, and $L_1\in\mathbb{N}^*$ such that \eqref{eq:renewal_explanation} and \eqref{eq:tail_stay_Ec_explanation} are satisfied. Then, \hyperlink{te:assumptions_velleret_A2}{$(A_2)$} is verified with $E = D_{L_1}$ and for any~$\rho \in\left(-\alpha',-\beta'\right)$. Moreover, we have $\rho_S \leq -\alpha'$. 
\end{prop}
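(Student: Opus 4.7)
The plan is to verify the two clauses of \hyperlink{te:assumptions_velleret_A2}{$(A_2)$} separately, with the natural choice $E = D_{L_1}$: the uniform exponential moment bound will come from~\eqref{eq:tail_stay_Ec_explanation}, and the estimate $\rho_S \leq -\alpha'$ from~\eqref{eq:renewal_explanation}. Since $\alpha' > \beta'$, the interval $(-\alpha',-\beta')$ is non-empty, and any $\rho$ in it satisfies both $\rho + \beta' < 0$ and $\rho > -\alpha'$ -- exactly the conditions arising in each of the two steps.

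For the moment bound, I split on whether $z \in D_{L_1}$ or $z \in (D_{L_1})^c$. On $D_{L_1}$, $\tau_E = 0$ and the expectation equals $1$. On $(D_{L_1})^c$, $\tau_E = \tau_{D_{L_1}}$, so writing $T := \min(\tau_\partial, \tau_{D_{L_1}})$, the tail formula
$$
\mathbb{E}_z\!\left[e^{\rho T}\right] = 1 + \rho \int_0^{+\infty} e^{\rho u}\, \mathbb{P}_z[T > u]\, du
$$
combined with~\eqref{eq:tail_stay_Ec_explanation} on $[t_2,+\infty)$ and the trivial bound $\mathbb{P}_z[T > u] \leq 1$ on $[0,t_2]$ gives an upper bound that is independent of $z$ and finite as soon as $\rho + \beta' < 0$.

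For $\rho_S \leq -\alpha'$, it suffices to exhibit, for each $\gamma > -\alpha'$, a single $L$ such that $\inf_{t>0} e^{\gamma t}\, \mathbb{P}_\nu[t < \min(\tau_\partial, T_{D_L})] > 0$. I take $L = L_1$ and iterate~\eqref{eq:renewal_explanation} via the strong Markov property at the deterministic times $k t_1$ -- legitimate in the augmented filtration $(\mathcal{G}_t)_{t\geq 0}$ of Section~\ref{subsect:preliminaries_theorem} because $(\overline{Z}_t)_{t\geq 0}$ is Feller and both events $\{Z_{k t_1} \in A\}$ and $\{T_{D_{L_1}} > k t_1\}$ are $\mathcal{G}_{k t_1}$-measurable. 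By induction on $n \in \mathbb{N}$, for every $(x,a) \in A$,
$$
\mathbb{P}_{(x,a)}\!\left[Z_{n t_1} \in A,\ \min(\tau_\partial, T_{D_{L_1}}) > n t_1\right] \geq e^{n \alpha' t_1}.
$$
For arbitrary $t > 0$, choosing $n = \lceil t/t_1 \rceil$ and exploiting the monotonicity of $s \mapsto \mathbb{P}_{(x,a)}[s < \min(\tau_\partial, T_{D_{L_1}})]$ yields $\mathbb{P}_{(x,a)}[t < \min(\tau_\partial, T_{D_{L_1}})] \geq C\, e^{\alpha' t}$ for some $C > 0$ depending only on $\alpha'$ and $t_1$ (this constant absorbs the discretisation error for either sign of $\alpha'$). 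Integrating against $\nu$ and using $\nu(A) > 0$ produces $\mathbb{P}_\nu[t < \min(\tau_\partial, T_{D_{L_1}})] \geq \nu(A)\, C\, e^{\alpha' t}$, whence $e^{\gamma t} \mathbb{P}_\nu[t<\min(\tau_\partial,T_{D_{L_1}})] \geq \nu(A)\, C > 0$ for every $t > 0$.

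Combining the two steps, every $\rho \in (-\alpha',-\beta')$ satisfies both the uniform moment condition and $\rho > -\alpha' \geq \rho_S$, which is precisely~\hyperlink{te:assumptions_velleret_A2}{$(A_2)$}. The main point to be careful about is the iteration via the strong Markov property: it must be applied in the filtration $(\mathcal{G}_t)_{t\geq 0}$, and the passage from the discrete lower bound at $n t_1$ to a continuous-time bound at all $t > 0$ must yield a multiplicative constant independent of $(x,a) \in A$ and $t$; apart from this, the remaining computations are direct consequences of the tail formula and the two hypotheses.
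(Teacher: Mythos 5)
Your proof is correct, and the second half (the uniform exponential moment bound) is essentially identical to the paper's: the same split between $z\in D_{L_1}$, where $\tau_E=0$ gives expectation $1$, and $z\in (D_{L_1})^c$, where the tail formula plus \eqref{eq:tail_stay_Ec_explanation} and the trivial bound on $[0,t_2]$ give a finite bound uniform in $z$ as soon as $\rho<-\beta'$. The only real divergence is in the first half: for $\rho_S\leq-\alpha'$ the paper simply invokes \cite[Lemma $3.0.2$]{velleret_unique_2022} together with \eqref{eq:renewal_explanation}, whereas you reprove the needed estimate from scratch by iterating \eqref{eq:renewal_explanation} through the Markov property at the deterministic times $nt_1$ and then discretising. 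Your argument is sound — note that the simple Markov property at fixed times suffices (no strong Markov property is needed), that the induction is cleanest if started at $n=1$, which is exactly the hypothesis, and that the discretisation constant is harmless because \eqref{eq:renewal_explanation} forces $e^{\alpha' t_1}\leq 1$, i.e.\ $\alpha'\leq 0$, so $C=e^{\alpha' t_1}$ works. What your route buys is a self-contained verification of the exact content of the cited lemma (and it makes explicit that a single $L=L_1$ witnesses $\sup_{L\geq 1}$ in the definition of $\rho_S$); what the paper's route buys is brevity, since that lemma is precisely tailored to this situation.
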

\begin{proof}
First, in view of \eqref{eq:renewal_explanation} and \cite[Lemma $3.0.2$]{velleret_unique_2022}, we have 
$$
\rho_S \leq -\frac{\ln(e^{\alpha't})}{t}= -\alpha'.
$$
Now, changing the order of integration we have for all $\rho \in\left(-\alpha',-\beta'\right)$, $(x,a) \in (D_{L_1})^c$,
$$
\begin{aligned}
\mathbb{E}_{(x,a)}\left[\exp\left(\rho\min\left(\tau_{\partial}, \tau_E\right)\right)\right] &= \int_0^{+\infty} e^{\rho u}  d\mathbb{P}_{\min\left(\tau_{\partial}, \tau_E\right)}(u) \\
&= \int_0^{+\infty} \left[\int_{0}^u \rho\exp\left(\rho s\right)ds  +1\right] d\mathbb{P}_{\min\left(\tau_{\partial}, \tau_E\right)}(u) \\
&= \rho\int_0^{+\infty} \mathbb{P}_{(x,a)}\left[\min\left(\tau_{\partial}, \tau_E\right) > s\right]\exp\left(\rho s\right)  ds + 1. \\
\end{aligned}
$$
This implies in particular using \eqref{eq:tail_stay_Ec_explanation} that for all $\rho \in\left(-\alpha',-\beta'\right)$, $(x,a) \in (D_{L_1})^c$
\begin{equation}\label{eq:prop_simplification_A2_intermediate}
\begin{aligned}
\mathbb{E}_{(x,a)}\left[\exp\left(\rho\min\left(\tau_{\partial}, \tau_E\right)\right)\right] &\leq \rho\int_0^{t_2} \exp\left(\rho s\right)  ds + \rho\int_{t_2}^{+\infty}e^{(\rho+\beta') s}ds + 1 < +\infty. \\ 
\end{aligned}
\end{equation}
Finally, for all $(x,a)\in D_{L_1}$ we have $\tau_{D_{L_1}} = 0$ a.s., implying that
$$
\mathbb{E}_{(x,a)}\left[\exp\left(\rho\min\left(\tau_{\partial}, \tau_E\right)\right)\right] = 1.
$$
The latter and Eq.~\eqref{eq:prop_simplification_A2_intermediate} imply that Assumption \hyperlink{te:assumptions_velleret_A2}{$(A_2)$} is verified with $E = D_{L_1}$ and for any~\hbox{$\rho \in\left(-\alpha',-\beta'\right)$}.
\end{proof}
\noindent  We underline the fact that \eqref{eq:renewal_explanation} and \eqref{eq:tail_stay_Ec_explanation} correspond more or less to~\cite[Assump. $(G_2)$]{champagnat_2020} in continuous time, and with $\psi_1 = 1$ , $\psi_2 = 1_{K_{\text{ren}}}$. Our aim is to obtain \eqref{eq:renewal_explanation} and \eqref{eq:tail_stay_Ec_explanation}, and then apply Proposition~\ref{prop:simplification_A2} to verify \hyperlink{te:assumptions_velleret_A2}{$(A_2)$}. 

% Our aim is to obtain \eqref{eq:renewal_explanation} and \eqref{eq:tail_stay_Ec_explanation}, and then apply Proposition~\ref{prop:simplification_A2} to verify \hyperlink{te:assumptions_velleret_A2}{$(A_2)$}.
Given that the age of $\left(Z_t\right)_{t\geq0}$ resets to $0$ at each jump, controlling the rate at which the particle exits $\left(D_{L_1}\right)^c$ is essentially similar to controlling the rate at which $\left(X_{N_t}\right)_{t\geq0}$, the marginal of $\left(Z_t\right)_{t\geq0}$ over telomere lengths (see~\eqref{eq:expression_particle}), exits  $\left(\mathcal{D}_{L_1}\right)^c$ (see~\eqref{eq:Dl_expression}). This return to $0$ at each jump also induces a renewal structure for the age, implying that one mainly needs to establish a sufficiently strong renewal for~$\left(X_{N_t}\right)_{t\geq0}$ in a set~$\Tilde{A} \subset \mathcal{D}_1$ to prove~\eqref{eq:renewal_explanation}. One can therefore think that the age structure does not create any particular difficulty in verifying~\eqref{eq:renewal_explanation} and~\eqref{eq:tail_stay_Ec_explanation}, since the properties that must be established are mostly related only to the process $\left(X_{N_t}\right)_{t\geq0}$. In fact, this age structure is causing most of the difficulties here, as we explained in the introduction of Section~\ref{subsubsect:assumption_(A2)}. The root of the difficulties lies on the fact that the process $\left(X_{N_t}\right)_{t\geq0}$ is not Markovian, due to this age structure. Indeed, at any given time, the time until the next jump of $\left(X_{N_t}\right)_{t\geq0}$ depends on the time since its last jump. This process therefore does not admit an infinitesimal generator, which is useful to establish exponential~bounds. To manage this issue, we use \text{Bellman-Harris processes}~\hbox{\cite[Chap. IV]{athreya_1972}}. These correspond to branching processes defined as follows.
\begin{itemize}[leftmargin=*]
\item Lifetimes of individuals are independent and identically distributed.

\item At the end of its life, an individual gives birth to a certain number of new individuals. The number of new individuals is distributed according to a reproduction law~$(p_k)_{k\in\mathbb{N}}$.
\end{itemize}
The property we exploit to get \eqref{eq:renewal_explanation} and \eqref{eq:tail_stay_Ec_explanation} is that the average number of such a process grows exponentially. Indeed, let us consider an arbitrary Bellman-Harris process. We denote by $f$ the probability density function of individual lifetimes, by $\overline{f}$ its associated complementary cumulative distribution function, and by $\gamma = \sum_{k\geq 0}kp_k$ the mean of its reproduction law. We also denote by $m(t)$ the mean number of individuals of this Bellman-Harris process at time $t>0$, when it starts from one individual with age~$0$. If there exists $\alpha' >0$ such that~$\mathcal{L}(f)(\alpha') = \frac{1}{\gamma}$, then by \cite[Eq. $(12)$,\,p.$143$]{athreya_1972} and \hbox{\cite[Theorem $3$.A,p.$152$]{athreya_1972}}, there exists $n_1 >0$ such that
\begin{equation}\label{eq:asymptotic_mean_bellman-harris}
m(t) = \sum_{n\geq 0} \gamma^n\left[f*^{(n)}\overline{f}\right](t) \underset{t\rightarrow+\infty}{\sim} n_1 e^{\alpha ' t},
\end{equation}
where the operation $*^{(n)}$ is defined in Notation~\ref{notation:convolution}. 

Let us explain how we get \eqref{eq:renewal_explanation} and \eqref{eq:tail_stay_Ec_explanation} from \eqref{eq:asymptotic_mean_bellman-harris}. We recall that $(M_t)_{t\geq 0}$, defined in~\eqref{eq:first_moment_semigroup}, is the first moment semigroup of the branching process introduced in Section~\ref{subsect:algorithm_model}. From now on, we call "distorted branching process" the branching process that has for first moment semigroup $\frac{1}{\psi(x,a)}M_t(f\psi)(x,a)$, for all $t\geq 0$,~$f \in M_b(\mathcal{X})$,~$(x,a)\in\mathcal{X}$. By~\eqref{eq:weighted_renormalised_semigroup} and~\eqref{eq:equality_semigroup}, the semigroup of the particle $(Z_t)_{t\geq0}$ is the same as the one of the distorted branching process multiplied by~$e^{-\lambda_{\psi}t}$. Thus, below is the intuition of what we propose to do to obtain \eqref{eq:renewal_explanation} and \eqref{eq:tail_stay_Ec_explanation}.
\begin{itemize}[leftmargin=*]
\item We consider a Bellman-Harris process with individual lifetimes distributed according to the density~\hbox{$\mathdutchcal{F}_0*^{(G-1)}\mathdutchcal{F}_0$} and a reproduction law with mean $1+\varepsilon_0$. We also recall that the Laplace transform of the convolution of two functions is the product of their Laplace transforms. Then, in view of~\eqref{eq:asymptotic_mean_bellman-harris} and~\hyperlink{paragraph:long_time_behaviour_S3.2}{$(S_{3.2})$}, we propose to bound from below the average number of individuals of the distorted branching process that renews in $D_{L_1}$ using the mean of this Bellman-Harris process. Indeed, if we transmit the bound obtained for the distorted branching process to $(Z_t)_{t\geq0}$ by multiplying by~$e^{-\lambda_{\psi}t}$, then we will have~\eqref{eq:renewal_explanation}.

\item We consider a Bellman-Harris process with individual lifetimes distributed according to the density~$\mathdutchcal{J}_{0}$, and a reproduction law with mean~$1+\varepsilon_1$. Then, in view of~\eqref{eq:asymptotic_mean_bellman-harris} and \hyperlink{paragraph:long_time_behaviour_S3.3}{$(S_{3.3})$}, we propose to bound from above the average number of individuals of the distorted branching process that stays in~$(D_{L_1})^c$ using the mean of this Bellman-Harris process.  Again, if we transmit the bound obtained for the distorted branching process to $(Z_t)_{t\geq0}$ by multiplying by~$e^{-\lambda_{\psi}t}$, then we will have~\eqref{eq:tail_stay_Ec_explanation}.

\end{itemize}
We are going to put this intuition into practice. For the first case, we also need to handle the fact that the age of the particle stays in a compact set during renewal. Thus, we do not use exactly the Bellman-Harris presented above to obtain our lower bound. We rather use all the Bellman-Harris processes such that lifetimes are distributed according to $\mathdutchcal{F}_0*^{(G-1)}\mathdutchcal{F}_0$, and such that the reproduction law has a mean in~$\left(1,1+\varepsilon_0\right)$, see Section~\ref{subsubsect:proof_prop_renewal_set}. 

\begin{rem}
The property stated in~\eqref{eq:asymptotic_mean_bellman-harris} is true for Bellman-Harris processes starting from an individual with age $0$. However, in our case, we assume that~$\left(Z_t\right)_{t\geq0}$ can start from any age. This is not a main issue because the age of~$\left(Z_t\right)_{t\geq0}$ immediately returns to~$0$ after one jump. Therefore, before doing our comparisons with Bellman-Harris processes, some steps are required to manage the time before the first jump. We do not present them here as they only correspond to obtaining easy lower and upper bounds~(see~\eqref{eq:conclusion_renewal_intermediate},~\eqref{eq:inequality_lyapunov_intermediate_second} and~\eqref{eq:inequality_lyapunov_intermediate_sixth}).
\end{rem}
\begin{rem}
The process that describes the number of individuals in a Bellman-Harris process, that we call in this remark~$\left(B_t\right)_{t\geq0}$, is not Markovian for the same reason as~$\left(X_{N_t}\right)_{t\geq0}$. However, the property presented in~\eqref{eq:asymptotic_mean_bellman-harris} comes from the fact that the past-dependence of $\left(B_t\right)_{t\geq0}$ dissipates at long time. The reason is that the age distribution of individuals stabilises when $t\rightarrow+\infty$, implying that the time until the next birth of an individual depends only on the number of individuals in the population (and thus on the present). Our way to manage the non-Markovian nature of $\left(X_{N_t}\right)_{t\geq0}$ is thus to show that~$\left(X_{N_t}\right)_{t\geq0}$~can be compared to Markov processes at long time.
\end{rem}
\subsubsection{Auxiliary statements and proof that \texorpdfstring{\protect\hyperlink{te:assumptions_velleret_A2}{$(A_2)$}}{(A2)} is verified}\label{subsubsect:useful_statements_A2}

We now present statements that imply \eqref{eq:renewal_explanation} and \eqref{eq:tail_stay_Ec_explanation}, and use them to verify \hyperlink{te:assumptions_velleret_A2}{$(A_2)$}. We first present the following statement, essential for obtaining \eqref{eq:renewal_explanation} and \eqref{eq:tail_stay_Ec_explanation}. The proof of this statement is given in Section \ref{subsubsect:proof_inequality_means}.
\begin{lemm}[Transfer of means]\label{lemm:inequality_for_means}
Let $m\in\mathbb{N}^*$, $(q(x,dw_1,\hdots,dw_m))_{x\in\mathbb{R}_+^{2k}}$ a family of probability measures on $\left(\mathbb{R}_+^{2k}\right)^m$, $(A_1,\,A_2) \in\left(0,+\infty\right]^2$ such that $A_1 \geq A_2$, and two functions $b_1 : \mathbb{R}_+^{2k}\times[0,A_1) \mapsto \mathbb{R}_+$ and $b_2 : \mathbb{R}_+^{2k}\times[0,A_2) \mapsto \mathbb{R}_+$ such that
$$
\begin{aligned}
&\forall (x,a)\in\mathbb{R}_+^{2k}\times[0,A_2):\hspace{23.5mm} b_1(x,a) \leq b_2(x,a), \\
&\forall i\in\{1,2\},\,\forall (x,a)\in\mathbb{R}_+^{2k}\times[0,A_i):\,\hspace{3mm}\underset{t\longrightarrow A_i}{\lim}\int_a^{t} b_i(x,s) ds = +\infty.
\end{aligned}
$$
We denote for all $i\in\{1,2\}$, $(x,a,t)\in\mathbb{R}_+^{2k}\times[0,A_i)\times\mathbb{R}_+$ 
$$
\begin{aligned}
F_i(x,a,t) &= b_i(x,a)\exp\left(-\int_{a}^{a+t} b_i(x,u)du\right)1_{t\in[0,A_i-a)},\\
\overline{F}_i(x,a,t) &= \exp\left(-\int_{a}^{a+t} b_i(x,u)du\right)1_{t\in[0,A_i-a)},
\end{aligned}
$$
and for all $c> 1$ 
\begin{equation}\label{eq:mean_for_inequality_means}
\begin{aligned}
&E_i(x,a,t,c) = \overline{F}_i(x,a,t) + \sum_{\substack{(n,r)\in \mathbb{N}\times\llbracket0,m-1\rrbracket \\ (n,r)\neq(0,0)}} c^{n}\int_{[0,t]}\hdots \int_{\left[0,t - \sum_{i = 1}^{nm+r-1}s_i\right]} \int_{(\mathbb{R}_+^{2k})^m}\hdots \int_{(\mathbb{R}_+^{2k})^m} \\
&\times \left[F_i(x,a,s_1) F_i(w_1,0,s_2)\hdots F_i(w_{nm+r-1},0,s_{nm+r})\right]\overline{F}_i\left(w_{nm+r},0,t- \sum_{j = 1}^{nm+r}s_{j}\right)\\
&\times\left(ds_1\hdots ds_{nm+r}\right)\left(q(x,dw_1,\hdots,dw_m) \hdots q(w_{nm},dw_{nm+1},\hdots,dw_{(n+1)m})\right).\\ 
\end{aligned}
\end{equation}
Then for all $(x,a,t,c)\in\mathbb{R}_+^{2k}\times\left[0,A_2\right)\times\mathbb{R}_+\times]1,+\infty[$, we have
$$
E_1(x,a,t,c) \leq E_2(x,a,t,c).
$$
\end{lemm}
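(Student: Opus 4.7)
The plan is to interpret $E_i(x,a,t,c)$ as the expectation $\mathbb{E}_{(x,a)}\bigl[c^{\lfloor N_t^{(i)}/m\rfloor}\bigr]$ for a pure-jump process with hazard rate $b_i$ and transition kernel $q$, and then to compare the two processes via a monotone coupling based on inverse transform sampling. Concretely, for each $i\in\{1,2\}$, consider the piecewise-constant process $(X_s^{(i)},A_s^{(i)})_{s\geq 0}$ starting at $(x,a)$, whose $j$-th inter-jump time admits density $F_i$ based at the current post-jump state (the age being reset to $0$ at every jump), and whose successive post-jump positions are sampled in batches of $m$ from $q$: the $k$-th batch is drawn from $q(X^{(i)}_{T^{(i)}_{(k-1)m}},\cdot)$ and supplies the positions reached at jumps $(k-1)m+1,\dots,km$. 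Reading off \eqref{eq:mean_for_inequality_means} term by term, the $(n,r)$-summand is exactly $c^n$ times the joint probability of $\{N_t^{(i)} = nm+r\}$ together with the trajectory of post-jump positions $(w_1,\dots,w_{nm+r})$, integrated out. Since $\lfloor (nm+r)/m\rfloor = n$, summing gives
$$
E_i(x,a,t,c) = \mathbb{E}_{(x,a)}\bigl[c^{\lfloor N_t^{(i)}/m\rfloor}\bigr].
$$

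To compare the two processes, the next step is to couple them by fixing a single sequence $(U_j)_{j\geq 1}$ of i.i.d.\ uniforms on $[0,1]$ and a single sequence of $q$-draws used by both, and defining the $j$-th inter-jump time $\tau_j^{(i)}$ as the unique solution of $\int_{a_j}^{a_j+\tau_j^{(i)}} b_i(z_j,u)\,du = -\ln U_j$, where $(z_j,a_j)$ is the post-jump state immediately before the $j$-th waiting time. By construction, $(z_j,a_j)$ is common to both processes (the positions are determined by the shared $q$-draws, and $a_j=a\mathbf{1}_{\{j=1\}}$). The hypothesis $b_1\leq b_2$ on $\mathbb{R}_+^{2k}\times[0,A_2)$ then forces the cumulative hazard to be pointwise smaller for $i=1$ on the range where process $2$ is still alive, hence $\tau_j^{(1)}\geq \tau_j^{(2)}$ a.s.\ for every $j$, and therefore $N_t^{(1)}\leq N_t^{(2)}$ a.s. Since $c>1$, the map $n\mapsto c^{\lfloor n/m\rfloor}$ is nondecreasing on $\mathbb{N}$, so $c^{\lfloor N_t^{(1)}/m\rfloor}\leq c^{\lfloor N_t^{(2)}/m\rfloor}$ pathwise, and taking expectations yields $E_1\leq E_2$.

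The main technicality to keep track of is the handling of the age thresholds $A_i$: the assumption $\lim_{t\to A_i}\int_a^t b_i(x,s)\,ds=+\infty$ ensures that $\overline{F}_i$ is a proper survival on $[0,A_i-a)$, so every $\tau_j^{(i)}$ is a.s.\ finite and strictly smaller than $A_i-a_j$, which is what makes the density interpretation of $F_i$ and the probabilistic identification of $E_i$ rigorous. When $A_1>A_2$, process $1$ may visit ages where $b_2$ is not defined, but since the comparison $\tau_j^{(1)}\geq \tau_j^{(2)}$ only needs to hold on the event that process $2$ has not yet been absorbed at age $A_2$, the inequality $b_1\leq b_2$ on the common domain $[0,A_2)$ is exactly what is needed; I expect no further obstacle.
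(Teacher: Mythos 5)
Your argument is exactly the paper's: the authors likewise represent $E_i$ probabilistically and couple the two jump processes through a single position chain $(W_l)$ driven by $q$ in batches of $m$ and a single sequence of uniforms $(V_n)$, defining the successive jump times by inverse transform sampling of $1-\overline{F}_i$ at the current post-jump state (age reset to $0$), proving $T_l^{(1)}\geq T_l^{(2)}$ for all $l$ by induction from $b_1\leq b_2$, and concluding by monotonicity of $n\mapsto c^{\lfloor n/m\rfloor}$ since $c>1$. The only cosmetic difference is that the paper encodes the representation as $E_i=\mathbb{E}\bigl[S_t^{(i)}\bigr]$ with $S_t^{(i)}=\sum_{n\geq 0}\sum_{j=0}^{m-1}c^n 1_{\{T_{nm+j}^{(i)}\leq t< T_{nm+j+1}^{(i)}\}}$, i.e. your $c^{\lfloor N_t^{(i)}/m\rfloor}$ restricted to $\{N_t^{(i)}<\infty\}$, which is the precise form of the identity you state.
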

\begin{rem}\label{rem:inequality_means}
When for $i\in\{1,2\}$, $b_i(x,a) = b_i(a)$ i.e. does not depend on $x$, we can integrate all the measures $q$ in \eqref{eq:mean_for_inequality_means} to obtain, using Notation \ref{notation:convolution},
$$
E_i(x,a,t,c) = \overline{F}_i(a,t) + \sum_{\substack{(n,r)\in \mathbb{N}\times\llbracket0,m-1\rrbracket \\ (n,r)\neq(0,0)}} c^{n}\left(F_i(a,.)\right)*\left(F_i(0,.)*^{(nm+r-1)}\overline{F}_i(0,.)\right)(t).
$$
\end{rem}
\noindent In the above, for all $i\in\{1,2\}$ and $(x,a,t,c)\in\mathbb{R}_+^{2k}\times\left[0,A_2\right)\times\mathbb{R}_+\times]1,+\infty[$, $E_i(x,a,t,c)$ represents the mean number of individuals of a birth and death process, where each individual is structured by a trait in $\mathbb{R}_+^{2k}\times[0,A_i)$. This~birth and death process starts from an individual with trait~$(x,a)\in\mathbb{R}_+^{2k}\times[0,A_i)$. The age of each individual grows with a transport term. Each individual dies at a rate $b_i(x',a')$, where $(x',a')\in\mathbb{R}_+\times[0,A_i)$ is its trait, and gives birth to a random number of offspring at death. At each generation that is a multiple of $m$, the distribution of the number of offspring has for mean~$c$. At the other generations, the number of new offspring is almost surely~$1$. The trait of individuals evolves such that at each generation that is a multiple of $m$, lengths of the offspring of the $m$~next generations are updated thanks to the kernel $q(x',dw_1,\hdots, dw_m)$, where $x'$ is the trait in space of the individual that dies at this generation. %In addition, for all $n\in \mathbb{N}$ and $r\in\llbracket0,m-1\rrbracket$, the mean number of individuals at the $(nm + r)$-th generation is $c^n$. 

Qualitatively, this lemma states that when $b_2 \geq b_1$ and $c >1$, the birth and death process presented above with death rate $b_2$ has in average more individuals than the one with death rate $b_1$. In our proof, we use this property to transfer bounds obtained for a birth and death/branching process with death/branching rate $\overline{b}(a)$ or~$\underline{b}(a)$ to a birth and death/branching process with death/branching rate~$b(x,a)$, using Assumption~\hyperlink{paragraph:long_time_behaviour_S3.1}{$(S_{3.1})$}. In particular, this allows us to handle the fact that the division rate may depend on $x$ in our~model.

In view of the above statement, we obtain \eqref{eq:renewal_explanation} as a direct consequence of the following proposition, whose proof is given in Section \ref{subsubsect:proof_prop_renewal_set}.
\begin{prop}[Renewal]\label{prop:renewal_set}
Assume that \hyperlink{paragraph:long_time_behaviour_S1.1}{$(S_{1.1})$}, \hyperlink{paragraph:long_time_behaviour_S1.2}{$(S_{1.2})$}, \hyperlink{paragraph:long_time_behaviour_S2}{$(S_2)$} and \hyperlink{paragraph:long_time_behaviour_S3}{$(S_3)$} hold. Then for all~$\eta >0$,  there exist $l\geq 1$, $L' \geq l+2m_0$ and $t_1 >0$ such that for all $L\geq \max\left(t_1/a_0,\,L'\right)$, $(x,a)\in [0,B_{\max}]^{2k}\times[0,a_0(l+1)]$, we have
\begin{equation}\label{eq:renewal_prop_inequality}
\mathbb{P}_{(x,a)}\bigg[Z_{t_1} \in [0,B_{\max}]^{2k}\times[0,a_0(l+1)];\, \min(\tau_{\partial}, T_{D_{L}}) > t_1\bigg] \geq e^{(\alpha - \lambda_{\psi} - \eta)t_1}.
\end{equation}
\end{prop}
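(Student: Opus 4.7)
The starting point is the particle-branching duality: by the construction of $(Z_{t})$ in Section~\ref{subsect:construction_auxiliary} and by~\eqref{eq:equality_semigroup}, if we denote by $\widetilde{M}^{L}_{t}$ the first-moment semigroup of the branching process killed whenever a living individual exits $D_{L}$, then for any measurable $A\subset D_{L}$
\[
\mathbb{P}_{(x,a)}\bigl[Z_{t_{1}}\in A,\ \min(\tau_{\partial},T_{D_{L}})>t_{1}\bigr]
= \frac{e^{-\lambda_{\psi}t_{1}}}{\psi(x,a)}\,\widetilde{M}^{L}_{t_{1}}(\psi\mathbf{1}_{A})(x,a).
\]
Since $\psi\geq \mathdutchcal{V}_{\min}$ by~\hyperlink{paragraph:long_time_behaviour_S2.2}{$(S_{2.2})$} and $\psi(x,a)$ is bounded on the starting box $[0,B_{\max}]^{2k}\times[0,a_{0}(l+1)]$, it suffices to bound from below the expected number of lineages of the original branching process that remain in $D_{L}$ throughout $[0,t_{1}]$ and that land in $A=[0,B_{\max}]^{2k}\times[0,a_{0}(l+1)]$ at time $t_{1}$ by a quantity of order $e^{(\alpha-\eta/2)t_{1}}$.

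For that lower bound I restrict further to lineages whose spatial trait at every birth stays in $[0,B_{\max}]^{2k}$ and group generations in blocks of $D$. By Assumption~\hyperlink{paragraph:long_time_behaviour_S2.1}{$(S_{2.1})$}, starting from $x\in K_{\mathrm{renew}}$, each such block produces on average at least $1+\varepsilon_{0}$ descendants in $K_{\mathrm{renew}}$ while keeping intermediate traits inside $[0,B_{\max}]^{2k}$. Applying Lemma~\ref{lemm:inequality_for_means} with $b_{1}=\underline{b}$, $b_{2}=b$, $m=D$ and $c=1+\varepsilon_{0}$ then bounds the expected count from below by the mean population size of a Bellman-Harris-type process with inter-block lifetime density $\mathdutchcal{F}_{0}*^{(D-1)}\mathdutchcal{F}_{0}$ and offspring mean $1+\varepsilon_{0}$. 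By the choice of $\alpha$ in~\hyperlink{paragraph:long_time_behaviour_S3.2}{$(S_{3.2})$} and the Feller renewal estimate~\eqref{eq:asymptotic_mean_bellman-harris}, this mean is equivalent to $c_{\star}e^{\alpha t_{1}}$ as $t_{1}\to\infty$.

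Two corrections remain. First, to keep the lineages inside $D_{L}$, successive inter-jump ages must not exceed $a_{0}L$; I enforce this by replacing $\underline{b}$ by an alternative birth rate $\tilde{b}_{L}$ that coincides with $\underline{b}$ on $[0,a_{0}L)$ and diverges at $a_{0}L$, so that its associated density $\tilde{\mathdutchcal{F}}_{0,L}$ is supported on $[0,a_{0}L]$ and Lemma~\ref{lemm:inequality_for_means} still applies with $A_{1}=a_{0}L$. Second, the initial point $(x,a)$ need not belong to $K_{\mathrm{renew}}\times\{0\}$: I invoke the Doeblin estimate of Proposition~\ref{prop:to_prove_A1} over a short initial window to inject a positive fraction of the mass into $K_{\mathrm{renew}}\times[0,a_{0}]$, and symmetrically I keep only individuals whose last birth lies in $[t_{1}-a_{0}(l+1),\,t_{1}]$, which by the renewal-density interpretation of~\eqref{eq:asymptotic_mean_bellman-harris} retains a positive fraction of the mean as soon as $l$ is large enough.

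The main obstacle will be the truncation step, namely showing that the Malthus parameter $\alpha_{L}$ of the truncated Bellman-Harris process converges to $\alpha$ as $L\to\infty$. This reduces to proving $\mathcal{L}(\tilde{\mathdutchcal{F}}_{0,L})(\alpha)\to\mathcal{L}(\mathdutchcal{F}_{0})(\alpha)=1/(1+\varepsilon_{0})^{1/D}$ uniformly in the starting age, and this uniformity is exactly what the second inequality in~\hyperlink{paragraph:long_time_behaviour_S3.2}{$(S_{3.2})$} is tailored for. Once that point is settled, the Doeblin coupling, the final-age restriction and the $\psi$-weight each contribute only a multiplicative constant, which is harmlessly absorbed into the slack $e^{-\eta t_{1}}$ by taking $t_{1}$ and $L$ large enough.
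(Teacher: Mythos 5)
Your overall skeleton --- grouping generations in blocks of $D$ via \hyperlink{paragraph:long_time_behaviour_S2.1}{$(S_{2.1})$}, the mean-transfer Lemma~\ref{lemm:inequality_for_means} with $b_1=\underline{b}$, the renewal asymptotics \eqref{eq:asymptotic_mean_bellman-harris}, and the Doeblin step of Proposition~\ref{prop:to_prove_A1} to move an arbitrary starting point into $K_{\text{renew}}$ --- is the same as the paper's. The gap is in how you treat the age constraints. The constraint of staying in $D_L$ is not where the difficulty lies: since $L\geq t_1/a_0$ (up to bookkeeping), the age simply cannot exceed $a_0L$ before time $t_1$, so your truncation of $\underline{b}$ at level $a_0L$ is essentially vacuous, and your announced ``main obstacle'' (convergence $\alpha_L\to\alpha$ of a truncated Malthusian parameter) is not the obstacle at all. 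The binding constraint is the \emph{terminal} age: the event requires $Z_{t_1}\in[0,B_{\max}]^{2k}\times[0,a_0(l+1)]$ with $l$ fixed before $t_1$ is sent to infinity. You propose to impose it a posteriori, keeping only individuals of the comparison Bellman--Harris process whose last birth is recent. This cannot work in the order you propose: Lemma~\ref{lemm:inequality_for_means} compares only the \emph{unrestricted} means $E_1\leq E_2$, and the monotone coupling of jump times behind its proof ($T^{(1)}_l\geq T^{(2)}_l$) does not control the backward recurrence time $t-\mathdutchcal{T}_{N_t}$ in either direction (a faster clock can perfectly well have an older current age at time $t$). Hence the age-restricted mean of the true process is not bounded below by an age-restricted mean, or a constant fraction of the mean, of the dominated process, and the ``positive fraction by the renewal density'' remark does not attach to the quantity you actually need to bound.

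The paper resolves this \emph{before} the comparison, not after: all inter-jump densities are replaced by their truncations $\mathdutchcal{Y}^{\tilde{l}}$ to $[0,a_0(\tilde{l}+1))$, which forces every inter-jump time --- in particular the last one, i.e.\ the terminal age --- below $a_0(\tilde{l}+1)$ deterministically, and these truncated densities correspond to a genuine rate $b_{\tilde{l}}\geq\underline{b}$ to which Lemma~\ref{lemm:inequality_for_means} applies directly. The per-jump cost of truncation, a factor $\left((1+\tilde{\varepsilon}_0)/(1+\varepsilon_0)\right)^{1/D}$, is absorbed by trading $\varepsilon_0$ for a slightly smaller $\tilde{\varepsilon}_0$, with $\tilde{\varepsilon}_0$ and $\tilde{l}$ chosen jointly through \eqref{eq:equalities_from_intermediate_values_theorem}; the Malthusian parameter is then exactly $\tilde{\alpha}$ defined by $\mathcal{L}(\mathdutchcal{F}_0)(\tilde{\alpha})=(1+\tilde{\varepsilon}_0)^{-1/D}$, which can be taken arbitrarily close to $\alpha$, so no truncated Laplace transform and no limit $\alpha_L\to\alpha$ ever appear. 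Finally, the second condition in \hyperlink{paragraph:long_time_behaviour_S3.2}{$(S_{3.2})$} is not ``tailored'' to your truncation step: in the paper it is used in the last step to handle the first jump, whose density $\mathdutchcal{F}_a$ depends on the starting age $a$, uniformly in $a$ --- a point your proposal leaves implicit.
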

\noindent As explained in the paragraph about Bellman-Harris processes, the proof of this proposition is based on the fact that in view of \hyperlink{paragraph:long_time_behaviour_S2.1}{$(S_{2.1})$} and \hyperlink{paragraph:long_time_behaviour_S3.1}{$(S_{3.1})$}, we can bound from below the left-hand side term of \eqref{eq:renewal_prop_inequality} by the expectations of Bellman-Harris processes (multiplied by~$e^{-\lambda_{\psi}t}$). We use Lemma~\ref{lemm:inequality_for_means} to obtain these lower bounds.

Similarly, \eqref{eq:tail_stay_Ec_explanation} is obtained as a consequence of the following proposition which is proved in Section~\ref{subsubsect:proof_bound_tail_probability}. 
\begin{prop}[Return in a compact set]\label{prop:bound_tail_probability}
Assume that \hyperlink{paragraph:long_time_behaviour_S1.1}{$(S_{1.1})$} and \hyperlink{paragraph:long_time_behaviour_S2}{$(S_2)-(S_{3})$}  hold. Then for all $L_1\geq L_{\text{ret}}$, $\eta >0$, there exists $t_2 >0$, such that for all $t\geq t_2$, $(x,a)\in \mathcal{X}$
\begin{equation}\label{eq:bound_tail_probability}
\begin{aligned}
\mathbb{P}_{(x,a)}\left[\min\left(\tau_{D_{L_1}},\tau_{\partial}\right) > t\right] \leq e^{(\beta - \lambda_{\psi} +\eta)t}. \\ 
\end{aligned}
\end{equation}
\end{prop}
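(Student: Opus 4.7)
The overall strategy is to bound the survival probability by an estimate on the weighted first moment semigroup and then compare the latter to the mean of a Bellman--Harris process with reproduction mean $1+\varepsilon_1$ and lifetime density $\mathdutchcal{J}_0$, whose growth rate is $\beta$ by~\hyperlink{paragraph:long_time_behaviour_S3.3}{$(S_{3.3})$} and \eqref{eq:asymptotic_mean_bellman-harris}. First, since $\tau_{D_{L_1}}$ is the first hitting time of $D_{L_1}$, the event $\{t<\min(\tau_{D_{L_1}},\tau_\partial)\}$ is contained in $\{Z_t\in(D_{L_1})^c,\,t<\tau_\partial\}$, so by~\eqref{eq:equality_semigroup}
$$
\mathbb{P}_{(x,a)}[\min(\tau_{D_{L_1}},\tau_\partial)>t]\leq e^{-\lambda_\psi t}\,\frac{M_t(\psi\mathbb{1}_{(D_{L_1})^c})(x,a)}{\psi(x,a)},
$$
and it suffices to bound the right-hand side by $e^{(\beta-\lambda_\psi+\eta)t}$ for $t\geq t_2$, uniformly in $(x,a)$.

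To do so, I would split $\mathbb{1}_{(D_{L_1})^c}\leq \mathbb{1}_{\mathcal{D}_{L_1}^c\times\mathbb{R}_+}+\mathbb{1}_{\mathbb{R}_+^{2k}\times(a_0L_1,+\infty)}$ and treat each piece. The age-only piece counts individuals alive at time $t$ whose last jump occurred more than $a_0L_1$ time units ago; using $b(\cdot,a)\geq b_0$ for $a\geq a_0$ together with the at-most $\psi(x,a)e^{\lambda_\psi t}$ growth of the total weighted mean from Lemma~\ref{lemma:inequalities_psi}, this contribution is exponentially small in $L_1$ uniformly in $t$, hence absorbed into $e^{\eta t}$ for $t\geq t_2$. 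For the telomere-only piece $M_t(\psi\mathbb{1}_{\mathcal{D}_{L_1}^c\times\mathbb{R}_+})(x,a)$, I would iterate the integral equation of Lemma~\ref{lemma:well_definition_semigroup} to obtain a series indexed by the number $n$ of divisions. Since $L_1\geq L_{\text{return}}$, \hyperlink{paragraph:long_time_behaviour_S2.2}{$(S_{2.2})(i)$} gives $\mathdutchcal{K}(\mathdutchcal{V}\mathbb{1}_{\mathcal{D}_{L_1}^c})(x)\leq(1+\varepsilon_1)\mathdutchcal{V}(x)$, and iterating this inequality bounds the $n$-th generation term by $(1+\varepsilon_1)^n\mathdutchcal{V}(x)$ times an iterated integral involving only the birth density. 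Lemma~\ref{lemm:inequality_for_means} applied with $b_1=b$, $b_2=\overline{b}$ and $c=1+\varepsilon_1$ then replaces the $x$-dependent birth rate by $\overline{b}$, and Remark~\ref{rem:inequality_means} identifies the resulting sum as the mean $m(t)$ of the Bellman--Harris process with lifetime density $\mathdutchcal{J}_0$ and reproduction mean $1+\varepsilon_1$. By~\eqref{eq:asymptotic_mean_bellman-harris} and~\hyperlink{paragraph:long_time_behaviour_S3.3}{$(S_{3.3})$}, $m(t)\sim n_1 e^{\beta t}$ as $t\to+\infty$, so for $t\geq t_2$ sufficiently large $m(t)\leq e^{(\beta+\eta/2)t}$; multiplying by $e^{-\lambda_\psi t}$ yields the claim.

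The main obstacle is the required uniformity in $(x,a)\in\mathcal{X}$: the denominator $\psi(x,a)=(1+a^{d_\psi})\mathdutchcal{V}(x)$ must compensate both the $\mathdutchcal{V}(x)$ factor and the polynomial-in-age contributions that appear in $M_t(\psi\mathbb{1}_{(D_{L_1})^c})(x,a)$. This is precisely why the iterated Lyapunov bound $(1+\varepsilon_1)^n\mathdutchcal{V}(x)$ is essential, and why \hyperlink{paragraph:long_time_behaviour_S2.2}{$(S_{2.2})(iii)$--$(iv)$} are needed to control $\mathdutchcal{V}$ on the remaining compact part. Equally delicate is the application of Lemma~\ref{lemm:inequality_for_means}: one must verify that the generational structure produced by iterating the integral equation matches the form of $E_1(x,a,t,c)$ exactly, so that the comparison with $E_2$, obtained from the $x$-independent rate $\overline{b}$, is licit and does not lose the factor that matches~\hyperlink{paragraph:long_time_behaviour_S3.3}{$(S_{3.3})$}.
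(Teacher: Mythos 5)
There is a genuine gap, and it sits at the very first step. You replace the pathwise event $\{t<\min(\tau_{D_{L_1}},\tau_{\partial})\}$ by the time-$t$ marginal event $\{Z_t\in (D_{L_1})^c,\,t<\tau_{\partial}\}$ and then try to bound $e^{-\lambda_{\psi}t}M_t(\psi 1_{(D_{L_1})^c})(x,a)/\psi(x,a)$. This containment is true but it discards exactly the information that makes the $(1+\varepsilon_1)$-per-generation estimate available. When you iterate the integral equation \eqref{eq:equation_integral_semigroup} for $M_t(\psi 1_{(\mathcal{D}_{L_1})^c\times\mathbb{R}_+})$, the indicator $1_{(\mathcal{D}_{L_1})^c}$ is attached only to the test function, i.e.\ only to the \emph{last} application of the kernel; all intermediate generations involve the unrestricted kernel, for which the only available bound is $\mathdutchcal{K}(\mathdutchcal{V})(x)\le 2(1+\varepsilon_1)\mathdutchcal{V}(x)$ from \eqref{eq:upperbound_fullkernel}, not $(1+\varepsilon_1)\mathdutchcal{V}(x)$. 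So the $n$-th generation term is of order $2^{\,n-1}(1+\varepsilon_1)^{n}\mathdutchcal{V}(x)$, and the comparison process you end up with is a Bellman--Harris process with reproduction mean $2(1+\varepsilon_1)$, whose Malthusian parameter is not $\beta$. This is not a fixable bookkeeping issue: the quantity you propose to bound counts \emph{all} individuals alive at time $t$ lying outside $D_{L_1}$, and since the branching population grows at rate $\lambda\ge\alpha>\beta$ while $(D_{L_1})^c$ carries positive stationary mass for the fixed $L_1$, one has $e^{-\lambda_{\psi}t}M_t(\psi 1_{(D_{L_1})^c})/\psi \asymp e^{(\lambda-\lambda_{\psi})t}$, which violates the target bound $e^{(\beta-\lambda_{\psi}+\eta)t}$ for any $\eta<\alpha-\beta$. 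The same objection applies to your age piece: "exponentially small in $L_1$" is a constant here ($L_1$ is fixed in the statement), so that term is again of order $e^{(\lambda-\lambda_{\psi})t}$ and cannot be absorbed into $e^{\eta t}$.

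The paper avoids this by never leaving the pathwise event. It decomposes over the number of jumps $N_t=n$ and uses that the age resets to $0\le a_0L_1$ at each jump, so that $\{\min(\tau_{D_{L_1}},\tau_{\partial})>t\}$ forces \emph{every} post-jump position $X_i$, $i\in\llbracket1,n\rrbracket$, to lie in $(\mathcal{D}_{L_1})^c$ (plus a no-jump term $\overline{\mathdutchcal{H}}_a(x,t)$). Lemmas~\ref{lemm:generalized_duhamel_n=1}--\ref{lemm:generalized_duhamel} then give an explicit expression in which the \emph{restricted} kernel $\mathdutchcal{K}_{(\mathcal{D}_{L_1})^c}$ appears at every generation, so $(S_{2.2})(i)$ can be applied $n$ times, producing the factor $(1+\varepsilon_1)^n$ together with the normalised kernels $q(w,dw')=\mathdutchcal{V}(w')\mathdutchcal{K}_{(\mathcal{D}_{L_1})^c}(w,dw')/\mathdutchcal{K}_{(\mathcal{D}_{L_1})^c}(\mathdutchcal{V})(w)$; only then does Lemma~\ref{lemm:inequality_for_means} (with $b_1=b$, $b_2=\overline{b}$) and the Bellman--Harris asymptotics \eqref{eq:asymptotic_mean_bellman-harris} with $(S_{3.3})$ enter. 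That second half of your plan is the right one, but it only becomes licit after the event has been kept in the form "all intermediate jump positions stay outside $\mathcal{D}_{L_1}$", which your initial reduction throws away.
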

\noindent We prove this proposition by bounding from above $\mathbb{P}_{(x,a)}\left[\min\left(\tau_E,\tau_{\partial}\right) > t\right]$ by the mean of a Bellman-Harris process (translated by $e^{-\lambda_{\psi}t}$), using Lemma~\ref{lemm:inequality_for_means} to obtain the upper bound. 

In view of Propositions \ref{prop:renewal_set} and \ref{prop:bound_tail_probability}, we now use Proposition \ref{prop:simplification_A2} to obtain that there exists $L_1\in\mathbb{N}^*$ such that \hyperlink{te:assumptions_velleret_A2}{$(A_2)$} is verified with $E = D_{L_1}$ and for any $\rho \in \left(\lambda_{\psi} - \alpha,\,\lambda_{\psi} - \beta \right)$ ($\eta >0 $ of Propositions \ref{prop:renewal_set} and \ref{prop:bound_tail_probability} can be taken as small as possible). In particular, we also have by Proposition~\ref{prop:simplification_A2} that 
\begin{equation}\label{eq:inequality_absorbing_rate}
\rho_S \leq \lambda_{\psi} - \alpha.
\end{equation}
In the rest of the proof, we take arbitrarily $\rho = \lambda_{\psi} - \frac{\alpha + \beta}{2}\in \left(\lambda_{\psi} -\alpha,\lambda_{\psi} - \beta\right)$. We now prove all the statements presented above.
\subsubsection{Proof of Lemma \ref{lemm:inequality_for_means}}\label{subsubsect:proof_inequality_means}
Before starting to prove this lemma, we need to introduce random variables. The random variables introduced here are only useful for this proof. Let $(x,a)\in\mathbb{R}_+^{2k}\times[0,A_2)$. We consider $(\mathcal{W}_l)_{l\in\mathbb{N}}$ a stochastic process taking values in $\mathbb{R}_+^{2k}$, such that:
\begin{itemize}[leftmargin=*]
\item The initial term is $\mathcal{W}_0 = x$.
\item For all $n\in\mathbb{N}$, the distribution of $\left(\mathcal{W}_{nm+1},\hdots,\mathcal{W}_{(n+1)m}\right)$, knowing $\mathcal{W}_{nm}$, is given by the measure $q(\mathcal{W}_{nm},dw_1,\hdots,w_m)$.
\end{itemize}
Let $(V_n)_{n\in\mathbb{N}}$ a sequence of independent uniform random variables on $[0,1]$. We also define for $i\in\{1,2\}$ the sequence of random variables $\big(T_n^{(i)}\big)_{n\in\mathbb{N}}$, such that $T^{(i)}_0 = 0$ a.s. and:
\begin{itemize}[leftmargin=*]    
\item For all $\omega\in\Omega$: $T_1^{(i)}(\omega) = \left(1 - \overline{F}_i\right)^{-1}\left(x,a,V_0(\omega)\right)$, where we have for $(x',a',u)\in \mathbb{R}_+^{2k}\times[0,A_i)\times(0,1)$
$$
(1 - \overline{F}_i)^{-1}(x',a',u) = \inf\left\{y\in[0,A_i-a')\,|\,1 - \overline{F}_i(x',a',y) \geq u\right\}.
$$
The function $(1 - \overline{F}_i)(x,a,.)$ can be seen as the cumulative distribution function linked to the probability density function $F_i(x,a,.)$. Thus, in view of the inverse transform sampling, the distribution of $T_1^{(i)}$ is given by the probability density function $F_i(x,a,.)$.

\item For all $n \in \mathbb{N}^*$, $\omega\in\Omega$: $T_{n+1}^{(i)}(\omega) = T_{n}^{(i)}(\omega) + \left(1 - \overline{F}_i\right)^{-1}\left(\mathcal{W}_n(\omega),0,V_n(\omega)\right)$. Thus, in view of the inverse transform sampling, the distribution of $T_{n+1}^{(i)} - T_n^{(i)}$ knowing~$\mathcal{W}_n$ is given by the probability density function $F_i(\mathcal{W}_n,0,.)$.
\end{itemize}
Let $c > 1$. We finally define for all $t\geq 0$, $i\in\{1,2\}$, $\omega \in\Omega$,
$$
S_t^{(i)}(\omega) = \sum_{n\geq 0} \sum_{j = 0}^{m-1} c^n 1_{\{T_{nm + j}^{(i)}(\omega) \leq t < T_{nm + j+ 1}^{(i)}(\omega)\}}.
$$
One can notice that for all $t\geq 0$ we have $E_{i}(x,a,t,c) = \mathbb{E}\big[S_t^{(i)}\big]$, see~\eqref{eq:mean_for_inequality_means}. Thus, if we prove that almost surely it holds $S_t^{(1)} \leq S_t^{(2)}$, then the lemma is proved. As $c > 1$, the sequence~$(c^n)_{n\in\mathbb{N}}$ increases. Then, we only have to prove that $T_l^{(1)} \geq T_l^{(2)}$ a.s. for all~$l\in\mathbb{N}$ to obtain that $S_t^{(1)} \leq S_t^{(2)}$ a.s..

Let us prove that $T_l^{(1)} \geq T_l^{(2)}$ a.s. for all $l\in\mathbb{N}$ by induction. The base case is trivial. We detail the induction step. We suppose that the assertion is true for $l\in\mathbb{N}$. Let $\omega \in \Omega$. As $b_1(x',a') \leq b_2(x',a')$ for all $(x',a')\in\mathbb{R}_+^{2k}\times[0,A_2)$, we have for all $s\in [0,A_2)$
$$
\begin{aligned}
\left(1 - \overline{F}_1\right)\left(\mathcal{W}_l(\omega),0,s\right) &= 1 - \exp\left(-\int_{0}^{s} b_1\left(\mathcal{W}_l(\omega),u\right)du\right) \\
&\leq 1 - \exp\left(-\int_{0}^{s} b_2\left(\mathcal{W}_l(\omega),u\right)du\right) = \left(1 - \overline{F}_2\right)\left(\mathcal{W}_l(\omega),0,s\right).
\end{aligned}
$$
This implies that
$$
\begin{aligned}
\left(1 - \overline{F}_1\right)^{-1}\left(\mathcal{W}_l(\omega),0,V_l(\omega)\right) &\geq \left(1 - \overline{F}_2\right)^{-1}\left(\mathcal{W}_l(\omega),0,V_l(\omega)\right).
\end{aligned}
$$
The induction assumption and the latter imply that when $l\geq 1$
$$
\begin{aligned}
T_{l+1}^{(1)}(\omega) &= T_{l}^{(1)}(\omega) + \left(1 - \overline{F}_1\right)^{-1}\left(\mathcal{W}_l(\omega),0,V_l(\omega)\right) \\
&\geq T_{l}^{(2)}(\omega) + \left(1 - \overline{F}_2\right)^{-1}\left(\mathcal{W}_l(\omega),0,V_l(\omega)\right) = T_{l+1}^{(2)}(\omega).
\end{aligned}
$$
By the same reasoning, when $l = 0$, we also have $T_{l+1}^{(1)}(\omega) \geq T_{l+1}^{(2)}(\omega)$. Hence, we obtain that $T_l^{(1)} \geq T_l^{(2)}$ almost surely, so that the lemma is proved.
\qed 
\subsubsection{Proof of Proposition \ref{prop:renewal_set}}\label{subsubsect:proof_prop_renewal_set}

The objects we use in this proof are mostly introduced in \hyperlink{paragraph:long_time_behaviour_S2}{$(S_{2})$}, \hyperlink{paragraph:long_time_behaviour_S3.1}{$(S_{3.1})$}, and \hyperlink{paragraph:long_time_behaviour_S3.2}{$(S_{3.2})$}. We first notice that, if for all $\tilde{\alpha} \in \left(0,\alpha\right)$ Eq.~\eqref{eq:renewal_prop_inequality} is true with $\tilde{\alpha}$ instead of $\alpha$ in the equation, then the proposition will be true because $\tilde{\alpha}$ can be taken as close as possible to $\alpha$. Thus, we need to prove that for all $\tilde{\alpha} \in \left(0,\alpha\right)$, Eq.~\eqref{eq:renewal_prop_inequality} is true, replacing $\alpha$ by $\tilde{\alpha}$ in the latter. 

Let $\tilde{\alpha} \in \left(0,\alpha\right)$.  The function $\mathcal{L}\left(\mathdutchcal{F}_0\right)(x) = \int_0^{+\infty} e^{-s x} \mathdutchcal{F}_0(s) ds$ strictly decreases on $[0,\alpha]$, with $\mathcal{L}\left(\mathdutchcal{F}_0\right)(0) = 1$ and $\mathcal{L}\left(\mathdutchcal{F}_0\right)(\alpha) = \frac{1}{(1+\varepsilon_0)^{\frac{1}{G}}}$ in view of~\hyperlink{paragraph:long_time_behaviour_S3.2}{$(S_{3.2})$}. In addition, by~\hyperlink{paragraph:long_time_behaviour_S3.1}{$(S_{3.1})$}, we have \hbox{$\underset{x \rightarrow+\infty}{\lim} \exp\left(-\int_0^{x} \underline{b}(s) ds\right)=0$}. Then, there exists $\tilde{\varepsilon}_0\in\left(0,\varepsilon_0\right)$ and $\tilde{l}\in\mathbb{N}^*$ such that
\begin{equation}\label{eq:equalities_from_intermediate_values_theorem}
\mathcal{L}(\mathdutchcal{F}_0)(\tilde{\alpha})= \frac{1}{\left(1+\tilde{\varepsilon}_0\right)^{\frac{1}{G}}}, \hspace{8mm}1 - \exp\left(-\int_0^{a_0(\tilde{l}+1)} \underline{b}(s) ds\right) \geq \left(\frac{1+\tilde{\varepsilon}_0}{1+\varepsilon_0}\right)^{\frac{1}{G}}.
\end{equation}
The proof that \eqref{eq:renewal_prop_inequality} is true with $\tilde{\alpha}$ instead of $\alpha$ is done in three steps. In Step~\hyperlink{paragraph:step_1_renewal}{$1$}, we prove that there exists \hbox{$C >0$} such that for all $t\geq 0$, $(x,a)\in K_{\text{ren}}\times[0,a_0\tilde{l}]$,  $L\geq t/a_0 + \tilde{l}$, we have
\begin{equation}\label{eq:inequality_duhamel_renewal}
\begin{aligned}
&\mathbb{P}_{(x,a)}\left[Z_t \in [0,B_{\max}]^{2k}\times[0,a_0(\tilde{l}+1)];\, \min(\tau_{\partial}, T_{D_{L}}) > t \right] \\
&\geq  Ce^{-\lambda_{\psi}t}\sum_{\substack{(j,r)\in \mathbb{N}\times\llbracket0,G-1\rrbracket \\ (j,r)\neq(0,0)}}(1+\tilde{\varepsilon}_0)^{j} \left(\mathdutchcal{F}_a*\mathdutchcal{F}_0*^{(jG+r-1)}\overline{\mathdutchcal{F}}_0\right)(t).
\end{aligned}
\end{equation}
Then, we prove in Step~\hyperlink{paragraph:step_2_renewal}{$2$} that for all $\eta' >0$, there exists $t_{\eta'} >0$ such that for all $t\geq t_{\eta'}$
\begin{equation}\label{eq:inequality_exponential_by_residues}
F(t) = \sum_{j'\geq 0}\sum_{r' = 0}^{G-1}\left[\left(1+\tilde{\varepsilon}_0\right)\right]^{j'}\left[\left(\mathdutchcal{F}_0\right)*^{(j'G+r')}\overline{\mathdutchcal{F}_0}\right](t)  \geq e^{(\tilde{\alpha} - \eta')t}.
\end{equation}
Finally, using \eqref{eq:inequality_duhamel_renewal} and \eqref{eq:inequality_exponential_by_residues}, we conclude the proof of the proposition in Step~\hyperlink{paragraph:step_3_renewal}{$3$}.
\paragraph{Step 1:}\hypertarget{paragraph:step_1_renewal}{}
To shorten the number of lines of our equations, we denote in this proof for all~$t\geq0$, $(x,a)\in\mathcal{X}$, and $(l,L)\in(\mathbb{N}^*)^2$
$$
I_B = [0,B_{\max}]^{2k} \text{ and } P_B(t,x,a,l,L) = \mathbb{P}_{(x,a)}\bigg[Z_t \in I_B\times[0,a_0(l+1)]\,;\, \min(\tau_{\partial}, T_{D_{L}}) > t\bigg].
$$
Recall from Section \ref{subsect:construction_auxiliary} that $N_t$ is the number of jumps of the particle up to time $t$, and that on the event $\{N_t = n,t < \tau_{\partial}\}$, we have $Z_t = (X_n,\,a1_{\{n = 0\}} + t-\mathdutchcal{T}_n)$, where $a\geq0$ is the initial age of the particle. Recall also that $\overline{\mathdutchcal{H}}_a(x,s) = \frac{\psi(x,a+s)}{\psi(x,a)}\exp\left(-\lambda_{\psi}s - \int_a^{a+s}b(u) du\right)$ is the complementary cumulative distribution function for the time of the first jump when the initial condition is $(x,a)$. Using these notations, we have for all $t\geq 0$, \hbox{$(x,a)\in K_{\text{ren}}\times[0,a_0\tilde{l}]$}, $L\geq t/a_0 + \tilde{l}$,
\begin{equation}\label{eq:proof_renewal_probabilities_to_sum}
\begin{aligned}
P_B(t,x,a,\tilde{l},L) = \overline{\mathdutchcal{H}}_a(x,t) + \sum_{n\geq 1} \mathbb{P}_{(x,a)}&\left[(X_n,t - \mathdutchcal{T}_n) \in I_B\times[0,a_0(\tilde{l}+1)],\,t < \tau_{\partial},\,N_t = n,\right.\\
&\left.\,\forall i\in\llbracket1,n\rrbracket:\, X_i\in \mathcal{D}_{L}\right].\\
\end{aligned}
\end{equation}
Now, we develop the right-hand side term of \eqref{eq:proof_renewal_probabilities_to_sum} in order to exhibit a lower bound. Recall that for all $x\in\mathbb{R}_+^{2k}$, $a\geq0$, and $s\geq0$, we have introduced the function
$$
\mathdutchcal{G}_a(x,s)  = 2\frac{b(x,a+s)}{\psi(x,a)}\exp\left(-\int_a^{a+s} b\left(x,u\right) du - \lambda_{\psi}s\right),
$$
and that for all $w\in\mathbb{R}_+^{2k}$ and $f\in M_b(\mathcal{X})$ we write $\mathdutchcal{K}_{\mathcal{D}_L}(f)(w) = \mathdutchcal{K}(f1_{\mathcal{D}_L})(w)$. In view of Lemmas~\ref{lemm:generalized_duhamel_n=1}-\ref{lemm:generalized_duhamel} and Remark~\ref{rem:equality_kernel}, we have
\begin{equation}\label{eq:inequality_duhamel_renewal_before_intermediate}
\begin{aligned}
&P_B(t,x,a,\tilde{l},L) = \overline{\mathdutchcal{H}}_a(x,t) + \sum_{n\geq 1}\frac{1}{2^n}\int_{[0,t]\times \mathbb{R}^{2k}}\int_{[0,t-s_1]\times \mathbb{R}^{2k}}\hdots \int_{\left[0,t - \sum_{i = 1}^{n-1}s_i\right] \times \mathbb{R}^{2k}}  \\
&\times \mathdutchcal{V}(w_1)\hdots\mathdutchcal{V}(w_n)\mathdutchcal{G}_{a}(x,s_1)\mathdutchcal{G}_0(w_1,s_2)\hdots\mathdutchcal{G}_0\left(w_{n-1},s_{n}\right)\overline{\mathdutchcal{H}}_0\left(w_n,t-\sum_{i = 1}^{n}s_i\right) \\
&\times 1_{\{w_n\in I_B\}}1_{\left\{t - \sum_{i = 1}^n s_i \leq a_0(\tilde{l}+1)\right\}}\left(ds_1\mathdutchcal{K}_{\mathcal{D}_{L}}(x,dw_1)\right)\hdots \left(ds_{n}\mathdutchcal{K}_{\mathcal{D}_{L}}(w_{n-1},dw_n)\right). \\
\end{aligned}
\end{equation}
To continue our development, we need to introduce new functions, and obtain intermediate equalities and inequalities. We denote for all $x\in\mathbb{R}_+^{2k}$, $a,s\geq0$
\begin{equation}\label{eq:density_pdf_true_birth_rate}
\mathdutchcal{Y}_a(x,s) = b(x,a+s)\exp\left(-\int_a^{a+s} b\left(x,u\right) du\right), \hspace{2mm}\overline{\mathdutchcal{Y}_a}(s) = \exp\left(-\int_a^{a+s} b\left(x,u\right) du\right).
\end{equation}
These correspond respectively to the probability density function and the complementary cumulative distribution function for times between events that occur at a rate $b(x,a)$ (for example, the division time of a cell in the branching process introduced in Section \ref{subsect:algorithm_model}). We also introduce the truncated versions on the interval $[0,a_0(\tilde{l}+1))$ of the functions introduced above
$$
\begin{aligned}
\mathdutchcal{Y}_{a}^{\tilde{l}}(x,s) &= \frac{\mathdutchcal{Y}_a(x,s)}{1 - \overline{\mathdutchcal{Y}}_a(x,a_0 (\tilde{l}+1)-a)}1_{s \in[0,a_0 (\tilde{l}+1)-a)}, \\
\overline{\mathdutchcal{Y}_a}^{\tilde{l}}(x,s) &= \frac{\overline{\mathdutchcal{Y}}_a(x,s)-\overline{\mathdutchcal{Y}}_a(x,a_0 (\tilde{l}+1) - a)}{1 - \overline{\mathdutchcal{Y}}_a(x,a_0 (\tilde{l}+1) -a)}1_{s \in[0,a_0 (\tilde{l}+1)-a)}.
\end{aligned}
$$
We now obtain the intermediate equalities and inequalities needed to develop \eqref{eq:inequality_duhamel_renewal_before_intermediate}. First, in the upper term of the equation below, the terms $\exp(-\lambda s_i)$ in $\mathdutchcal{G}_a(w_0,s_1)$ and~$\mathdutchcal{G}_0(w_i,s_{i+1})$ ($i\in\llbracket1,n-1\rrbracket$) can be multiplied with the term $\exp\left(-\lambda_{\psi}\left(t - \sum_{i = 1}^n s_i\right)\right)$ in $\overline{\mathdutchcal{H}}_0\left(w_n,t - \sum_{i = 1}^n s_i\right)$ to give a term $\exp\left(-\lambda_{\psi}t\right)$. In addition, for any $i\in\llbracket1,n-1\rrbracket$, the term $1/\psi(w_i,0) = 1/\mathdutchcal{V}(w_i)$ in $\mathdutchcal{G}_0(w_i,.)$ can be simplified with the term~$\mathdutchcal{V}(w_i)$. Hence, it comes for all $n\in\mathbb{N}^*$, $t\in\mathbb{R}_+$, $a\geq0$, $w = (w_0,\hdots,w_n)\in(\mathbb{R}_+^{2k})^{n+1}$, $f\in M_b (\mathbb{R}_+^{n+1})$
\begin{equation}\label{eq:convol_exp}
\begin{aligned}
&\int_{[0,t]}\hdots \int_{\left[0,t - \sum_{i = 1}^{n-1}s_i\right]} f(t,s_1,\hdots,s_n) \mathdutchcal{G}_{a}(w_0,s_1)\mathdutchcal{G}_0(w_1,s_2)\hdots\mathdutchcal{G}_0\left(w_{n-1},s_{n}\right) \\ 
&\times\mathdutchcal{V}(w_1)\hdots\mathdutchcal{V}(w_{n})\overline{\mathdutchcal{H}}_0\left(w_n,t-\sum_{i = 1}^{n}s_i\right) ds_1\hdots ds_{n} \\  
&= \frac{2^{n}\mathdutchcal{V}(w_{n})\exp\left(-\lambda_{\psi}t\right)}{\psi(w_0,a)}\int_{[0,t]}\hdots \int_{\left[0,t - \sum_{i = 1}^{n-1}s_i\right]} f(t,s_1,\hdots,s_n)\mathdutchcal{Y}_{a}(w_0,s_1)\mathdutchcal{Y}_0(w_1,s_2)\hdots \\ 
&\times\mathdutchcal{Y}_0\left(w_{n-1},s_{n}\right) \overline{\mathdutchcal{Y}}_0\left(w_{n},t-\sum_{i = 1}^{n}s_i\right)\left[1 + \left(t- \sum_{j = 1}^{n}s_{j}\right)^{d_{\psi}}\right]ds_1\hdots ds_{n}.
\end{aligned}
\end{equation}
Moreover, we easily have by restricting the integration on the interval $[a_0\tilde{l},a_0(\tilde{l}+1)]$, and then applying~\eqref{eq:birth_rate_assumption}, that for all $(x,a,s)\in\mathbb{R}_+^{2k}\times[0,a_0\tilde{l}]\times\mathbb{R}_+$
\begin{equation}\label{eq:ineq_truncated_age}
\begin{aligned}
\mathdutchcal{Y}_a(x,s) &\geq \left(1-\exp\left(-\int_a^{a_0(\tilde{l}+1)} b(x,s) ds\right)\right)\mathdutchcal{Y}_{a}^{\tilde{l}}(x,s) \geq  \big[1-e^{-b_0a_0}\big]\mathdutchcal{Y}_{a}^{\tilde{l}}(x,s).
\end{aligned}
\end{equation}
One can also obtain by similar computations, \hyperlink{paragraph:long_time_behaviour_S3.1}{$(S_{3.1})$} and \eqref{eq:equalities_from_intermediate_values_theorem} that
\begin{equation}\label{eq:ineq_truncated_noage}
\mathdutchcal{Y}_{0}(x,s) \geq  \left(\frac{1+\tilde{\varepsilon}_0}{1+\varepsilon_0}\right)^{\frac{1}{G}} \mathdutchcal{Y}_{0}^{\tilde{l}}(x,s) \hspace{3mm}\text{ and }\hspace{3mm} \overline{\mathdutchcal{Y}}_{0}(x,s) \geq \left(\frac{1+\tilde{\varepsilon}_0}{1+\varepsilon_0}\right)^{\frac{1}{G}}\overline{\mathdutchcal{Y}}_{0}^{\tilde{l}}(x,s).
\end{equation}
Finally, we have as $\psi(x,a) = \mathdutchcal{V}(x)(1+a^{d_{\psi}})$
\begin{equation}\label{eq:time_first_jump}
\overline{\mathdutchcal{H}}_a(x,s) = \frac{\psi(x,a+s)}{\psi(x,a)}\exp\left(-\lambda_{\psi}s - \int_a^{a+s}b(x,u) du\right) \geq e^{-\lambda_{\psi} s} \overline{\mathdutchcal{Y}}_{a}(x,s).
\end{equation}
We are now able to develop \eqref{eq:inequality_duhamel_renewal_before_intermediate}. First, we apply Eq.~\eqref{eq:convol_exp} and~\eqref{eq:time_first_jump}, to transform functions $\mathdutchcal{G}_a$, $\mathdutchcal{G}_0$, $\overline{\mathdutchcal{H}}_a$ and $\overline{\mathdutchcal{H}}_0$ into functions $\mathdutchcal{Y}_a$, $\mathdutchcal{Y}_0$, $\overline{\mathdutchcal{Y}}_a$ and $\overline{\mathdutchcal{Y}}_0$. Then, we use \eqref{eq:ineq_truncated_age} and~\eqref{eq:ineq_truncated_noage} to display the truncated versions of $\mathdutchcal{Y}_0$, $\mathdutchcal{Y}_a$, $\overline{\mathdutchcal{Y}}_0$ and $\overline{\mathdutchcal{Y}}_a$. Thereafter, we use the fact that $1+\left(t-\sum_{i = 1}^n s_i\right)^{d_{\psi}} \geq 1$ and the fact that $\mathdutchcal{V}(w_n)\geq \mathdutchcal{V}_{\min}$ by the third statement of \hyperlink{paragraph:long_time_behaviour_S2.2}{$(S_{2.2})$}. Finally, we group indices in the sum according to their remainder for the Euclidean division by~$G$. It comes for all $t\geq 0$, $(x,a)\in K_{\text{ren}}\times[0,a_0\tilde{l}]$, $L\geq t/a_0 + \tilde{l}$
\begin{equation}\label{eq:inequality_duhamel_renewal_intermediate}
\begin{aligned}
P_B(t,x,a,\tilde{l},L) &\geq  e^{-\lambda_{\psi} t}\overline{\mathdutchcal{Y}}_{a}^{\tilde{l}}(x,t) + \big[1-e^{-b_0a_0}\big]\frac{\mathdutchcal{V}_{\min}e^{-\lambda_{\psi}t}}{\psi(x,a)}\hspace{-0.445mm}\sum_{\substack{(j,r)\in \mathbb{N}\times\llbracket0,G-1\rrbracket,\\
	(j,r)\neq(0,0)}} \left[\left(\frac{1+\tilde{\varepsilon}_0}{1+\varepsilon_0}\right)^{\frac{1}{G}}\right]^{jG + r} \\
	&\times \int_{[0,t]\times \mathbb{R}^{2k}}\hdots \int_{\left[0,t - \sum_{i = 1}^{jG+r-1}s_i\right] \times \mathbb{R}^{2k}}\mathdutchcal{Y}_{a}^{\tilde{l}}(x,s_1)\mathdutchcal{Y}_0^{\tilde{l}}(w_1,s_2)\hdots  \\
	&\times\mathdutchcal{Y}_0^{\tilde{l}}\left(w_{jG+r-1},s_{jG+r}\right)\overline{\mathdutchcal{Y}_0}^{\tilde{l}}\left(w_{jG+r},t- \sum_{j = 1}^{jG+r}s_{j}\right)1_{\{w_{jG+r}\in I_B\}}\\ 
	&\times \left(ds_1\mathdutchcal{K}_{\mathcal{D}_{L}}(x,dw_1)\right)\hdots \left(ds_{jG+r}\mathdutchcal{K}_{\mathcal{D}_{L}}(w_{jG+r-1},dw_{jG+r})\right). 
\end{aligned}
\end{equation}
We denote the family of probability measures $\left(q(y,dw_1,\hdots,dw_{G})\right)_{y\in\mathbb{R}_+^{2k}}$ on $(\mathbb{R}_+^{2k})^{G}$, defined for all~\hbox{$y\in\mathbb{R}_+^{2k}$} in the following way
$$
\begin{aligned}
q(y,dw_1,\hdots, dw_{G}) &= \frac{1_{\{w_{G}\in K_{\text{ren}}\}}}{\left(\mathdutchcal{K}_{I_B}\right)^{G}(1_{\{K_{\text{ren}}\}})(y)} \mathdutchcal{K}_{I_B}(y,dw_{1})\\
&\times\mathdutchcal{K}_{I_B}(w_{1},dw_{2})\hdots\mathdutchcal{K}_{I_B}(w_{G-1},dw_{G}).
\end{aligned}
$$
Our aim now is to bound from below the measures $\left(\mathdutchcal{K}_{\mathcal{D}_{L}}(w_{i-1},dw_i)\right)_{i\in\llbracket1,jG+r+1\rrbracket}$ in~\eqref{eq:inequality_duhamel_renewal_intermediate}, using the measure introduced above. To do so, we first need an auxiliary inequality. Let~$i\in\llbracket1,G-1\rrbracket$ and $y\in I_B$. As at each generation, we have at most $2$ offspring, it comes~$\left(\mathdutchcal{K}_{I_B}\right)^{G-i}(1_{\{K_{\text{ren}}\}})(y) \leq 2^{G-i} \leq 2^{G}$. Then, one can easily obtain from this inequality, \hyperlink{paragraph:long_time_behaviour_S2.1}{$(S_{2.1})$}, and the fact that $\frac{1}{1+\varepsilon_0} \leq 1$, that for all \hbox{$f \in M_b\left(\mathbb{R}_+^{i}\right)$} non-negative, 
\begin{equation}\label{eq:inequality_measures_not_multiple_D}
\begin{aligned}
&\int_{w_1\in\mathbb{R}_+^{2k}}\hdots \int_{w_{G}\in\mathbb{R}_+^{2k}} f(w_1,\hdots,w_i) q(y,dw_1,\hdots,dw_{G}) \\ 
&\leq   2^{G}\int_{w_1\in\mathbb{R}_+^{2k}}\hdots \int_{w_i\in\mathbb{R}_+^{2k}} f(w_1,\hdots,w_i) \mathdutchcal{K}_{I_B}(w_1,dw_2)\hdots \mathdutchcal{K}_{I_B}(w_{i-1},dw_i).
\end{aligned}
\end{equation}
We are now able to bound from below the measures $\left(\mathdutchcal{K}_{\mathcal{D}_{L}}(w_{i-1},dw_i)\right)_{i\in\llbracket1,jG+r+1\rrbracket}$ in Eq.~\eqref{eq:inequality_duhamel_renewal_intermediate}. For all $(j,r)\in\mathbb{N}\times\llbracket0,G-1\rrbracket$ such that $(j,r)\neq (0,0)$:
\begin{itemize}[leftmargin=*]
\item When $r\neq 0$, in view of \eqref{eq:inequality_measures_not_multiple_D} and the fact that $I_B \subset \mathcal{D}_L$, we bound from below the measure $\mathdutchcal{K}_{\mathcal{D}_{L}}(w_{jG},dw_{jG+1})\times\hdots\times\mathdutchcal{K}_{\mathcal{D}_{L}}(w_{jG+r-1},dw_{jG+r})$ in~\eqref{eq:inequality_duhamel_renewal_intermediate} by the measure
$$
\frac{1}{2^{G}}\int_{w_{jG+r+2}\in I_B}\hdots \int_{w_{jG+G-1}\in I_B}\int_{w_{(j+1)G}\in K_{\text{ren}}} q\left(w_{jG},dw_{jG+1},\hdots, dw_{j(G+1)}\right).
$$

\item Then, in view of Assumption \hyperlink{paragraph:long_time_behaviour_S2.1}{$(S_{2.1})$}, we bound from below all the measures of the form
$\mathdutchcal{K}_{\mathcal{D}_{L}}(w_{(i-1)G},dw_{(i-1)G+1})\times\hdots\times\mathdutchcal{K}_{\mathcal{D}_{L}}(w_{(i-1)G + G-1},dw_{iG})$ in~\eqref{eq:inequality_duhamel_renewal_intermediate}, where $i\in\llbracket1,j\rrbracket$, by the measure 
$$
(1+\varepsilon_0) q(w_{(i-1)G},dw_{(i-1)G+1},\hdots, dw_{iG}).
$$

\item In view of the fact that $\left(\frac{1+\tilde{\varepsilon}_0}{1+\varepsilon_0}\right)^{\frac{1}{G}} \leq 1$ (as $\tilde{\varepsilon}_0 \leq \varepsilon_0$), we bound from below the term~$\left[\left(\frac{1+\tilde{\varepsilon}_0}{1+\varepsilon_0}\right)^{\frac{1}{G}}\right]^{jG + r}$ in~\eqref{eq:inequality_duhamel_renewal_intermediate} by the term $\left(\frac{1+\tilde{\varepsilon}_0}{1+\varepsilon_0}\right)^{\frac{G-1}{G}}\left(\frac{1+\tilde{\varepsilon}_0}{1+\varepsilon_0}\right)^{j}$.

\item Finally, in view of the fact that $\tilde{\varepsilon}_0 < \varepsilon_0$ and the fact that
$$
\frac{\mathdutchcal{V}_{\min}}{\psi(x,a)} \leq \frac{\mathdutchcal{V}(x)}{\psi(x,a)} \leq 1,
$$
we bound from below the term $e^{-\lambda_{\psi} t}\overline{\mathdutchcal{Y}}_{a}^{\tilde{l}}(x,s)$ in~\eqref{eq:inequality_duhamel_renewal_intermediate} by the term $$\left(\frac{1+\tilde{\varepsilon}_0}{1+\varepsilon_0}\right)^{\frac{G-1}{G}}\big[1-e^{-b_0a_0}\big]\frac{\mathdutchcal{V}_{\min}e^{-\lambda_{\psi}t}}{\psi(x,a)}\overline{\mathdutchcal{Y}}_{a}^{\tilde{l}}(x,s).
$$
\end{itemize}
At the end, we obtain that for all $t\geq 0$, $(x,a)\in K_{\text{ren}}\times[0,a_0\tilde{l}]$,  $L\geq t/a_0 + \tilde{l}$
$$
\begin{aligned}
P_B(t,x,a,\tilde{l},L) &\geq   \left(\frac{1+\tilde{\varepsilon}_0}{1+\varepsilon_0}\right)^{\frac{G-1}{G}}\big[1-e^{-b_0a_0}\big]\frac{\mathdutchcal{V}_{\min}e^{-\lambda_{\psi}t}}{2^{G}\psi(x,a)}\Bigg[\overline{\mathdutchcal{Y}}_{a}^{\tilde{l}}(x,t)+ \hspace{-0.32mm} \sum_{\substack{(j,r)\in \mathbb{N}\times\llbracket0,G-1\rrbracket,\\
(j,r)\neq(0,0)}}\left(1+\tilde{\varepsilon}_0\right)^{j} \\
&\times \int_{[0,t]}\hdots \int_{\left[0,t - \sum_{i = 1}^{jG+r-1}s_i\right]} \int_{(\mathbb{R}_+^{2k})^{G}}\hdots \int_{(\mathbb{R}_+^{2k})^{G}} \mathdutchcal{Y}^{\tilde{l}}_{a}(x,s_1)\mathdutchcal{Y}_0^{\tilde{l}}(w_1,s_2)\hdots\\
&\times\mathdutchcal{Y}_0^{\tilde{l}}\left(w_{jG+r-1.},s_{jG+r}\right)\overline{\mathdutchcal{Y}_0}^{\tilde{l}}\Bigg(w_{jG+r},t- \sum_{j = 1}^{jG+r}s_{j}\Bigg)\left(ds_1\hdots ds_{jG+r}\right)\\
&\times \left(q(x,dw_1,\hdots,dw_{G}) \hdots q(w_{jG},dw_{jG+1},\hdots,dw_{(j+1)G})\right)\Bigg]. 
\end{aligned}
$$
The functions $\mathdutchcal{Y}^{\tilde{l}}_0$ and $\overline{\mathdutchcal{Y}}^{\tilde{l}}_0$ correspond to the probability density function and the complementary cumulative distribution function of times between events that occur at a rate~\hbox{$b_{\tilde{l}} : \mathbb{R}_+^{2k}\times\big[0,a_0(\tilde{l}+1)\big) \mapsto \mathbb{R}_+$}, defined such that for all $(x,a)\in \mathbb{R}_+^{2k}\times\big[0,a_0(\tilde{l}+1)\big)$
$$
b_{\tilde{l}}(x,a) = -\frac{d}{da}\left(\ln\Big(\overline{\mathdutchcal{Y}}_0^{\tilde{l}}(x,a)\Big)\right) = \frac{b(x,a)\exp\left(-\int_0^a b(x,s) ds\right)}{\exp\left(-\int_0^a b(x,s) ds\right) - \exp\left(-\int_0^{a_0(\tilde{l}+1)} b(x,s) ds\right)}.
$$
We have $b_{\tilde{l}} \geq \underline{b}$ on $\mathbb{R}_+^{2k}\times\big[0,a_0(\tilde{l}+1)\big)$ by \hyperlink{paragraph:long_time_behaviour_S3.1}{$(S_{3.1})$}. Hence, we can apply Lemma \ref{lemm:inequality_for_means} with \hbox{$b_1(x,a)=\underline{b}(a)$}, \hbox{$b_2(x,a)~=~b_{\tilde{l}}(x,a)$}, and the probability measures $\left(q(y,w_1,\hdots,w_{G})\right)_{y\in\mathbb{R}_+^{2k}}$ (see Remark \ref{rem:inequality_means} for the simplification when the birth rate does not depend on telomere lengths), to obtain for all $t\geq 0$, \hbox{$(x,a)\in K_{\text{ren}}\times[0,a_0\tilde{l}]$}, $L\geq t/a_0 + \tilde{l}$ the following
$$
\begin{aligned}
P_B(t,x,a,\tilde{l},L) &\geq  \left(\frac{1+\tilde{\varepsilon}_0}{1+\varepsilon_0}\right)^{\frac{G-1}{G}}\big[1-e^{-b_0a_0}\big]\frac{\mathdutchcal{V}_{\min}e^{-\lambda_{\psi}t}}{2^{G}\psi(x,a)}\bigg(\overline{\mathdutchcal{F}_a}(t) + \sum_{\substack{(j,r)\in \mathbb{N}\times\llbracket0,G-1\rrbracket,\\
(j,r)\neq(0,0)}} \left(1+\varepsilon_0\right)^{j} \\
&\times\left(\left(\mathdutchcal{F}_a\right)*\left(\mathdutchcal{F}_0\right)*^{(jG+r-1)}\left(\overline{\mathdutchcal{F}}_0\right)\right)(t)\bigg). \\
\end{aligned}
$$
Now, in the above equation,  we use the fact that $\overline{\mathdutchcal{F}_a}(t) \geq 0$. Then,  we use the fact that in view of~\hyperlink{paragraph:long_time_behaviour_S2.2}{$(S_{2.2})$}, it holds for all $(x,a)\in \left(K_{\text{ren}}\times[0,a_0\tilde{l}]\right) \subset \left(\mathcal{D}_1\times[0,a_0\tilde{l}]\right)$ 
$$
\frac{1}{\psi(x,a)} = \frac{1}{\mathdutchcal{V}(x)(1+a^{d_{\psi}})} \geq \frac{1}{\underset{y\in\mathcal{D}_{1}}{\sup}\left(\mathdutchcal{V}(y)\right)\left(1+(a_0\tilde{l})^{d_{\psi}}\right)}.
$$
It comes that \eqref{eq:inequality_duhamel_renewal} is true.
\paragraph{Step 2:}\hypertarget{paragraph:step_2_renewal}{} We first prove by induction that for all $n\in\mathbb{N}^*$, $t\geq 0$
\begin{equation}\label{eq:relation_between_severalconv}
\sum_{r' = 0}^{n-1} \left[\mathdutchcal{F}_0*^{(r')}\overline{\mathdutchcal{F}}_0\right](t) = \int_t^{+\infty}  \left(\mathdutchcal{F}_0*^{(n-1)}\mathdutchcal{F}_0\right)(s) ds.
\end{equation}
The base case is trivial with the convention that $(f*^{(0)}g)(t) = g(t)$ (see Notation \ref{notation:convolution}), and the fact that $\int_t^{+\infty} \mathdutchcal{F}_0(s) ds = \overline{\mathdutchcal{F}}_0(t)$. For the induction step, we first observe by developing the convolution that
$$
\begin{aligned}
\int_t^{+\infty}  \left(\mathdutchcal{F}_0*^{(n)}\mathdutchcal{F}_0\right)(s) ds &= \int_t^{+\infty}  \left[\int_0^{t} \left(\mathdutchcal{F}_0*^{(n-1)}\mathdutchcal{F}_0\right)(r)\mathdutchcal{F}_0(s-r) dr\right] ds \\
&+ \int_t^{+\infty}  \left[\int_t^{s} \left(\mathdutchcal{F}_0*^{(n-1)}\mathdutchcal{F}_0\right)(r)\mathdutchcal{F}_0(s-r) dr\right] ds.
\end{aligned}
$$
Then, we switch $\int_t^{+\infty}$ and $\int_0^{t}$ in the first term, and $\int_t^{+\infty}$ and $\int_t^{s}$ in the second term, and use the fact that $\int_{t}^{+\infty} \mathdutchcal{F}_0(s-r) ds=  \overline{\mathdutchcal{F}}_0(t-r)$ and $\int_{r}^{+\infty} \mathdutchcal{F}_0(s-r) ds= 1$, to obtain
$$
\begin{aligned}
\int_t^{+\infty}  \left(\mathdutchcal{F}_0*^{(n)}\mathdutchcal{F}_0\right)(s) ds &= \left(\mathdutchcal{F}_0*^{(n)}\overline{\mathdutchcal{F}}_0\right)(t) + \int_t^{+\infty}  \left(\mathdutchcal{F}_0*^{(n-1)}\mathdutchcal{F}_0\right)(r) dr. 
\end{aligned}
$$
Finally, we apply the induction hypothesis on the above equation. It comes that the induction step is done, and thus that~\eqref{eq:relation_between_severalconv} holds.  

Now, we denote for all $t\geq0$ the function $\mathdutchcal{F}_{0,G}(t) = \left(\mathdutchcal{F}_0*^{(G-1)}\mathdutchcal{F}_0\right)(t)$, that can be seen as a probability density function with complementary cumulative function \hbox{$\overline{\mathdutchcal{F}}_{0,G}(t) = \int_t^{+\infty}\left(\mathdutchcal{F}_0*^{(G-1)}\mathdutchcal{F}_0\right)(s) ds$}. Using \eqref{eq:relation_between_severalconv}, the function $F$ defined in \eqref{eq:inequality_exponential_by_residues} becomes
$$
F(t) = \sum_{j'\geq 0} \left(1+\tilde{\varepsilon}_0\right)^{j'} \left(\mathdutchcal{F}_{0,G}*^{(j')}\overline{\mathdutchcal{F}}_{0,G}\right)(t).
$$
In particular, $F$ is the mean of a Bellman-Harris process with lifetimes distributed according to $\mathdutchcal{F}_{0,G}$ and a reproduction law with mean $1+\varepsilon_0$, see \cite[Eq. $(12)$,\,p.$143$]{athreya_1972}. In addition, as the Laplace transform of the convolution of two functions is the product of their Laplace transform, for all~$x\in\mathbb{R}$ such that the Laplace transform of $\mathdutchcal{F}_0$ is defined, we have $\mathcal{L}\left(\mathdutchcal{F}_{0,G}\right)(x) = \left(\mathcal{L}\left(\mathdutchcal{F}_{0}\right)(x)\right)^{G}$. Then, in view of the first equality in~\eqref{eq:equalities_from_intermediate_values_theorem} and~\eqref{eq:asymptotic_mean_bellman-harris}, there exists $n_1 >0$ such that $F(t)  \sim n_1e^{\tilde{\alpha} t}$ when $t\rightarrow+\infty$. We easily deduce Eq.~\eqref{eq:inequality_exponential_by_residues} from the latter.
\paragraph{Step 3:}\hypertarget{paragraph:step_3_renewal}{} 
Let $\eta'>0$ and $t_{\eta'}$ such that \eqref{eq:inequality_exponential_by_residues} holds for $\eta'$ previously fixed. A consequence of the second condition in~\hyperlink{paragraph:long_time_behaviour_S3.2}{$(S_{3.2})$} and the fact that $\tilde{\alpha} < \alpha$ is that there exists $t_{c},\,c > 0$ such that for all $t\geq t_{c}$, $a\geq 0$, we have
\begin{equation}\label{eq:laplace_transform_age_lower_bound}
\int_{0}^{t} e^{-\tilde{\alpha} s}\mathdutchcal{F}_a(s) ds \geq c.
\end{equation}
We use this inequality and~\eqref{eq:inequality_duhamel_renewal} to bound from below $P_B(t,x,a,\tilde{l},L)$ by an exponential term. First, in view of Tonelli's theorem, we put the convolution by $\mathdutchcal{F}_a$ in~\eqref{eq:inequality_duhamel_renewal} outside the sum. Then, we bound from below what remains in the sum by $F$ defined in~\eqref{eq:inequality_exponential_by_residues}. The bound appears when we change the indices in the sum, and take $r' = r-1$, and $j' = j +1$ when $r' = -1$. Finally, we plug the inequality given in~\eqref{eq:inequality_exponential_by_residues} and~\eqref{eq:laplace_transform_age_lower_bound} in the lower bound we have obtained. This gives us that for all $t \geq t_{\eta'} + t_c$, $(x,a)\in K_{\text{ren}}\times[0,a_0\tilde{l}]$,~$L\geq t/a_0 + \tilde{l}$,
\begin{equation}\label{eq:conclusion_renewal_intermediate}
\begin{aligned}
P_B(t,x,a,l,L) &\geq Ce^{-\lambda_{\psi} t}\int_0^t F(t-s) \mathdutchcal{F}_{a}(s) ds \\
&\geq Ce^{-\lambda_{\psi} t}\int_0^{t-t_{\eta'}} F(t-s) \mathdutchcal{F}_{a}(s) ds \geq c.C.e^{(\tilde{\alpha}-\lambda_{\psi} - \eta')t}.
\end{aligned}
\end{equation}
Using now the Markov property, Proposition \ref{prop:to_prove_A1}, and finally \eqref{eq:conclusion_renewal_intermediate}, we can find $t'_1 >0$, $c_1 >0$, $L'\geq (\tilde{l}+1) + 2m_0$, such that for all $t \geq t_{\eta'} + t_c$, $(x,a)\in \left[0,B_{\max}\right]^{2k}\times[0,a_0(\tilde{l}+1)]$ and $L\geq \max\big(t/a_0 + \tilde{l},\,L'\big)$,
\begin{equation}\label{eq:penultimate_equation_lemma_renewal}
\begin{aligned}
P_B(t+t'_1,x,a,l,L) &\geq c_1\int_{(x',a')\in K_{\text{ren}}\times[0,a_0\tilde{l}]} P_B(t,x',a',l,L) dx'da'\\
&\geq c_1.\text{Leb}(K_{\text{ren}}).(a_0\tilde{l}).c.C.e^{(\tilde{\alpha}-\lambda_{\psi} - \eta')t},
\end{aligned}
\end{equation}
where $\text{Leb}$ is the Lebesgue measure on $\mathbb{R}_+^{2k}$.

Let $\eta > \eta'$. The ratio between the right-hand side term of \eqref{eq:penultimate_equation_lemma_renewal} and $e^{(\tilde{\alpha} - \lambda_{\psi} - \eta)(t+t'_1)}$ tends to infinity when $t\rightarrow+\infty$. Therefore, as $\eta'$ can be taken as small as possible, we easily obtain that \eqref{eq:renewal_prop_inequality} is true replacing $\alpha$ by $\tilde{\alpha}$, for any $\tilde{\alpha}\in(0,\alpha)$. As stated at the beginning of the proof, this yields that the proposition is true.
\qed

\subsubsection{Proof of Proposition \ref{prop:bound_tail_probability}}\label{subsubsect:proof_bound_tail_probability}

Let $L_1 \geq L_{\text{ret}}$. By the definition of $\tau_{D_{L_1}}$, for all $(x,a)\in D_{L_1}$, we have	
$$
\mathbb{P}_{(x,a)}\left[\min\left(\tau_{D_{L_1}},\tau_{\partial}\right) > t\right] = 0.
$$
Thus, we just need to prove \eqref{eq:bound_tail_probability} for all $(x,a)\in (D_{L_1})^c$. The proof that~\eqref{eq:bound_tail_probability} holds for all $(x,a)\in (D_{L_1})^c$ is done in two steps. First, in Step \hyperlink{paragraph:step1_bound_tail_probability}{$1$}, we prove that for all $t\geq0$, $(x,a)\notin D_{L_1}$,
\begin{equation}\label{eq:inequality_Lyapunov}
\begin{aligned}
\mathbb{P}_{(x,a)}\left[\min\left(\tau_{D_{L_1}},\tau_{\partial}\right) > t\right] \leq \overline{\psi}\times\left(1+t^{d_{\psi}}\right)e^{-\lambda_{\psi}t}\bigg[\overline{\mathdutchcal{J}}_a(t) &+ \sum_{n \geq 1} (1+\varepsilon_1)^n\\
&\times\left(\mathdutchcal{J}_a*\mathdutchcal{J}_0*^{(n-1)}\mathdutchcal{J}_0\right)(t)\bigg],
\end{aligned}
\end{equation}
where $\overline{\psi}$ is defined by Lemma~\ref{lemma:inequalities_psi}, $d_{\psi}$ in \eqref{eq:defintion_all_distortion_functions}, and $\mathdutchcal{J}_a$ in \hyperlink{paragraph:long_time_behaviour_S3.3}{$(S_{3.3})$}. The conclusion is then given in Step~\hyperlink{paragraph:step2_bound_tail_probability}{$2$}.

\paragraph{Step 1:}\hypertarget{paragraph:step1_bound_tail_probability}{} Let $(x,a)\in {(D_{L_1})^c}$, $t\geq0$. Recall from Section~\ref{subsect:construction_auxiliary} that $\mathdutchcal{T}_1$ is the waiting time for the first jump of $(Z_t)_{t\geq 0}$, $N_t$ is the total number of jumps $(Z_t)_{t\geq0}$ has made up to time~$t >0$, and $(X_n)_{n\in\mathbb{N}}$ is the evolution of telomere lengths of $(Z_t)_{t\geq0}$ jump by jump. In view of the latter, we have
\begin{equation}\label{eq:inequality_lyapunov_intermediate_first}
\begin{aligned}
\mathbb{P}_{(x,a)}\left[\min\left(\tau_{D_{L_1}},\tau_{\partial}\right) > t\right] = \mathbb{P}_{(x,a)}\left[\mathdutchcal{T}_1 > t\right] + \sum_{n\geq 1}\mathbb{P}_{(x,a)}&\big[\forall i\in\llbracket1,n\rrbracket:\,X_i\in\left(\mathcal{D}_{L_1}\right)^c, \\ 
&\,N_t = n,\,\tau_{\partial} > t\big].
\end{aligned}
\end{equation}
First, we bound from above the first term on the right-hand side. Using the equality 
$$
\int_a^{a+t}\frac{\frac{\partial \psi}{\partial a}(x,u)}{\psi(x,u)} du = \ln\left(\frac{\psi(x,a+t)}{\psi(x,a)}\right)
$$
and the third statement of Lemma \ref{lemma:inequalities_psi}, we obtain
\begin{equation}\label{eq:inequality_lyapunov_intermediate_second}
\begin{aligned}
\mathbb{P}_{(x,a)}\left[\mathdutchcal{T}_1 > t\right] &= \overline{\mathdutchcal{H}}_{a}(x,t) = \exp\left(-\int_a^{a+t} b\left(x,u\right) du - \lambda_{\psi}t + \int_a^{a+t}\frac{\frac{\partial \psi}{\partial a}(x,u)}{\psi(x,u)} du\right) \\ 
&\leq \overline{\psi}\times\left(1+t^{d_{\psi}}\right)\exp\left(-\int_a^{a+t} b\left(x,u\right) du - \lambda_{\psi}t\right).
\end{aligned}
\end{equation}
Now, we bound from above the second term in \eqref{eq:inequality_lyapunov_intermediate_first}. Let $n\in\mathbb{N}^*$. Recall~\eqref{eq:measure_k_restriction} for the definition of~$\mathdutchcal{K}_{(\mathcal{D}_L)^c}$. In view of Lemmas~\ref{lemm:generalized_duhamel_n=1}~and~\ref{lemm:generalized_duhamel}, Remark~\ref{rem:equality_kernel}, and Eq.~\eqref{eq:convol_exp},  we have 
\begin{align}
&\mathbb{P}_{(x,a)}\left[\forall i\in\llbracket1,n\rrbracket:\,X_i \in \left(\mathcal{D}_{L_1}\right)^c,\,N_t = n,\,\tau_{\partial} > t\right] = \frac{e^{-\lambda_{\psi}t}}{\psi(x,a)} \int_{[0,t]\times\mathbb{R}_+^{2k}}\hdots \int_{\left[0,t - \sum_{i = 1}^{n-1}s_i\right] \times \mathbb{R}_+^{2k}} \nonumber\\
&\times \mathdutchcal{Y}_{a_1}(x,s_1)\mathdutchcal{Y}_0(w_1,s_2)\hdots\mathdutchcal{Y}_0\left(w_{n-1},s_{n}\right)\overline{\mathdutchcal{Y}_0}\left(w_n,t- \sum_{j = 1}^{n}s_{j}\right)\left[1 + \left(t- \sum_{j = 1}^{n}s_{j}\right)^{d_{\psi}}\right]\mathdutchcal{V}(w_n)  \label{eq:inequality_lyapunov_intermediate_third} \\
&\times\left(ds_1\mathdutchcal{K}_{(\mathcal{D}_{L_1})^c}(x,dw_1)\right)\hdots\left(ds_{n}\mathdutchcal{K}_{(\mathcal{D}_{L_1})^c}(w_{n-1},dw_n)\right). \nonumber
\end{align}
Let us introduce a family of probability measure $(q(w,dw'))_{w\in\mathbb{R}_+^{2k}}$ such that for all $w\in \mathcal{X}$
$$
q(w,dw') = \frac{\mathdutchcal{V}(w')\mathdutchcal{K}_{(\mathcal{D}_{L_1})^c}(w,dw')}{\mathdutchcal{K}_{(\mathcal{D}_{L_1})^c}(\mathdutchcal{V})(w)}. 
$$
Our aim is to bound from above the measures $\big(\mathdutchcal{K}_{(\mathcal{D}_{L_1})^c}(w_{i-1},dw_{i})\big)_{i\in\llbracket1,n\rrbracket}$ in \eqref{eq:inequality_lyapunov_intermediate_third} (with the convention that $w_0 = x$), using the measures introduced above. To do so, we use the first statement of \hyperlink{paragraph:long_time_behaviour_S2.2}{$(S_{2.2})$} to bound from above the measure $\mathdutchcal{V}(w_n)\mathdutchcal{K}_{(\mathcal{D}_{L_1})^c}(w_{n-1},dw_n)$ by~$(1+\varepsilon_1)\mathdutchcal{V}(w_{n-1})q(w_{n-1},dw_n)$. Then, we iterate this procedure $n-1$ times, such that at the last step of the iteration, we bound from above $(1+\varepsilon_1)^{n-1}\mathdutchcal{V}(w_{1})\mathdutchcal{K}_{(\mathcal{D}_{L_1})^c}(x,dw_{1})$ by $(1+\varepsilon_1)^n\mathdutchcal{V}(x)q(x,dw_{1})$. This gives us an upper bound for~\eqref{eq:inequality_lyapunov_intermediate_third}. We plug  in \eqref{eq:inequality_lyapunov_intermediate_first} the upper bound obtained and~\eqref{eq:inequality_lyapunov_intermediate_second}, and bound from above the term $1 + (t- \sum_{j = 1}^{n}s_{j})^{d_{\psi}}$ in~\eqref{eq:inequality_lyapunov_intermediate_first} by $1+t^{d_{\psi}}$. We obtain after these steps
$$
\begin{aligned}
\mathbb{P}_{(x,a)}[\min&\left(\tau_{D_{L_1}},\tau_{\partial}\right) > t] \leq e^{-\lambda_{\psi}t}(1+t^{d_{\psi}})\bigg(\overline{\psi}.\overline{\mathdutchcal{Y}_a}(t) + \frac{\mathdutchcal{V}(x)}{\psi(x,a)}\sum_{n\geq1} (1+\varepsilon_1)^n  \\
&\times\int_{[0,t]\times\mathbb{R}_+^{2k}}\hdots\int_{\left[0,t - \sum_{i = 1}^{n-1} s_i\right] \times \mathbb{R}_+^{2k}} \mathdutchcal{Y}_{a}(x,s_1)\mathdutchcal{Y}_0(w_1,s_2)\hdots\mathdutchcal{Y}_0\left(w_{n-1},s_{n}\right)\\
&\times\overline{\mathdutchcal{Y}_0}\bigg(w_n,t- \sum_{j = 1}^{n}s_{j}\bigg) \left(ds_1q(x,dw_1)\right)\hdots\left(ds_{n}q(w_{n-1},dw_n)\right)\bigg).
\end{aligned}
$$
As $\overline{\psi} > 1$, we bound from above the term 
$$
\frac{\mathdutchcal{V}(x)}{\psi(x,a)} = \frac{\mathdutchcal{V}(x)}{(1+a^{d_{\psi}})\mathdutchcal{V}(x)}
$$
by $\overline{\psi}$. Then, recalling that \hbox{$\mathdutchcal{J}_a(s)=\overline{b}(a+s)\exp\left(-\int_a^{a+s} \overline{b}\left(u\right) du\right)$}, we obtain \eqref{eq:inequality_Lyapunov} by applying Lemma~\ref{lemm:inequality_for_means} with $b_1 = b$, $b_2 = \overline{b}$ and the probability measures $(q(w,dw'))_{w\in\mathcal{X}}$ (we have $b_1 \leq b_2$ by~\hyperlink{paragraph:long_time_behaviour_S3.1}{$(S_{3.1})$}, and we refer to Remark~\ref{rem:inequality_means} for the simplification of the notations when one of the birth rate does not depend on telomere lengths).

\paragraph{Step 2:}\hypertarget{paragraph:step2_bound_tail_probability}{}

\smallskip

Let us denote for all $t\geq 0$
$$
m(t) = \sum_{n\geq 0}(1+\varepsilon_1)^n\left[\mathdutchcal{J}_0*^{(n)}\overline{\mathdutchcal{J}}_0\right](t).
$$
This function is the mean of a Bellman-Harris process with lifetimes distributed according to $\mathdutchcal{J}_0$, and a reproduction law with mean $1+\varepsilon_1$, see \cite[Eq. $(12)$,\,p.$143$]{athreya_1972}. To have a reproduction law with mean $1+\varepsilon_1$, we take for reproduction law $(p_l)_{l\in\mathbb{N}^*}$ such that $p_1 + rp_r = 1+\varepsilon_1$ and $p_1 + p_r = 1$, where $r\in\mathbb{N}\backslash\{0,1\}$ is taken sufficiently large. By \eqref{eq:asymptotic_mean_bellman-harris} and \hyperlink{paragraph:long_time_behaviour_S3.3}{$(S_{3.3})$}, there exists $n_1 >0$ such that $m(t) \underset{t\rightarrow+\infty}{\sim} n_1e^{\beta t}$. Then, for all $\eta' > 0$, there exists $t'_2>0$ such that for all $t\geq t'_2$
\begin{equation}\label{eq:Bellman_harris_tgeqt_1}
m(t) \leq e^{(\beta + \eta')t}.
\end{equation}
In addition, as $p_0 = 0$, the number of individuals of such a Bellman-Harris increases with time. Then, $m$ is an increasing function. 

Let us fix $\eta' >0$. Using the bound in~\eqref{eq:Bellman_harris_tgeqt_1}, the fact that $m$ is increasing and that $\int_{0}^{\infty} \mathdutchcal{J}_a(s')ds' = 1$, Eq.~\eqref{eq:inequality_Lyapunov} becomes for all $t\geq t'_2$
\begin{equation}\label{eq:inequality_lyapunov_intermediate_sixth}
\begin{aligned}
\mathbb{P}_{(x,a)}\left[\min\left(\tau_{D_{L_1}},\tau_{\partial}\right) > t\right] &\leq \overline{\psi}\times\left(1+t^{d_{\psi}}\right)e^{-\lambda_{\psi}t}\left(\overline{\mathdutchcal{J}}_a(t) + (1+\varepsilon_1)(\mathdutchcal{J}_a*m)(t)\right)\\
&\leq \overline{\psi}\times\left(1+t^{d_{\psi}}\right)e^{-\lambda_{\psi}t}\left(1 + (1+\varepsilon_1)e^{(\beta + \eta')t}\right).
\end{aligned}
\end{equation}
For any $\eta > \eta'$, the ratio between the right-hand-side term of~\eqref{eq:inequality_lyapunov_intermediate_sixth} and $e^{(\beta -\lambda_{\psi}+\eta)t}$ goes to $0$ when~$t$ goes to infinity. The latter and the fact that $\eta'$ can be taken as small as possible imply that the proposition is proved.
\qed

\subsection{Assumption \texorpdfstring{\protect\hyperlink{te:assumptions_velleret_A3}{$(A_3)_F$}}{(A3)} : Asymptotic comparison of survival with a weak form of a Harnack inequality}\label{subsubsect:assumption_(A3)F}

To verify \hyperlink{te:assumptions_velleret_A3}{$(A_3)_F$}, we need to construct a stopping time $U_H$ such that \eqref{eq:first_statement_(A3F)}, \eqref{eq:second_statement_(A3F)} and~\eqref{eq:third_statement_(A3F)} are true. For the stopping time that we present in this section, the fact that~\eqref{eq:second_statement_(A3F)} is true is trivial by its definition. Verifying~\eqref{eq:first_statement_(A3F)}, for its part, involves cumbersome computations. That is why, we only present the main arguments to verify it in Section~\ref{subsubsect:satisfaction_statements_A3F}, and provide the detailed proof for interested readers in Section~\ref{appendix:proof_A3_stepB}. The part that we detail in this section is the proof of \eqref{eq:third_statement_(A3F)}. Again, we need to handle the fact that we have an age-dependent process, with a birth rate that depends both on the age and on telomere lengths of the particle. Hence, a stochastic comparison of the distorted branching process by a Bellman-Harris process is done to verify \eqref{eq:third_statement_(A3F)}. We believe that the proof presented here allows to handle more general cases than the one presented in~\cite{velleret_exponential_2023}, in particular models where the age is involved.

Let us proceed as follows. First, we explain the assumption, and construct the stopping time~$U_H$ in Section~\ref{subsubsect:construction_stopping_time}. Then, we give the auxiliary statements that allow us to get~\eqref{eq:third_statement_(A3F)} in Section~\ref{subsubsect:useful_statements_A3F}, and prove these statements in Sections~\ref{subsubsect:proof(A3)F_stepA}, \ref{subsubsect:proof(A3)F_control_time} and~\ref{subsubsect:proof(A3)F_control_jump}. Finally, we obtain \eqref{eq:first_statement_(A3F)} and~\eqref{eq:third_statement_(A3F)} in Section~\ref{subsubsect:satisfaction_statements_A3F}. The latter allows us to conclude that \hyperlink{te:assumptions_velleret_A3}{$(A_3)_F$} is verified.

\subsubsection{Context and construction of the stopping time \texorpdfstring{$U_H$}{UH}}\label{subsubsect:construction_stopping_time}
\paragraph{Context.} It is well-known since \cite{champagnat_2016} that a Doeblin condition, combined with the fact that the probability of non-extinction starting from $\nu$ is not too small compared to the probability of non-extinction starting from elsewhere, implies the existence of a stationary profile. In the setting of \cite{velleret_unique_2022}, this last criteria is called "asymptotic comparison of survival" and is stated as  
\begin{equation}\label{eq:inequality_control_mass_(A3)}
\limsup_{t\rightarrow+\infty} \sup_{z\in E} \frac{\mathbb{P}_{z}\left[t < \tau_{\partial}\right]}{\mathbb{P}_{\nu}\left[t < \tau_{\partial}\right]} < +\infty. 
\end{equation}
One of the methods to compare the probability of non-extinction starting from $\nu$, and starting from elsewhere, is to use a Harnack inequality. The latter would correspond to the fact that there exist $t_0,\,t_1 >0$ and $C >0$ such that for all $z\in E$
\begin{equation}\label{eq:Harnack_inequality}
\mathbb{P}_z\left[Z_{t_0} \in dz',\,t_0< \tau_{\partial}\right] \leq C.\mathbb{P}_{\nu}\left[Z_{t_1} \in dz',\,t_1< \tau_{\partial}\right].
\end{equation}
By Proposition \ref{prop:to_prove_A1}, we know that the right-hand side term of \eqref{eq:Harnack_inequality} is bounded from below, up to a constant, by the Lebesgue measure restricted on intervals of the form~$[\eta,A]^{2k}$, where $\eta >0$, $A > 0$. Thus, if we prove that the left-hand side term of~\eqref{eq:Harnack_inequality} is bounded from above, up to a constant, by the restriction of the Lebesgue measure on one of these intervals, then we will have \eqref{eq:Harnack_inequality}. However, the particle $(Z_t)_{t\geq 0}$ has a jump kernel that is discontinuous with respect to the Lebesgue measure: there are Dirac measures in the coordinates where the particle does not jump. In addition, for all $t > 0$, $\eta > 0$, $A > 0$, the probability that the particle has telomere lengths outside of $[\eta,A]^{2k}$ at time $t$ is strictly larger than~$0$. Hence, the above tactic does not work. 

Assumption \hyperlink{te:assumptions_velleret_A3}{$(A_3)_F$} allows us to handle these problems by considering events where a Harnack inequality holds, and events where it fails. The latter are called "rare events". When the probability to have a rare event is "sufficiently" small, then we have~\eqref{eq:inequality_control_mass_(A3)}, see~\cite[Theorem~$2.3$]{velleret_exponential_2023}. This can be seen as a "weak form of a Harnack inequality".  

\paragraph{Construction of the stopping time.} The first thing we do is to construct the stopping time $U_H$ involved in this assumption. The above rare events are the following:
\begin{itemize}[leftmargin=*]
\item The particle has not jumped in all the coordinates, implying discontinuities with respect to the Lebesgue measure for the law of $\left(Z_t\right)_{t\geq0}$,
\item The number of jumps is too large, which leads the possibility to have a too large telomere,
\item The particle has a telomere with a length too close to $0$.
\end{itemize}
To write them rigorously, we need to introduce some notions. We recall from Section~\ref{subsect:construction_auxiliary} that for all $l\geq 1$, the random variables $I_l$ and $J_l$ correspond respectively to the sets of "shortened coordinates", and of "lengthened coordinates", at the $l$-th jump. We also recall that $\mathcal{N}_{\partial}$ is the random variable that describes the number of jumps of the particle before extinction. For all $j\in\llbracket1,2k\rrbracket$, we introduce $\mathcal{N}_{j}$ the number of jumps before the first jump in the $j-$th coordinate, defined as
$$
\begin{aligned}
\mathcal{N}_{j} := \inf\left\{l\in\llbracket1,\mathcal{N}_{\partial}\rrbracket,\, j\in I_l\cup J_l\right\}.
\end{aligned}
$$
We also introduce $T_{all}$ the time before the particle $(Z_t)_{t\geq 0}$ has jumped in all coordinates, defined as
$$
T_{all} := \underset{j\in\llbracket1,2k\rrbracket}{\max}\mathdutchcal{T}_{\mathcal{N}_{j}}.
$$
Let $\varepsilon\in(0,1)$. In view of the above list of "rare events", our aim is to find $t_F>0$, $n_J\in\mathbb{N}^*$ and $\eta_0 >0$ such that the statements of~\hyperlink{te:assumptions_velleret_A3}{$(A_3)_F$} are satisfied for $\varepsilon$ and $U_H$ of the form
\begin{equation}\label{eq:stopping_time_UH}
\begin{aligned}
U_H := \begin{cases}
t_F, & \text{ if } t_F < \tau_{\partial}, \text{ }T_{all}  \leq t_F,\text{ } N_{t_F} \leq n_J, \text{ and }X_{N_{t_F}}\in [\eta_0,B_{\max}L_1+n_J\Delta]^{2k},\\
+\infty, & \text{ otherwise.}
\end{cases}
\end{aligned}
\end{equation}
Eq.~\eqref{eq:second_statement_(A3F)} is trivially verified. Thus, we focus on proving that there exists $t_F>0$, $n_J\in\mathbb{N}^*$ and $\eta_0 > 0$ such that~\eqref{eq:first_statement_(A3F)}~and~\eqref{eq:third_statement_(A3F)} are verified. In particular, we now present auxiliary statements that allow us to obtain~\eqref{eq:third_statement_(A3F)}.

\subsubsection{Statements useful to obtain \texorpdfstring{\eqref{eq:third_statement_(A3F)}}{37}}\label{subsubsect:useful_statements_A3F} 

To obtain \eqref{eq:third_statement_(A3F)}, we need to control the probability that $U_H = \infty$ on $\{t_F < \tau_{\partial}\}$. From the definition of $U_H$, the three cases that correspond to this situation are cases where $T_{all}$~is too large, cases where $N_{t_F}$ is too large, and cases where $X_{N_{t_F}} \in \left([\eta_0,+\infty]^{2k}\right)^c$. The third case is directly handled by the statement allowing to obtain~\eqref{eq:first_statement_(A3F)}, see~Section~\ref{subsubsect:satisfaction_statements_A3F}. For the two other cases, we introduce statements allowing to control the probability they occur, and briefly explain how we obtain these statements. Then, in Sections~\ref{subsubsect:proof(A3)F_stepA},~\ref{subsubsect:proof(A3)F_control_time} and~\ref{subsubsect:proof(A3)F_control_jump}, we give all the proofs.

\paragraph{Statements to control the probability that $T_{all}$ is too large.} Inspired by what we have done in Section~\ref{subsubsect:assumption_(A2)}, we propose to use Bellman-Harris processes to control this probability. To be more precise, we need to stochastically compare the distorted branching process presented in Section~\ref{subsubsect:explanation_proof_A2} by the Bellman-Harris process used to prove Proposition~\ref{prop:bound_tail_probability}. This will give us that the probability that~\hbox{$\min(T_{all},\tau_{\partial}) > t$} tends to~$0$ faster than~$e^{(\beta-\lambda_{\psi}+\eta)t}$, for all $\eta > 0$.

To do so, we first obtain an upper bound for the probability that a coordinate is not shortened at one event of division. As the choice of which telomeres are shortened is uniform, we only have to control for all $i\in\llbracket1,2k\rrbracket$ the cardinal of $\{I\in \mathcal{I}_k\,|\,i\notin I\}$ (we recall that $\mathcal{I}_k$ is defined in \eqref{eq:set_combination_shortening}, and is the set that contains all the possible combinations of indices of telomeres that can be shortened). The following lemma, which is proved in Section~\ref{subsubsect:proof(A3)F_stepA}, deals with the latter.
\begin{lemm}[Number of combinations for the shortening]\label{lemma:cardinal_set_part}
Let us consider $i\in\llbracket1,2k\rrbracket$. Then, the functions $f_1 : \{0,1\}^{k} \longrightarrow  \mathcal{I}_{k}$ and \hbox{$f_2: \{0,1\}^{k-1} \longrightarrow  \{I\in\mathcal{I}_k,\,i\notin I\}$} defined such that
$$
\begin{aligned}
\forall x\in\{0,1\}^k:\hspace{6.475mm} f_1(x) &= \{1+kx_1,\hdots,\,k + kx_k\},\\ 
\forall x\in\{0,1\}^{k-1}:\hspace{2.5mm} f_2(x) &=\begin{cases} 
\{j+kx_j\,|\,j\in\llbracket1,k\rrbracket,j\neq i \text{ mod }k\}\cup\{i+k\}, & \text{ if }i\leq k,\\
\{j+kx_j\,|\,j\in\llbracket1,k\rrbracket,j\neq i \text{ mod }k\}\cup\{i-k\}, & \text{ otherwise},\\
\end{cases}
\end{aligned}
$$
are bijective. In particular, it holds
$$
\#\{I\in\mathcal{I}_k,\,i\notin I\} = 2^{k-1}, \hspace{4.25mm}\#(\mathcal{I}_{k}) = 2^k, \hspace{4.25mm}\text{and} \hspace{4.25mm}\frac{\#\{I\in\mathcal{I}_k,\,i\notin I\}}{\#(\mathcal{I}_{k})} = \frac{1}{2}.
$$
\end{lemm}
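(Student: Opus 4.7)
The plan is to exploit the equivalence-class structure hidden in the definition \eqref{eq:set_combination_shortening} of $\mathcal{I}_k$. The key observation is that the residue modulo $k$ partitions $\llbracket 1, 2k \rrbracket$ into exactly $k$ pairs, namely $\{j, j+k\}$ for $j \in \llbracket 1, k \rrbracket$ (with the convention that $k$ and $2k$ share the residue $0$). The congruence constraint in \eqref{eq:set_combination_shortening} says precisely that $I$ contains at most one representative of each such pair, and the cardinality constraint $\#I = k$ then forces $I$ to contain exactly one element of every pair. In other words, specifying an element of $\mathcal{I}_k$ is the same as making, independently for every $j \in \llbracket 1, k\rrbracket$, a binary choice between $j$ and $j+k$.

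To handle $f_1$, I would write the inverse explicitly: given $I \in \mathcal{I}_k$, define $\tilde{x} \in \{0,1\}^k$ by $\tilde{x}_j = 1$ if $j+k \in I$ and $\tilde{x}_j = 0$ otherwise. The discussion above shows $f_1(\tilde{x}) = I$, and the value of $x_j$ can be recovered from $f_1(x)$ by checking whether $j$ or $j+k$ is present, so $f_1$ is both injective and surjective. Checking that $f_1(x)$ genuinely lies in $\mathcal{I}_k$ for every $x$ is immediate: the set $\{1+kx_1, \ldots, k+kx_k\}$ has exactly $k$ elements and contains exactly one representative of each residue class.

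For $f_2$, the same mechanism applies, with the supplementary observation that excluding $i$ determines the choice within the pair containing $i$. When $i \le k$, the elements $i$ and $i+k$ belong to the same residue class, so $i \notin I$ together with the fact established above that exactly one element of $\{i, i+k\}$ appears in $I$ forces $i+k \in I$; this is precisely what $f_2$ encodes, while the free coordinates $(x_j)_{j \neq i}$ parameterize the remaining $k-1$ binary choices. The case $i > k$ is symmetric, with the roles of $i$ and $i-k$ swapped. Bijectivity of $f_2$ then follows by the same argument used for $f_1$.

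The main (and very mild) obstacle is notational bookkeeping around the modular convention, in particular the fact that the pair associated with residue $0$ is $\{k, 2k\}$ rather than $\{0, k\}$, and the case split $i \le k$ versus $i > k$ in the definition of $f_2$. Once the pair decomposition is set up cleanly, the remaining content is elementary counting: $\#(\mathcal{I}_k) = \#(\{0,1\}^k) = 2^k$ from bijectivity of $f_1$, $\#\{I \in \mathcal{I}_k, i \notin I\} = \#(\{0,1\}^{k-1}) = 2^{k-1}$ from that of $f_2$, and the ratio $1/2$ follows at once.
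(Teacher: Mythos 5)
Your proof is correct and rests on the same key observation as the paper's: the congruence constraint together with $\#I = k$ forces $I \in \mathcal{I}_k$ to contain exactly one element of each pair $\{j, j+k\}$, $j \in \llbracket 1,k\rrbracket$. The paper packages this as a surjectivity argument via the Euclidean division $i-1 = q_i k + r_i$, while you write the explicit inverse; this is only a presentational difference.
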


\noindent From here, we use a Bellman-Harris process to bound from above the probability that~$\min(T_{all},\tau_{\partial}) > t$. This gives us the following lemma, which is proved in Section~\ref{subsubsect:proof(A3)F_control_time}.
\begin{lemm}[Control of the discontinuities]\label{lemma:inequality_time_mixing}
Assume that \hyperlink{paragraph:long_time_behaviour_S1.1}{$(S_{1.1})$}, \hyperlink{paragraph:long_time_behaviour_S2}{$(S_2)$} and \hyperlink{paragraph:long_time_behaviour_S3}{$(S_3)$} hold. Then, for all $\varepsilon > 0$, we can find $t_0 >0$ such that for all $t\geq t_0$
\begin{equation}\label{eq:inequality_time_mixing}
\sup_{(x,a)\in E} \left(\mathbb{P}_{(x,a)}\left[\min(T_{all},\tau_{\partial}) >  t\right]\right) \leq \frac{\varepsilon}{3}\exp\left(-\rho t\right),
\end{equation}
where $\rho = \lambda_{\psi} - \frac{\alpha + \beta}{2}$ is defined in Section~\ref{subsubsect:useful_statements_A2} under~\eqref{eq:inequality_absorbing_rate}.
\end{lemm}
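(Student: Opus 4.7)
The plan is to bound $T_{all}$ by a union bound over the $2k$ coordinates and, for each coordinate $j$, to control the probability that $j$ is never touched before time $t$ by comparison with the same Bellman--Harris process as in the proof of Proposition~\ref{prop:bound_tail_probability}. Writing $T_{all}=\max_{j}\mathdutchcal{T}_{\mathcal{N}_j}$, one has
\[
\mathbb{P}_{(x,a)}\!\left[\min(T_{all},\tau_{\partial})>t\right]\;\leq\;\sum_{j=1}^{2k}\mathbb{P}_{(x,a)}\!\left[\mathdutchcal{T}_{\mathcal{N}_j}>t,\;\tau_{\partial}>t\right],
\]
and decomposing on the number of jumps $N_t$ gives, for each $j$, an expression of the form $\overline{\mathdutchcal{H}}_a(x,t)+\sum_{n\geq 1}\mathbb{P}_{(x,a)}[N_t=n,\tau_{\partial}>t,\mathcal{N}_j>n]$. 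On the event $\{N_t=n,\mathcal{N}_j>n\}$ every pair $(I_p,J_p)$ with $p\leq n$ satisfies $j\notin I_p$, so Lemma~\ref{lemm:generalized_duhamel} (with $f\equiv 1$ and $B_p=\mathbb{R}_+^{2k}$) expresses the $n$th term as a multiple integral in which one sums over indices $(i_p,j_p)\in\mathcal{Q}_k$ with $j\notin i_p$.

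\textbf{The key algebraic gain.} The crucial observation is that combining Lemma~\ref{lemma:cardinal_set_part} with Assumption \hyperlink{paragraph:long_time_behaviour_S2.2}{$(S_{2.2})(ii)$} applied \emph{uniformly in $I$} yields
\[
\sum_{(I,J):\,j\notin I}\int_{u\in\mathbb{R}^{2k}}\mathdutchcal{V}(y+u)\,1_{\{y+u\in\mathbb{R}_+^{2k}\}}\,d\pi^{I,J}_y(u)\;\leq\;\frac{\#\{I\in\mathcal{I}_k:j\notin I\}}{\#(\mathcal{I}_k)}\,(1+\varepsilon_1)\mathdutchcal{V}(y)\;=\;\frac{1+\varepsilon_1}{2}\mathdutchcal{V}(y),
\]
because $I$ is drawn uniformly in $\mathcal{I}_k$ independently of $(J,M)$, which factors out the ratio $1/2$ from Lemma~\ref{lemma:cardinal_set_part}. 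This is the crucial factor $1/2$ per jump that compensates the extra factor $2$ per jump appearing in $\mathdutchcal{G}$ (compared to the density $\mathdutchcal{Y}$) when we mimic the manipulations of Section~\ref{subsubsect:proof_bound_tail_probability}.

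\textbf{Execution.} Starting from the representation given by Lemma~\ref{lemm:generalized_duhamel}, I iterate the inequality above from the last index $p=n$ down to $p=1$: each iteration collapses the $u_p$-integral against $\mathdutchcal{V}$ into a factor $(1+\varepsilon_1)/2\cdot\mathdutchcal{V}(x+\sum_{l<p}u_l)$, which in turn cancels the $\psi(\cdot,0)=\mathdutchcal{V}(\cdot)$ at the denominator of the next $\mathdutchcal{G}_0$. Using the identity $\int_a^{a+s}\frac{\partial_a\psi}{\psi}du=\ln(\psi(x,a+s)/\psi(x,a))$ and the third statement of Lemma~\ref{lemma:inequalities_psi} to extract the polynomial factor $\overline{\psi}(1+t^{d_\psi})$, then bounding $b(x,\cdot)\leq\overline{b}(\cdot)$ via \hyperlink{paragraph:long_time_behaviour_S3.1}{$(S_{3.1})$} and applying Lemma~\ref{lemm:inequality_for_means} exactly as in Step~$1$ of Section~\ref{subsubsect:proof_bound_tail_probability}, I obtain
\[
\mathbb{P}_{(x,a)}[\mathdutchcal{T}_{\mathcal{N}_j}>t,\tau_\partial>t]\;\leq\;\overline{\psi}(1+t^{d_\psi})e^{-\lambda_\psi t}\Bigl(\overline{\mathdutchcal{J}}_a(t)+\sum_{n\geq 1}(1+\varepsilon_1)^n\bigl(\mathdutchcal{J}_a*\mathdutchcal{J}_0*^{(n-1)}\overline{\mathdutchcal{J}}_0\bigr)(t)\Bigr),
\]
which is exactly the right-hand side of~\eqref{eq:inequality_Lyapunov}. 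Applying the asymptotic~\eqref{eq:asymptotic_mean_bellman-harris} with parameter $\beta$ coming from \hyperlink{paragraph:long_time_behaviour_S3.3}{$(S_{3.3})$} produces a bound of the form $C_\eta(1+t^{d_\psi})e^{(\beta-\lambda_\psi+\eta)t}$ for any $\eta>0$, with $C_\eta$ uniform over $(x,a)\in E$ since $\mathdutchcal{V}$ and $1/\psi$ are bounded on the compact set $E=D_{L_1}$.

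\textbf{Conclusion and main obstacle.} Since $\rho=\lambda_\psi-\tfrac{\alpha+\beta}{2}$ and $\alpha>\beta$, picking any $\eta<\tfrac{\alpha-\beta}{2}$ ensures $\beta-\lambda_\psi+\eta<-\rho$, so multiplying the union bound by $2k$ and absorbing the polynomial factor, there exists $t_0>0$ such that for $t\geq t_0$ the right-hand side is at most $\tfrac{\varepsilon}{3}e^{-\rho t}$. The genuine difficulty is item~(4): realising that the uniform choice of the shortening index $I\in\mathcal{I}_k$ in \eqref{eq:measure_shortening}, together with the fact that Assumption \hyperlink{paragraph:long_time_behaviour_S2.2}{$(S_{2.2})(ii)$} holds \emph{uniformly in $I$}, allows the factor $1/2$ from Lemma~\ref{lemma:cardinal_set_part} to be extracted from $\sum_{(I,J):j\notin I}\int\mathdutchcal{V}\,d\pi^{I,J}$ without disturbing the subsequent Bellman--Harris comparison. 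Everything else then mirrors the proof of Proposition~\ref{prop:bound_tail_probability} almost verbatim.
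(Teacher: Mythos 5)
Your proposal is correct and follows essentially the same route as the paper: a union bound over the $2k$ coordinates, relaxing "coordinate $j$ never touched" to "coordinate $j$ never shortened", and then exploiting that only half of the sets in $\mathcal{I}_k$ avoid a given index (Lemma~\ref{lemma:cardinal_set_part}) together with the uniform-in-$I$ bound of \hyperlink{paragraph:long_time_behaviour_S2.2}{$(S_{2.2})$}$(ii)$, so that the restricted kernel contributes $(1+\varepsilon_1)$ per jump after absorbing the factor $2$ from $\mathdutchcal{G}$ (the paper packages this as $\mathdutchcal{K}_j(\mathdutchcal{V})\leq(1+\varepsilon_1)\mathdutchcal{V}$ in~\eqref{eq:inequality_kernel_power_intermediate}, which is your "$\tfrac{1+\varepsilon_1}{2}$ per kernel" statement). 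The remaining steps — Lemma~\ref{lemm:inequality_for_means} with $b_1=b$, $b_2=\overline{b}$, the Bellman--Harris asymptotics under \hyperlink{paragraph:long_time_behaviour_S3.3}{$(S_{3.3})$}, and the choice $\eta<\tfrac{\alpha-\beta}{2}$ — match the paper's proof.
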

\paragraph{Statement to control the probability that $N_{t_F}$ is too large.} To control the probability that $N_{t_F}$ is too large, we only use the following statement, that we prove in Section~\ref{subsubsect:proof(A3)F_control_jump}. Briefly, we obtain this by using the fact that $\left(\mathbb{P}[\mathcal{N}_{\partial} > n]\right)_{n\geq0}$ decreases exponentially fast.

\begin{lemm}[Control of the number of jumps]\label{lemma:inequality_generation_mixing} Assume that \hyperlink{paragraph:long_time_behaviour_S1.1}{$(S_{1.1})$}, \hyperlink{paragraph:long_time_behaviour_S2}{$(S_2)$} and \hyperlink{paragraph:long_time_behaviour_S3}{$(S_3)$} hold. Then, for all $\varepsilon >0$, there exists an increasing function $n :\mathbb{R}_+ \longrightarrow \mathbb{N}^*$ such that for all~$t\geq0$
\begin{equation}\label{eq:inequality_generation_mixing}
\sup_{(x,a)\in E} \left(\mathbb{P}_{(x,a)}\left[N_{t} > n(t),t < \tau_{\partial}\right]\right) \leq \frac{\varepsilon}{3}\exp\left(-\rho t\right),
\end{equation}
where $\rho = \lambda_{\psi} - \frac{\alpha + \beta}{2}$ is defined in Section~\ref{subsubsect:useful_statements_A2} under~\eqref{eq:inequality_absorbing_rate}.
\end{lemm}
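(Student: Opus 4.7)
I will dominate the jump-counting process $(N_t)$ stochastically from above by a renewal process whose inter-arrival times (from the first jump on) are i.i.d.\ with a distribution depending only on $\overline b$ and $\lambda_\psi$, then apply a Chernoff-type estimate to obtain the desired tail bound, with $n(t)$ chosen linearly in $t$.

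\textbf{Coupling of inter-jump times.} Let $S_k := \mathdutchcal{T}_{k+1}-\mathdutchcal{T}_k$ denote the inter-jump times of the (unkilled) auxiliary chain of Section~\ref{subsect:construction_auxiliary}. Conditionally on $(X_k, A_k)$, $S_k$ has complementary c.d.f.~$\overline{\mathdutchcal{H}}_{A_k}(X_k, \cdot)$, with hazard $\lambda_\psi + b(X_k,\cdot) - \partial_a\psi(X_k,\cdot)/\psi(X_k,\cdot)$. Because $\psi$ is increasing in its age variable (so $\partial_a\psi/\psi \geq 0$) and $b(x,\cdot) \leq \overline b(\cdot)$ by \hyperlink{paragraph:long_time_behaviour_S3.1}{$(S_{3.1})$}, this hazard is pointwise dominated by $\lambda_\psi + \overline b(A_k + \cdot)$. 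Using an inverse-CDF construction driven by a single uniform variable at each step, I obtain an almost-sure coupling $S_k \geq \widetilde S_k$, where $\widetilde S_k$ has complementary c.d.f.~$s \mapsto e^{-\lambda_\psi s}\,\overline{\mathdutchcal{J}}_{A_k}(s)$. The age-reset in~\eqref{eq:algorithm_process_jump} forces $A_k = 0$ for all $k \geq 1$, so $(\widetilde S_k)_{k \geq 1}$ is i.i.d.\ with common complementary c.d.f.~$s \mapsto e^{-\lambda_\psi s}\,\overline{\mathdutchcal{J}}_0(s)$; only $\widetilde S_0$ depends on the initial age $a$, which on $E = D_{L_1}$ is bounded by $a_0 L_1$.

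\textbf{Chernoff estimate and choice of $n(t)$.} On $\{N_t > n\} \cap \{t < \tau_\partial\}$ at least $n+1$ jumps have occurred by time $t$, so $\sum_{k=0}^n S_k \leq t$ and, by the coupling, $\sum_{k=0}^n \widetilde S_k \leq t$. Therefore
$$\sup_{(x,a) \in E}\mathbb{P}_{(x,a)}\big[N_t > n,\,t < \tau_\partial\big] \;\leq\; \sup_{a \in [0,\,a_0 L_1]}\mathbb{P}\Big[\widetilde S_0 + \sum_{k=1}^n \widetilde S_k \leq t\Big].$$
For any $\theta > 0$, Markov's inequality yields the upper bound $e^{\theta t}\,\mathbb{E}[e^{-\theta\widetilde S_0}]\,(r_\theta)^n \leq e^{\theta t}\,r_\theta^n$, where $r_\theta := \mathbb{E}[e^{-\theta \widetilde S_1}] \in (0,1)$ since $\widetilde S_1 > 0$ a.s., and $\mathbb{E}[e^{-\theta\widetilde S_0}] \leq 1$. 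Fixing any such $\theta$ and setting
$$n(t) := \left\lceil \frac{(\rho + \theta)\,t + \log(3/\varepsilon)}{\left|\log r_\theta\right|}\right\rceil,$$
which is non-decreasing with values in $\mathbb{N}^{*}$, we obtain $e^{\theta t}\,r_\theta^{n(t)} \leq (\varepsilon/3)\,e^{-\rho t}$, i.e.~\eqref{eq:inequality_generation_mixing}.

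\textbf{Main obstacle.} The only genuine difficulty is the coupling step: the true inter-jump times depend on the random trait $X_k$ through $b(X_k,\cdot)$, so one must carefully replace them by i.i.d.\ lower bounds. The inverse-CDF representation resolves this because the hazard domination $\lambda_\psi + b(X_k,\cdot) - \partial_a\psi/\psi \leq \lambda_\psi + \overline b(A_k + \cdot)$ holds uniformly in the history, and the age-reset $A_k = 0$ for $k \geq 1$ makes the dominating law identical across all subsequent jumps; every other step reduces to standard exponential-moment computations.
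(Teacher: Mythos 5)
Your proof is correct, but it takes a genuinely different route from the paper. The paper's proof throws away the time constraint entirely: it bounds $\mathbb{P}_{(x,a)}[N_t>n,\,t<\tau_{\partial}]$ by $\mathbb{P}_{(x,a)}[\mathcal{N}_{\partial}>n]$, expands this via the iterated kernel, and uses \eqref{eq:birth_rate_assumption} together with $(\mathdutchcal{K})^{n}(\mathdutchcal{V})/\mathdutchcal{V}\leq 2^{n}(1+\varepsilon_1)^{n}$ to get a geometric bound $(c_{\lambda_{\psi}})^{n}$, the decay coming from the per-jump absorption probability of the normalised process; this forces the condition $c_{\lambda_{\psi}}<1$, and the paper must a posteriori enlarge $\lambda_{\psi}$ if the value fixed in Lemma~\ref{lemma:inequalities_psi} is too small. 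You instead keep the constraint that $n+1$ jumps fit into $[0,t]$, dominate the inter-jump times from below by i.i.d.\ variables with survival function $s\mapsto e^{-\lambda_{\psi}s}\overline{\mathdutchcal{J}}_0(s)$ (hazard comparison via $(S_{3.1})$ and $\partial_a\psi/\psi\geq 0$, coupled through the inverse-CDF device — essentially the same construction as in the paper's proof of Lemma~\ref{lemm:inequality_for_means}, so this step is legitimate and in the paper's own style), and conclude by a Chernoff bound on the renewal count. What your approach buys: no condition of the type $c_{\lambda_{\psi}}<1$, hence no need to retune $\lambda_{\psi}$, and a bound that is in fact uniform over all of $\mathcal{X}$, not just $E$; it also exploits the age structure ($\overline b$) rather than only the polynomial growth of $b$. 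What the paper's approach buys: a bound on $\mathbb{P}[\mathcal{N}_{\partial}>n]$ itself, uniform in $t$, obtained with no coupling and no use of $(S_{3.1})$. Two cosmetic points to fix in your write-up: take $\theta>\max(0,-\rho)$ (or note $\rho>0$ in the regime of interest) and cap the ceiling from below by $1$ so that $n$ is genuinely non-decreasing with values in $\mathbb{N}^{*}$; and note that $r_{\theta}<1$ uses $\widetilde S_1>0$ a.s., which holds because $(S_{3.3})$ forces $\overline b$ to be locally integrable near $0$.
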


\noindent  Let us prove now all the statements given above, and then obtain \eqref{eq:first_statement_(A3F)} and \eqref{eq:third_statement_(A3F)}.

\subsubsection{Proof of Lemma~\ref{lemma:cardinal_set_part}}\label{subsubsect:proof(A3)F_stepA}

The injectivity of each of these functions is trivial. We  prove the surjectivity for $f_1$ (this is very similar for $f_2$). Let $I\in\mathcal{I}_k$. As $I$ is a subset of $\left\{1,2,\hdots,2k\right\}$, we have for all $i\in I$ that $i-1\in \left\{0,1,\hdots,2k-1\right\}$. Then, for all $i\in I$ there exists $(q_i,r_i)\in\{0,1\}\times\llbracket0,k-1\rrbracket$ such that $i-1 = q_ik +r_i$. 

\noindent By the definition of $\mathcal{I}_{k}$, for all $(i,j)\in I^2$ such that $i\neq j$ we have that $r_i \neq r_j$. In addition, we know that~$\#(I) = k$. Combining these two statements yields that the set $\left\{r_i,i\in I\right\}$ is a set with $k$ different elements. Then, necessarily
$$
\left\{r_i,i\in I\right\} = \left\{0,\hdots,k-1\right\}.
$$
We can now define $\sigma : I \longrightarrow \llbracket1,k\rrbracket$ as $\sigma(i) = r_i +1$ for all $i\in I$. We obtain a new expression for~$I$ using~$\sigma$
$$
I = \left\{q_ik+r_i + 1\,|\,i\in I\right\} = \left\{q_{\sigma^{-1}(j)}k+j\,|\,j\in\llbracket1,k\rrbracket\right\}.
$$
Thus, if we take $x = \left(q_{\sigma^{-1}(1)},q_{\sigma^{-1}(2)},\hdots,q_{\sigma^{-1}(k)}\right)$, then we have $f_1(x) = I$. This implies that $f_1$ is surjective.
\qed

\subsubsection{Proof of Lemma~\ref{lemma:inequality_time_mixing}}\label{subsubsect:proof(A3)F_control_time}

Let $(x,a)\in E$ be the initial condition of the process $(Z_t)_{t\geq0}$. Recall that $\mathcal{N}_{\partial}$ is the random variable that describes the number of jumps before extinction, and for all $j\in\llbracket1,2k\rrbracket$, $\mathcal{N}_j$~is the random variable used to describe the number of jumps before having a jump in the $j-$th coordinate. For every $j\in\llbracket1,2k\rrbracket$, we also define	
$$
\begin{aligned}
\mathbb{N}_{j} := \inf\left\{l\in\llbracket1,\mathcal{N}_{\partial}\rrbracket,\, j\in I_l\right\}
\end{aligned}
$$
the number of jumps before having a shortening in the $j-$th coordinate. We easily see that $\mathdutchcal{T}_{\mathcal{N}_j}$ is a random variable that describes the time of the first jump in the $j-$th coordinate, and $\mathdutchcal{T}_{\mathbb{N}_j}$ is a random variable that describes the time of the first shortening in the $j-$th coordinate. As $N_t = l$ is equivalent to $\mathdutchcal{T}_{l+1}>t\geq \mathdutchcal{T}_l$, and as $T_{all} = \underset{j\in\llbracket1,2k\rrbracket}{\max} \mathdutchcal{T}_{\mathcal{N}_j}$, we have for all $t\geq0$
$$
\begin{aligned}
\mathbb{P}_{(x,a)}\left[\min(T_{all},\tau_{\partial}) > t\right] &= \sum_{l\geq 0}\mathbb{P}_{(x,a)}\left[\min(T_{all},\tau_{\partial}) > t, N_t = l\right]\\
&\leq \sum_{l\geq 0}\sum_{j\in\llbracket1,2k\rrbracket} \mathbb{P}_{(x,a)}\left[\min(\mathdutchcal{T}_{\mathcal{N}_{j}},\tau_{\partial},\mathdutchcal{T}_{l+1}) > t \geq \mathdutchcal{T}_l\right].
\end{aligned}
$$
Recall that in \eqref{eq:inequality_lyapunov_intermediate_second}, we bounded from above the tail of the time of the first event. Then, rewriting the previous equation with the latter implies
\begin{equation}\label{eq:inequality_time_mixing_intermediate_zero}
\begin{aligned}
\mathbb{P}_{(x,a)}[\min(T_{all},\tau_{\partial}) > t] 
&\leq 2k\times\overline{\psi}\times\left(1+t^{d_{\psi}}\right)\exp\left[-\int_a^{a+t} b\left(x,u\right) du - \lambda_{\psi}t\right] \\
&+ \sum_{l\geq 1}\sum_{j\in\llbracket1,2k\rrbracket} L(j,l),
\end{aligned}
\end{equation}
where $L(j,l) := \mathbb{P}_{(x,a)}\left[\min(\mathdutchcal{T}_{\mathcal{N}_{j}},\tau_{\partial},\mathdutchcal{T}_{l+1}) > t \geq \mathdutchcal{T}_l\right]$ for all $j\in\llbracket1,2k\rrbracket$ and $l\in\mathbb{N}^*$. We now bound from above $L(j,l)$, for $j\in\llbracket1,2k\rrbracket$ and $l\in\mathbb{N}^*$. Notice first that for all $i\in\llbracket1,2k\rrbracket$ it holds $\mathdutchcal{T}_{\mathbb{N}_{i}} \geq \mathdutchcal{T}_{\mathcal{N}_{i}}$~a.s.. In addition, on the event $\left\{\min(\mathdutchcal{T}_{\mathbb{N}_{j}},\tau_{\partial},\mathdutchcal{T}_{l+1}) > t \geq \mathdutchcal{T}_l\right\}$, we know that there have been exactly $l$ jumps without visiting~$\partial$ and that the coordinate $j$ has not been shortened yet. This yields the following
\begin{equation}\label{eq:inequality_time_mixing_intermediate_first}
\begin{aligned}
L(j,l) &\leq  \mathbb{P}_{(x,a)}\left[\mathdutchcal{T}_{l+1} > t \geq \mathdutchcal{T}_l,\,\forall i\in\llbracket1,l\rrbracket:\,(I_i,J_i)\neq \left(\partial,\partial\right)\text{ and }j\notin I_i\right].\\
\end{aligned}
\end{equation}
We consider for all $C\in\mathcal{B}\left(\mathbb{R}_+^{2k}\right)$
\begin{equation}\label{eq:kernel_updated_permutation}
\mathdutchcal{K}_{j}(C)(x) = 2\sum_{\substack{(I,J)\in\mathcal{Q}_{k},\\ j\notin I}} \int_{u\in\mathbb{R}^{2k}}1_{\{x+u\in C\}} d\pi^{I,J}_x(u),
\end{equation}
where $\pi_x^{I,J}$ is the measure given in \eqref{eq:measure_by_event}. We use Lemmas~\ref{lemm:generalized_duhamel_n=1}~and~\ref{lemm:generalized_duhamel} in \eqref{eq:inequality_time_mixing_intermediate_first}, then~\eqref{eq:convol_exp} and~\eqref{eq:kernel_updated_permutation}, and finally the fact that $\Big[1 + \big(t- \sum_{j = 1}^{n}y_{j}\big)^{d_{\psi}}\Big] \leq 1+t^{d_{\psi}}$. We obtain (recalling the notation~\eqref{eq:density_pdf_true_birth_rate})
\begin{align}
L(j,l) &\leq \frac{\exp\left(-\lambda_{\psi}t\right)}{\psi(x,a)}\left(1 + t^{d_{\psi}}\right)\int_{[0,t]\times\mathbb{R}_+^{2k}}\hdots \int_{\left[0,t - \sum_{i = 1}^{l-1}s_i\right] \times \mathbb{R}_+^{2k}} \mathdutchcal{Y}_{a}(x,s_1)\mathdutchcal{Y}_0(w_1,s_2)\hdots \nonumber\\ 
&\times\mathdutchcal{Y}_0\left(w_{l-1},s_{l}\right)\overline{\mathdutchcal{Y}_0}\left(w_l,t- \sum_{j = 1}^{l}s_{j}\right)\mathdutchcal{V}(w_l)\left(ds_1\mathdutchcal{K}_{j}(x,dw_1)\right)\hdots \left(ds_{l}\mathdutchcal{K}_{j}(w_{l-1},dw_l)\right). \label{eq:inequality_time_mixing_intermediate_second}
\end{align}
Now, first use the definition of $\pi_x^{I,J}$ given in \eqref{eq:measure_by_event} and the second statement of~\hyperlink{paragraph:long_time_behaviour_S2.2}{$(S_{2.2})$} to bound from above $\mathdutchcal{K}_{j}(\mathdutchcal{V})(x)$. Then, use Lemma \ref{lemma:cardinal_set_part} to write the bound obtained in a more convenient way. It comes this second inequality
\begin{equation}\label{eq:inequality_kernel_power_intermediate}
\mathdutchcal{K}_{j}(\mathdutchcal{V})(x) \leq  \frac{2}{\#\left(\mathcal{I}_k\right)}\sum_{I\in\mathcal{I}_k,\,j\notin I} (1+\varepsilon_1)\mathdutchcal{V}(x) = (1+\varepsilon_1)\mathdutchcal{V}(x).
\end{equation}

We now bound from above $\mathbb{P}_{(x,a)}\left[\min(T_{all},\tau_{\partial}) > t\right]$. Using~\eqref{eq:inequality_time_mixing_intermediate_second} and \eqref{eq:inequality_kernel_power_intermediate}, we proceed exactly as we did to obtain \eqref{eq:inequality_Lyapunov} from the first statement of \hyperlink{paragraph:long_time_behaviour_S2.2}{$(S_{2.2})$}. First, we introduce a family of probability measures $(q(w,dw'))_{w\in\mathbb{R}_+^{2k}}$ defined such that for all~$w\in\mathbb{R}_+^{2k}$
$$
q(w,dw') = \frac{\mathdutchcal{V}(w')\mathdutchcal{K}_{j}(w,dw')}{\mathdutchcal{K}_j(\mathdutchcal{V})(w)}.
$$
Then, we iterate \eqref{eq:inequality_kernel_power_intermediate} to successively bound from above the measures of the form $\mathdutchcal{V}(w_i)\mathdutchcal{K}_j(w_{i-1},dw_i)$ in~\eqref{eq:inequality_time_mixing_intermediate_second}, where $i\in\llbracket1,l\rrbracket$, by the measure $(1+\varepsilon_1)\mathdutchcal{V}(w_{i-1})q(w_{i-1},dw_i)$. Thereafter, we plug the upper bound we have obtained for~\eqref{eq:inequality_time_mixing_intermediate_second} in~\eqref{eq:inequality_time_mixing_intermediate_zero}. Finally, in view of \hyperlink{paragraph:long_time_behaviour_S3.1}{$(S_{3.1})$}, we use Lemma \ref{lemm:inequality_for_means} for $b_1 = b$, $b_2 = \overline{b}$ and the kernel $(q(w,dw'))_{w\in\mathcal{X}}$. We get (recalling that $\overline{\psi} > 1$)
$$
\begin{aligned}
\mathbb{P}_{(x,a)}\left[\min(T_{all},\tau_{\partial}) > t\right] \leq 2k\times\overline{\psi}\times\left(1+t^{d_{\psi}}\right)e^{-\lambda_{\psi}t}\bigg[\overline{\mathdutchcal{J}}_a(t) &+ \sum_{n \geq 1} (1+\varepsilon_1)^n\\
&\times \left(\mathdutchcal{J}_a*\mathdutchcal{J}_0*^{(n-1)}\mathdutchcal{J}_0\right)(t)\bigg].
\end{aligned}
$$
We finally do exactly what we did to obtain~\eqref{eq:inequality_lyapunov_intermediate_sixth} from~\eqref{eq:inequality_Lyapunov} to conclude the proof of the lemma from the above equation.
\qed 

\subsubsection{Proof of Lemma \ref{lemma:inequality_generation_mixing}}\label{subsubsect:proof(A3)F_control_jump}

Let $\varepsilon >0$, $n\geq 1$, and let $(x,a)\in E$ be the initial condition of the process $(Z_t)_{t\geq0}$. Recall that $\mathcal{N}_{\partial}$ is the random variable that describes the number of jumps before extinction. We have for all $t\geq0$ that
\begin{equation}\label{eq:inequality_generation_mixing_intermediate_beforesecond_1}
\mathbb{P}_{(x,a)}\left[N_{t} > n,t < \tau_{\partial}\right] \leq  \mathbb{P}_{(x,a)}\left[\mathcal{N}_{\partial} > n\right] = \mathbb{P}_{(x,a)}\left[\forall i\in\llbracket1,n\rrbracket:\,(I_i,J_i)\neq (\partial,\partial)\right].
\end{equation}
Thus, our aim is to bound from above $\mathbb{P}_{(x,a)}\left[\mathcal{N}_{\partial} > n\right]$. Lemma~\ref{lemm:generalized_duhamel} can be slightly readapted, so that the time is no longer taken into account (we do not have a term $\overline{\mathdutchcal{H}}_0$, as we do not have a condition for~$\mathdutchcal{T}_2 - \mathdutchcal{T}_1$). Readapting this lemma, and using Remark~\ref{rem:equality_kernel}, we develop the right-hand side term of~\eqref{eq:inequality_generation_mixing_intermediate_beforesecond_1} to obtain
\begin{equation}\label{eq:inequality_generation_mixing_intermediate_second}
\begin{aligned}
\mathbb{P}_{(x,a)}\left[\mathcal{N}_{\partial} > n\right] &= \frac{1}{2^{n}}\int_{\mathbb{R}_+\times\mathbb{R}_+^{2k}}\hdots \int_{\mathbb{R}_+\times\mathbb{R}_+^{2k}}\mathdutchcal{V}(w_1)\hdots\mathdutchcal{V}(w_{n})\mathdutchcal{G}_{a}(x,s_1)\mathdutchcal{G}_0(w_1,s_2)\hdots \\
&\times \mathdutchcal{G}_0\left(w_{n-1},s_{n}\right)\left(ds_{1}d\mathdutchcal{K}(x,dw_1)\right)\hdots \left(ds_{n}d\mathdutchcal{K}(w_{n-1},dw_{n})\right).
\end{aligned}
\end{equation}
For all $(a',x',s')\in[0,a_0L_1]\times\mathbb{R}_+^{2k}\times\mathbb{R}_+$, it is easy to notice from Eq.~\eqref{eq:birth_rate_assumption} and the expression $\psi(x',a') = \mathdutchcal{V}(x')(1+(a')^{d_{\psi}})$ that
$$
\begin{aligned}
\mathdutchcal{G}_{a'}(x',s') &= 2\frac{b(x',a'+s')}{\psi(x',a')}\exp\left(-\int_{a'}^{a'+s'} b\left(x',u\right) du - \lambda_{\psi}s'\right) \\
&\leq 2\frac{\tilde{b}(1+(a_0L_1+s')^{d_b})}{\mathdutchcal{V}(x')}\exp\left(- \lambda_{\psi}s'\right).
\end{aligned}
$$
In addition, as $(x,a)\in E = D_{L_1}$, we know that $a\leq a_0L_1$. The latter, combined with~\eqref{eq:inequality_generation_mixing_intermediate_second}, the above equation, and the fact that $ \frac{(\mathdutchcal{K})^{n}(\mathdutchcal{V})(x)}{\mathdutchcal{V}(x)} \leq 2^n(1+\varepsilon_1)^n$ by iterating \eqref{eq:upperbound_fullkernel}, yields
$$
\mathbb{P}_{(x,a)}\left[\mathcal{N}_{\partial} > n\right] \leq \left(2\tilde{b}(1+\varepsilon_1)\int_{\mathbb{R}_+}(1+(a_0L_1+s)^{d_b})\exp\left(- \lambda_{\psi}s\right) ds\right)^{n} =: (c_{\lambda_{\psi}})^n.
$$
One can notice that if $c_{\lambda_{\psi}} < 1$ and
$$
n \geq \frac{\text{ln}\left(\frac{\varepsilon}{3}\right) - \rho t}{\ln\left(c_{\lambda_{\psi}}\right)}
$$
then it holds $(c_{\lambda_{\psi}})^n \leq \frac{\varepsilon}{3}\exp\left(-\rho t\right)$. Thus, if $\lambda_{\psi}$ is large enough to satisfy $c_{\lambda_{\psi}} < 1$,
then the function
$$
n(t) = \begin{cases}
\left\lceil \frac{\ln\left(\frac{\varepsilon}{3}\right) - \rho t}{\ln\left(c_{\lambda_{\psi}}\right)} \right\rceil, & \text{if } t > \frac{\ln\left(\frac{\varepsilon}{3}\right)}{\rho}, \\
1, & \text{if } t \leq \frac{\ln\left(\frac{\varepsilon}{3}\right)}{\rho},
\end{cases}
$$
will verify the statements of Lemma \ref{lemma:inequality_generation_mixing}.

Now, recall that $\lambda_{\psi}$ was chosen to verify the first two statements of Lemma \ref{lemma:inequalities_psi}. If the chosen value is not large enough to verify $c(\lambda_{\psi}) < 1$, then we choose another~$\bar \lambda_{\psi}>\lambda_{\psi}$. 
\qed

\subsubsection{Proof of \texorpdfstring{\eqref{eq:first_statement_(A3F)}}{(4.0.1)} and \texorpdfstring{\eqref{eq:third_statement_(A3F)}}{(4.0.3)}}\label{subsubsect:satisfaction_statements_A3F}

\paragraph{Proof of~\eqref{eq:first_statement_(A3F)}.}

We consider $t_F > 0$,~$n_J = n(t_F)\in\mathbb{N}^*$ and~$\eta_0 >0$, and a stopping time~$U_H$ defined as in \eqref{eq:stopping_time_UH} with these constants. Let us prove that~\eqref{eq:first_statement_(A3F)} holds for this stopping time. We consider $L_2\in\mathbb{N}^*$ such that $L_2\geq L_1$, $L_2B_{\max} > L_1B_{\max} + n_J\Delta$ and $a_0L_2 \geq t_F$. With this choice, one can see that $D_{L_1} \subset D_{L_2}$ and \hbox{$([\eta_0,L_1B_{\max} + n_J\Delta]^{2k}\times[0,t_F])\subset D_{L_2}$}. We also fix $V =t(L_2)$, where $t(L_2)> 0$ is a time such that there exists $C_{L_2} > 0$ satisfying for all $(x,a)\in E = D_{L_1}$
\begin{equation}\label{eq:recall_local_doeblin_condition}
\mathbb{P}_{(x,a)}\left[Z_{t(L_2)}\in dx'da';\,t(L_2)<\tau_{\partial}\right]  \geq C_{L_2}1_{[\eta_0,B_{\max}L_2]^{2k}\times [0,a_0L_2]}(x',a')dx'da'.
\end{equation}
Such a time exists by Proposition~\ref{prop:to_prove_A1}. Notice that on the event~$\{U_H < \tau_{\partial}\}$ it holds $X_{N_{t_F}}\in [\eta_0,B_{\max}L_1+ n_J\Delta]^{2k}$ (see~\eqref{eq:stopping_time_UH}). Recall also that $(X_{N_t})_{t\geq 0}$ is the marginal of~$(Z_t)_{t\geq 0}$ over telomere lengths. In view of these two points and~\eqref{eq:recall_local_doeblin_condition}, if the following statement is true, then we have that \eqref{eq:first_statement_(A3F)} is true with $U_H$ defined above and $V = t(L_2)$. 
\begin{prop}[Upper bound on non-rare events]\label{prop:prop_to_prove_(A3)}
Assume \hyperlink{paragraph:long_time_behaviour_S1.1}{$(S_{1.1})$}, \hyperlink{paragraph:long_time_behaviour_S1.3}{$(S_{1.3})$}, and \hyperlink{paragraph:long_time_behaviour_S2.2}{$(S_{2.2})$} hold. Then for all $t\geq0$, there exists $\overline{C}(t) >0$ such that for every $(x,a)\in E = D_{L_1}$
\begin{equation}\label{eq:eq_to_prove_(A3)}
\mathbb{P}_{(x,a)}[Z_{t} \in dx'da';\,T_{all}  \leq t < \tau_{\partial},\,N_{t} \leq n(t)] \leq \overline{C}(t).1_{[0,B_{\max}L_1+n(t)\Delta]^{2k}\times[0,t]}(x',a')dx'da'.
\end{equation}
Moreover, $\overline{C}(t)$ increases with $t$.
\end{prop}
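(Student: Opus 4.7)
The core observation is that on the event $\{T_{all} \leq t < \tau_{\partial}\} \cap \{N_t \leq n(t)\}$, every one of the $2k$ telomere coordinates has been updated at least once, either by a shortening (density $g \leq \overline{g}$ from \hyperlink{paragraph:long_time_behaviour_S1.3}{$(S_{1.3})$}) or by a lengthening (density $h$, bounded on compacts by \hyperlink{paragraph:long_time_behaviour_S1.3}{$(S_{1.3})$}). Consequently, the measure on the left-hand side of \eqref{eq:eq_to_prove_(A3)} has a bounded density with respect to $dx'\,da'$, supported in $[0, B_{\max}L_1 + n(t)\Delta]^{2k} \times [0,t]$: the support in the spatial variable follows from the fact that at most $n(t)$ lengthenings of size at most $\Delta$ have occurred, starting from $x \in [0, B_{\max}L_1]^{2k}$, while the age $a_t = t - \mathdutchcal{T}_{N_t}$ lies in $[0,t]$.

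\paragraph{Expansion.} I would start by applying Lemma~\ref{lemm:generalized_duhamel} to rewrite the left-hand side of \eqref{eq:eq_to_prove_(A3)} as the finite sum
\begin{equation*}
\sum_{n=1}^{n(t)}\ \sum_{\substack{((i_p,j_p))_{p=1}^n \in \mathcal{Q}_k^n \\ \bigcup_p (i_p \cup j_p)\,=\,\llbracket 1, 2k \rrbracket}} \mathbb{E}_{(x,a)}\!\left[\mathbf{1}_{dx'\,da'}(Z_t)\,;\,N_t = n,\,\forall p\!:\, (I_p,J_p) = (i_p,j_p)\right],
\end{equation*}
the covering condition $\bigcup_p (i_p \cup j_p) = \llbracket 1, 2k \rrbracket$ being exactly the event $\{T_{all} \leq \mathdutchcal{T}_n\}$ on $\{N_t = n\}$. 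Each summand unfolds as a multiple integral in jump times $(s_1,\dots,s_n)$ (with $\sum s_p \leq t$) and jump variables $(u_1,\dots,u_n)$ against $d\pi_x^{i_1,j_1}(u_1)\cdots d\pi_{x+\sum_{l<n}u_l}^{i_n,j_n}(u_n)$, weighted by $\mathdutchcal{V}$, $\mathdutchcal{G}_{a}, \mathdutchcal{G}_0$ and $\overline{\mathdutchcal{H}}_0$.

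\paragraph{Density estimate for the composed kernel.} After the change of variables $(y,\tilde a) := (x + \sum_p u_p,\, t - \sum_p s_p)$, the heart of the proof is to show that for each covering sequence, the image of $\prod_p d\pi^{i_p,j_p}$ under $(u_1,\ldots,u_n)\mapsto x + \sum_p u_p$ has a Lebesgue density on $\mathbb{R}^{2k}$ bounded by a constant $C_n$ depending only on $n$, $\overline{g}$ and $\sup_{z \in [-\delta, B_{\max}L_1 + n\Delta]^{2k}} h(z_c,\cdot)$. Recall that by~\eqref{eq:measure_by_event}, $\pi_x^{I,J}$ is built from the tensor product of $g$ (on coordinates in $I$), $h$ (on coordinates in $J$) and Dirac masses at $0$ (elsewhere), reweighted by the probabilities $p_{J,M}(x-\alpha_1, x-\alpha_2)\in[0,1]$. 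In particular, the marginal in each active coordinate $c \in I \cup J$ is bounded by $C_1 := \max(\overline{g}, \sup h)$. Using the covering hypothesis and the independence of the successive $u_p$, a coordinate-by-coordinate convolution argument (convolution with a probability measure preserves pointwise bounds) yields the joint bound $C_n \leq C_1^{2k}$.

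\paragraph{Conclusion and main obstacle.} The remaining weights are uniformly controlled on $[0, B_{\max}L_1 + n(t)\Delta]^{2k} \times [0,t]$: $\mathdutchcal{V}$ is bounded on compacts by \hyperlink{paragraph:long_time_behaviour_S2.2}{$(S_{2.2})$}\,$iii)$; thanks to~\eqref{eq:birth_rate_assumption} and $\mathdutchcal{V} \geq \mathdutchcal{V}_{\min}$ one has $\mathdutchcal{G}_{a'}(\cdot,s) \leq 2\tilde b\bigl(1 + (a_0L_1 + t)^{d_b}\bigr)/\mathdutchcal{V}_{\min}$ for $a'\in[0,a_0L_1]$; and $\overline{\mathdutchcal{H}}_0(\cdot,r) \leq \overline{\psi}(1 + t^{d_\psi})$ by the third statement of Lemma~\ref{lemma:inequalities_psi}. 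The simplex $\{\sum s_p \leq t\}$ contributes $t^n/n!$, and there are at most $(\#\mathcal{Q}_k)^{n(t)}$ covering sequences. Summing over $n \in \llbracket 1, n(t) \rrbracket$ delivers an explicit $\overline{C}(t)$, manifestly non-decreasing in $t$ since $n(\cdot)$ is increasing and every factor above is non-decreasing in $t$. The main obstacle lies in the density estimate: the probabilities $p_{J,M}(x-\alpha_1, x-\alpha_2)$ couple the coordinates inside a single kernel, and the reference point $x+\sum_{l<p}u_l$ drifts from one kernel to the next. Carrying out a clean layer-by-layer marginalization showing that the bound $C_1^{2k}$ survives the composition of $n$ such kernels is the technical heart of the argument, which I would relegate to the appendix.
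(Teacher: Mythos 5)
Your proposal is correct and takes essentially the same route as the paper's proof: there, the event is likewise decomposed according to which jump updates each coordinate (Lemma~\ref{lemma:preliminaries_prop_to_prove_(A3)}), each jump kernel is then dominated, after bounding $p_{J,M}\leq 1$, $g\leq\overline{g}$, $h$ by its supremum on the relevant compact and the weights $\mathdutchcal{V}$, $\mathdutchcal{G}$, $\overline{\mathdutchcal{H}}_0$ by constants, by a product of Dirac and restricted Lebesgue factors (Lemmas~\ref{lemma:inequality_control_mass} and~\ref{lemma:inequality_control_mass_empty}), and one concludes exactly by your observation that each coordinate is convolved at least once with a bounded density, with monotonicity of $\overline{C}(t)$ inherited from that of $n(t)$. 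The only cosmetic differences are that you index the decomposition by the full covering sequence of $(I_p,J_p)\in\mathcal{Q}_k$ rather than by one designated jump per coordinate, and that your joint bound $C_1^{2k}$ should also carry the finite mass and combinatorial factors coming from the restricted Lebesgue factors and the sum over compatible $(I,J)$ --- harmless, since only finiteness of the constant is needed.
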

\noindent We point out here that the increasing character of $\overline{C}(t)$ is only useful to obtain the density representation of the stationary profile in Section~\ref{subsect:density_stationary_profile}, and is not used here.

The proof of Proposition~\ref{prop:prop_to_prove_(A3)} is long, computational and requires a lot of notations to be introduced. Hence, we prefer to leave the detailed proof of the proposition in Appendix~\ref{appendix:proof_A3_stepB}. Below are given briefly the main arguments of this proof.
\begin{itemize}[leftmargin=*]
\item In view of \hyperlink{paragraph:long_time_behaviour_S1.1}{$(S_{1.1})$}, the probability density function $g$ representing the distribution of shortening values is bounded on $\mathbb{R}_+$, and the probability density function $h$ representing the distribution of lengthening values is bounded on~$[-\delta,B_{\max}L_1 + n(t)\Delta]\times[0,\Delta]$. 

\item Proposition~\ref{prop:prop_to_prove_(A3)} concerns initial conditions $(x,a)\in D_{L_1}$. Hence, the initial age is bounded by $a_0L_1$, and before the time $t$, the age of the particle is bounded by~$a_0L_1+t$. This implies by~\eqref{eq:birth_rate_assumption} that the rate at which a jump occurs stays bounded. 

\item All coordinates have been mixed at least once by the Lebesgue measure, and the number of mixing is bounded by $n(t)$. 

\end{itemize}
Since "everything is bounded" and since each coordinate has been mixed at least one time by the Lebesgue measure, the left-hand side term of \eqref{eq:eq_to_prove_(A3)} is bounded by the Lebesgue measure multiplied by a constant~$\overline{C}(t)$. The function $\overline{C}$ increases because $n$ increases by Lemma~\ref{lemma:inequality_generation_mixing}. For telomere lengths, the Lebesgue measure is restricted on the set \hbox{$[0,B_{\max}L_1 + n(t)\Delta]^{2k}$} for the following reason: the initial condition is $x\in[0,B_{\max}L_1]^{2k}$, we have at most $n(t)$ jumps, and the maximum lengthening value is $\Delta$. For the age, we have a restriction of the Lebesgue measure on~$[0,t]$ because on the event~$\{T_{all} \leq t\}$, at least one jump occurs before time $t$. At this jump the age is reset to $0$, so we necessarily have that the age of the particle is at most $t$ at time $t$. 

From these points, Proposition~\ref{prop:prop_to_prove_(A3)} is true, which implies that Eq.~\eqref{eq:first_statement_(A3F)} is verified for $U_H$ defined as above.

\paragraph{Proof of \eqref{eq:third_statement_(A3F)}.} Now, we prove that for all $\varepsilon > 0$, there exists $t_F > 0$, and $\eta_0 > 0$ such that~\eqref{eq:third_statement_(A3F)} is verified for $U_H$ defined with $t_F$, $n_J = n(t_F)$ and $\eta_0$. Before proving~\eqref{eq:third_statement_(A3F)}, we need to control the probability that the particle has a telomere with a length too close to $0$. Let us fix $\varepsilon >0$, and $t_F>0$ and $n_J = n(t_F)\in\mathbb{N}^*$ such that \eqref{eq:inequality_time_mixing} and~\eqref{eq:inequality_generation_mixing} are satisfied. In view of the fact that $\left(X_{N_t}\right)_{t\geq0}$ is the marginal of $\left(Z_t\right)_{t\geq 0}$ over telomere lengths, first apply Proposition~\ref{prop:prop_to_prove_(A3)} to bound from above 
$$
\mathbb{P}_{(x,a)}\big[X_{N_{t_F}}\in\left([\eta,B_{\max}L_1+n_J\Delta]^{2k}\right)^c;\,T_{all}  \leq t_F < \tau_{\partial},\,N_{t} \leq n_J\big].
$$
Then, use the fact that 
$$
[0,B_{\max}L_1+n_J\Delta]^{2k} \backslash\big([\eta,B_{\max}L_1+n_J\Delta]^{2k}\big)\subset \overset{2k}{\underset{i = 1}{\bigcup}} \left\{x\in[0,B_{\max}L_1+n_J\Delta]^{2k}\,|\, x_i \leq \eta\right\}
$$
to bound from above the marginal of the Lebesgue measure over telomere lengths in the bound we have obtained (the measure of the set on the right is bounded from above by~$2k\eta\left(B_{\max}L_1 + n_J\Delta\right)^{2k-1}$). It comes that for all~$\eta > 0$,~$(x,a)\in E$
$$
\begin{aligned}
&\mathbb{P}_{(x,a)}\big[X_{N_{t_F}}\in\left([\eta,B_{\max}L_1+n_J\Delta]^{2k}\right)^c;\,T_{all}  \leq t_F < \tau_{\partial},\,N_{t_F} \leq n_J\big] \\ 
&\leq \overline{C}(t_F)2k\eta\left(B_{\max}L_1 + n_J\Delta\right)^{2k-1}t_F.
\end{aligned}
$$
The latter implies that there exists $\eta_0 > 0$ such that 
\begin{equation}\label{eq:inequality_jump_origin}
\mathbb{P}_{(x,a)}\big[X_{N_{t_F}}\in\left([\eta_0,B_{\max}L_1+n_J\Delta]^{2k}\right)^c;\,T_{all}  \leq t_F < \tau_{\partial},\,N_{t_F} \leq n_J\big] \leq \frac{\varepsilon}{3}\exp\left(-\rho t_F\right).
\end{equation}

Now, let us consider $U_H$ defined as in~\eqref{eq:stopping_time_UH} with $t_F$, $n_J$ and $\eta_0$ of the previous paragraph. Combining~\eqref{eq:inequality_time_mixing},~\eqref{eq:inequality_generation_mixing} and~\eqref{eq:inequality_jump_origin} yields that for all $(x,a)\in E$,
$$
\begin{aligned}
\mathbb{P}_{(x,a)}[U_H = \infty,t_F < \tau_{\partial}] &= \mathbb{P}_{(x,a)}[\min(T_{all},\tau_{\partial}) >  t_F] + \mathbb{P}_{(x,a)}\left[N_{t_F} > n_J,T_{all} \leq t_F,t_F < \tau_{\partial}\right] \\
&+\mathbb{P}_{(x,a)}\big[X_{N_{t_F}}\in\left([\eta_0,B_{\max}L_1+n_J\Delta]^{2k}\right)^c;\\
&\hspace{4mm}T_{all}  \leq t_F < \tau_{\partial},\,N_{t_F} \leq n_J\big] \\
&\leq  \varepsilon \exp\left(-\rho t_F\right)  .%\hspace{-6mm}
\end{aligned}
$$
Then, \eqref{eq:third_statement_(A3F)} is satisfied, which ends the proof of Assumption~\hyperlink{te:assumptions_velleret_A3}{$(A_3)_F$}.

\subsection{Existence and uniqueness of the stationary profile}\label{subsect:existence_stationary_profile}

We recall here Notations \hyperref[notation:start_nota_space_distortion]{\ref*{notation:start_nota_space_distortion}} and \hyperref[notation:end_nota_space_distortion]{\ref*{notation:end_nota_space_distortion}}, and refer to \eqref{eq:first_moment_semigroup}, \eqref{eq:weighted_renormalised_semigroup} and \eqref{eq:first_moment_particle} for the definitions of the semigroups $(M_t)_{t\geq0}$, $(M^{(\psi)}_t)_{t\geq0}$ and~$(P^{(\psi)}_t)_{t\geq0}$ used in this subsection. We also recall that $\psi$ has been fixed in Section~\ref{subsect:preliminaries_theorem}, and that the index $\psi$ was dropped in the process $\big(Z_t^{(\psi)}\big)_{t\geq0}$.

\paragraph{Existence.}  We begin by proving that~$(M_t)_{t\geq0}$ converges in $\mathdutchcal{M}\left(\psi\right)$ towards a stationary profile using what we did in the previous subsections. First, as \hyperlink{te:assumptions_velleret_A1}{$(A_1)$}, \hyperlink{te:assumptions_velleret_A2}{$(A_2)$} and \hyperlink{te:assumptions_velleret_A3}{$(A_3)_F$} are verified for the process~²$\big(Z_t^{(\psi)}\big)_{t\geq0}$, by~\eqref{eq:equality_semigroup} and Theorem \ref{te:assumptions_velleret} there exists a triplet of eigenelements $(\tilde{\gamma},\tilde{\phi},\tilde{\lambda})\in\mathcal{M}_1\left(\mathcal{X}\right)\times M_b\left(\mathcal{X}\right)\times\mathbb{R}_+$ s.t.~$\tilde{\gamma}(\tilde{\phi}) = 1$, and two constants $C,\omega >0$, such that for all $t>0$, $\tilde{\mu}\in \mathcal{M}_1(\mathcal{X})$
\begin{equation}\label{eq:inequality_application_Velleret}
\begin{aligned}
\sup_{\substack{\tilde{f}\in M_b(\mathcal{X}), ||\tilde{f}||_{\infty} \leq 1}} \Big|e^{(\tilde{\lambda} -\lambda_{\psi})t}\tilde{\mu}\frac{M_t (\psi \tilde{f})}{\psi} - \tilde{\mu}\left(\tilde{\phi}\right)\tilde{\gamma}(\tilde{f})\Big|  = \Big|\Big|e^{\tilde{\lambda} t}\tilde{\mu} P_t^{(\psi)} -  \tilde{\mu}\left(\tilde{\phi}\right)\tilde{\gamma}\Big|\Big|_{TV,\mathcal{X}}\leq Ce^{-\omega t}.
\end{aligned}
\end{equation}
From \cite[Prop. 2.10]{velleret_exponential_2023}, we also have $\tilde{\phi} > 0$. 

Second, recalling Notations \hyperref[notation:start_nota_space_distortion]{\ref*{notation:start_nota_space_distortion}} and \hyperref[notation:end_nota_space_distortion]{\ref*{notation:end_nota_space_distortion}}, the following equalities hold:
\begin{align}
\left\{\tilde{f}\psi\text{ s.t. }\tilde{f}\in M_b(\mathcal{X}),\,||\tilde{f}||_{\infty} \leq 1\right\} = \{f \in \mathdutchcal{B}(\psi)\text{ s.t. }||f||_{\mathdutchcal{B}(\psi)} \leq 1\},\label{eq:equality_distorted_functions} \\
\left\{\mu\in\mathdutchcal{M}_+(\psi) \,\big|\,\exists \tilde{\mu}\in\mathcal{M}_1(\mathcal{X}) \text{ s.t. } \mu(.) = \tilde{\mu}\left(\frac{.}{\psi}\right)\right\} = \left\{\mu \in \mathdutchcal{M}_+(\psi)\,\big|\,||\mu||_{\mathdutchcal{M}(\psi)} = 1\right\}. \label{eq:equality_distorted_measures}
\end{align}

Finally, we consider 
\begin{equation}\label{eq:link_eigen_particle_and_branching}
\phi = \tilde{\gamma}\left(\frac{1}{\psi}\right)\tilde{\phi}\psi \in \mathdutchcal{B}(\psi), \hspace{1.5mm}\text{ and }\hspace{1.5mm} \gamma(.) = \left[\tilde{\gamma}\left(\frac{1}{\psi}\right)\right]^{-1}\tilde{\gamma}\left(\frac{.}{\psi}\right) \in \left[\mathdutchcal{M}_+(\psi)\cap\mathcal{M}_1(\mathcal{X})\right].
\end{equation}
As $\tilde{\gamma}(\tilde{\phi}) = 1$, we easily see that  $\gamma(\phi) = 1$. We also introduce $\lambda = \lambda_{\psi} - \tilde{\lambda}$, that belongs to~$[\alpha,+\infty[$ by Proposition \ref{prop:simplification_A2} and the fact that $\rho_S = \tilde{\lambda}$ (see \mbox{\cite[Theorem $2.8$]{velleret_exponential_2023}}).

We can use these three results to conclude on the convergence towards a stationary profile, starting from measures in $\mathdutchcal{M}_+(\psi)$ s.t. $||\mu||_{\mathdutchcal{M}(\psi)} = 1$. First, we replace the terms $\tilde{\gamma}$, $\tilde{\phi}$ and $\tilde{\lambda} -\lambda_{\psi}$ in~\eqref{eq:inequality_application_Velleret} by the terms $\gamma$, $\phi$ and~$\lambda$ respectively, using the definitions of the latter. Then, we use \eqref{eq:equality_distorted_functions} to change functions $\tilde{f}\psi$ in~\eqref{eq:inequality_application_Velleret} into functions $f \in \mathdutchcal{B}(\psi)$. Finally, we use \eqref{eq:equality_distorted_measures} to change measures $\tilde{\mu}\left(\frac{.}{\psi}\right)$ into measures $\mu \in \mathdutchcal{M}(\psi)$. We obtain that for all $\mu \in \mathdutchcal{M}_+(\psi)$ such that $||\mu||_{\mathdutchcal{M}(\psi)} = 1$
\begin{equation}\label{eq:main_result_probability_measure}
\forall \mu \in \mathdutchcal{M}_+(\psi) \text{ s.t. } ||\mu||_{\mathdutchcal{M}(\psi)} = 1: \hspace{2.5mm}\sup_{f \in \mathdutchcal{B}(\psi),\,||f||_{\mathdutchcal{B}(\psi)} \leq 1} \left|e^{-\lambda t}\mu M_t (f)- \mu(\phi)\gamma\left(f\right)\right| \leq Ce^{-\omega t}.
\end{equation}

Now, we extend the result above to measures in $\mathdutchcal{M}(\psi)$. Let us consider $\mu\in\mathdutchcal{M}(\psi)$. By definition of~$\mathdutchcal{M}(\psi)$ (Notation~\hyperref[notation:end_nota_space_distortion]{\ref*{notation:end_nota_space_distortion}}), there exists a couple $(\mu_+,\mu_-)\in\mathdutchcal{M}_+(\psi)\times\mathdutchcal{M}_+(\psi)$ such that $\mu = \mu_+ - \mu_-$. First, we take for initial measures 
$$
\frac{\mu_+}{||\mu_+||_{\mathdutchcal{M}(\psi)}} = \frac{\mu_+}{\mu_+(\psi)} \hspace{3mm}\left(\text{or }\frac{\mu_-}{||\mu_-||_{\mathdutchcal{M}(\psi)}}\right). 
$$
Then, we apply~\eqref{eq:main_result_probability_measure} and multiply by $\mu_+(\psi)$ (or $\mu_-(\psi)$). Finally in view of the equality~\hbox{$\mu = \mu_+ - \mu_-$}, we use the triangular inequality. We obtain at the end the desired~result.

%\subsection{Uniqueness of the stationary profile}\label{subsect:uniqueness_stationary_profile}

\paragraph{Uniqueness.} We recall that the set $\Psi$, that contains all the functions used to distort the space, was defined in~\eqref{eq:defintion_all_distortion_functions}. The function $\psi\in\Psi$ that we have fixed at the beginning of the proof is no more fixed from here.
By the previous paragraph, we have that for all~$\psi\in\Psi$, the first moment semigroup converges towards a stationary profile in  $\mathdutchcal{M}(\psi)$ endowed with~$||.||_{\mathdutchcal{M}(\psi)}$. We now prove that this stationary profile is the same whatever the function~$\psi\in\Psi$. 

Let $(\psi_1,\psi_2)\in\Psi^2$. For all $i\in\{1,2\}$, we denote by $(\gamma_i,\phi_i,\lambda_i)\in\mathdutchcal{M}(\psi_i)\times\mathdutchcal{B}(\psi_i)\times\mathbb{R}_+^*$ the triplet of eigenelements of $(M_t)_{t\geq0}$ obtained by a distortion of the space by~$\psi_i$. These eigenelements also verify the following properties: $\gamma_i\in\mathcal{M}_1(\mathcal{X})$, $\phi_i > 0$ and $\gamma_i(\phi_i) = 1$. By applying~\eqref{eq:main_result_probability_measure} to Dirac measures, and then the fact that $\gamma_1$ and $\gamma_2$ are probability measures, we have that for all $i\in\{1,2\}$, $z\in\mathcal{X}$,
$$
\lim_{t\rightarrow+\infty}  e^{-\lambda_i t}M_t\left(1\right)(z) = \phi_i(z)\gamma_i(1) = \phi_i(z).
$$
Then, from the above equality, we necessarily have that $\left(\phi_1,\lambda_1\right) = \left(\phi_2,\lambda_2\right)$. It thus only remains to prove that $\gamma_1 = \gamma_2$, and the uniqueness will be proved. To do this, notice that by applying~\eqref{eq:main_result_probability_measure} to Dirac measures and indicators, and then using that \hbox{$\left(\phi_1,\lambda_1\right) = \left(\phi_2,\lambda_2\right)$}, we have for all $z\in\mathcal{X}$ and $A\in\mathcal{B}\left(\mathcal{X}\right)$,
$$
\gamma_1\left(A\right) = \lim_{t\rightarrow+\infty} \frac{e^{-\lambda_1 t}M_t\left(1_A\right)(z)}{\phi_1(z)} = \lim_{t\rightarrow+\infty} \frac{e^{-\lambda_2 t}M_t\left(1_A\right)(z)}{\phi_2(z)} = \gamma_2\left(A\right).
$$
Then, from the above, we easily conclude that $\gamma_1 = \gamma_2$, so that $\left(\gamma_1,\phi_1,\lambda_1\right) = \left(\gamma_2,\phi_2,\lambda_2\right)$.

% $\left(\lambda_1,\phi_1,\gamma_1\right) = \left(\lambda_2,\phi_2,\gamma_2\right)$
\subsection{Density representation of the stationary profile}\label{subsect:density_stationary_profile}
Now that we know that there exists a stationary profile which is the same for all $\psi \in \Psi$, we prove that the latter can be represented by a function. In particular, we emphasize the fact that the density representation of the stationary profile can be obtained as a consequence of the statements and the arguments given to prove Assumption~\hyperlink{te:assumptions_velleret_A3}{$(A_3)_F$}. We believe that this proof can be adapted to many other models for which the discontinuities with respect to the Lebesgue of the model can be controlled over time, as for example the models given in \cite{velleret_adaptation_2023,velleret_exponential_2023}. 

First, we prove in Section~\ref{subsubsect:absolute_continuity} that the stationary profile admits a density with respect to the Lebesgue measure thanks to the results obtained in Section \ref{subsubsect:assumption_(A3)F}. Then, we prove in Section \ref{subsubsect:separation_space_age} that the function representing our stationary profile can be seen as the product of two functions, one linked to telomere lengths, and the other linked to the age.

\subsubsection{Absolute continuity of the stationary profile with respect to the Lebesgue measure}\label{subsubsect:absolute_continuity}
Let $\psi\in \Psi$. In view of the first paragraph of Section \ref{subsect:existence_stationary_profile}, we have the equivalence between obtaining the absolute continuity of the stationary profile of $\left(M_t\right)_{t\geq0}$ with respect to~$\psi(x,a) dx da$, and obtaining the absolute continuity of the stationary profile of $\big(P_t^{(\psi)}\big)_{t\geq 0}$ with respect to~$dx da$. We will hence prove that the stationary profile of~$(P_t^{(\psi)})_{t\geq 0}$, denoted $\tilde{\gamma}$, has a density with respect to the Lebesgue measure.

Let $\tilde{\lambda}\in\mathbb{R}_+$ the absorbing rate of $\big(P_t^{(\psi)}\big)_{t\geq 0}$. As $\tilde{\gamma}$ is a quasi-stationary distribution for~$\big(P_t^{(\psi)}\big)_{t\geq 0}$, we easily obtain by~\hbox{\cite[Eq.~$(2.3)$]{velleret_unique_2022}} that for all~$t\geq 0$, $A\in \mathcal{B}(\mathcal{X})$,
\begin{equation}\label{eq:stationary_profile_fixed_point}
\tilde{\gamma}(A) = e^{\tilde{\lambda} t}\int_{(x,a)\in \mathcal{X}}P_t^{(\psi)}(1_A)(x,a)\tilde{\gamma}(dx,da). 
\end{equation}
We use this equality to get the abolute continuity of the stationary profile. Two difficulties arise:
\begin{itemize}[leftmargin=*]
\item As explained in Section \ref{subsubsect:assumption_(A3)F}, $(P_t^{(\psi)})_{t\geq 0}$ has a part absolutely continuous with respect to the Lebesgue measure, and another part discontinuous with respect to the Lebesgue measure. We need to handle the discontinuous part. For that, we use the results obtained in Section \ref{subsubsect:assumption_(A3)F}.

\item The results we have obtained in Section \ref{subsubsect:assumption_(A3)F} are only true for measures starting from $E = D_{L_1}$, where $L_1\in\mathbb{N}^*$ is large and has been fixed in the last paragraph of Section~\ref{subsubsect:useful_statements_A2}. In \eqref{eq:stationary_profile_fixed_point}, the relation between $\tilde{\gamma}$ and $(P_t^{(\psi)})_{t\geq 0}$ involves initial conditions in the set $E^c$. Thus, we need to handle such initial conditions.
\end{itemize}

Let us first control the time we need to return to $E$. As $\rho = \lambda_{\psi} - \frac{\alpha + \beta}{2}$ and $\beta < \alpha$, Proposition \ref{prop:bound_tail_probability} ($L_1$ is chosen sufficiently large to have $L_1 \geq L_0$) implies the following statement.
\begin{lemm}[Control of the time to return to $E$]\label{lemm:proof_representation_inequality_returncompact}
Assume that \hyperlink{paragraph:long_time_behaviour_S1.1}{$(S_{1.1})$}, \hyperlink{paragraph:long_time_behaviour_S2}{$(S_2)$} and~\hyperlink{paragraph:long_time_behaviour_S3}{$(S_3)$} hold. Then there exists $t_1 > 0$ such that for all $t\geq t_1$
$$
\sup_{(x,a)\in \mathcal{X}}\left(\mathbb{P}_{(x,a)}\left[\min\left(\tau_E,\tau_{\partial}\right) > t\right]\right) \leq \frac{\varepsilon}{3}\exp\left(-\rho t\right).
$$
\end{lemm}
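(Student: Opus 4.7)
The plan is to read off the lemma as a direct corollary of Proposition~\ref{prop:bound_tail_probability}, the strict inequality $\beta<\alpha$ in \hyperlink{paragraph:long_time_behaviour_S3.3}{$(S_{3.3})$}, and the choice $\rho=\lambda_{\psi}-\tfrac{\alpha+\beta}{2}$ that was fixed at the end of Section~\ref{subsubsect:useful_statements_A2}. Since $E=D_{L_1}$ with $L_1$ taken large enough that $L_1\geq L_{\text{return}}$, the hitting time $\tau_E$ coincides with $\tau_{D_{L_1}}$, so Proposition~\ref{prop:bound_tail_probability} is directly available.

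First I would fix a gap $\eta>0$ small enough that the exponent in Proposition~\ref{prop:bound_tail_probability} beats $-\rho$. More precisely, set
\[
\eta \;=\; \tfrac{1}{2}\Bigl(\rho - (\lambda_{\psi}-\beta)\Bigr)^{-} \;=\; \tfrac{\alpha-\beta}{4}>0,
\]
so that $\beta-\lambda_{\psi}+\eta<-\rho$; concretely, $\beta-\lambda_{\psi}+\eta+\rho=-\tfrac{\alpha-\beta}{4}<0$. Proposition~\ref{prop:bound_tail_probability} then produces $t_2>0$ such that, for every $t\geq t_2$ and every $(x,a)\in\mathcal{X}$,
\[
\mathbb{P}_{(x,a)}\!\left[\min(\tau_E,\tau_{\partial})>t\right]\;\leq\;\exp\!\bigl((\beta-\lambda_{\psi}+\eta)\,t\bigr)\;=\;\exp(-\rho t)\,\exp\!\bigl(-\tfrac{\alpha-\beta}{4}\,t\bigr).
\]

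Second, I would absorb the additional decay factor into the prefactor $\varepsilon/3$: choose
\[
t_1 \;\geq\; \max\!\Bigl(t_2,\; \tfrac{4}{\alpha-\beta}\,\log(3/\varepsilon)\Bigr),
\]
so that $\exp\!\bigl(-\tfrac{\alpha-\beta}{4}\,t\bigr)\leq \varepsilon/3$ for every $t\geq t_1$. Taking the supremum over $(x,a)\in\mathcal{X}$ is free because the bound in Proposition~\ref{prop:bound_tail_probability} is already uniform in the initial condition. I do not expect any genuine obstacle here: the whole content has already been packed into Proposition~\ref{prop:bound_tail_probability} (the Bellman--Harris comparison and Lemma~\ref{lemm:inequality_for_means}), and the present lemma is purely a quantitative bookkeeping statement converting the exponent $\beta-\lambda_{\psi}+\eta$ into the target exponent $-\rho$ at the price of a large-time threshold $t_1$. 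The only minor point to check is that $\eta$ can indeed be chosen strictly below $(\alpha-\beta)/2$, which is immediate from $\beta<\alpha$ in \hyperlink{paragraph:long_time_behaviour_S3.3}{$(S_{3.3})$}.
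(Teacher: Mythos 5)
Your proof is correct and follows the same route as the paper: the paper obtains this lemma as an immediate consequence of Proposition~\ref{prop:bound_tail_probability} (applicable since $E=D_{L_1}$ with $L_1\geq L_{\text{return}}$, so $\tau_E=\tau_{D_{L_1}}$), using $\rho=\lambda_{\psi}-\tfrac{\alpha+\beta}{2}$ and $\beta<\alpha$ to absorb the factor $\varepsilon/3$ for $t$ large. Your explicit choices $\eta=\tfrac{\alpha-\beta}{4}$ and $t_1\geq\max\bigl(t_2,\tfrac{4}{\alpha-\beta}\log(3/\varepsilon)\bigr)$ simply make quantitative the bookkeeping the paper leaves implicit.
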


\noindent Now that we have Lemma \ref{lemm:proof_representation_inequality_returncompact}, using the constant $t_0$ introduced in Lemma~\ref{lemma:inequality_time_mixing}, we control $P_t^{(\psi)}$ starting from the restriction of $\tilde{\gamma}$ on $E^c$ for $t \geq t_0 + t_1$. This implies the following statement, that is proved at the end of the subsection.
\begin{lemm}[Control of the discontinuities starting from $E^c$]\label{lemm:representation_upperbound_outsidelocalset}
Assume that \hyperlink{paragraph:long_time_behaviour_S1.1}{$(S_{1.1})$}, \hyperlink{paragraph:long_time_behaviour_S2}{$(S_2)$} and~\hyperlink{paragraph:long_time_behaviour_S3}{$(S_3)$} hold. Let $t_0,t_1 >0$ the two constants introduced in Lemmas \ref{lemma:inequality_time_mixing} and \ref{lemm:proof_representation_inequality_returncompact}. Then for all $t\geq t_0 + t_1$, $A\in\mathcal{B}(\mathcal{X})$, the following holds
\begin{equation}\label{eq:representation_upperbound_outsidelocalset}
\begin{aligned}
\int_{(x,a)\in E^c}P_t^{(\psi)}(1_A)(x,a)\tilde{\gamma}(dx,da) &\leq  \overline{C}(t)\text{Leb}(A) + \frac{2\varepsilon}{3}.e_{\mathcal{T}}.e^{-\rho t} + \frac{\varepsilon}{3} e^{-\rho(t-t_0)},
\end{aligned}
\end{equation}
where $e_{\mathcal{T}} := \sup_{(x,a)\in\mathcal{X}} \left[\mathbb{E}_{(x,a)}\left[\exp\left(\rho\min\left(\tau_{\partial}, \tau_E\right)\right)\right]\right]$ (finite by \hyperlink{te:assumptions_velleret_A2}{$(A_2)$}), and $\overline{C}(t)$ is the constant introduced in Proposition~\ref{prop:prop_to_prove_(A3)}.
\end{lemm}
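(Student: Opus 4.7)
The plan is to first establish a pointwise upper bound on $P_s^{(\psi)}(1_A)(x,a)$ for initial conditions $(x,a)\in E$ and times $s\geq t_0$, and then transfer this bound to initial conditions in $E^c$ using the strong Markov property at the hitting time $\tau_E$. Integrating against $\tilde\gamma\!\restriction_{E^c}$ (a sub-probability measure) will directly yield the claim.

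For the first step, fix $(x,a)\in E$ and $s\geq t_0$. Decompose
\[
P_s^{(\psi)}(1_A)(x,a) = \mathbb{E}_{(x,a)}\!\left[1_A(Z_s);\,s<\tau_\partial,\,T_{all}\leq s,\,N_s\leq n(s)\right] + R(x,a,s),
\]
where $R(x,a,s)$ collects the complementary "rare" events. Proposition~\ref{prop:prop_to_prove_(A3)} bounds the first term by $\overline{C}(s)\,\text{Leb}(A)$, while $R(x,a,s)$ is bounded by $\mathbb{P}_{(x,a)}[\min(T_{all},\tau_\partial)>s] + \mathbb{P}_{(x,a)}[N_s>n(s),\,s<\tau_\partial]$. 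Since $s\geq t_0$, Lemmas~\ref{lemma:inequality_time_mixing} and~\ref{lemma:inequality_generation_mixing} control each term by $\tfrac{\varepsilon}{3}e^{-\rho s}$, giving
\[
P_s^{(\psi)}(1_A)(x,a)\leq \overline{C}(s)\,\text{Leb}(A) + \tfrac{2\varepsilon}{3}e^{-\rho s}, \qquad (x,a)\in E,\;s\geq t_0.
\]

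For the second step, fix $(x,a)\in E^c$ and $t\geq t_0+t_1$, and split according to whether $\tau_E\leq t-t_0$ or not. On $\{\tau_E>t-t_0\}\cap\{t<\tau_\partial\}$ we have $\min(\tau_E,\tau_\partial)>t-t_0\geq t_1$, so Lemma~\ref{lemm:proof_representation_inequality_returncompact} bounds this contribution by $\tfrac{\varepsilon}{3}e^{-\rho(t-t_0)}$. On $\{\tau_E\leq t-t_0\}$, apply the strong Markov property at $\tau_E$ (noting $Z_{\tau_E}\in E$ since $E$ is closed and $(Z_t)$ is càdlàg); the remaining running time $t-\tau_E\geq t_0$ allows me to use the bound from step one. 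Using monotonicity of $\overline{C}$ to replace $\overline{C}(t-\tau_E)$ by $\overline{C}(t)$ and the identity $e^{-\rho(t-\tau_E)}=e^{-\rho t}e^{\rho\tau_E}$, I obtain
\[
\mathbb{E}_{(x,a)}\!\left[1_A(Z_t);\,\tau_E\leq t-t_0,\,t<\tau_\partial\right]\leq \overline{C}(t)\,\text{Leb}(A) + \tfrac{2\varepsilon}{3}\,e^{-\rho t}\,\mathbb{E}_{(x,a)}\!\left[e^{\rho\min(\tau_E,\tau_\partial)}\right],
\]
and the last expectation is bounded uniformly by $e_{\mathcal{T}}$ thanks to Assumption~\hyperlink{te:assumptions_velleret_A2}{$(A_2)$}.

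Combining the two pieces gives a pointwise bound on $P_t^{(\psi)}(1_A)(x,a)$ for $(x,a)\in E^c$ and $t\geq t_0+t_1$ of exactly the shape of the right-hand side of~\eqref{eq:representation_upperbound_outsidelocalset}. Integrating against $\tilde\gamma$ restricted to $E^c$ and using $\tilde\gamma(E^c)\leq 1$ concludes the proof. The main subtlety is bookkeeping rather than analysis: one must arrange the decomposition so that after applying the strong Markov property at $\tau_E$, the residual time $t-\tau_E$ is always at least $t_0$, which is precisely why the event $\{\tau_E\leq t-t_0\}$ is used; and one must exploit the monotonicity of $\overline{C}$ (Proposition~\ref{prop:prop_to_prove_(A3)}) in order to pull the prefactor $\overline{C}(t-\tau_E)$ out of the expectation as $\overline{C}(t)$.
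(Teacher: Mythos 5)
Your proof is correct and follows essentially the same route as the paper: split on $\{\tau_E\leq t-t_0\}$ versus $\{\tau_E>t-t_0\}$, handle the latter with Lemma~\ref{lemm:proof_representation_inequality_returncompact}, apply the strong Markov property at $\tau_E$ on the former, control the post-$\tau_E$ evolution via Proposition~\ref{prop:prop_to_prove_(A3)} together with Lemmas~\ref{lemma:inequality_time_mixing} and~\ref{lemma:inequality_generation_mixing}, and conclude using the monotonicity of $\overline{C}$, the identity $\tau_E=\min(\tau_E,\tau_\partial)$ on $\{\tau_E<\tau_\partial\}$, and $e_{\mathcal{T}}$. The only cosmetic difference is that you package the three-event decomposition as a standalone pointwise bound for initial conditions in $E$ before invoking the Markov property, whereas the paper performs that decomposition directly inside the conditional expectation.
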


For the restriction of $\tilde{\gamma}$ on $E$, a similar upper bound can be obtained with the same arguments. The only slight difference is that we do not need to handle the fact that we need to return to $E$. Here is the statement that corresponds to this upper bound. As the proof of this statement consists of applying exactly the same arguments as those used to obtain \eqref{eq:representation_upperbound_outsidelocalset} without handling the return to $E$, we do not detail it.
\begin{lemm}[Control of the discontinuities starting from $E$]\label{lemm:representation_upperbound_insidelocalset}
Assume that \hyperlink{paragraph:long_time_behaviour_S1.1}{$(S_{1.1})$}, \hyperlink{paragraph:long_time_behaviour_S2}{$(S_2)$} and~\hyperlink{paragraph:long_time_behaviour_S3}{$(S_3)$} hold. Let $t_0>0$ the constant introduced in Lemma~\ref{lemma:inequality_time_mixing}. Then for all $t\geq t_0$, $A\in\mathcal{B}(\mathcal{X})$, the following holds
\begin{equation}\label{eq:representation_upperbound_insidelocalset}
\begin{aligned}
\int_{(x,a)\in E} P_t^{(\psi)}(1_A)(x,a)\tilde{\gamma}(dx,da) &\leq \overline{C}(t)\text{Leb}(A) + \frac{2\varepsilon}{3} e^{-\rho t},  
\end{aligned}
\end{equation}
where $\overline{C}(t)$ is the constant introduced in Proposition~\ref{prop:prop_to_prove_(A3)}.
\end{lemm}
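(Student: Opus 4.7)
The plan is to prove Lemma~\ref{lemm:representation_upperbound_insidelocalset} by the same decomposition strategy that underpinned the verification of \hyperlink{te:assumptions_velleret_A3}{$(A_3)_F$}: split the pointwise expectation $P_t^{(\psi)}(1_A)(x,a)=\mathbb{E}_{(x,a)}[1_A(Z_t); t<\tau_\partial]$ into a ``regular'' part, on which Proposition~\ref{prop:prop_to_prove_(A3)} yields an absolutely continuous bound, and a ``rare-event'' part, controlled by Lemmas~\ref{lemma:inequality_time_mixing} and~\ref{lemma:inequality_generation_mixing}. Since the initial condition lies in $E=D_{L_1}$, no return-time estimate is needed and the proof is strictly simpler than that of~\eqref{eq:representation_upperbound_outsidelocalset}.

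Concretely, fix $t\geq t_0$ and $A\in\mathcal{B}(\mathcal{X})$. I would write, for every $(x,a)\in E$,
\begin{align*}
P_t^{(\psi)}(1_A)(x,a) &= \mathbb{E}_{(x,a)}\!\left[1_A(Z_t)\,;\, t<\tau_\partial,\, T_{\mathrm{all}}\leq t,\, N_t\leq n(t)\right] \\
&\quad + \mathbb{E}_{(x,a)}\!\left[1_A(Z_t)\,;\, t<\tau_\partial,\, T_{\mathrm{all}} > t\right] \\
&\quad + \mathbb{E}_{(x,a)}\!\left[1_A(Z_t)\,;\, t<\tau_\partial,\, T_{\mathrm{all}}\leq t,\, N_t > n(t)\right],
\end{align*}
where $n(\cdot)$ is the function provided by Lemma~\ref{lemma:inequality_generation_mixing} for the fixed $\varepsilon$ used throughout Section~\ref{subsubsect:assumption_(A3)F}. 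The first term is the ``good event'' and the last two are the ``rare events''.

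For the good event, Proposition~\ref{prop:prop_to_prove_(A3)} applied at time $t$ with $n_J=n(t)$ (whose hypothesis $(x,a)\in E=D_{L_1}$ is met by construction) gives that the sub-probability measure $(x',a')\mapsto\mathbb{P}_{(x,a)}[Z_t\in dx'da'\,;\,T_{\mathrm{all}}\leq t<\tau_\partial,\,N_t\leq n(t)]$ is dominated by $\overline{C}(t)\,1_{[0,B_{\max}L_1+n(t)\Delta]^{2k}\times[0,t]}(x',a')\,dx'da'$, so the first term is at most $\overline{C}(t)\,\mathrm{Leb}(A)$. For the two rare-event terms, bounding $1_A\leq 1$ and applying Lemma~\ref{lemma:inequality_time_mixing} (valid since $t\geq t_0$) and Lemma~\ref{lemma:inequality_generation_mixing} respectively gives each of them a bound of $\frac{\varepsilon}{3}e^{-\rho t}$, uniformly in $(x,a)\in E$.

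Integrating the three pointwise bounds against $\tilde\gamma$ restricted to $E$ and using $\tilde\gamma(E)\leq\tilde\gamma(\mathcal{X})=1$ yields
\[
\int_{E} P_t^{(\psi)}(1_A)(x,a)\,\tilde\gamma(dx,da)\ \leq\ \overline{C}(t)\,\mathrm{Leb}(A)\ +\ \frac{2\varepsilon}{3}e^{-\rho t},
\]
which is exactly~\eqref{eq:representation_upperbound_insidelocalset}. There is no genuine obstacle here; the only point requiring care is the bookkeeping that ensures the constants $\varepsilon$, $n(\cdot)$, $\rho$, $t_0$ are the same ones fixed in Section~\ref{subsubsect:useful_statements_A3F}, so that the uniform-in-$(x,a)\in E$ estimates of Lemmas~\ref{lemma:inequality_time_mixing} and~\ref{lemma:inequality_generation_mixing} and the density bound of Proposition~\ref{prop:prop_to_prove_(A3)} all hold simultaneously for the chosen $t$.
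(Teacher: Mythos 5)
Your proof is correct and follows essentially the same route as the paper: the paper itself states that this lemma is obtained "by applying exactly the same arguments as those used to obtain \eqref{eq:representation_upperbound_outsidelocalset} without handling the return to $E$", which is precisely your decomposition into the good event bounded via Proposition~\ref{prop:prop_to_prove_(A3)} and the two rare events bounded via Lemmas~\ref{lemma:inequality_time_mixing} and~\ref{lemma:inequality_generation_mixing}, followed by integration against $\tilde\gamma$ restricted to $E$ with $\tilde\gamma(E)\leq 1$.
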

Now, we obtain that $\tilde{\gamma}$ can be represented by a function. Plugging \eqref{eq:representation_upperbound_outsidelocalset} and \eqref{eq:representation_upperbound_insidelocalset} in \eqref{eq:stationary_profile_fixed_point}, we get for all $t\geq t_0 + t_1$
$$
\begin{aligned}
\tilde{\gamma}(A) \leq 2e^{\tilde{\lambda} t}\overline{C}(t)\text{Leb}(A) + \frac{2\varepsilon}{3}.e_{\mathcal{T}}.e^{-\rho t + \tilde{\lambda} t} + \frac{2\varepsilon}{3} e^{-\rho(t-t_0) + \tilde{\lambda} t} + \frac{\varepsilon}{3} e^{-\rho t + \tilde{\lambda} t}.
\end{aligned}
$$
As $\rho = \lambda_{\psi} - \frac{\alpha + \beta}{2} > \tilde{\lambda}$ (by \eqref{eq:inequality_absorbing_rate} and the fact that $\tilde{\lambda} = \rho_S$, see \cite[Theorem $2.8$]{velleret_exponential_2023}), there exists $t_2 \geq t_0 + t_1$ such that $e^{-\rho t_2 + \tilde{\lambda} t_2} \leq \min\left(1,\left(e_{\mathcal{T}}\right)^{-1},e^{-\rho t_0}\right)$. Then, take $t = t_2$ in the above equation to obtain
$$
\begin{aligned}
\tilde{\gamma}(A) \leq 2e^{\tilde{\lambda} t_2}\overline{C}(t_2)\text{Leb}(A)+ \frac{5\varepsilon}{3}.
\end{aligned}
$$
From the above and~\cite[Prop.~$15.5$.b]{nielsen_introduction_1997},~$\tilde{\gamma}$ is absolutely continuous with respect to the Lebesgue measure. 
\begin{rem}
We mention that it is also possible to obtain the density representation of $\tilde{\gamma}$ by the usual way starting from an absolutely continuous initial condition. Namely, one should prove that for all $\mu\in\mathcal{M}_1\left(\mathcal{X}\right)$ absolute continuous with respect to the Lebesgue measure, $\mu P_t^{(\psi)}$~is absolute continuous with respect to the Lebesgue measure.  Then, the conclusion follows from~\eqref{eq:inequality_application_Velleret} and the fact that the limit in total variation norm preserves the regularity with respect to the Lebesgue measure. We do not proceed like this here because the first step implies cumbersome computations similar to the ones in Appendix~\ref{appendix:proof_A3_stepB}, and because we think that our new method is interesting in itself.
\end{rem}

We now denote $\gamma$ the stationary profile of $(M_t)_{t\geq 0}$. We prove that it can be represented by a function in $L^1(\Psi)$. As $\tilde{\gamma}$ is absolutely continuous with respect to the Lebesgue measure, the right-hand side of~\eqref{eq:link_eigen_particle_and_branching} implies that there exists \hbox{$N \in L^1_{p.d.f.}(\mathcal{X})\cap L^1(\psi)$} (see Notation~\ref{notations:L1_pdf}) such that for all~$A\in\mathcal{B}(\mathcal{X})$
\begin{equation}\label{eq:stationary_measure_representable_function}
\gamma(A) = \int_{A} N(x,a) dx da. 
\end{equation}
Thus, as $\psi\in\Psi$ was chosen arbitrarily, and as the stationary profile is the same whatever the function $\psi \in \Psi$, we have $N \in \bigcap_{\psi\in\Psi} L^1(\psi) = L^1(\Psi)$. It remains to prove Lemma \ref{lemm:representation_upperbound_outsidelocalset}.
\begin{proof}[Proof of Lemma \ref{lemm:representation_upperbound_outsidelocalset}]
Let us consider $t\geq t_0 + t_1$, and $A\in\mathcal{B}(\mathcal{X})$. We have
$$
\begin{aligned}
\int_{(x,a)\in E^c}P_t^{(\psi)}(1_A)(x,a)\gamma(dx,da) &= \int_{(x,a)\in E^c}\mathbb{P}_{(x,a)}\Big[Z_{t}^{(\psi)}\in A;\tau_E \leq t-t_0,t < \tau_{\partial}\Big]\gamma(dx,da) \\
&+ \int_{(x,a)\in E^c}\mathbb{P}_{(x,a)}\Big[Z_{t}^{(\psi)}\in A;\tau_E > t-t_0,t < \tau_{\partial}\Big]\gamma(dx,da).
\end{aligned}
$$
One can easily see that if $\tau_E > t-t_0$ and $\tau_{\partial} > t$, then $\min(\tau_E,\tau_{\partial}) > t-t_0\geq t_1$. The latter combined with Lemma \ref{lemm:proof_representation_inequality_returncompact} yields 
\begin{equation}\label{eq:representation_upperbound_outsidelocalset_first}
\begin{aligned}
\int_{(x,a)\in E^c}P_t^{(\psi)}(1_A)(x,a)\gamma(dx,da) &\leq \int_{(x,a)\in E^c}\mathbb{P}_{(x,a)}\left[Z_{t}^{(\psi)}\in A;\tau_E \leq t-t_0,t < \tau_{\partial}\right]\gamma(dx,da) \\
&+ \frac{\varepsilon}{3} \exp\left(-\rho (t - t_0)\right).
\end{aligned}
\end{equation}
We now obtain an upper bound for the first term on the right-hand side term of \eqref{eq:representation_upperbound_outsidelocalset_first}. With the strong Markov property (and an abuse of notations), we have for all $(x,a)\in E^c$

\begin{multline}\label{eq:representation_upperbound_outsidelocalset_second}
\mathbb{P}_{(x,a)}\left[Z_{t}^{(\psi)}\in A\,;\,\tau_E \leq t-t_0,t < \tau_{\partial}\right] \\ 
= \mathbb{E}_{(x,a)}\left[\mathbb{P}_{Z_{\tau_E}^{(\psi)}}\left[\tilde{Z}_{t-\tau_E}^{(\psi)} \in A, t - \tau_E < \tilde{\tau}_{\partial}\right];\,\tau_E \leq t-t_0,\tau_E < \tau_{\partial}\right], 
\end{multline}  
where $\big(\tilde{Z}_t^{(\psi)}\big)_{t\geq0}$ is an independent copy of $\big(Z_t^{(\psi)}\big)_{t\geq0}$ that has the same distribution, and $\tilde{\tau}_{\partial}$ is the equivalent of $\tau_{\partial}$ for $\big(\tilde{Z}_t^{(\psi)}\big)_{t\geq0}$. One can decompose the probability in the right-hand side term as follows
$$
\begin{aligned}
\mathbb{P}_{Z_{\tau_E}^{(\psi)}}\Big[\tilde{Z}_{t-\tau_E}^{(\psi)} \in A, t - \tau_E < \tilde{\tau}_{\partial}\Big] &= \mathbb{P}_{Z_{\tau_E}}\left[\tilde{Z}_{t-\tau_E}^{(\psi)} \in A, \tilde{T}_{all} \leq t - \tau_E < \tilde{\tau}_{\partial},\tilde{N}_{t - \tau_E} \leq n(t - \tau_E)\right]\\ 
&+ \mathbb{P}_{Z_{\tau_E}^{(\psi)}}\left[\tilde{Z}_{t-\tau_E}^{(\psi)} \in A, \tilde{T}_{all} \leq t - \tau_E < \tilde{\tau}_{\partial},\tilde{N}_{t - \tau_E} > n(t - \tau_E)\right] \\ 
&+ \mathbb{P}_{Z_{\tau_E}^{(\psi)}}\left[\tilde{Z}_{t-\tau_E}^{(\psi)} \in A, \tilde{T}_{all} > t - \tau_E,  t - \tau_E < \tilde{\tau}_{\partial}\right],\\
\end{aligned}
$$
where $\tilde{T}_{all}$ and $\tilde{N}_t$ are the equivalents of $T_{all}$ and $N_t$ for $\big(\tilde{Z}_t^{(\psi)}\big)_{t\geq0}$. Then, in view of Proposition \ref{prop:prop_to_prove_(A3)} for the first term, Lemma~\ref{lemma:inequality_generation_mixing} for the second term, and Lemma \ref{lemma:inequality_time_mixing} for the last term, it holds on the event~$\{\tau_E \leq t-t_0,\tau_E < \tau_{\partial}\}$
$$
\mathbb{P}_{Z_{\tau_E}^{(\psi)}}\left[\tilde{Z}_{t-\tau_E}^{(\psi)} \in A, t - \tau_E < \tilde{\tau}_{\partial}\right] \leq \overline{C}(t-\tau_E)\text{Leb}(A) + \frac{2\varepsilon}{3}\exp\left(-\rho(t-\tau_E)\right).
$$
Plugging this in \eqref{eq:representation_upperbound_outsidelocalset_second}, then using the fact that $\overline{C}$ increases (see Prop.~\ref{prop:prop_to_prove_(A3)}), and finally noticing that $\tau_E = \min(\tau_E,\tau_{\partial})$ on the event $\left\{\tau_E < \tau_{\partial}\right\}$ yields ($e_{\mathcal{T}}$ is defined in the statement of the lemma)
$$
\begin{aligned}
\mathbb{P}_{(x,a)}\left[Z_{t}^{(\psi)}\in A\,;\,\tau_E \leq t-t_0,t < \tau_{\partial}\right] \leq \overline{C}(t)\text{Leb}(A) + \frac{2\varepsilon}{3}.e^{-\rho t}.e_{\mathcal{T}}.
\end{aligned}
$$
Then, plugging the latter in \eqref{eq:representation_upperbound_outsidelocalset_first} implies that the lemma is proved.
\end{proof}

\subsubsection{Separation of the space and age}\label{subsubsect:separation_space_age}
Here, we prove that there exists $N_0\in L^1(\mathbb{R}_+^{2k})$ non-negative, such that for all $(x,a)\in\mathcal{X}$
\begin{equation}\label{eq:separation_age_space}
N(x,a) = N_0(x)\exp\left(-\lambda a - \int_0^a b(x,s) ds\right),
\end{equation}
and such that $N_0$ is solution to~\eqref{eq:equation_N_0}. To do this, we begin by proving~\eqref{eq:separation_age_space}. Let \hbox{$\varphi\in \mathcal{C}_b^{m,1}\left(\mathcal{X}\right)$} (see~Notation~\ref{nota:Cm1}) and $s\geq0$. First, we use~\eqref{eq:main_result_probability_measure} with $\mu = \gamma$ and $f = M_s(\varphi)$ (one can normalise $f$ to have $||f||_{\mathdutchcal{B}(\psi)} \leq 1$) to obtain that 
$$
\underset{t\rightarrow + \infty}{\lim} e^{-\lambda t} \gamma M_{t+s}(\varphi) = \gamma(\phi)\gamma(M_s(\varphi)) = \gamma(M_s(\varphi)).
$$
Then, we apply again~\eqref{eq:main_result_probability_measure} with  $\mu = \gamma$ and $f = \varphi$ to obtain that 
$$
\underset{t\rightarrow + \infty}{\lim} e^{-\lambda t} \gamma M_{t}(\varphi) =  \gamma(\varphi). 
$$
It comes from these two equalities and \eqref{eq:stationary_measure_representable_function} that
$$
e^{\lambda s}\int_{(x,a) \in \mathcal{X}}  \varphi(x,a) N(x,a) dx da = \int_{(x,a) \in \mathcal{X}}  M_s(\varphi)(x,a) N(x,a) dx da.
$$
Now, we take the derivative of the two terms of this equality in $s=0$, using Theorem~\ref{te:PDE} for the second term. This yields 
\begin{equation}\label{eq:fixed_point_equation}
\begin{aligned}
\lambda\int_{(x,a) \in \mathcal{X}}  \varphi(x,a) N(x,a) dx d&a = \int_{(x,a) \in \mathcal{X}} \frac{\partial \varphi}{\partial a}(x,a)N(x,a) dx da \\
&+ \int_{(x,a)\in\mathcal{X}} \left[\mathdutchcal{K}\left(\varphi(.,0)\right)(x)-\varphi(x,a)\right] b(x,a)N(x,a) dx da.
\end{aligned}
\end{equation}
Then, as we have $\varphi(.,0) \equiv 0$ when $\varphi\in\mathcal{C}_c^{\infty}(\text{int}(\mathcal{X}))$, we have for all $\varphi\in\mathcal{C}_c^{\infty}(\text{int}(\mathcal{X}))$ that 
$$
\lambda\int_{(x,a) \in \text{int}(\mathcal{X})}  \varphi(x,a) N(x,a) dx da = \int_{(x,a) \in \text{int}(\mathcal{X})} \left[\frac{\partial \varphi}{\partial a}(x,a) -\varphi(x,a)b(x,a)\right]N(x,a) dx da,
$$
so that $N$ is the solution of the following equation (in the weak sense)
\hbox{$\frac{\partial}{\partial a}N  + (\lambda + b)N = 0$}. From this equation, we have that the weak derivative in the variable $a$ of the function~$\tilde{N}(x,a) := \exp\left(\lambda a + \int_0^a b(x,s) ds\right)N(x,a)$ is zero. Then, there exists $N_0\in L^1_{\text{loc}}(\mathbb{R}_+^{2k})$ such that $N(x,a) = N_0(x)\exp\left(-\lambda a - \int_0^a b(x,s) ds\right)$  for all $(x,a)\in \mathcal{X}$. As $N$ is non-negative, we have that $N_0$ is non-negative. By \eqref{eq:birth_rate_assumption}, we also have
$$
\int_{x\in\mathbb{R}_+^{2k}} N_0(x)dx \times \int_{a\in\mathbb{R}_+}\exp\left(-\lambda a - \tilde{b}\int_0^a (1+s^{d_b}) ds\right)da\leq \int_{(x,a) \in \mathcal{X}} N(x,a) dx da < +\infty.
$$
Then, it holds $N_0\in L^1(\mathbb{R}_+^{2k})$, which implies that~\eqref{eq:separation_age_space} is proved. % holds, and we have obtained that the stationary profile is the product of two functions, one linked to the variable $x$, the other linked to the variable $a$.

It remains to prove that $N_0$ is a solution of~\eqref{eq:equation_N_0}. Let $f\in M_b\left(\mathbb{R}_+^{2k}\right)$. By applying~\eqref{eq:fixed_point_equation} for $\varphi(x,a) = f(x)$, in view of the fact that $\frac{\partial \varphi}{\partial a} = 0$, and then putting the last term in the left-hand side, we have that
$$
\int_{(x,a) \in \mathcal{X}}  \left(\lambda +b(x,a)\right)f(x) N(x,a) dx da = \int_{(x,a)\in\mathcal{X}} \mathdutchcal{K}\left(f\right)(x) b(x,a)N(x,a) dx da.
$$
Thus, by plugging~\eqref{eq:separation_age_space} in the above, and then integrating in $a$, we obtain that
$$
\int_{x\in\mathbb{R}_+^{2k}}f(x)N_0(x) dx = \int_{(x,a)\in\mathbb{R}_+^{2k}\times\mathbb{R}_+} \hspace{-0.0033mm}\mathdutchcal{K}(f)(x) b(x,a) \exp\left(-\int_0^a b(x,s)ds - \lambda a\right) N_0(x)dxda.
$$
The above implies that~$N_0$ is a solution of~\eqref{eq:equation_N_0}, in view of the definition of $\mathdutchcal{Y}_0$ given in Theorem~\ref{te:main_result}, and of the definition of the Laplace transform given in Notation~\ref{nota:laplace_transform}.

\section{Criteria for Lyapunov functions and two illustrative models}\label{sect:assumptions_verified}
Now that we have proved Theorem \ref{te:main_result}, we give conditions and models for which Assumptions~\hyperlink{paragraph:long_time_behaviour}{$(S_1)-(S_{3})$} are verified. First, we present in Section~\ref{subsect:S2.2} conditions implying Assumptions~\hyperlink{paragraph:long_time_behaviour_S2.2}{$(S_{2.2})$} and~\hyperlink{paragraph:long_time_behaviour_S3}{$(S_{3})$}. Then, we present in Sections~\ref{subsect:model_renewal_several_generations} and~\ref{subsect:model_telomere_shortening} two models where all the assumptions are verified. The first one is a model for which all coordinates are lengthened at each division. The second one is a model for which for each telomere, its probability to be lengthened is independent of the other telomeres. For each of these models, we devote a large part of their study to check~\hyperlink{paragraph:long_time_behaviour_S2.1}{$(S_{2.1})$}, which is the most difficult assumption to verify.

\subsection{The Lyapunov function}\label{subsect:S2.2}
We first present in Section~\ref{subsubsect:general_criteria_lyapunov} the key statement of this subsection that allows us to check~\hyperlink{paragraph:long_time_behaviour_S2.2}{$(S_{2.2})$}. We also discuss about the relation between $\varepsilon_1$ and $\varepsilon_0$ that must occur to check~\hyperlink{paragraph:long_time_behaviour_S3}{$(S_{3})$} from this statement. Then, in Section~\ref{subsubsect:practical_criteria_lyapunov}, we give two criteria useful in practice to prove that~\hyperlink{paragraph:long_time_behaviour_S2.2}{$(S_{2.2})$} and~\hyperlink{paragraph:long_time_behaviour_S3}{$(S_{3})$} hold. Finally, in Section~\ref{subsubsect:proof_prop_S2.1_verified}, we prove the key statement.

\subsubsection{Statement implying~\texorpdfstring{\protect\hyperlink{paragraph:long_time_behaviour_S2.2}{$(S_{2.2})$}}{S2.2} and its link with~\texorpdfstring{\protect\hyperlink{paragraph:long_time_behaviour_S3}{$(S_{3})$}}{S3}}\label{subsubsect:general_criteria_lyapunov}
The function $\mathdutchcal{V}$ that we need to find to verify \hyperlink{paragraph:long_time_behaviour_S2.2}{$(S_{2.2})$} corresponds to a Lyapunov function. Qualitatively, this is a function that measures the impact of the initial condition of the semigroup on the speed of convergence towards the stationary profile. We thus need to think about how the speed of convergence varies when telomere lengths vary to obtain our Lyapunov function. In our case, the speed of convergence increases exponentially when telomere lengths increase. Hence, the Lyapunov function must have an exponential form. Until the next proposition is stated, we assume that $\mathdutchcal{V}$ has an exponential form.

By the first statement of \hyperlink{paragraph:long_time_behaviour_S2.2}{$(S_{2.2})$}, our aim is to bound from above $\frac{\mathdutchcal{K}(\mathdutchcal{V})(x)}{\mathdutchcal{V}(x)}$ when $\max_{i\in\llbracket1,2k\rrbracket}(x_i)$ is "large". To do so, under~\hyperlink{paragraph:long_time_behaviour_S1.1}{$(S_{1.1})$}  and~\hyperlink{paragraph:long_time_behaviour_S2.1}{$(S_{2.1})$}, we first introduce for all~$x\in\mathbb{R}^{2k}$, $L\in\mathbb{N}^*$ the following set
$$
\mathcal{E}_{L}(x) := \{i\in\llbracket1,2k\rrbracket\,|\,x_i>B_{\max}L-2\Delta - \delta\}.
$$
This set is used to represent the coordinates of $x$ which are considered to be "large". We also consider the following function, for all $\lambda \geq 0$, $L \in\mathbb{N}^*$ 
\begin{equation}\label{eq:definition_condition_supremum}
\Lambda(\lambda, L) := \sup_{(r,s)\in(\mathbb{R}^{2k})^2}\left[\sum_{(J,M)\in\mathcal{J}_k}p_{J,M}(r,s)\left(\prod_{i\in J\cap \mathcal{E}_{L}(r)}\mathcal{L}(h(r_i,.))(-\lambda)\right) \right],
\end{equation}
where for all $y >0$, $\mathcal{L}(h(y,.))$ denotes the Laplace transform of $h(y,.)$. As $\mathdutchcal{V}$ has an exponential form, the Laplace transform term gives information about the integral of the marginals of $\mathdutchcal{V}$ with respect to the measures $\left(h(x_i,y)dy\right)_{i\in \mathcal{E}_L(x)}$. Then, as $h$ is the probability density function for lengthening values, the function $\Lambda$ allows us to bound from above how the lengthening impacts the value of $\frac{\mathdutchcal{K}(\mathdutchcal{V})(x)}{\mathdutchcal{V}(x)}$.

To bound from above how the shortening impacts the value of $\frac{\mathdutchcal{K}(\mathdutchcal{V})(x)}{\mathdutchcal{V}(x)}$, we use the function $1 + \mathcal{L}(g)$, where $\mathcal{L}(g)$ is the Laplace transform of the shortening density $g$. To be more precise, it is possible to bound from above $\frac{\mathdutchcal{K}(\mathdutchcal{V})(x)}{\mathdutchcal{V}(x)}$ when $\max_{i\in\llbracket1,2k\rrbracket}(x_i)$ is large by the product between~$\Lambda$ and~$1+\mathcal{L}(g)$. This gives us the following key statement, that allows us to verify~\hyperlink{paragraph:long_time_behaviour_S2.2}{$(S_{2.2})$}, and that is proved in Section~\ref{subsubsect:proof_prop_S2.1_verified}.

\begin{prop}[Existence of a Lyapunov function]\label{prop:S2.2_verified}
Let us assume that \hyperlink{paragraph:long_time_behaviour_S1.1}{$(S_{1.1})$}, \hyperlink{paragraph:long_time_behaviour_S1.3}{$(S_{1.3})$} and~\hyperlink{paragraph:long_time_behaviour_S2.1}{$(S_{2.1})$} hold with $B_{\max} > \Delta + \delta$. Then for all $\lambda_0 > 0$, $L\in\mathbb{N}^*$, Assumption~\hyperlink{paragraph:long_time_behaviour_S2.2}{$(S_{2.2})$} is verified with $L_{\text{ret}} = L$ and the following function $\mathdutchcal{V}$ and constant~$\varepsilon_1$:
\begin{equation}\label{eq:form_function_lyapunov_and_epsilon}
\begin{aligned}
\forall x\in\mathbb{R}_+^{2k}: \hspace{2mm}\mathdutchcal{V}(x) &= \exp\left[\lambda_0.\sum_{i = 1}^{2k}\max\left(x_i-B_{\max}L+\Delta+\delta,0\right)\right],\\
\varepsilon_1 &= \left(1+\mathcal{L}(g)(\lambda_0)\right)\Lambda(\lambda_0, L) - 1.
\end{aligned}
\end{equation}
\end{prop}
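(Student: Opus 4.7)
The plan is to verify each of the four items $(i)$--$(iv)$ of~\hyperlink{paragraph:long_time_behaviour_S2.2}{$(S_{2.2})$} in turn, for the candidates $\mathdutchcal{V}$ and $\varepsilon_1$ given in~\eqref{eq:form_function_lyapunov_and_epsilon}. Items $(iii)$ and $(iv)$ will follow at once from the form of $\mathdutchcal{V}$: since each summand in its exponent is nonnegative, $\mathdutchcal{V}(x) \geq 1$, which yields $(iii)$ with $\mathdutchcal{V}_{\min} = 1$, and continuity of the exponential and of $t \mapsto \max(t, 0)$ gives boundedness on compact sets. For $(iv)$, the fact that $t \mapsto \max(t, 0)$ is $1$-Lipschitz yields $\mathdutchcal{V}(y)/\mathdutchcal{V}(x) \leq \exp(2k\lambda_0 \max(\delta, \Delta))$ whenever $||y - x||_\infty \leq \max(\delta, \Delta)$, so one may take $C_{\mathdutchcal{V}} := \exp(2k\lambda_0 \max(\delta, \Delta))$.

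The heart of the argument is $(i)$ and $(ii)$. I will set $c := B_{\max}L - \Delta - \delta$, which is positive by the hypothesis $B_{\max} > \Delta + \delta$, and exploit the product form $\mathdutchcal{V}(y) = \prod_{i = 1}^{2k} e^{\lambda_0 \max(y_i - c, 0)}$. The two elementary inequalities driving the computation are
\begin{align*}
\max(a + b - c, 0) &\leq \max(a - c, 0) + b, & b \geq 0, \\
\max(a - b - c, 0) &\leq \max(a - c, 0), & b \geq 0,
\end{align*}
encoding respectively the effect of lengthening and of shortening on the exponent of $\mathdutchcal{V}$. Applied coordinatewise inside the lengthening integral against $\mu^{(E, J, M)}$, the first inequality gives, for each $j \in J$,
\[
\int e^{\lambda_0 \max((x - \alpha_1)_j + s - c, 0)} h((x - \alpha_1)_j, s) \, ds \leq e^{\lambda_0 \max((x - \alpha_1)_j - c, 0)}\,\mathcal{L}(h((x - \alpha_1)_j, \cdot))(-\lambda_0),
\]
and for $j \notin \mathcal{E}_L(x - \alpha_1)$ the support condition forces this integral to equal exactly $1$, so no Laplace factor appears. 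Summing over $(J, M)$ with weights $p_{J, M}(x - \alpha_1, x - \alpha_2)$ and using the very definition~\eqref{eq:definition_condition_supremum} of $\Lambda$ then bounds the full lengthening integral by $\Lambda(\lambda_0, L)\,\mathdutchcal{V}(x - \alpha_1)$.

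It then remains to integrate $\mathdutchcal{V}(x - \alpha_1)$ against the shortening measure $\mu^{(S, I)}$. For each $i \in I$ I will split the range of $(\alpha_1)_i \in [0, \delta]$ at the threshold $\max(x_i - c, 0)$: after factoring out $e^{\lambda_0 \max(x_i - c, 0)}$, the sub-range below the threshold contributes at most $\mathcal{L}(g)(\lambda_0)$ and the one above at most $1$. Multiplying these bounds coordinatewise and combining with the previous step will give $T_I(x) \leq (1 + \mathcal{L}(g)(\lambda_0))\Lambda(\lambda_0, L)\mathdutchcal{V}(x)$, which is $(ii)$. For $(i)$, the restriction to the complement of $[0, B_{\max} L]^{2k}$ forces at least one coordinate $i_0$ with $x_{i_0} > B_{\max} L - \Delta$, which automatically puts $i_0$ in the regime where the shortening analysis delivers the tighter factor $\mathcal{L}(g)(\lambda_0)$; averaging over $I \in \mathcal{I}_k$ together with the counting identity of Lemma~\ref{lemma:cardinal_set_part} will then absorb the factor $2$ coming from the two daughter cells into the same bound $(1 + \mathcal{L}(g)(\lambda_0))\Lambda(\lambda_0, L)\mathdutchcal{V}(x)$. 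The main technical obstacle will be the careful case analysis between $\mathcal{E}_L(x)$, $\mathcal{E}_L(x - \alpha_1)$ and the subsets $I$, $J$, $M$ when some coordinates of $x$ lie in the transition zone $(c, c + \delta)$, where neither the ``small'' nor the ``large'' asymptotics apply cleanly.
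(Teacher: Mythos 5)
Your lengthening bound, your handling of items $(iii)$--$(iv)$, and your overall architecture for item $(i)$ (case split at $B_{\max}L-\Delta$, then averaging over $I\in\mathcal{I}_k$ via Lemma~\ref{lemma:cardinal_set_part}) coincide with the paper's argument; indexing the Laplace factors by $\mathcal{E}_L(x-\alpha_1)$ even matches the definition of $\Lambda$ in~\eqref{eq:definition_condition_supremum} a bit more directly than the paper's use of $\mathcal{E}_L(x)$. The gap is in how you harvest the shortening gain. You split the range of $(\alpha_1)_i$ at the threshold for \emph{every} $i\in I$ and then ``multiply these bounds coordinatewise'': each shortened coordinate then contributes a factor at most $1+\mathcal{L}(g)(\lambda_0)$, so what you actually obtain is $T_I(x)\leq \left(1+\mathcal{L}(g)(\lambda_0)\right)^{k}\Lambda(\lambda_0,L)\mathdutchcal{V}(x)$ (recall $\#I=k$), which does not imply the claimed bound $\left(1+\mathcal{L}(g)(\lambda_0)\right)\Lambda(\lambda_0,L)\mathdutchcal{V}(x)=(1+\varepsilon_1)\mathdutchcal{V}(x)$ for $k\geq 2$. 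The same multiplicative accumulation ruins the averaging step in item $(i)$: the per-$I$ bounds would be $\mathcal{L}(g)(\lambda_0)\left(1+\mathcal{L}(g)(\lambda_0)\right)^{k-1}\Lambda\mathdutchcal{V}$ and $\left(1+\mathcal{L}(g)(\lambda_0)\right)^{k}\Lambda\mathdutchcal{V}$, whose average exceeds $(1+\varepsilon_1)\mathdutchcal{V}$, so the counting identity cannot ``absorb the factor $2$'' as claimed.

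The repair, which is what the paper does, is to extract the factor $\mathcal{L}(g)(\lambda_0)$ at exactly one coordinate, and only for item $(i)$. For item $(ii)$ no shortening gain is needed at all: since $v$ is nondecreasing and $\alpha_1\geq 0$, one has $\mathdutchcal{V}(x-\alpha_1)\leq\mathdutchcal{V}(x)$, and integrating your lengthening bound against the probability measure $\mu^{(S,I)}$ already gives $\Lambda(\lambda_0,L)\mathdutchcal{V}(x)\leq(1+\varepsilon_1)\mathdutchcal{V}(x)$ because $\mathcal{L}(g)(\lambda_0)\geq 0$. For item $(i)$, either all $x_i\leq B_{\max}L-\Delta$, in which case the restricted kernel vanishes (the lengthening is at most $\Delta$), or there is $i_0$ with $x_{i_0}>B_{\max}L-\Delta=c+\delta$ in your notation; then $x_{i_0}-(\alpha_1)_{i_0}-c\geq 0$ for every $(\alpha_1)_{i_0}\in[0,\delta]$, so $v(x_{i_0}-(\alpha_1)_{i_0})=v(x_{i_0})e^{-\lambda_0(\alpha_1)_{i_0}}$ exactly, and this single coordinate yields the factor $\mathcal{L}(g)(\lambda_0)$ whenever $i_0\in I$, while all the other coordinates of $I$ are bounded by the trivial monotone bound $v(x_i-(\alpha_1)_i)\leq v(x_i)$. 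Lemma~\ref{lemma:cardinal_set_part} then gives $\frac{2}{\#(\mathcal{I}_k)}\sum_{I\in\mathcal{I}_k,\,i_0\in I}\mathcal{L}(g)(\lambda_0)\Lambda\mathdutchcal{V}(x)+\frac{2}{\#(\mathcal{I}_k)}\sum_{I\in\mathcal{I}_k,\,i_0\notin I}\Lambda\mathdutchcal{V}(x)=(1+\varepsilon_1)\mathdutchcal{V}(x)$. With this bookkeeping your anticipated ``transition zone'' $(c,c+\delta)$ is a non-issue: the distinguished coordinate always lies above $B_{\max}L-\Delta$, and everywhere else only monotonicity is used.
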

\noindent However, the main difficulty is not to verify \hyperlink{paragraph:long_time_behaviour_S2.2}{$(S_{2.2})$}, but to verify both \hyperlink{paragraph:long_time_behaviour_S2.2}{$(S_{2.2})$} and \hyperlink{paragraph:long_time_behaviour_S3}{$(S_{3})$}. Indeed, the values of the constants $\alpha$ and $\beta$ in \hyperlink{paragraph:long_time_behaviour_S3}{$(S_{3})$} depend on the values of $\varepsilon_0$ and $\varepsilon_1$ and we must have $\beta < \alpha$. Thus, we need to have a suitable relation between $\varepsilon_1$ and $\varepsilon_0$ to verify these assumptions. When the birth rate does not depend on $x$, this relation is~$1+\varepsilon_1 < (1+\varepsilon_0)^{\frac{1}{G}}$. We illustrate this by proving the following statement.

\begin{cor}\label{cor:birth_rate_age}
Let us assume that the assumptions of Proposition~\ref{prop:S2.2_verified} hold. Assume also that the birth rate $b$ is independent of $x$, and that there exists $\lambda_0 > 0$, $L\in\mathbb{N}^*$ such that
\begin{equation}\label{eq:condition_Lyapunov_indep_age}
\left(1+\mathcal{L}(g)(\lambda_0)\right)\Lambda(\lambda_0, L) < (1+\varepsilon_0)^{\frac{1}{G}}.
\end{equation}
Then, Assumptions~\hyperlink{paragraph:long_time_behaviour_S2.2}{$(S_{2.2})$} and~\hyperlink{paragraph:long_time_behaviour_S3}{$(S_{3})$} are verified.
\end{cor}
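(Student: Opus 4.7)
The plan is to apply Proposition~\ref{prop:S2.2_verified} directly to obtain \hyperlink{paragraph:long_time_behaviour_S2.2}{$(S_{2.2})$}, and then to check each sub-assumption of \hyperlink{paragraph:long_time_behaviour_S3}{$(S_3)$} separately, exploiting the fact that the birth rate is independent of $x$ so that one can take $\underline{b}=\overline{b}=b$ and consequently $\mathdutchcal{F}_a=\mathdutchcal{J}_a$ for every $a\geq 0$. The bulk of the work will consist in two applications of the intermediate value theorem to the Laplace transform of $\mathdutchcal{F}_0$, linking the analytic condition~\eqref{eq:condition_Lyapunov_indep_age} to the existence of the constants $\alpha$ and $\beta$ with $\beta<\alpha$.

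First, Proposition~\ref{prop:S2.2_verified} yields \hyperlink{paragraph:long_time_behaviour_S2.2}{$(S_{2.2})$} with the function $\mathdutchcal{V}$ and the constant $\varepsilon_1=(1+\mathcal{L}(g)(\lambda_0))\Lambda(\lambda_0,L)-1$ displayed in~\eqref{eq:form_function_lyapunov_and_epsilon}; this step is automatic given the hypotheses. Second, setting $\underline{b}=\overline{b}=b$, Assumption \hyperlink{paragraph:long_time_behaviour_S3.1}{$(S_{3.1})$} follows from~\eqref{eq:birth_rate_assumption}: indeed, $b(a)\geq b_0$ on $[a_0,+\infty[$ forces $\int_0^{+\infty}b(a)da=+\infty$, and the required pointwise bounds are trivial.

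Third, to verify \hyperlink{paragraph:long_time_behaviour_S3.2}{$(S_{3.2})$}, I would argue that the map $x\mapsto\mathcal{L}(\mathdutchcal{F}_0)(x)$ is continuous and strictly decreasing on $\mathbb{R}_+$ with $\mathcal{L}(\mathdutchcal{F}_0)(0)=1$ (as $\mathdutchcal{F}_0$ is a probability density, thanks to $\int_0^{+\infty}\underline{b}(a)da=+\infty$) and $\lim_{x\to+\infty}\mathcal{L}(\mathdutchcal{F}_0)(x)=0$ by dominated convergence. Since $(1+\varepsilon_0)^{-1/D}\in(0,1)$, the intermediate value theorem provides a unique $\alpha>0$ with $\mathcal{L}(\mathdutchcal{F}_0)(\alpha)=(1+\varepsilon_0)^{-1/D}$. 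For the second condition in~\hyperlink{paragraph:long_time_behaviour_S3.2}{$(S_{3.2})$}, I will invoke the criterion mentioned in the discussion below \hyperlink{paragraph:long_time_behaviour_S3.2}{$(S_{3.2})$} with $a_1=a_0$ and $b_1=b_0$, which reduces to a direct computation using the lower bound $b(a)\geq b_0$ on $[a_0,+\infty[$.

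Finally, for \hyperlink{paragraph:long_time_behaviour_S3.3}{$(S_{3.3})$}, the key observation is that $\mathdutchcal{J}_0=\mathdutchcal{F}_0$ because $\overline{b}=\underline{b}=b$. Hypothesis~\eqref{eq:condition_Lyapunov_indep_age} together with the explicit value of $\varepsilon_1$ yields $1+\varepsilon_1<(1+\varepsilon_0)^{1/D}$, hence $(1+\varepsilon_1)^{-1}>\mathcal{L}(\mathdutchcal{F}_0)(\alpha)$. Since $\mathcal{L}(\mathdutchcal{F}_0)$ is continuous, strictly decreasing, and equal to $1>(1+\varepsilon_1)^{-1}$ at $0$, the intermediate value theorem delivers $\beta\in(0,\alpha)$ with $\mathcal{L}(\mathdutchcal{J}_0)(\beta)=(1+\varepsilon_1)^{-1}$, which concludes \hyperlink{paragraph:long_time_behaviour_S3.3}{$(S_{3.3})$}. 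There is no real obstacle here: the proof is essentially a bookkeeping step translating the analytic inequality~\eqref{eq:condition_Lyapunov_indep_age} into the existence of $\beta<\alpha$ via the strict monotonicity of the Laplace transform.
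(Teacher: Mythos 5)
Your proposal is correct and takes essentially the same route as the paper: Proposition~\ref{prop:S2.2_verified} gives $(S_{2.2})$ with $\varepsilon_1=\left(1+\mathcal{L}(g)(\lambda_0)\right)\Lambda(\lambda_0,L)-1$, one takes $\underline{b}\equiv b\equiv\overline{b}$ for $(S_{3.1})$, and $\alpha$ and then $\beta\in(0,\alpha)$ are obtained by the intermediate value theorem applied to the continuous, strictly decreasing Laplace transform of $\mathdutchcal{F}_0=\mathdutchcal{J}_0$, using $1+\varepsilon_1<(1+\varepsilon_0)^{\frac{1}{D}}$. The one step you only gesture at, the second condition of $(S_{3.2})$, is exactly where the paper performs a short integration by parts using the bound $b\geq b_0$ on $[a_0,+\infty)$ that you invoke; note that the ``criterion'' mentioned in the discussion is itself established inside this corollary's proof, so you would need to write out that (easy) computation rather than cite it.
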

\begin{proof}
One can first easily apply Proposition~\ref{prop:S2.2_verified} to verify~\hyperlink{paragraph:long_time_behaviour_S2.2}{$(S_{2.2})$} with the constant \hbox{$\varepsilon_1 = \left(1+\mathcal{L}(g)(\lambda_0)\right)\Lambda(\lambda_0, L) - 1$}, and then verify \hyperlink{paragraph:long_time_behaviour_S3.1}{$(S_{3.1})$} with $\underline{b} \equiv b$ and $\overline{b} \equiv b$. We thus focus on verifying Assumptions~\hyperlink{paragraph:long_time_behaviour_S3.2}{$(S_{3.2})$} and~\hyperlink{paragraph:long_time_behaviour_S3.3}{$(S_{3.3})$}. We define for all $x \geq 0$ the function $f(x) = \int_0^{+\infty}b(s)\exp\left(-\int_0^s b(u) du\right)e^{-xs} ds$. We observe by~\eqref{eq:birth_rate_assumption} that for all $x > 0$
$$
f(x) \leq \tilde{b}\int_0^{+\infty}(1+s^{d_b}) e^{-xs} ds.
$$
Thus, it holds $\lim_{x \rightarrow +\infty} f(x) = 0$. We also have by classical computations that $f(0) = 1$ and that $f$ is a continuous decreasing function. Then, by the intermediate values theorem and the fact that $1+\varepsilon_1 < \left(1+\varepsilon_0\right)^{\frac{1}{G}}$, there exist $ 0< \beta < \alpha$ such that $f(\alpha) = \frac{1}{(1+\varepsilon_0)^{\frac{1}{G}}}$ and~$f(\beta) = \frac{1}{1+\varepsilon_1}$.
These correspond to the equalities we need to verify~\hyperlink{paragraph:long_time_behaviour_S3.2}{$(S_{3.2})$} and \hyperlink{paragraph:long_time_behaviour_S3.3}{$(S_{3.3})$}. Thus, it only remains to prove the second condition of~\hyperlink{paragraph:long_time_behaviour_S3.2}{$(S_{3.2})$} and the corollary will be proved. 

We notice that for all $a,\,s \geq 0$, we have that 
$\mathdutchcal{F}_a(s) = b(a+s)\exp\left(-\int_a^{a+s} b(u) du\right)$ and \hbox{$\overline{\mathdutchcal{F}}_a(s) = \exp\left(-\int_a^{a+s} b(u) du\right)$}. Let us bound from below $\int_0^{t}  \mathdutchcal{F}_a(s)e^{-\alpha s} ds$ for all pair $(a,t)\in\mathbb{R}_+\times[a_0,+\infty)$. First, after an integration by parts, use the fact that $\overline{\mathdutchcal{F}}_a(s) \leq 1$ for $a,\,s\geq0$, and the fact that $\overline{\mathdutchcal{F}}_a(s) \leq \exp(-b_0(s-a_0))$ when $s\geq a_0$ by \eqref{eq:birth_rate_assumption}. Then, integrate in $s$. It comes for all $t\geq a_0$, $a\geq0$
$$
\int_0^{t}  \mathdutchcal{F}_a(s)e^{-\alpha s} ds = \left[-\overline{\mathdutchcal{F}}_a(s)e^{-\alpha s}\right]_0^{t} - \alpha\int_0^t e^{-\alpha s}\overline{\mathdutchcal{F}}_a(s) ds  \geq  e^{-\alpha a_0}- e^{-\alpha t} - \frac{\alpha}{\alpha + b_0}e^{-\alpha a_0}.
$$
We easily deduce that the second condition of \hyperlink{paragraph:long_time_behaviour_S3.2}{$(S_{3.2})$} is verified from the above equation by taking~$t$ sufficiently large.
\end{proof}
\noindent When the birth rate depends on $x$, a stronger inequality needs to be verified to check~\hyperlink{paragraph:long_time_behaviour_S3}{$(S_{3})$}. This inequality depends on the gap between $\underline{b}$ and $\overline{b}$. For example, when the birth rate is bounded from above and from below by a positive constant, the inequality is the following. 
\begin{cor}\label{cor:birth_rate_bounded_above_below}
Let us assume that the assumptions of Proposition~\ref{prop:S2.2_verified} hold. Assume also that there exists $b_1,\,b_2 > 0$ such that for all $(x,a) \in \mathcal{X}$ it holds $b_1 \leq b(x,a) \leq b_2$, and $\lambda_0 > 0$, $L\in\mathbb{N}^*$ such that
\begin{equation}\label{eq:condition_Lyapunov_rates}
\left(1+\mathcal{L}(g)(\lambda_0)\right)\Lambda(\lambda_0, L) < \frac{b_1}{b_2}\left((1+\varepsilon_0)^{\frac{1}{G}} - 1\right) + 1.
\end{equation}
Then, Assumptions~\hyperlink{paragraph:long_time_behaviour_S2.2}{$(S_{2.2})$} and~\hyperlink{paragraph:long_time_behaviour_S3}{$(S_{3})$} are verified.
\end{cor}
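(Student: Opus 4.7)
The plan is to essentially mimic the proof of Corollary~\ref{cor:birth_rate_age}, but now exploiting the fact that $\underline{b}$ and $\overline{b}$ can be taken as the two different constants $b_1$ and $b_2$. Proposition~\ref{prop:S2.2_verified} immediately yields Assumption~\hyperlink{paragraph:long_time_behaviour_S2.2}{$(S_{2.2})$} with the Lyapunov function given in~\eqref{eq:form_function_lyapunov_and_epsilon} and $\varepsilon_1 = \left(1+\mathcal{L}(g)(\lambda_0)\right)\Lambda(\lambda_0, L) - 1$. Assumption~\hyperlink{paragraph:long_time_behaviour_S3.1}{$(S_{3.1})$} is then satisfied with the choice $\underline{b}(a) \equiv b_1$ and $\overline{b}(a) \equiv b_2$, since $\int_0^{+\infty} b_1\,da = +\infty$.

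The key computation is that with $\underline{b}$ and $\overline{b}$ constants, the densities reduce to exponentials: $\mathdutchcal{F}_a(s) = b_1 e^{-b_1 s}$ and $\mathdutchcal{J}_a(s) = b_2 e^{-b_2 s}$, both independent of $a$. Their Laplace transforms are
\[
\mathcal{L}(\mathdutchcal{F}_0)(x) = \frac{b_1}{b_1+x}, \qquad \mathcal{L}(\mathdutchcal{J}_0)(x) = \frac{b_2}{b_2+x},
\]
so solving the equations required in~\hyperlink{paragraph:long_time_behaviour_S3.2}{$(S_{3.2})$} and~\hyperlink{paragraph:long_time_behaviour_S3.3}{$(S_{3.3})$} yields explicitly
\[
\alpha = b_1\bigl[(1+\varepsilon_0)^{1/D}-1\bigr], \qquad \beta = b_2 \varepsilon_1.
\]
The condition $\beta < \alpha$ then reads $b_2 \varepsilon_1 < b_1\bigl[(1+\varepsilon_0)^{1/D}-1\bigr]$, which is exactly equivalent to the hypothesis~\eqref{eq:condition_Lyapunov_rates} once we substitute the expression for $\varepsilon_1$. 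This step is purely algebraic; there is no real obstacle, only the need to rearrange correctly.

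It remains to verify the second part of~\hyperlink{paragraph:long_time_behaviour_S3.2}{$(S_{3.2})$}. Since $\mathdutchcal{F}_a(s) = b_1 e^{-b_1 s}$ does not depend on $a$, a direct computation gives
\[
\int_0^t e^{-\alpha s} \mathdutchcal{F}_a(s)\,ds = \frac{b_1}{\alpha+b_1}\bigl(1 - e^{-(\alpha+b_1)t}\bigr),
\]
which is independent of $a$ and strictly positive for every $t>0$. Taking $t$ large enough yields a value arbitrarily close to $\tfrac{b_1}{\alpha+b_1} > 0$, so the supremum-infimum in~\hyperlink{paragraph:long_time_behaviour_S3.2}{$(S_{3.2})$} is strictly positive. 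Thus all three parts of Assumption~\hyperlink{paragraph:long_time_behaviour_S3}{$(S_3)$} are verified and the corollary is proved. The only nontrivial observation in the whole argument is the explicit identification of the thresholds $\alpha$ and $\beta$ as solutions of exponential Laplace equations; once done, the inequality~\eqref{eq:condition_Lyapunov_rates} translates verbatim into the required ordering.
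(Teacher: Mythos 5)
Your proof is correct and follows essentially the same route as the paper: verify $(S_{2.2})$ via Proposition~\ref{prop:S2.2_verified}, take $\underline{b}\equiv b_1$, $\overline{b}\equiv b_2$ for $(S_{3.1})$, compute the explicit Laplace transforms to get $\alpha = b_1\bigl[(1+\varepsilon_0)^{1/D}-1\bigr]$ and $\beta = b_2\varepsilon_1$, and read off $\beta<\alpha$ from~\eqref{eq:condition_Lyapunov_rates}. Your explicit evaluation of $\int_0^t e^{-\alpha s}\mathdutchcal{F}_a(s)\,ds$ matches the paper's argument (and in fact states the exponent $-(\alpha+b_1)t$ more accurately), so nothing is missing.
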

\begin{rem}
This result shows that the more the birth rate varies with respect to telomere lengths, the more it is difficult to ensure the existence of a stationary profile.
\end{rem}
\begin{proof}
One can first easily apply Proposition~\ref{prop:S2.2_verified} to verify~\hyperlink{paragraph:long_time_behaviour_S2.2}{$(S_{2.2})$} with the constant \hbox{$\varepsilon_1 = \left(1+\mathcal{L}(g)(\lambda_0)\right)\Lambda(\lambda_0, L) - 1$}, and then verify \hyperlink{paragraph:long_time_behaviour_S3.1}{$(S_{3.1})$} with $\underline{b} \equiv b_1$ and $\overline{b} \equiv b_2$. We thus focus on verifying~\hyperlink{paragraph:long_time_behaviour_S3.2}{$(S_{3.2})$}  and~\hyperlink{paragraph:long_time_behaviour_S3.3}{$(S_{3.3})$}. First, we have by easy computations that for all~$y \in\mathbb{R}_+$
$$
\int_0^{+\infty} e^{-ys}\mathdutchcal{F}_0(s) ds = b_1\int_0^{+\infty} e^{-(y+b_1)s} ds = \frac{b_1}{y + b_1}.
$$
From the above, the equality on the left in~\hyperlink{paragraph:long_time_behaviour_S3.2}{$(S_{3.2})$} is verified for $\alpha = b_1\big((1+\varepsilon_0)^{\frac{1}{G}} -1\big)$. The inequality on the right is then trivially verified because with similar computations as the above, we have for all $t\geq
0$, $a\geq 0$: 
$$
\int_0^{t} e^{-\alpha s}\mathdutchcal{F}_a(s) ds = \frac{b_1}{\alpha + b_1}\left(1 - e^{-\left(\alpha+b_1\right)t}\right).
$$

To verify~\hyperlink{paragraph:long_time_behaviour_S3.3}{$(S_{3.3})$}, we first observe that for all $y \geq 0$: 
$$
\int_0^{+\infty} e^{-ys}\mathdutchcal{J}_0(s) ds = \frac{b_2}{y + b_2}.
$$
Then, for $\beta = b_2\varepsilon_1$, the equality stated in~\hyperlink{paragraph:long_time_behaviour_S3.3}{$(S_{3.3})$} holds. The fact that $\beta < \alpha$ easily comes from~\eqref{eq:condition_Lyapunov_rates}.
\end{proof}
In practice, verifying formally inequalities such as those presented in~\eqref{eq:condition_Lyapunov_indep_age} or~\eqref{eq:condition_Lyapunov_rates} is quite difficult (but it can be easily done numerically). In addition, this requires conditions quite restrictive on the birth rate, see assumptions of Corollaries~\ref{cor:birth_rate_age} and~\ref{cor:birth_rate_bounded_above_below}. To solve these issues, we now present two conditions on the lengthening densities $(h(x,.))_{x\in\mathbb{R}}$ or the probability mass functions~$\left(p_{J,M}(r,s)\right)_{(J,M)\in\mathcal{J}_k,(r,s)\in\mathbb{R}^2}$, that allow to check~\hyperlink{paragraph:long_time_behaviour_S2.2}{$(S_{2.2})$} and~\hyperlink{paragraph:long_time_behaviour_S3}{$(S_{3})$} in practice.

\subsubsection{Practical criteria to verify~\texorpdfstring{\protect\hyperlink{paragraph:long_time_behaviour_S2.2}{$(S_{2.2})$}}{S2.2} and~\texorpdfstring{\protect\hyperlink{paragraph:long_time_behaviour_S3}{$(S_{3})$}}{S3}}\label{subsubsect:practical_criteria_lyapunov}
In view of Proposition~\ref{prop:S2.2_verified}, we need to control the value of $(1+\mathcal{L}(g))\Lambda$. Indeed, if the latter can be taken as small as we want, then by the intermediate values theorem it will always be possible to find~$\varepsilon_1 >0$ and $\alpha > \beta >0$ such that~\hyperlink{paragraph:long_time_behaviour_S2.2}{$(S_{2.2})$} and~\hyperlink{paragraph:long_time_behaviour_S3}{$(S_{3})$} are verified.  In fact, $\mathcal{L}(g)$ can be taken as small as we want. Thus, if for all $\lambda >0$ we can control the value of $\Lambda(\lambda,.)$, then we will be able to verify the assumptions. We illustrate this by proving the following statement.

\begin{prop}[Criterion to check~\hyperlink{paragraph:long_time_behaviour_S2.2}{$(S_{2.2})$} and~\hyperlink{paragraph:long_time_behaviour_S3}{$(S_{3})$}]\label{prop:limsup_Lambda}
Let us assume that \hyperlink{paragraph:long_time_behaviour_S1.1}{$(S_{1.1})$}, \hyperlink{paragraph:long_time_behaviour_S1.3}{$(S_{1.3})$}, \hyperlink{paragraph:long_time_behaviour_S2.1}{$(S_{2.1})$} and~\hyperlink{paragraph:long_time_behaviour_S3.1}{$(S_{3.1})$} hold with $B_{\max} > \Delta + \delta$, and that 
\begin{eqnarray}
&\forall \lambda > 0,\,\varepsilon >0,\,\exists L\in\mathbb{N}^* \textnormal{ s.t. } \Lambda(\lambda,L) \leq 1 + \varepsilon,  \label{eq:limsup_Lambda}\\
&\exists b_1 > 0,\,d_1\in\mathbb{N}\textnormal{ s.t. }\forall (x,a)\in\mathbb{R}_+^{2k}\times[a_1,+\infty):\, \overline{b}(x,a) \leq b_1(1+a^{d_1}), \label{eq:upper_bound_rate_superior} \\
&\exists b_2 > 0,\,a_2\geq 0\textnormal{ s.t. }\forall (x,a)\in\mathbb{R}_+^{2k}\times[a_2,+\infty):\, \underline{b}(x,a) \geq b_2. \label{eq:lower_bound_rate_inferior}
\end{eqnarray}
Then,~\hyperlink{paragraph:long_time_behaviour_S2.2}{$(S_{2.2})$} and~\hyperlink{paragraph:long_time_behaviour_S3}{$(S_{3})$} are verified.
\end{prop}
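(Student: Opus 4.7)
The plan is to combine Proposition~\ref{prop:S2.2_verified} with classical intermediate value theorem arguments applied to Laplace transforms. First, I would observe that the support condition on $g$ coming from \hyperlink{paragraph:long_time_behaviour_S1.1}{$(S_{1.1})$} gives $\mathcal{L}(g)(\lambda_0) \to 0$ as $\lambda_0 \to +\infty$ by dominated convergence. Combined with~\eqref{eq:limsup_Lambda}, this allows us to make $(1+\mathcal{L}(g)(\lambda_0))\Lambda(\lambda_0,L)$ arbitrarily close to $1$: given $\varepsilon > 0$, first pick $\lambda_0$ such that $\mathcal{L}(g)(\lambda_0) \leq \varepsilon/4$, then pick $L$ such that $\Lambda(\lambda_0,L) \leq 1 + \varepsilon/4$, so that $(1+\mathcal{L}(g)(\lambda_0))\Lambda(\lambda_0,L) \leq 1 + \varepsilon$ when $\varepsilon$ is small. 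Applying Proposition~\ref{prop:S2.2_verified} then provides \hyperlink{paragraph:long_time_behaviour_S2.2}{$(S_{2.2})$} with $L_{\text{return}} = L$ and $\varepsilon_1 \leq \varepsilon$, where the precise threshold on $\varepsilon$ will be fixed at the very last step.

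Next, I would verify \hyperlink{paragraph:long_time_behaviour_S3.2}{$(S_{3.2})$}. Since \hyperlink{paragraph:long_time_behaviour_S3.1}{$(S_{3.1})$} ensures $\int_0^\infty \underline{b}(a)\,da = +\infty$, the function $\mathdutchcal{F}_0$ is a probability density and $\mathcal{L}(\mathdutchcal{F}_0)(0) = 1$. The lower bound~\eqref{eq:lower_bound_rate_inferior} yields $\overline{\mathdutchcal{F}_0}(s) \leq \exp(-b_2(s-a_2))$ for $s \geq a_2$, hence $\mathcal{L}(\mathdutchcal{F}_0)(p) \to 0$ as $p \to +\infty$. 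Continuity and strict monotonicity of $\mathcal{L}(\mathdutchcal{F}_0)$ together with the intermediate value theorem then produce a unique $\alpha > 0$ with $\mathcal{L}(\mathdutchcal{F}_0)(\alpha) = (1+\varepsilon_0)^{-1/D}$. The $\sup\inf$ condition in \hyperlink{paragraph:long_time_behaviour_S3.2}{$(S_{3.2})$} follows directly from~\eqref{eq:lower_bound_rate_inferior}, exactly as sketched in the discussion of \hyperlink{paragraph:long_time_behaviour_S3.2}{$(S_{3.2})\,ii)$}.

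Finally, I would verify \hyperlink{paragraph:long_time_behaviour_S3.3}{$(S_{3.3})$} together with the constraint $\beta < \alpha$. Because $\overline{b} \geq \underline{b}$, the integral $\int_0^\infty \overline{b}(a)\,da$ is also infinite, so $\mathdutchcal{J}_0$ is a density; dominated convergence yields $\mathcal{L}(\mathdutchcal{J}_0)(0) = 1$ and $\mathcal{L}(\mathdutchcal{J}_0)(p) \to 0$ as $p \to +\infty$. Setting $c_\star := \mathcal{L}(\mathdutchcal{J}_0)(\alpha/2) \in (0,1)$, it suffices to return to the first step and choose $\varepsilon$ small enough that the resulting $\varepsilon_1$ satisfies $1+\varepsilon_1 < 1/c_\star$; the intermediate value theorem then produces $\beta \in (0,\alpha/2)$ with $\mathcal{L}(\mathdutchcal{J}_0)(\beta) = 1/(1+\varepsilon_1)$, so that $\beta < \alpha$ as required.

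The only delicate point is this last coupling: $\alpha$ is determined by $(S_{2.1})$ and $(S_{3.1})$ alone, so the freedom to shrink $\varepsilon_1$ afforded by~\eqref{eq:limsup_Lambda} is exactly what buys room to place $\beta$ strictly below $\alpha$. This is why the order of the argument matters — we must first identify $\alpha$, then retroactively calibrate $\lambda_0$ and $L$ in the application of Proposition~\ref{prop:S2.2_verified}, rather than attempting to fix the Lyapunov function once and for all at the outset.
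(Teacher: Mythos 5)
Your proof is correct and follows essentially the same route as the paper's: obtain $\alpha$ for $(S_{3.2})$ by the intermediate value theorem using~\eqref{eq:lower_bound_rate_inferior}, then use the decay of $\mathcal{L}(g)$ together with~\eqref{eq:limsup_Lambda} to apply Proposition~\ref{prop:S2.2_verified} with $(1+\mathcal{L}(g)(\lambda_0))\Lambda(\lambda_0,L)$ close enough to $1$, which gives $(S_{2.2})$ and, via the equation $\mathcal{L}(\mathdutchcal{J}_0)(\beta)=1/(1+\varepsilon_1)$, gives $(S_{3.3})$ with $\beta<\alpha$. The only difference is bookkeeping: the paper fixes $\beta\in(0,\alpha)$ first and defines $\varepsilon_1$ from it before calibrating $\lambda_0$ and $L$, whereas you calibrate $\varepsilon_1$ first and recover $\beta$ by the intermediate value theorem; both orderings are valid.
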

\begin{proof}
First, proceeding as in the proof of Corollary~\ref{cor:birth_rate_age}, one can easily obtain from the intermediate values theorem,~\eqref{eq:upper_bound_rate_superior} and~\eqref{eq:lower_bound_rate_inferior}, that there exists $\alpha > 0$ such that \hyperlink{paragraph:long_time_behaviour_S3.2}{$(S_{3.2})$} is verified. Then, we focus on proving \hyperlink{paragraph:long_time_behaviour_S2.2}{$(S_{2.2})$}~and~\hyperlink{paragraph:long_time_behaviour_S3.3}{$(S_{3.3})$}. 

Let $\beta \in(0,\alpha)$ and $\varepsilon_1 = \left[\mathcal{L}(\mathdutchcal{J}_0)(\beta)\right]^{-1}-1$. First, by \hyperlink{paragraph:long_time_behaviour_S1.1}{$(S_{1.1})$}, it holds for all $x \in \mathbb{R}_+^*$: $\mathcal{L}(g)(x) \leq \frac{\overline{g}}{x}$. The latter, the fact that $\mathcal{L}(g)(0) = 1$ and $\varepsilon_1 <1$, and the continuity of~$\mathcal{L}(g)$, imply that there exists $\lambda_{0}>0$ such that
\begin{equation}\label{eq:equation_laplace_for_H8}
\mathcal{L}(g)(\lambda_{0})  < (1+\varepsilon_1)^{\frac{1}{2}} - 1. 
\end{equation}
Second, in view of~\eqref{eq:limsup_Lambda}, there exists $L > 0$ such that
$$
\Lambda(\lambda_0,L) < (1+\varepsilon_1)^{\frac{1}{2}}.
$$
Combining~\eqref{eq:equation_laplace_for_H8} and the above equation yields that~~$\left(1+\mathcal{L}(g)(\lambda_0)\right)\Lambda(\lambda_0, L) < 1+\varepsilon_1$. In view of Proposition~\ref{prop:S2.2_verified} and the fact that $\varepsilon_1 = \left[\mathcal{L}(\mathdutchcal{J}_0)(\beta)\right]^{-1}-1$, we then obtain that Assumptions~\hyperlink{paragraph:long_time_behaviour_S2.2}{$(S_{2.2})$} and~\hyperlink{paragraph:long_time_behaviour_S3.3}{$(S_{3.3})$} are verified. 
\end{proof}
\noindent The most difficult assumption of Proposition~\ref{prop:limsup_Lambda} to check is~\eqref{eq:limsup_Lambda}. Let us fix $\lambda > 0$. The quantities that can be large in the expression of $\Lambda(\lambda,.)$ are the Laplace transforms of the functions~$(h(x,.))_{x\in\mathbb{R}}$. Here are two ways to control them.
\begin{itemize}[leftmargin=*]
\item By finding conditions implying that these Laplace transforms are very small for large coordinates.
\item By finding conditions implying that for all $(r,s) \in \left(\mathbb{R}^{2k}\right)^2$ and $(J,M)\in\mathcal{J}_k$, the lengthening probabilities~$p_{J,M}(r,s)$ are small when $r_j$ is large for at least one\hbox{~$j\in J$}. Indeed, according to \eqref{eq:definition_condition_supremum}, the lengthening probabilities will compensate for the Laplace transforms of the functions $(h(x,.))_{x\in\mathbb{R}}$ under this condition. %if they are small when one coordinate is large 
\end{itemize}
Let us now give two statements, each corresponding one of the above points, implying~\eqref{eq:limsup_Lambda}. We begin with the following statement, providing a condition allowing to directly control the values of the Laplace transforms. Its qualitative interpretation is that the longer a telomere is, the smaller its lengthening value becomes.
\begin{prop}[Control of the Laplace transforms]\label{prop:value_lengthening_vanishes}
Let us assume that~\hyperlink{paragraph:long_time_behaviour_S1.1}{$(S_{1.1})$} and~\hyperlink{paragraph:long_time_behaviour_S2.1}{$(S_{2.1})$} hold with $B_{\max} > \Delta + \delta$, and that
\begin{equation}\label{eq:value_lengthening_zero}
\lim_{x\rightarrow +\infty} \Delta_x = 0.
\end{equation}
Then, Eq.~\eqref{eq:limsup_Lambda} holds.
\end{prop}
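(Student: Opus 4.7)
The plan is to exploit the hypothesis $\lim_{x\to+\infty}\Delta_x = 0$ to make the Laplace transforms $\mathcal{L}(h(r_i,.))(-\lambda)$ arbitrarily close to $1$ whenever $r_i$ is large enough. Since the product inside $\Lambda(\lambda,L)$ only concerns indices $i \in J \cap \mathcal{E}_L(r)$, which by definition satisfy $r_i > B_{\max}L - 2\Delta - \delta$, we can make each factor uniformly close to $1$ by taking $L$ large.

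First, since $h(x,\cdot)$ is a probability density supported on $[0,\Delta_x]$, I would observe that for every $x\in\mathbb{R}$ and every $\lambda > 0$,
\[
\mathcal{L}(h(x,\cdot))(-\lambda) = \int_0^{\Delta_x} e^{\lambda y}\,h(x,y)\,dy \leq e^{\lambda \Delta_x}.
\]
Fix $\lambda > 0$ and $\varepsilon > 0$. Since the cardinality of $J \cap \mathcal{E}_L(r)$ is bounded by $2k$, choose $\eta > 0$ small enough so that $e^{2k\lambda \eta} \leq 1+\varepsilon$. Using the hypothesis $\lim_{x\to+\infty}\Delta_x = 0$, pick $X_0 > 0$ such that $\Delta_x \leq \eta$ for all $x \geq X_0$. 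Then select $L \in \mathbb{N}^*$ large enough so that $B_{\max}L - 2\Delta - \delta \geq X_0$; with this choice, any index $i \in \mathcal{E}_L(r)$ satisfies $r_i \geq X_0$ and therefore $\mathcal{L}(h(r_i,\cdot))(-\lambda) \leq e^{\lambda \eta}$.

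Putting everything together, for every $(r,s) \in (\mathbb{R}^{2k})^2$ and every $(J,M) \in \mathcal{J}_k$,
\[
\prod_{i\in J \cap \mathcal{E}_L(r)} \mathcal{L}(h(r_i,\cdot))(-\lambda) \leq e^{\lambda \eta \cdot \#(J \cap \mathcal{E}_L(r))} \leq e^{2k\lambda \eta} \leq 1+\varepsilon.
\]
Since $\sum_{(J,M) \in \mathcal{J}_k} p_{J,M}(r,s) = 1$, taking the supremum over $(r,s)$ yields $\Lambda(\lambda, L) \leq 1+\varepsilon$, which is exactly~\eqref{eq:limsup_Lambda}.

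There is no real obstacle: the bound $\mathcal{L}(h(x,\cdot))(-\lambda) \leq e^{\lambda \Delta_x}$ is immediate from the support condition in~\hyperlink{paragraph:long_time_behaviour_S1.1}{$(S_{1.1})$}, and the definition of $\mathcal{E}_L(r)$ transfers the quantitative information "$L$ large" into "$r_i$ large" to trigger the vanishing of $\Delta_{r_i}$. The only minor point to keep in mind is that the product may be empty (when $J \cap \mathcal{E}_L(r) = \emptyset$), in which case it equals $1 \leq 1+\varepsilon$ by the convention $\prod_\emptyset = 1$ recalled in the notations, so the bound still holds trivially.
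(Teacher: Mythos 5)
Your proof is correct and follows essentially the same route as the paper: bound $\mathcal{L}(h(x,\cdot))(-\lambda)$ by $e^{\lambda\Delta_x}$ via the support condition, use $\Delta_x\to 0$ to make each of the at most $2k$ factors over $J\cap\mathcal{E}_L(r)$ uniformly close to $1$ for $L$ large, and conclude with $\sum_{(J,M)}p_{J,M}(r,s)=1$. The only cosmetic difference is that the paper allocates $(1+\varepsilon)^{1/2k}$ per factor while you allocate $e^{\lambda\eta}$ with $e^{2k\lambda\eta}\le 1+\varepsilon$, which is the same estimate; your explicit choice of $L$ with $B_{\max}L-2\Delta-\delta\ge X_0$ matches the definition of $\mathcal{E}_L(r)$ precisely.
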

\begin{proof}
Let $\lambda > 0$. First, as the functions $(h(x,.))_{x\in\mathbb{R}}$ are probability density functions, one has for all $x\in \mathbb{R}$ that 
\begin{equation}\label{eq:upperbound_laplace_functionh}
1\leq \mathcal{L}(h(x,.))(-\lambda) =  \int_0^{\Delta_x} \exp\left(\lambda u\right) h(x,u) du\leq \exp\left(\lambda\Delta_x\right).
\end{equation}
Then, by \eqref{eq:value_lengthening_zero} we have $\lim_{x\rightarrow +\infty} \mathcal{L}(h(x,.))(-\lambda) = 1$, implying that for all $\varepsilon >0$, there exists $C_{\varepsilon} >0$ such that for all~$x > C_{\varepsilon}$
\begin{equation}\label{eq:laplace_transform_h_confined}
1\leq \mathcal{L}(h(x,.))(-\lambda) \leq (1+ \varepsilon)^{\frac{1}{2k}}.
\end{equation}
We now use the latter to prove the proposition. Let $\varepsilon >0$, and $L\in \mathbb{N}^*$ such that
$$
L > \frac{C_{\varepsilon}+\Delta+\delta}{B_{\max}}. 
$$
First, we use~\eqref{eq:laplace_transform_h_confined} to bound the term $\prod_{i\in J\cap \mathcal{E}_{L}(r)}\mathcal{L}(h(r_i,.))(-\lambda)$ by $(1+ \varepsilon)$. Then, we use the fact that for all $(r,s)\in(\mathbb{R}^{2k})^2$, the sum of the probabilities~$(p_{J,M}(r,s))_{(J,M)\in\mathcal{J}_k}$ is $1$. It comes that $\Lambda(\lambda,L) \leq 1 + \varepsilon$, which ends the proof of the proposition.
\end{proof}
\noindent Biologically, it seems that even when telomeres are long, they can be lengthened by a large number of nucleotides, see~\cite[Figures~$4$.D and $5$.C]{teixeira2004}. Hence, the condition given in Proposition~\ref{prop:value_lengthening_vanishes} is not really biologically relevant. However, we think that this condition is interesting mathematically because it helps to understand why a Lyapunov function may exist on this type of model.

Now, we provide a condition allowing to control the values of the lengthening probabilities $p_{J,M}(r,s)$, where $(r,s) \in \left(\mathbb{R}^{2k}\right)^2$ and $(J,M)\in\mathcal{J}_k$, when $r_j$ is large for at least one~$j\in J$. Its qualitative interpretation is that the longer a telomere is, the smaller its probability to be lengthened becomes. This condition, contrary to the one in Proposition~\ref{prop:value_lengthening_vanishes}, is highly relevant from a biological viewpoint, see~\cite[Figures~$4$.E and~$5$.D]{teixeira2004}.
\begin{prop}[Control of the lengthening probabilities]\label{prop:proba_lengthening_vanishes}
Let us assume that \hyperlink{paragraph:long_time_behaviour_S1.1}{$(S_{1.1})$}, \hyperlink{paragraph:long_time_behaviour_S1.3}{$(S_{1.3})$} and~\hyperlink{paragraph:long_time_behaviour_S2.1}{$(S_{2.1})$} hold with $B_{\max} > \Delta + \delta$, and that for all $(J,M)\in\mathcal{J}_{k}$
\begin{equation}\label{eq:probability_lengthening_to_zero}
\lim_{\underset{j \in J}{\max}(x_j) \rightarrow+\infty}\left(\sup_{y\in\mathbb{R}^{2k}}\left(p_{J,M}(x,y)\right)\right) = 0.
\end{equation}
Then, Eq.~\eqref{eq:limsup_Lambda} holds.
\end{prop}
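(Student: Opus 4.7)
The plan is to split the sum defining $\Lambda(\lambda, L)$ according to whether $J$ meets $\mathcal{E}_L(r)$ or not, and exploit the uniform support bound on $h$ together with the finiteness of $\mathcal{J}_k$ to convert the per-$(J,M)$ vanishing in~\eqref{eq:probability_lengthening_to_zero} into a genuinely uniform statement.

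First I would use the third item of~\hyperlink{paragraph:long_time_behaviour_S1.1}{$(S_{1.1})$}, namely $\text{supp}(h(x,\cdot)) \subset [0,\Delta]$ for every $x \in \mathbb{R}$, to bound each Laplace factor: $\mathcal{L}(h(r_i,\cdot))(-\lambda) = \int_0^{\Delta_{r_i}} e^{\lambda u} h(r_i,u)\,du \leq e^{\lambda \Delta}$. Since $\#(\mathcal{E}_L(r)) \leq 2k$, the product $\prod_{i\in J \cap \mathcal{E}_L(r)} \mathcal{L}(h(r_i,\cdot))(-\lambda)$ is bounded by $e^{2k\lambda \Delta}$ in all cases, and equals $1$ whenever $J \cap \mathcal{E}_L(r) = \emptyset$ by the empty-product convention.

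Next, using that $(p_{J,M}(r,s))_{(J,M)\in\mathcal{J}_k}$ is a probability mass function on $\mathcal{J}_k$, I would split the inner sum in the definition of $\Lambda(\lambda, L)$ into the contribution of pairs with $J \cap \mathcal{E}_L(r) = \emptyset$ (whose total mass is at most $1$) and the contribution of pairs with $J \cap \mathcal{E}_L(r) \neq \emptyset$, obtaining
\[
\Lambda(\lambda, L) \leq 1 + e^{2k\lambda \Delta}\sup_{(r,s)\in(\mathbb{R}^{2k})^2}\sum_{\substack{(J,M)\in\mathcal{J}_k \\ J \cap \mathcal{E}_L(r) \neq \emptyset}} p_{J,M}(r,s).
\]
To control this remaining supremum, I would fix an arbitrary $\varepsilon' > 0$ and apply~\eqref{eq:probability_lengthening_to_zero} to each of the finitely many pairs $(J,M) \in \mathcal{J}_k$ with $J \neq \emptyset$: there exists $C_{J,M} > 0$ such that $\sup_{y\in\mathbb{R}^{2k}} p_{J,M}(x,y) < \varepsilon'$ whenever $\max_{j\in J}(x_j) > C_{J,M}$. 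Setting $C := \max_{(J,M)} C_{J,M}$ and choosing $L \in \mathbb{N}^*$ large enough that $B_{\max} L - 2\Delta - \delta > C$, any pair with $J \cap \mathcal{E}_L(r) \neq \emptyset$ automatically satisfies $\max_{j \in J}(r_j) > B_{\max} L - 2\Delta - \delta > C$, and hence $p_{J,M}(r,s) < \varepsilon'$ uniformly in $(r,s)$. Summing over the at most $\#(\mathcal{J}_k)$ such pairs and then choosing $\varepsilon' = \varepsilon / (e^{2k\lambda\Delta} \#(\mathcal{J}_k))$ yields $\Lambda(\lambda,L) \leq 1 + \varepsilon$, which is exactly~\eqref{eq:limsup_Lambda}.

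The only real subtlety is that the index set $\{(J,M) : J \cap \mathcal{E}_L(r) \neq \emptyset\}$ depends on $r$, so the pointwise vanishing provided by~\eqref{eq:probability_lengthening_to_zero} has to be promoted to an estimate uniform in $(r,s)$; this is handled cleanly by using the finiteness of $\mathcal{J}_k$ to extract a single threshold $C$ (and hence a single $L$) governing every admissible pair simultaneously, which is the reason no further regularity on $(p_{J,M})$ is needed.
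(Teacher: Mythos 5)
Your proof is correct and follows essentially the same route as the paper's: both split the sum in $\Lambda(\lambda,L)$ according to whether $J$ meets $\mathcal{E}_L(r)$, bound the Laplace factors uniformly via $\Delta_x\leq\Delta$, and use the finiteness of $\mathcal{J}_k$ to turn the pointwise vanishing in~\eqref{eq:probability_lengthening_to_zero} into a single threshold (hence a single $L$) making the contribution of the pairs with $J\cap\mathcal{E}_L(r)\neq\emptyset$ at most $\varepsilon$. The only cosmetic difference is that the paper absorbs the constant into the choice of $p_0$ via $\sup_r(\mathcal{L}(h(r,.))(-\lambda))^{2k}$ while you write it explicitly as $e^{2k\lambda\Delta}$; the arguments are otherwise the same.
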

\begin{rem}
In view of \eqref{eq:symmetry_proba}, Proposition~\eqref{prop:proba_lengthening_vanishes} is still true if we replace in Eq.~\eqref{eq:probability_lengthening_to_zero} the terms $\sup_{y\in\mathbb{R}^{2k}}$ and \hbox{$\max_{j \in J}(x_j) \rightarrow+\infty$} with $\sup_{x\in\mathbb{R}^{2k}}$ and $\max_{m\in M} (y_m) \rightarrow+\infty$ respectively.
\end{rem}
\begin{proof}
Let $\lambda > 0$.  By~\eqref{eq:probability_lengthening_to_zero} and the fact that $\#(\mathcal{J}_k) < +\infty$, for all $\varepsilon > 0$ there exists~$C_{\varepsilon} >0$ such that for all $(x,y)\in (\mathbb{R}_+^{2k})^2$ and $(J,M)\in\mathcal{J}_{k}$ verifying $\max_{j\in J}(x_{j}) \geq C_{\varepsilon}$, we have
\begin{equation}\label{eq:assumption_propH8}
p_{J,M}(x,y) < \frac{1}{\sup_{r\in \mathbb{R}}\left(\mathcal{L}(h(r,.))(-\lambda)\right)^{2k}}\frac{\varepsilon}{\#(\mathcal{J}_k)}  =: p_{0}.
\end{equation}
The fact that $\sup_{x\in \mathbb{R}}\left(\mathcal{L}(h(x,.))(-\lambda)\right) < +\infty$ is a consequence of~\eqref{eq:upperbound_laplace_functionh} and the fact that~$\Delta_x \leq \Delta$ for all $x\in \mathbb{R}$, see~\hyperlink{paragraph:long_time_behaviour_S1.1}{$(S_{1.1})$}.

Let us now use~\eqref{eq:assumption_propH8} to get~\eqref{eq:limsup_Lambda}. We fix $\varepsilon > 0$ and $L\in \mathbb{N}^*$ such that 
$$
L > \frac{C_{\varepsilon}+\Delta+\delta}{B_{\max}}.
$$
First, in~\eqref{eq:definition_condition_supremum}, we use \eqref{eq:assumption_propH8} to bound from above $p_{J,M}(r,s)$ when~\hbox{$J \cap \mathcal{E}_{L}(r) \neq \emptyset$} by~$p_0$. Then, for all $(r,s)\in\mathbb{R}^{2k}\times\mathbb{R}^{2k}$, we bound from above the sum of the probabilities $\left(p_{J,M}(r,s)\right)_{(J,M)\in\mathcal{J}_k\text{ s.t. }J\cap\mathcal{E}_L(r) = \emptyset}$ by~$1$. Finally, using the definition of $p_0$ (see~\eqref{eq:assumption_propH8}), we bound from above the other sum by~$\varepsilon$, and sum this bound with the bound obtained for the first sum. It comes that $\Lambda(\lambda,L) \leq 1 + \varepsilon$, which ends the proof of the proposition.
\end{proof}
\noindent We present in Sections~\ref{subsect:model_renewal_several_generations} and \ref{subsect:model_telomere_shortening} two models where the criteria given in this section are used. We now conclude this subsection by proving Proposition~\ref{prop:S2.2_verified}. 

\subsubsection{Proof of Proposition~\ref{prop:S2.2_verified}}\label{subsubsect:proof_prop_S2.1_verified}

Let $\lambda_0 > 0$ and $L \in\mathbb{N}^*$. We first define for all $y\in\mathbb{R}$
$$
v(y) := \exp\left[\lambda_{0}.\max\left(y-B_{\max}L+\Delta+\delta,0\right)\right].
$$
We also define for all $x\in\mathbb{R}_+^{2k}$, $(I,J,M)\in\mathcal{I}_k\times\mathcal{J}_k$
$$
\begin{aligned}
A(x,I,J,M) &:= \int_{(\alpha_1,\alpha_2)\in ([0,\delta]^{2k})^2} p_{J,M}\left(x - \alpha_1,x - \alpha_2\right)\left[\int_{(\beta_1,\beta_2)\in\left([0,\Delta]^{2k}\right)^2} \mathdutchcal{V}(x-\alpha_1+\beta_1)\right.\\
&\times\left.1_{\{x-\alpha_1+\beta_1 \in \mathbb{R}_+^{2k}\}}d\mu_{(x-\alpha_1,x-\alpha_2)}^{(E;J,M)}(\beta_1,\beta_2)\right] d\mu^{(S;I)}(\alpha_1,\alpha_2).
\end{aligned}
$$
This represents the kernel for telomere lengths updating of the branching process when the sets that index which telomeres are shortened and lengthened in the daughter cell~$A$ are respectively $I$ and $J$, and in the daughter cell $B$ are respectively $I^c$ and $M$. In addition, we notice that for all $x\in\mathbb{R}_+^{2k}$ we have 
\begin{equation}\label{eq:notice_proof_S2.1_general}
\mathdutchcal{V}(x) = \prod_{i\in\llbracket1,2k\rrbracket}v(x_i), \hspace{3.5mm} \text{and} \hspace{3.5mm}\mathdutchcal{K}(\mathdutchcal{V})(x) = \frac{2}{\#(\mathcal{I}_k)}\sum_{(I,J,M)\in\mathcal{I}_k\times\mathcal{J}_{k}} A(x,I,J,M). 
\end{equation}
In this proof, we extend the definition of $\mathdutchcal{V}$ to the set $\left(\mathbb{R}_+^{2k}\right)^c$ by writing for all $y\in \left(\mathbb{R}_+^{2k}\right)^c$ the following: $\mathdutchcal{V}(y) = \underset{i\in\llbracket1,2k\rrbracket}{\prod}v(y_i)$ .

The proof of Proposition~\ref{prop:S2.2_verified} is done in three steps. In Step~\hyperlink{paragraph:proof_lyapunov_step1}{$1$}, we prove the following auxiliary inequality. For all $x\in \mathbb{R}_+^{2k}$ and $(\alpha_1,\alpha_2)\in\left([0,\delta]^{2k}\right)^2$ 
\begin{equation}\label{eq:step1_proof_lyapunov}
\sum_{(J,M)\in\mathcal{J}_k}\hspace{-0.14mm}p_{J,M}(x,y)\int_{(\beta_1,\beta_2)\in([0,\Delta]^{2k})^2} \mathdutchcal{V}(x-\alpha_1+\beta_1) d\mu_{(x-\alpha_1,x-\alpha_2)}^{(E;J,M)}(\beta_1,\beta_2)\leq \mathdutchcal{V}(x-\alpha_1)\Lambda(\lambda_0,L).
\end{equation}
Then, we prove in Step~\hyperlink{paragraph:proof_lyapunov_step2}{$2$} a second auxiliary inequality. For all $x\in \mathbb{R}_+^{2k}$, $I \in \mathcal{I}_k$
\begin{equation}\label{eq:lyapunov_function_step2}
\begin{aligned}
&\int_{(\alpha_1,\alpha_2)\in\left([0,\delta]^{2k}\right)^2} \mathdutchcal{V}(x-\alpha_1) d\mu^{(S;I)}(\alpha_1,\alpha_2) \\
&\leq  \left(1_{\{\forall i\in I:\, x_{i} \leq B_{\max}L-\Delta\}} + \mathcal{L}(g)(\lambda_0)1_{\{\exists i_0\in I \text{ s.t. } x_{i_0} > B_{\max}L-\Delta\}}\right)\mathdutchcal{V}(x).
\end{aligned}
\end{equation}
Finally, in Step \hyperlink{paragraph:proof_lyapunov_step3}{$3$} we verify \hyperlink{paragraph:long_time_behaviour_S2.2}{$(S_{2.2})$} using the previous inequalities.

\paragraph{Step $1$:}\hypertarget{paragraph:proof_lyapunov_step1}{} Let $x\in \mathbb{R}_+^{2k}$, $(\alpha_1,\alpha_2)\in\left([0,\delta]^{2k}\right)^2$, $(\beta_1,\beta_2)\in[0,\Delta]^{2k}$, and $(J,M)\in\mathcal{J}_{k}$. For all~\hbox{$i\in\left(\mathcal{E}_{L}(x)\right)^c$}, we have by definition of $\mathcal{E}_{L}(x)$ that $x_i - (\alpha_1)_i + (\beta_1)_i  \leq B_{\max}L -\Delta- \delta$. This implies 
\begin{equation}\label{eq:step1_lyapunov_intermediate_first}
v(x_i - (\alpha_1)_i + (\beta_1)_i) = 1 = v(x_i - (\alpha_1)_i).
\end{equation}
In addition, by \eqref{eq:measure_elongation}, the measure $\mu_{(x-\alpha_1,x-\alpha_2)}^{(E;J,M)}$ is a Dirac measure in $0$ for the coordinates that are not in~$J$, and is the measure $h(x_i-(\alpha_1)_i,u)du$ for the coordinates $i\in J$. By the equality on the left in~\eqref{eq:notice_proof_S2.1_general}, Eq.~\eqref{eq:step1_lyapunov_intermediate_first} and the latter, we thus have
\begin{equation}\label{eq:step1_lyapunov_intermediate_second}
\begin{aligned}
&\int_{(\beta_1,\beta_2)\in\left([0,\Delta]^{2k}\right)^2} \mathdutchcal{V}(x-\alpha_1+\beta_1) d\mu_{(x-\alpha_1,x-\alpha_2)}^{(E;J,M)}(\beta_1,\beta_2) \\
&= \left[\prod_{\substack{i \in \llbracket1,2k\rrbracket\\i\notin\mathcal{E}_{L}(x) \text{ or } i\notin J}}v(x_i - (\alpha_1)_i)\right]\left[\prod_{\substack{i \in \llbracket1,2k\rrbracket\\i\in J\cap \mathcal{E}(x)}}\left(\int_{u \in [0,\Delta]} v(x_i - (\alpha_1)_i + u) h(x_i - (\alpha_1)_i,u) du\right)\right].
\end{aligned}
\end{equation}
We now obtain Eq.~\eqref{eq:step1_proof_lyapunov} from the above equality. First, notice that for all $u\in[0,\Delta]$ we have \hbox{$v(x_i - (\alpha_1)_i + u) \leq v(x_i - (\alpha_1)_i)\exp\left(\lambda_0 u\right)$,} and apply this inequality to the right-hand side term of~\eqref{eq:step1_lyapunov_intermediate_second}. Then, integrate in $u$ to obtain for all $i\in J\cap \mathcal{E}(x)$ the Laplace transform of $h(x_i-(\alpha_1)_i,.)$ in $-\lambda_0$ and group all the $(v(x_i - (\alpha_1)_i))_{i\in \llbracket1,2k\rrbracket}$ into one product to get~$\mathdutchcal{V}(x-\alpha_1)$. Finally, multiply both sides of the inequality by $p_{J,M}(x,y)$, sum with respect to the indices $(J,M)\in\mathcal{J}_k$, and take the supremum for the right-hand side term. This gives~\eqref{eq:step1_proof_lyapunov}.

\paragraph{Step $2$:}\hypertarget{paragraph:proof_lyapunov_step2}{} Let $x\in\mathbb{R}_+^{2k}$ and $I \in \mathcal{I}_k$. Assume first that for all $i\in I$: $x_i \leq B_{\max}L -\Delta$. Notice that the function~$v$ is non-decreasing. Then, by the form of $\mathdutchcal{V}$ given in~\eqref{eq:notice_proof_S2.1_general}, we have for all $\alpha_1 \in \mathbb{R}_+^{2k}$: $\mathdutchcal{V}(x-\alpha_1) \leq \mathdutchcal{V}(x)$. Plugging this in the left-hand side term of \eqref{eq:lyapunov_function_step2} and then using the fact that $\mu^{(S;I)}$ is a probability measure yields that~\eqref{eq:lyapunov_function_step2} is true when  for~all~$i\in I$: $x_i \leq B_{\max}L - \Delta$.

Now, assume that there exists $i_0 \in I$ such that $x_{i_0}> B_{\max}L-\Delta$. Recall that $v$ is non-decreasing. Then, by the form of $\mathdutchcal{V}$ given in~\eqref{eq:notice_proof_S2.1_general} and the fact that $x_{i_0}> B_{\max}L-\Delta$, we have for all $\alpha_1 \in [0,\delta]^{2k}$
\begin{equation}\label{eq:step2_lyapunov_intermediate}
\mathdutchcal{V}(x-\alpha_1) \leq v(x_{i_0} - (\alpha_1)_{i_0})\prod_{\substack{i \in\llbracket1,2k\rrbracket\\ i\neq i_0}} v(x_i) = \exp\left(-\lambda_0 (\alpha_1)_{i_0}\right)\mathdutchcal{V}(x).
\end{equation}
In addition, for all $I \in \mathcal{I}_k$, the measure $\mu^{(S;I)}$ is the measure $g(u) du$ for the coordinates in $I$, and is a probability measure for the other coordinates (see~\eqref{eq:measure_shortening}). Then, we obtain that~\eqref{eq:lyapunov_function_step2} is true by integrating both sides of~\eqref{eq:step2_lyapunov_intermediate} with respect to $\mu^{(S;I)}$.

\paragraph{Step $3$:}\hypertarget{paragraph:proof_lyapunov_step3}{} We now verify \hyperlink{paragraph:long_time_behaviour_S2.2}{$(S_{2.2})$}. The third statement is easy to verify. We begin with the first statement. Let $x\in[0,B_{\max}L-\Delta]^{2k}$. As the maximum lengthening value is $\Delta$, we have that the offspring of a cell that has telomere length $x$ necessarily have telomere length in $[0,B_{\max}L]^{2k}$ after one division. Then it holds~$\mathdutchcal{K}_{\left([0,B_{\max}L]^{2k}\right)^c}(\mathdutchcal{V})(x) = 0$ and the first statement is verified for \hbox{$x\in [0,B_{\max}L-\Delta]^{2k}$}. Now, let $x \in\left([0,B_{\max}L-\Delta]^{2k}\right)^c$ and $i_0 \in \llbracket1,2k\rrbracket$ satisfying $x_{i_0} > B_{\max}L-\Delta$. As it holds \hbox{$\mathdutchcal{K}_{\left([0,B_{\max}L]^{2k}\right)^c}(\mathdutchcal{V})(x) \leq \mathdutchcal{K}(\mathdutchcal{V})(x)$}, we prove the inequality for $\mathdutchcal{K}(\mathdutchcal{V})(x)$ instead of $\mathdutchcal{K}_{\left([0,B_{\max}L]^{2k}\right)^c}(\mathdutchcal{V})(x)$. First, develop~$\mathdutchcal{K}(\mathdutchcal{V})(x)$ as in~\eqref{eq:notice_proof_S2.1_general} and bound from above  for all $(I,J,M)\in\mathcal{I}_k\times\mathcal{J}_k$ the term $A(x,I,J,M)$ by applying~\eqref{eq:step1_proof_lyapunov}. Then, split the new sum in two sums, grouping the sets $(I,J,M)\in\mathcal{I}_k\times\mathcal{J}_k$ such that $i_0 \in I$ in one sum, and the other sets in a second sum. Finally, apply~\eqref{eq:lyapunov_function_step2} to bound from above the terms that are summed in the first sum by $\mathcal{L}(g)(\lambda_0)\mathdutchcal{V}(x)$, and the terms that are summed in the second sum by~$\mathdutchcal{V}(x)$. We mention that by the fact that~$\mathcal{L}(g)(\lambda_0) \leq 1$, we can bound from above all the terms such that $i_0\notin I$  by~$\mathdutchcal{V}(x)$, even when there exists $i_1 \in I$ such that $x_{i_1} > B_{\max}L -\Delta$. It comes
$$
\mathdutchcal{K}(\mathdutchcal{V})(x) \leq \frac{2}{\#(\mathcal{I}_k)}\sum_{\substack{I\in\mathcal{I}_k\\ i_0\in I}} \Lambda(\lambda_0,L)\mathcal{L}(g)(\lambda_0)\mathdutchcal{V}(x) + \frac{2}{\#(\mathcal{I}_k)}\sum_{\substack{I\in\mathcal{I}_k\\ i_0\notin I}}\Lambda(\lambda_0,L)\mathdutchcal{V}(x).
$$
Now, we use Lemma~\ref{lemma:cardinal_set_part} in the above to get that 
$$
\frac{2}{\#(\mathcal{I}_k)}\sum_{\substack{I \in \mathcal{I}_k,i_0 \in I}} = \frac{2}{\#(\mathcal{I}_k)}\sum_{\substack{I \in \mathcal{I}_k,i_0 \notin I}} = 1.
$$
We obtain that the first statement of~\hyperlink{paragraph:long_time_behaviour_S2.2}{$(S_{2.2})$} is verified.

To verify the second statement of~\hyperlink{paragraph:long_time_behaviour_S2.2}{$(S_{2.2})$}, first notice that as $v$ is non-decreasing and as~$\Lambda(\lambda_0,L) \leq 1+\varepsilon_1$, it holds $\mathdutchcal{V}(x-\alpha_1)\Lambda(\lambda_0,L)  \leq \mathdutchcal{V}(x)(1+\varepsilon_1)$. Then, plug this in the right-hand side term of~\eqref{eq:step1_proof_lyapunov}, and integrate both sides of the new inequality with respect to the measure $\mu^{(S;I)}$. As this is a probability measure, the right-hand side term stays equal to~$\mathdutchcal{V}(x)(1+\varepsilon_1)$. It comes that the second statement of~\hyperlink{paragraph:long_time_behaviour_S2.2}{$(S_{2.2})$} is verified.

We finally prove the fourth statement of~\hyperlink{paragraph:long_time_behaviour_S2.2}{$(S_{2.2})$}. We consider $(x,y)\in(\mathbb{R}_+^{2k})^2$ verifying the inequality~$||y-x||_{\infty} \leq \max(\delta,\Delta)$. As $v$ is a non-decreasing function, we have from the last inequality
$$
\begin{aligned}
\mathdutchcal{V}(y) &\leq \exp\left[\lambda_0.\sum_{i = 1}^{2k} \max\left(x_i + \max(\delta,\Delta) - B_{\max}L + \Delta + \delta, 0 \right)\right]\\
&\leq  \mathdutchcal{V}(x)\exp\left(2k\lambda_{0}\max(\delta,\Delta)\right).
\end{aligned} 
$$
Then, the fourth statement of~\hyperlink{paragraph:long_time_behaviour_S2.2}{$(S_{2.2})$} comes from the above inequality by taking the ratio between the left and right-hand side terms.
\qed

\subsection{A model where all telomeres are lengthened}\label{subsect:model_renewal_several_generations}

We now present two illustrative models for telomere shortening for which all the assumptions of Theorem~\ref{te:main_result} are verified. The first model is a model in which at each cell division, every telomere is lengthened. This is a concrete example of model for which this is easier to verify \hyperlink{paragraph:long_time_behaviour_S2.1}{$(S_{2.1})$} with $G > 1$. Let us first consider constants $\Delta >\delta >0$ verifying
\begin{equation}\label{eq:assumption_Delta_delta_first_model}
\left(\frac{\Delta - \delta}{\Delta}\right)^{2k^2+4k} > \frac{1}{4}.
\end{equation}
This condition means that $\delta$ must be sufficiently small in comparison to $\Delta$. We also consider a constant~$a_0 >0$. Then, our model is the branching process introduced in Section~\ref{subsect:algorithm_model} defined in the following~way:
\begin{itemize}[leftmargin=*]
\item For all $(x,a)\in\mathbb{R}_+^{2k}\times\mathbb{R}_+$: $b(x,a) = a1_{\left\{a\geq a_0\right\}}$,
\item For all $y\in\mathbb{R}_+$: $g(y) = \frac{1}{\delta}1_{[0,\delta]}(y)$,
\item For all $(x,y)\in\mathbb{R}\times\mathbb{R}_+$:
$$
h(x,y) = \frac{1}{\Delta}1_{[0,\Delta]}(y)1_{\{x < 0\}} + \frac{x+1}{\Delta}1_{\left[0,\frac{\Delta}{x+1}\right]}(y)1_{\{x \geq 0\}},
$$
\item For all $(s_1,s_2) \in (\mathbb{R}^{2k})^2$, $(J,M)\in\mathcal{J}_k$,
$$
p_{J,M}(s_1,s_2) = \begin{cases}
1 & \text{ if } (J,M) = \left(\{1,\hdots,2k\},\{1,\hdots,2k\}\right), \\
0, & \text{ otherwise.} \\
\end{cases} 
$$
As $p_{J,M}(s_1,s_2)$ does not depend on $s_1$ and $s_2$, we drop $(s_1,s_2)$ in the notation. We also denote $(J_0,M_0) = \left(\{1,\hdots,2k\},\{1,\hdots,2k\}\right)$.
\end{itemize}
The birth rate has been chosen such that cell with an age below a certain threshold cannot divide, and such that the division rate increases with the cell age. This is consistent with the biological reality, see~\cite{tzur_cell_2009}. We easily see that Assumption \hyperlink{paragraph:long_time_behaviour_S1}{$(S_1)$} is verified for this model. We also easily verify \hyperlink{paragraph:long_time_behaviour_S3.1}{$(S_{3.1})$} with~$\underline{b} \equiv b \equiv \overline{b}$. Therefore, by Propositions~\ref{prop:limsup_Lambda} and~\ref{prop:value_lengthening_vanishes}, if \hyperlink{paragraph:long_time_behaviour_S2.1}{$(S_{2.1})$}~is~satisfied, then all the other assumptions are verified. We thus focus on verifying~\hbox{\hyperlink{paragraph:long_time_behaviour_S2.1}{$(S_{2.1})$}}. We first explain why we do not verify \hyperlink{paragraph:long_time_behaviour_S2.1}{$(S_{2.1})$} with $G = 1$ and then we verify it with $G = k + 2$.

\paragraph{Problem for $G = 1$.}  A natural candidate for the set $K_{\text{ren}}$ is a set of the form~$[0,L]^{2k}$, where~$L >0$. The reason is that these sets have a very simple form, and thus computations are easier. However, we cannot verify \hyperlink{paragraph:long_time_behaviour_S2.1}{$(S_{2.1})$} with $K_{\text{ren}}$ of this form and $G = 1$ as all the telomeres of a cell are lengthened at each division.  Let us formalise it. We fix $L > 0$, and then consider $x = \sum_{i = 1}^{2k} L\,e_i$, where~$(e_i)_{i\in\llbracket1,2k\rrbracket}$ are the vectors of the canonical basis. We study a cell with telomere lengths $x$ that divides. By Eq.~\eqref{eq:measure_shortening}, each of its daughter cells has at least $k$ telomeres that stay unchanged after shortening. In addition, as all telomeres are lengthened in this model (because the probability to draw $(J_0,M_0)$ is $1$), the $k$ telomeres that stayed unchanged during the shortening are lengthened. We then necessarily have at least $k$ telomeres with a length strictly larger than~$L$ after lengthening. In particular, daughter cells of the cell that divides necessarily have telomere lengths outside of~$[0,L]^{2k}$. As $\mathdutchcal{K}(.)(x)$ is the transition probability for telomere lengths of the daughter cells, the latter yields that 
$$
\mathdutchcal{K}(1_{[0,L]^{2k}})(x) = 0.
$$
Thus, the inequality presented in \hyperlink{paragraph:long_time_behaviour_S2.1}{$(S_{2.1})$} fails for $x = \sum_{i = 1}^{2k}Le_i$ and it is impossible to have a renewal in one generation for a set of the form $[0, L]^{2k}$, where $L > 0$. We thus have two options to verify~\hyperlink{paragraph:long_time_behaviour_S2.1}{$(S_{2.1})$}: either we should consider a set for the renewal with a more complex form, or we should verify~\hyperlink{paragraph:long_time_behaviour_S2.1}{$(S_{2.1})$} with~$G > 1$. The problem with the first option is that due to the multidimensional trait space and the fact that the expression of the jump kernel is complicated, exhibiting the good set is not trivial. We then choose the second option. Let us begin with some preliminaries.

\paragraph{Preliminaries.} In view of~\eqref{eq:assumption_Delta_delta_first_model}, we first introduce $\gamma \in(0,1)$ such that 
\begin{equation}\label{eq:condition_proportion_delta}
(1 - \gamma)^{2k}\left(\frac{\Delta - \delta}{\Delta}\right)^{2k^2+4k} > \frac{1}{4}.
\end{equation}
We use $\gamma$ in order to write the quantity $\gamma\delta$, which is a minimum shortening value for the cells we are interested in later in the proof. The above condition implies that $\gamma$ must be sufficiently small so that we can control the number of non-senescent offspring of a cell for which telomeres are shortened by at least~$\gamma\delta$. 

We now obtain auxiliary results useful to choose the maximum value $L >0$ of the set $K_{\text{ren}} = [0,L]^{2k}$. Let us consider for all $x \geq 0$: 
$$
f(x) = x - \gamma \delta + \frac{\Delta}{x+1} + \frac{\Delta}{\max\left(x-\delta + 1,1\right)}.
$$
We first easily have that it holds $\underset{x \rightarrow +\infty}{\lim} f(x) - (x- \frac{\gamma}{2}\delta) = -\frac{\gamma}{2}\delta < 0$, so that~$f$~is smaller than the function $\text{Id} -\frac{\gamma}{2}\delta$ after a certain value~$L_0 > 0$. In addition, we have that $f$ is differentiable on $(\delta,+\infty)$ and that~$\underset{x \rightarrow +\infty}{\lim} f'(x)  = 1$, which implies that there exists $L_1 > 0$ such that $f$ strictly increases on $[L_1,+\infty)$. As~$\underset{x \rightarrow +\infty}{\lim} f(x) = +\infty$, we finally have that there exists $L_2 > 0$ such that for all $x \geq L_2$: $f(x) \geq \max_{y\in[0,L_1]} f(y)$. These three results respectively imply that
\begin{alignat}{2}
&\forall x \geq \max(L_0,L_1,L_2): \hspace{7.5mm} &&f(x) \leq  x - \frac{\gamma}{2}\delta, \nonumber \\
&\forall x \in\left[L_1,\max(L_0,L_1,L_2)\right]: \hspace{7.5mm} &&f(x) \leq  f(\max(L_0,L_1,L_2)), \label{eq:condition_maximum_set_renew_first_intermediate}\\
&\forall x \in\left[0,L_1\right]: \hspace{7.5mm} &&f(x) \leq \max_{y\in[0,L_1]} f(y) \leq  f(\max(L_0,L_1,L_2)). \nonumber
\end{alignat}
By combining the first inequality in~\eqref{eq:condition_maximum_set_renew_first_intermediate} with the second one, and then the first with the third one, we obtain that for all $x\in \left[0,\max(L_0,L_1,L_2)\right]$: $f(x) \leq \max(L_0,L_1,L_2) - \frac{\gamma}{2}\delta$. From the latter and the first inequality in~\eqref{eq:condition_maximum_set_renew_first_intermediate}, it comes the following, which is the auxiliary inequality we need:
\begin{equation}\label{eq:condition_maximum_set_renew_first}
\forall L\geq \max(L_0,L_1,L_2),\,\forall x \in [0,L]: \hspace{6mm} f(x) \leq  L - \frac{\gamma}{2}\delta.
\end{equation}

We now introduce the set and constants for which we verify \hyperlink{paragraph:long_time_behaviour_S2.1}{$(S_{2.1})$}. We first fix an arbitrary \hbox{$L > \max\left(\Delta,L_0,L_1,L_2\right)$} verifying
\begin{equation}\label{eq:condition_maximum_set_renew_second}
\frac{k\Delta}{L- \frac{\gamma}{2}\delta+1} \leq \frac{\gamma}{2}\delta,
\end{equation}
We also consider the constants 
$$
\varepsilon_0 = 4(1 - \gamma)^{2k}\left(\frac{\Delta - \delta}{\Delta}\right)^{2k^2+4k} - 1, \hspace{3.5mm} G = k+2, \hspace{3.5mm} B_{\max} = L + G\Delta,
$$
and the set~\hbox{$K_{\text{ren}} = [0,L]^{2k}$}. To verify~\hyperlink{paragraph:long_time_behaviour_S2.1}{$(S_{2.1})$}, we proceed in two steps. We first prove in Step~\hyperlink{paragraph:step1_renewal_model1}{$1$} that for all $x \in \left[0,L-\frac{\gamma}{2}\delta\right]^{2k}$ we have
\begin{equation}\label{eq:step1_renewal_model1}
\left(\mathdutchcal{K}\right)^{k}(1_{K_{\text{ren}}})(x) \geq 2^{k}\left(\frac{\Delta - \delta}{\Delta}\right)^{2k^2}.
\end{equation}
Then, we conclude in Step~\hyperlink{paragraph:step2_renewal_model1}{$2$}. We start with the following remark:
\begin{rem}
As we start from $x\in K_{\text{ren}}= [0,L]^{2k}$, as the maximum lengthening value is~$\Delta$, and as $B_{\max}~=~L + G\Delta$, the offspring of $x$ from the first to the $G-$th generations stays in~$[0,B_{\max}]^{2k}$. Therefore, if the inequality presented in \hyperlink{paragraph:long_time_behaviour_S2.1}{$(S_{2.1})$} is obtained for $\mathdutchcal{K}$, then it will be also obtained for $\mathdutchcal{K}_{[0,B_{\max}]^{2k}}$ (see~\eqref{eq:measure_k_restriction} for the definition of $\mathdutchcal{K}_{[0,B_{\max}]^{2k}}$). Hence, in the rest of the subsection, we omit the term~$[0,B_{\max}]^{2k}$ in $\mathdutchcal{K}_{[0,B_{\max}]^{2k}}$ and rather study $\mathdutchcal{K}$.
\end{rem}
\paragraph{Step 1:}\hypertarget{paragraph:step1_renewal_model1}{} Let  us fix $x\in\left[0,L - \frac{\gamma}{2}\delta\right]^{2k}$. We also consider $y\in\left[0,L-\Delta\left(L-\frac{\gamma}{2}\delta+1\right)^{-1}\right]^{2k}$ and the set 
$$
I_y := \left\{w\in[0,L]^{2k}\,\big|\,w_i \leq \max\left(y_i + \frac{\Delta}{L-\frac{\gamma}{2}\delta+1}, \Delta\right)\right\}.
$$
We first bound from below $\mathdutchcal{K}(1_{I_y})(y)$. To do so, we use the following inequalities, that easily come from the expression of $h$ and the fact that the maximum of $j(x) = x + \frac{\Delta}{x+1}$ on $\left[0,L-\frac{\gamma}{2}\delta\right]$ is in $x = 0$ or~$x = L-\frac{\gamma}{2}\delta$:
\begin{align}
&\forall s\in \left[0,L-\frac{\gamma}{2}\delta\right]: \hspace{4mm}\int_{0}^{\frac{\Delta}{s+ 1}} 1_{\left\{s + u \in \left[0,\max\left(s + \Delta\left(L-\frac{\gamma}{2}\delta+1\right)^{-1},\Delta\right)\right]\right\}} h\left(s,u\right) du = 1 \geq \frac{\Delta - \delta}{\Delta}, \nonumber\\
&\forall s\in [-\delta,0): \hspace{4mm} \frac{1}{\Delta}\int_{0}^{\Delta} 1_{\left\{s + u \in \left[0,\max\left(s+\Delta\left(L-\frac{\gamma}{2}\delta+1\right)^{-1},\Delta\right)\right]\right\}} h\left(s,u\right) du \geq \frac{1}{\Delta}\int_{\delta}^{\Delta}du= \frac{\Delta - \delta}{\Delta}. \label{eq:first_intermediate_points_in_[0,L-1]}
\end{align}
First, develop $\mathdutchcal{K}(1_{I_y})(y)$ and only keep the pair of sets $(J_0,M_0)$ in the sum of the elements over~$\mathcal{J}_k$~(as the other probabilities are $0$). Then, noticing that for all \hbox{$(s_1,s_2)\in\left(\mathbb{R}^{2k}\right)^2$} the marginal of the measure~$\mu_{(s_1,s_2)}^{(E;J_0,M_0)}$ over each coordinate is the Lebesgue measure weighted by the function~$h$~(see~\eqref{eq:measure_elongation}), use Eq.~\eqref{eq:first_intermediate_points_in_[0,L-1]} to bound from below the integral with respect to $d\mu_{\left(x-\alpha_1,x-\alpha_2\right)}^{(E;J_0,M_0)}$ by~$\left(\frac{\Delta - \delta}{\Delta}\right)^{2k}$. Finally, integrate with respect to the probability measure $\mu^{(S;I)}$, and simplify $\frac{1}{\#(\mathcal{I}_k)}$ by~$\sum_{I\in\mathcal{I}_k}$. It comes
\begin{equation}\label{eq:second_intermediate_points_in_[0,L-1]}
\begin{aligned}
\mathdutchcal{K}(1_{I_y})(y) &= \frac{2}{\#\left(\mathcal{I}_k\right)} \sum_{I\in\mathcal{I}_{k}}\int_{(\alpha_1,\alpha_2)\in (\mathbb{R}_+^{2})^{2k}}\left[\int_{(\beta_1,\beta_2)\in (\mathbb{R}_+^{2})^{2k}}1_{\left\{x-\alpha_1+\beta_1\in[0,L]^{2k}\right\}} \right. \\
& \times d\mu_{\left(x-\alpha_1,x-\alpha_2\right)}^{(E;J_0,M_0)}(\beta_1,\beta_2)\bigg] d\mu^{(S;I)}(\alpha_1,\alpha_2) \geq 2\left(\frac{\Delta - \delta}{\Delta}\right)^{2k}.
\end{aligned}
\end{equation}
Now, let us consider $(w^{(1)},\hdots, w^{(k)}) \in (\mathbb{R}_+^{2k})^{k}$ such that $w^{(1)} \in I_x$ and such that for all \hbox{$i \in \llbracket1,k-1\rrbracket$:} $w^{(i+1)} \in I_{w^{(i)}}$. First, iterate the condition in the definition of the sets~$(I_y)_{y\in\mathbb{R}_+^{2k}}$. Then, use Eq.~\eqref{eq:condition_maximum_set_renew_second}, the fact that $x\in[0,L-\frac{\gamma}{2}\delta]^{2k}$, and the fact that~$L > \Delta$. We obtain that for all~\hbox{$i\in \llbracket1,2k\rrbracket$}
$$
\left(w^{(k)}\right)_i \leq \max\left(\left(w^{(k - 1)}\right)_i + \frac{\Delta}{L-\frac{\gamma}{2}\delta+1}, \Delta\right) \leq \hdots \leq \max\left(x + \frac{k\Delta}{L-\frac{\gamma}{2}\delta+1}, \Delta\right) \leq L.
$$
In particular, we have $w^{(k)} \in K_{\text{ren}}$. Using the latter, we get the following inequality:
$$
\left(\mathdutchcal{K}\right)^{k}(1_{K_{\text{ren}}})(x) \geq \int_{w^{(1)}\in I_x} \hdots \int_{w^{(k)}\in I_{w^{(k)}}} \mathdutchcal{K}(x,dw^{(1)}) \hdots \mathdutchcal{K}(w^{(k-1)},dw^{(k)}),
$$
for which we can easily obtain~\eqref{eq:step1_renewal_model1} by iterating~\eqref{eq:second_intermediate_points_in_[0,L-1]}.
\paragraph{Step $2$:}\hypertarget{paragraph:step2_renewal_model1}{} Now, we fix $x \in K_{\text{ren}}$. Our aim is to bound from below $(\mathdutchcal{K})^2\big(1_{[0, L - \frac{\gamma}{2}\delta]^{2k}}\big)(x)$.  Let us fix $(\alpha_1,\alpha'_1)\in \left([0,\delta]^{2k}\right)^{2}$ and $(\beta_1,\beta'_1)\in \left([0,\Delta]^{2k}\right)^{2}$. Assume that for all $i \in \llbracket1,2k\rrbracket$ either~\hbox{$(\alpha_1)_i \geq \gamma\delta$} and $(\alpha'_1)_i = 0$, or $(\alpha_1)_i = 0$ and $(\alpha'_1)_i \geq \gamma\delta$. Assume also that
\begin{itemize}[leftmargin=*]
\item  If $x_i - (\alpha_1)_i  < 0$, then  
$$
\delta \leq (\beta_1)_i \leq \Delta.
$$

\item  If $x_i - (\alpha_1)_i  \geq 0$, then
$$
0\leq (\beta_1)_i \leq \frac{\Delta}{x_i + 1}.
$$

\item If $x_i - (\alpha_1)_i + (\beta)_i -(\alpha'_1)_i < 0$, then 
$$
\delta \leq (\beta'_1)_i \leq \Delta.
$$

\item If $x_i - (\alpha_1)_i + (\beta)_i -(\alpha'_1)_i \geq 0$, then 
$$
0\leq (\beta'_1)_i \leq \frac{\Delta}{x_i - (\alpha_1)_i + (\beta)_i -(\alpha'_1)_i + 1}.
$$
This implies by the conditions on $\alpha_1$, $\alpha'_1$ and $\beta_1$ that 
$$
(\beta'_1)_i \leq \frac{\Delta}{x_i - \delta + 1}
$$
when $x_i \geq \delta$, so that
$$
(\beta'_1)_i \leq \frac{\Delta}{\max\left(x_i-\delta+1,1\right)}
$$ 
whatever the value of $x_i$ (as~$(\beta'_1)_i \leq \Delta$).
\end{itemize}
The above statements correspond to the conditions we have on $\alpha_1$, $\alpha'_1$, $\beta_1$ and $\beta'_1$  in the integrals used to develop $(\mathdutchcal{K})^2\big(1_{[0, L - \frac{\gamma}{2}\delta]^{2k}}\big)(x)$, see~\eqref{eq:second_intermediate_points_in_[0,L-1]} for the development of $\mathdutchcal{K}$. In view of the above, and Eq.~\eqref{eq:condition_maximum_set_renew_first} combined with the fact that $L \geq \max(L_0,L_1,L_2)$,
we have for all~$i\in \llbracket1,2k\rrbracket$ that 
$$
0 \leq x_i - (\alpha_1)_i + (\beta)_i -(\alpha'_1)_i + (\beta'_1)_i  \leq x_i - \gamma\delta + \frac{\Delta}{x_i + 1} + \frac{\Delta}{\max\left(x_i - \delta + 1,1\right)} \leq L - \frac{\gamma}{2}\delta.
$$
Then, it holds $x - \alpha_1 + \beta - \alpha'_1 + \beta'_1 \in [0, L - \frac{\gamma}{2}\delta]^{2k}$ when the above conditions are verified.  Now, we use the latter to bound from below~$(\mathdutchcal{K})^2\big(1_{[0, L - \frac{\gamma}{2}\delta]^{2k}}\big)(x)$. First, we develop the measure~$\mathdutchcal{K}$ two times, using the expression in the first line of~\eqref{eq:second_intermediate_points_in_[0,L-1]}, and only keep the pair of sets $(J_0,M_0)$ in the sum of the elements over~$\mathcal{J}_k$. This gives us a sum of elements over $(I,I')\in(\mathcal{I}_k)^2$. Then, we only keep the indices~$(I,I^c)$ where $I \in \mathcal{I}_k$ in this sum. Finally, we add indicators to restrict the domain of integration, such that the conditions implying that $x - \alpha_1 + \beta - \alpha'_1 + \beta'_1 \in [0, L - \frac{\gamma}{2}\delta]^{2k}$ hold. We get
\begin{equation}\label{eq:first_model_step2_intermediate}
\begin{aligned}
&(\mathdutchcal{K})^2\big(1_{[0, L - \frac{\gamma\delta}{2}]^{2k}}\big)(x) \geq \bigg(\frac{2}{\#(\mathcal{I}_k)}\bigg)^2 \sum_{I\in\mathcal{I}_{k}} \int_{(\alpha_1,\alpha_2)\in ([0,\delta]^{2k})^2}\int_{(\beta_1,\beta_2)\in ([0,\Delta]^{2k})^2}\bigg[\int_{(\alpha'_1,\alpha'_2)\in ([0,\delta]^{2k})^2} \\
&\times\left. \int_{(\beta'_1,\beta'_2)\in ([0,\Delta]^{2k})^2} 1_{\left\{\forall i \in I: (\alpha_1)_i \geq \gamma\delta, \forall j \in I^c: (\alpha'_1)_j \geq \gamma\delta\right\}}1_{\{ \forall i \in \llbracket1,2k\rrbracket \text{ s.t. } x_i - (\alpha_1)_i + (\beta)_i -(\alpha'_1)_i < 0:\, (\beta'_1)_i \geq \delta\}}\right.\\
&\times d\mu^{(S;I^c)}(\alpha'_1,\alpha'_2) d\mu_{\left(x-\alpha_1 + \beta_1 - \alpha'_1,x-\alpha_1 + \beta_1 - \alpha'_2\right)}^{(E;J_0,M_0)}(\beta'_1,\beta'_2) \Big]d\mu^{(S;I)}(\alpha_1,\alpha_2)  d\mu_{\left(x-\alpha_1,x-\alpha_2\right)}^{(E;J_0,M_0)}(\beta_1,\beta_2). 
\end{aligned}
\end{equation}
Now, as the measure $\mu_{(s_1,s_2)}^{(E;J_0,M_0)}$ over each coordinate is the Lebesgue measure weighted by the function~$h$~(see~\eqref{eq:measure_elongation}), use Eq.~\eqref{eq:first_intermediate_points_in_[0,L-1]} to bound from below the integrals with respect to the measures~$\left(\mu_{(s_1,s_2)}^{(E;J_0,M_0)}\right)_{(s_1,s_2)\in\mathbb{R}_+^{2k}}$ in~\eqref{eq:first_model_step2_intermediate} by~$\left(\frac{\Delta - \delta}{\Delta}\right)^{2k}$. Then, integrate with respect to the measures~$\mu^{(S;I)}$ and $\mu^{(S;I^c)}$. Each integral is equal to $\left(1 - \gamma\right)^k$. It comes in view of Lemma~\ref{lemma:cardinal_set_part}
$$
\begin{aligned}
(\mathdutchcal{K})^2\left(1_{[0, L - \frac{\gamma}{2}\delta]^{2k}}\right)(x) &\geq \left(\frac{2}{\#\left(\mathcal{I}_k\right)}\right)^2 \sum_{I\in\mathcal{I}_{k}}\left(\left(1-\gamma\right)^k\left(\frac{\Delta - \delta}{\Delta}\right)^{2k}\right)^2 \\
&= \frac{4}{2^k}\left(1-\gamma\right)^{2k}\left(\frac{\Delta - \delta}{\Delta}\right)^{4k}.
\end{aligned}
$$
The latter implies in view of~\eqref{eq:step1_renewal_model1} that 
$$
\begin{aligned}
(\mathdutchcal{K})^{G}(1_{K_{\text{ren}}})(x) &\geq (\mathdutchcal{K})^2\left(1_{[0, L - \frac{\gamma}{2}\delta]^{2k}}(.)(\mathdutchcal{K})^{k}(1_{K_{\text{ren}}})(.)\right)(x) \\
&\geq 4\left(1 - \gamma\right)^{2k}\left(\frac{\Delta - \delta}{\Delta}\right)^{2k^2 + 4k} = 1+\varepsilon_0.
\end{aligned}
$$
Thus, \hyperlink{paragraph:long_time_behaviour_S2.1}{$(S_{2.1})$} is verified and the model admits a stationary profile.

\subsection{A model with mutually independent lengthening probabilities}\label{subsect:model_telomere_shortening}

We now present a second illustrative model. Contrary to the previous model, the maximum lengthening values $\left(\Delta_x\right)_{x\in\mathbb{R}}$ are all equal in this model, while the lengthening probabilities $\left(p_{J,M}\right)_{(J,M)\in\mathcal{J}_k}$ depend on telomere lengths of the two daughter cells after shortening. Let us consider two constants $\Delta >\delta >0$ satisfying 
\begin{equation}\label{eq:assumption_Delta_delta_second_model}
\left(\frac{\Delta - \delta}{\Delta}\right)^{2k} > \frac{1}{2}.
\end{equation}
We also introduce another constant $a_0$, and a continuous function $q : \mathbb{R} \mapsto (0,1]$ such that~$\underset{x \rightarrow+\infty}{\lim} q(x)~=~0$ and
\begin{equation}\label{eq:indequality_q}
\underset{x\in[-\delta,0)}{\inf} q(x) \geq \left(\frac{\left(\frac{\Delta - \delta}{\Delta}\right)^{2k} + \frac{1}{2}}{2\left(\frac{\Delta - \delta}{\Delta}\right)^{2k}}\right)^{\frac{1}{8k}}.
\end{equation}
Our model in this section is the branching process constructed in Section~\ref{subsect:algorithm_model}, with:
\begin{itemize}[leftmargin=*]
\item For all $(x,a) \in \mathcal{X}$: $b(x,a) = a1_{\left\{a\geq a_0\right\}}$, 
\item For all $y\in\mathbb{R}_+$: $g(y) = \frac{1}{\delta}1_{[0,\delta]}(y)$,
\item For all $(x,y)\in\mathbb{R}\times\mathbb{R}_+$: $h(x,y) = \frac{1}{\Delta}1_{[0,\Delta]}(y)$,
\item For all $(x,y)\in\left(\mathbb{R}^{2k}\right)^2$, $(J,M)\in\mathcal{J}_k$,
$$
p_{J,M}(x,y) = \left(\prod_{j \in J}q(x_j)\right)\left(\prod_{m \in M}q(y_m)\right)\left(\prod_{j \notin J}\left(1-q(x_j)\right)\right)\left(\prod_{m \notin M}\left(1-q(y_m)\right)\right).
$$
\end{itemize}
Qualitatively, this choice for the function $p_{J,M}$ means that at each division, for each telomere of each daughter cell, if its length after shortening is $y\in\mathbb{R}$ (see Remark~\ref{rem:telomere_shortening_below_0}), then the probability that it is lengthened is $q(y)$. Conversely, the probability that it is not lengthened is $1 - q(y)$. In other words, the lengthening of each telomere is independent from the other telomeres. This way to model lengthening is used for example in \cite{benetos_stochastic_2024}.

The condition stated in \eqref{eq:assumption_Delta_delta_second_model} implies a significant difference between the maximum lengthening and shortening values. It has been experimentally deduced that this is the case, see~\cite{teixeira2004}, which is encouraging. For example, in the case of budding yeast cells, the maximum shortening value can be set as $\delta = 10$ and we have $k = 16$ chromosomes. Thus, according to \eqref{eq:assumption_Delta_delta_second_model}, we require $\Delta > 10/\left[1 - \left(\frac{1}{2}\right)^{\frac{1}{32}}\right] \simeq 466.68$ to be sure to observe a convergence towards a stationary profile. This a little too much, but not totally absurd as telomeres can be lengthened by more than $100$ nucleotides at a cell division, see~\cite{teixeira2004}. To compare, in the model presented in Section~\ref{subsect:model_renewal_several_generations}, for the same $k$ and $\delta$ we must have~$\Delta > 10/\left[1 - \left(\frac{1}{4}\right)^{\frac{1}{576}}\right] \simeq 4159.96$, which is much less realistic.

The condition stated in \eqref{eq:indequality_q} means that the probability that small telomeres are lengthened must be sufficiently large. When we have a large number of chromosomes, the condition implies that the probability must be very close to $1$. Again, the condition is not optimal, and can be refined by being more meticulous in our computations, or by choosing other distributions than uniform distributions for~$g$ and~$h$. 

Assumption \hyperlink{paragraph:long_time_behaviour_S1}{$(S_1)$} and \hyperlink{paragraph:long_time_behaviour_S3.1}{$(S_{3.1})$} are trivially verified for this model (we take $\underline{b} \equiv b \equiv \overline{b}$ for \hyperlink{paragraph:long_time_behaviour_S3.1}{$(S_{3.1})$}). Assumption \hyperlink{paragraph:long_time_behaviour_S2.1}{$(S_{2.1})$} is also verified, and the proof is given in the paragraph below. Using finally Propositions~\ref{prop:limsup_Lambda} and~\ref{prop:proba_lengthening_vanishes}, we conclude that all the assumptions of Theorem~\ref{te:main_result} are verified, so that there exists a stationary profile for this model.

\paragraph{Proof that \protect\hyperlink{paragraph:long_time_behaviour_S2.1}{$(S_{2.1})$} is verified.}  We begin with a preliminary result. As $\underset{y \rightarrow+\infty}{\lim} q(y) = 0$, and as by~\eqref{eq:assumption_Delta_delta_second_model} we have
$$
1 > \frac{\left(\frac{\Delta - \delta}{\Delta}\right)^{2k} + \frac{1}{2}}{2\left(\frac{\Delta - \delta}{\Delta}\right)^{2k}}, 
$$
there exists $B_{\max} > 2\Delta $ such that
\begin{equation}\label{eq:inequality_large_model}
\forall y >  B_{\max} - \Delta\,:\,1 - q(y) \geq \left(\frac{\left(\frac{\Delta - \delta}{\Delta}\right)^{2k} + \frac{1}{2}}{2\left(\frac{\Delta - \delta}{\Delta}\right)^{2k}}\right)^{\frac{1}{8k}}.
\end{equation}
Our aim is to prove that \hyperlink{paragraph:long_time_behaviour_S2.1}{$(S_{2.1})$} is verified with  $K_{\text{ren}} = [0,B_{\max}]^{2k}$. To do so,  we first obtain an auxiliary inequality. For all $(x,y) \in \left([-\delta, B_{\max}]^{2k}\right)^2$, we consider $\mathcal{A}(x,y) \subset \mathcal{J}_k$ such that
\begin{equation}\label{eq:condition_lengthening_model}
\begin{aligned}
&\mathcal{A}(x,y) = \Big\{(J,M)\in \mathcal{J}_k\hspace{0.54mm}\big|\hspace{0.54mm} \left\{i \in \llbracket1,2k\rrbracket\hspace{0.47mm}|\hspace{0.47mm} x_i \in[-\delta,0)\right\} \subset J\text{ and }\left\{i \in \llbracket1,2k\rrbracket\hspace{0.47mm}|\hspace{0.47mm} y_i \in[-\delta,0)\right\} \subset M,\\
& \hspace{40mm}\text{ and }\left\{i \in \llbracket1,2k\rrbracket\,|\, x_i \in(B_{\max} - \Delta, B_{\max}]\right\} \cap J = \emptyset, \\
&\hspace{40mm} \text{ and }\left\{i \in \llbracket1,2k\rrbracket\,|\, y_i \in(B_{\max} - \Delta, B_{\max}]\right\} \cap M = \emptyset \Big\}.
\end{aligned}
\end{equation}
When $k = 1$, if we denote $\left(x^*,y^*\right) = \left(\left(-\frac{\delta}{2},\frac{B_{\max}}{2}\right),\left(B_{\max} - \frac{\Delta}{2}, B_{\max} - \frac{\Delta}{2}\right)\right)$, then we have 
$$
\mathcal{A}(x^*,y^*) =  \left\{(\left\{1,2\right\},\emptyset),\left(\{1\},\emptyset\right)\right\}.
$$
In particular, from the definition of $p_{J,M}$, the following holds:
$$
\begin{aligned}
\sum\nolimits_{(J,M) \in \mathcal{A}(x^*,y^*)} p_{J,M}(x^*,y^*) &= q(x^*_1)q(x^*_2)(1-q(y^*_1))(1-q(y^*_2)) \\
&+ q(x^*_1)(1-q(x^*_2))(1-q(y^*_1))(1-q(y^*_2)) \\
&= q(x^*_1)(1-q(y^*_1))(1-q(y^*_2)).
\end{aligned}
$$
In other terms, $\sum_{(J,M) \in \mathcal{A}(x^*,y^*)} p_{J,M}(x^*,y^*)$ can be seen as a product of functions $q$ for the coordinates in $[-\delta,0)$, and functions $1-q$ for the coordinates in $(B_{\max} - \Delta,B_{\max}]$. In fact, this equality can be generalised for all $k \in \mathbb{N}^*$ and $(x,y) \in \left([-\delta, B_{\max}]^{2k}\right)^2$. The reason is that in view of the conditions given in \eqref{eq:condition_lengthening_model}, for all $(J,M)\in\mathcal{A}(x,y)$, the following holds:
\begin{itemize}[leftmargin=*]
\item For all $i \in \llbracket1, 2k\rrbracket$ such that $x_i \in [0,B_{\max} - \Delta]$, if $i\in J$, then $(J\backslash\{i\},M)\in\mathcal{A}(x,y)$. Conversely, if $i\notin J$, then $(J\cup\{i\},M)\in\mathcal{A}(x,y)$.
\item For all $i \in \llbracket1, 2k\rrbracket$ such that $y_i \in [0,B_{\max} - \Delta]$, if $i\in M$, then $(J,M\backslash\{i\})\in\mathcal{A}(x,y)$. Conversely, if $i\notin M$, then $(J,M\cup\{i\})\in\mathcal{A}(x,y)$.
\end{itemize}
Hence, if we sum all the $(J,M)\in\mathcal{A}(x,y)$, then we will always have a term $q$ and $1-q$ that will sum to give~$1$ for the coordinates in $[0,B_{\max} - \Delta]$. The latter, combined with the inequalities given in \eqref{eq:indequality_q} and~\eqref{eq:inequality_large_model} yields for all $(x,y) \in \left([-\delta, B_{\max}]^{2k}\right)^2$ that
\begin{equation}\label{eq:inequality_model_probabilities}
\begin{aligned}
\sum_{(J,M)\in\mathcal{A}(x,y)} p_{J,M}(x,y) &= \left(\prod_{\substack{i \in \llbracket1,2k \rrbracket\\ x_i \in [-\delta,0)}} q(x_i) \right) \left(\prod_{\substack{i\in\llbracket1,2k\rrbracket\\x_i\in(B_{\max} - \Delta, B_{\max}]}}\left(1-q(x_j)\right)\right) \\
&\times\left(\prod_{\substack{i \in \llbracket1,2k \rrbracket\\ y_i \in [-\delta,0)}} q(y_i) \right)\left(\prod_{\substack{i\in\llbracket1,2k\rrbracket\\y_i\in(B_{\max} - \Delta, B_{\max}]}}\left(1-q(y_j)\right)\right) \geq \frac{\left(\frac{\Delta - \delta}{\Delta}\right)^{2k} + \frac{1}{2}}{2\left(\frac{\Delta - \delta}{\Delta}\right)^{2k}}. 
\end{aligned}
\end{equation}
Now, we are able to verify \hyperlink{paragraph:long_time_behaviour_S2.1}{$(S_{2.1})$}. We denote $(x,y)\in\left([-\delta, B_{\max}]^{2k}\right)^2$ the telomere lengths, after shortening but before lengthening, of two daughter cells of a cell that divides. If the two following hold:
\begin{itemize}[leftmargin=*]
\item The coordinates where there is a lengthening are indexed by a pair \hbox{$(J,M)\in\mathcal{A}(x,y)$}, 
\item The coordinates $i\in \llbracket1,2k\rrbracket$ such that $x_i < 0$ or $y_i < 0$, are lengthened by a value greater than~$\delta$,
\end{itemize}
we easily have that the telomere lengths of the daughter cells stay in $[0,B_{\max}]^{2k} = K_{\text{ren}}$ after lengthening. In view of the two points above, in \eqref{eq:renewal_equation_below}, we do the following steps:
\begin{enumerate}[leftmargin=*]
\item We bound from below the sum of the pairs $(J,M)$ over $\mathcal{J}_k$ by a sum of $(J,M)$ over $\mathcal{A}(x-\alpha_1,y-\alpha_1)$, and apply~\eqref{eq:inequality_model_probabilities}.

\item  We restrict the integrals for the measures $h((\beta_1)_i)d(\beta_1)_i$ or $h((\beta_2)_i)d(\beta_2)_i$ (see~\eqref{eq:measure_elongation}) on the interval~$[\delta,\Delta]$ for the coordinates where there is a shortening. The function~$1_{\{x-\alpha_1+\beta_1\in K_{\text{ren}}\}}$ is equal to $1$ on this domain of integration by the two points presented above. 

\item We integrate in~$\beta$.  As there are~$2k$ coordinates in the two daughter cells that are shortened at each division, see~\eqref{eq:set_combination_shortening} and~ \eqref{eq:measure_shortening}, and as~$h$ is the uniform distribution, the integral is bounded from below by $\left(\frac{\Delta - \delta}{\Delta}\right)^{2k}$.

\item  We integrate in~$\alpha$, using the fact that the measures $\left(\mu^{(S;I)}\right)_{I\in\mathcal{I}_k}$ are probability measures, and simplify~$\frac{1}{\#(\mathcal{I}_k)}$ by~$\sum_{I \in \mathcal{I}_k}$. 
\end{enumerate}
It comes
\begin{equation}\label{eq:renewal_equation_below}
\begin{aligned}
\mathdutchcal{K}\left(1_{K_{\text{ren}}}\right)(x) &= \frac{2}{\#\left(\mathcal{I}_k\right)}\sum_{I\in\mathcal{I}_k} \sum_{(J,M)\in \mathcal{J}_{k}}\int_{(\alpha_1,\alpha_2)\in (\mathbb{R}_+^{2k})^2} p_{J,M}\left(x - \alpha_1,x - \alpha_2\right)\\ 
&\left[\int_{(\beta_1,\beta_2)\in (\mathbb{R}_+^{2k})^2}1_{\{x-\alpha_1+\beta_1\in K_{\text{ren}}\}} d\mu_{(x-\alpha_1,x-\alpha_2)}^{(E;J,M)}(\beta_1,\beta_2)\right] d\mu^{(S;I)}(\alpha_1,\alpha_2) \\
&\geq 2 \left(\frac{\Delta - \delta}{\Delta}\right)^{2k}\frac{\left(\frac{\Delta - \delta}{\Delta}\right)^{2k} + \frac{1}{2}}{2\left(\frac{\Delta - \delta}{\Delta}\right)^{2k}} = \left(\frac{\Delta - \delta}{\Delta}\right)^{2k} + \frac{1}{2}.
\end{aligned}
\end{equation}
Then, as $\left(\frac{\Delta - \delta}{\Delta}\right)^{2k} + \frac{1}{2} > 1$ by~\eqref{eq:assumption_Delta_delta_second_model}, we obtain that \hyperlink{paragraph:long_time_behaviour_S2.1}{$(S_{2.1})$} is verified  with $G = 1$ and $\varepsilon_0 = \left(\frac{\Delta - \delta}{\Delta}\right)^{2k} - \frac{1}{2}$.

\section{Discussion}\label{sect:discussion}
% by $\alpha$
This work enabled us to establish the convergence of the semigroup of our model towards a stationary profile, at an exponential rate. Our proof allowed us to obtain a precise lower bound on the principal eigenvalue of the semigroup $\lambda$, and provided qualitative information about the form of the stationary profile, \hbox{see~\eqref{eq:form_stationary_profile}-\eqref{eq:equation_N_0}}. We have also been able to establish very general conditions under which our assumptions, and in particular~\hyperlink{paragraph:long_time_behaviour_S2.2}{$(S_{2.2})$} and~\hyperlink{paragraph:long_time_behaviour_S3}{$(S_{3})$} are verified, see~Section~\ref{subsubsect:general_criteria_lyapunov}. Our results are thus very satisfying. There are however still different ways to improve them.%, that we now present

%Obtaining better insight into the influence of age on the convergence rate towards a stationary profile is thus one of the first directions for improving our~results.
The first concerns the influence of the age on the convergence rate towards a stationary profile (see the term before the exponential in~\eqref{eq:main_result}). Indeed, if we work with~$b(x,a) = a$ for all $a \geq 0$, then we have from~\eqref{eq:defintion_all_distortion_functions} and the fact that~\eqref{eq:birth_rate_assumption} is verified with $d_b = 1$ that
$$
\Psi = \left\{\psi \in  M_b^{loc}\left(\mathcal{X}\right)\,|\,\exists \,d_{\psi}\in\mathbb{N},\,d_{\psi}\geq 1 \text{ s.t. } \forall (x,a)\in\mathcal{X}:\,\psi(x,a) = (1+a^{d_{\psi}})\mathdutchcal{V}(x) \right\}.
$$	
This implies that for all $t \geq 0$, $(x,a) \in \mathcal{X}$, $A \in \mathcal{B}\left(\mathcal{X}\right)$ and $d_\psi \geq 1$, we have by computing~\eqref{eq:main_result} for~$\mu = \delta_{(x,a)}$, $\psi(x,a) = \left(1+a^{d_{\psi}}\right)\mathdutchcal{V}(x)$ and $f = \frac{1_A}{\left|\left|1_A\right|\right|_{\mathdutchcal{B}(\psi)}}$ that
\begin{equation}\label{eq:convergence_rate_discussion}
\left|e^{-\lambda t}M_t\left(1_A\right)(x,a)  - \phi(x,a)\int_{(y,s)\in A} N(y,s)dy ds\right| \leq C\left|\left|1_A\right|\right|_{\mathdutchcal{B}(\psi)}\left(1+a^{d_{\psi}}\right)\mathdutchcal{V}(x)e^{-\omega t}.
\end{equation}
The bound in the right-hand side of~\eqref{eq:convergence_rate_discussion} increases polynomially with the age of the initial condition. This bound does not seem optimal at all. Indeed, if we start from a very large age, then the initial cell will divide almost immediately into two daughter cells of age $0$, since the birth rate explodes when~$a\rightarrow+\infty$. The convergence rate presented in~\eqref{eq:convergence_rate_discussion} should therefore be very close to that of an initial condition starting from two cells of age $0$, which is not the case here. %Obtaining better insight into the influence of age on the convergence rate towards a stationary profile is thus one of the first directions for improving our~results. %As for the influence of the initial telomere lengths, it seems to be qualitative, given the very sharp conditions we managed to derive in order to construct our Lyapunov function.}

The second way to improve our work would be to obtain a law of large numbers result from Theorem~\ref{te:main_result}, similar to the one established in~\cite[Theorem~$1.4$]{tomasevic2022}, stating that for all $\psi\in\Psi$ and~$f\in\mathdutchcal{B}(\psi)$, 
\begin{equation}\label{eq:law_large_numbers}
\lim_{t\rightarrow+\infty}\frac{Z_t(f)}{Z_t(1)} = \int_{(y,s)\in\mathcal{X}}f(y,s) N(y,s)dy ds,
\end{equation}
where the limit holds in probability or almost surely. 
The motivation behind this is that Theorem~\ref{te:main_result} only gives information on the limit of~$e^{-\lambda t} \mathbb{E}\left[Z_t(f)\right]$ when $t\rightarrow+\infty$. However, in practice, $\lambda$ and $\mathbb{E}\left[Z_t(f)\right]$ are never observed, contrary to $Z_t(f)$ and~$Z_t(1)$. The result presented in~\eqref{eq:law_large_numbers} seems, therefore, more suitable for practical questions. We believe it can be derived by proceeding as in the proof of \hbox{\cite[Theorem 1.4]{tomasevic2022}}. However, we note that our framework is different since~\cite{tomasevic2022} studies a growth-fragmentation equation and not a jump process. This difference of setting should not play a major role in the proof, but this remains to be~verified.

Another way to improve this work would be to add complexity when modelling senescence and death.
%to our model, in order to be more relevant from a biological point of view.
Namely, from a biological point of view, dead cells and senescent cells are completely different. Including cell death in our model seems to be one of the first steps to take, especially to have a growth rate~$\lambda$ closer to the reality. Moreover, it has been identified in a recent study that there are two types of senescent cells~\cite{xu_two_2015}. The difference between these two types is that for one of them, cells enter in senescence after a succession of abnormally long cell cycles and normal cell cycles. For the other type, cells enter in senescence without doing any abnormally long cell cycle. Making a distinction between these two mechanisms of senescence would allow
us to improve the biological relevance of our work.

Finally, a new question motivated by biological studies can be raised: Under which conditions the marginals of the stationary profile of our model over each coordinate are, at least approximately, independent and identically distributed~? In other words, recalling the function $N_0~\in~L^1(\Psi)$ in the expression for the stationary profile, see Theorem~\ref{te:main_result}, under which conditions we have the existence of $n_0\in L^1(\mathbb{R}_+)$, such that for all~$x\in\mathbb{R}_+^{2k}$
$$
N_0(x) \approx \prod_{i = 1}^{2k} n_0(x_i).
$$
The reason is that this approximation is made in certain biological/biomathematical studies of this phenomenon~\cite{bourgeron_2015,Martin2021,rat_individual_2023,Xu2013}. Thus, obtaining theoretical guarantees for it will further improve the rigour behind these studies. It should be noted that telomeres on the same chromosome (at coordinates $i$ and $i +k$) are not independent during shortening: both telomeres cannot be shortened simultaneously. Consequently, it seems not trivial to justify that such an approximation is possible, and further numerical/mathematical studies must be done to obtain a good justification. This question is the subject of a future work.
\appendix
\section{Proof of the statements given in Section \ref{sect:auxiliary_process}}\label{appendix:proof_auxiliary}
\hypertarget{appendix:everything}{}
\subsection{Proof of Lemma \ref{lemma:inequality_exponential}}\label{subsect:proof_inequality_exponential}
Let $T>0$, and $(x,a)\in\mathcal{X}$. We suppose that $Y_0 = \delta_{(1,x,a)}$. For all $p\in\mathbb{N}^*$, we consider $h_p\in \mathcal{C}_b^{1,m,m,1}\left(\mathbb{R}_+\times\mathcal{U}\times\mathcal{X}\right)$ such that for all $(s,\mathdutchcal{u},x,a)\in \mathbb{R}_+\times\mathcal{U}\times\mathcal{X}$
$$
\begin{aligned}
h_p(s,\mathdutchcal{u},x,a) &= \left[\mathdutchcal{V}(x)1_{\{x\in[0,p]^{2k}\}} + \mathdutchcal{V}_{\min}1_{\{x\notin[0,p]^{2k}\}}\right]\bigg[e^a1_{\{a\in[0,p]\}}\\ 
&+ \left(e^p + e^p\frac{2\sin\left(\frac{\pi}{2}(a-p)\right)}{\pi}\right)1_{\{a\in(p,p+1)\}} +\left(e^p + \frac{2e^p}{\pi}\right)1_{a\geq p+1}\bigg].
\end{aligned}
$$
It is easy to check that $(h_p)_{p\geq0}$ is an increasing sequence in $\mathcal{C}_b^{1,m,m,1}\left(\mathbb{R}_+\times\mathcal{U}\times\mathcal{X}\right)$ that converges pointwise to $e^a\mathdutchcal{V}$. First, we apply \eqref{eq:SDE_model_intro}, \eqref{eq:uniform_random_variables_assumption}, \eqref{eq:kernel_branching}, and \eqref{eq:upperbound_fullkernel} to obtain
$$
\begin{aligned}
\mathbb{E}\left[ \sum_{\mathdutchcal{u}\in\,V_{T}} h_p(s,\mathdutchcal{u},x^\mathdutchcal{u},a_t^\mathdutchcal{u}) \right] &\leq \psi_e(x,a) + \mathbb{E}\bigg[\int_{s\in[0,T]}\sum_{\mathdutchcal{u}\in\,V_{s}}
\frac{\partial h_p}{\partial a}(s,\mathdutchcal{u},x^\mathdutchcal{u},a^\mathdutchcal{u}_{s})ds\bigg] \\
&+ 2(1+\varepsilon_1)\mathbb{E}\bigg[\int_{s\in[0,T]}\sum_{\mathdutchcal{u}\in\,V_{s-}}
b\left(x^\mathdutchcal{u},a_{s-}^\mathdutchcal{u}\right)\mathdutchcal{V}(x^\mathdutchcal{u})ds\bigg]. \\ 
\end{aligned}
$$
Then, we use the inequality 
$$
\left|\frac{\partial h_p}{\partial a}\right| \leq \psi_e,
$$
Eq.~\eqref{eq:birth_rate_assumption}, and Tonelli's theorem to have
$$
\begin{aligned}
\mathbb{E}\left[ \sum_{\mathdutchcal{u}\in\,V_{T}} h_p(s,\mathdutchcal{u},x^\mathdutchcal{u},a_t^\mathdutchcal{u}) \right] &\leq  \psi_e(x,a) + \int_{s\in[0,T]}\mathbb{E}\left[\sum_{\mathdutchcal{u}\in\,V_{s}}
\psi_e(x^\mathdutchcal{u},a_s^\mathdutchcal{u})\right]ds \\ 
&+2(1+\varepsilon_1)\tilde{b}\int_{s\in[0,T]}\mathbb{E}\left[\sum_{\mathdutchcal{u}\in\,V_{s-}}
\mathdutchcal{V}(x^\mathdutchcal{u})\left(1+(a_{s-}^\mathdutchcal{u})^{d_b}\right)\right] ds. \\
\end{aligned}
$$
Finally, we use the fact that there exists $C_{d_b} >0$ such that $1+a^{d_b} \leq C_{d_b}e^a$, and we let $p$ go to infinity by using the monotone convergence theorem to get
$$
M_T(\psi_e)(x,a) \leq \psi_e(x,a) + (1+2\tilde{b}(1+\varepsilon_1)C_{d_b})\int_{s\in[0,T]} M_s(\psi_e)(x,a) ds.
$$
Thus, by applying Gronwall's lemma, the lemma is proved.
\qed
\subsection{Proof of Lemma \ref{lemma:well_definition_semigroup}}\label{subsect:proof_well_definition_semigroup}

\begin{enumerate}[leftmargin=0.75cm]
\item This statement is immediate thanks to Lemma \ref{lemma:inequality_exponential} and the fact that for all function~\hbox{$f\in\cup_{\psi\in\Psi}\mathdutchcal{B}(\psi)$}, there exists $C_f >0$ such that $\left|f(x,a)\right| \leq C_f\mathdutchcal{V}(x)e^a$ for all \hbox{$(x,a)\in\mathcal{X}$}.

\item The semigroup property is a direct consequence of the branching Markov property. The positivity of this semigroup is trivial. To obtain the equation of the semigroup, first condition with respect to the first event of division in Eq.~\eqref{eq:SDE_model_intro} and use the strong Markov property. Then, apply~\eqref{eq:uniform_random_variables_assumption} and~\eqref{eq:kernel_branching}. It comes for all $t\geq 0$, $(x,a)\in\mathcal{X}$ and $f\in\cup_{\psi\in\Psi}\mathdutchcal{B}(\psi)$
$$
\begin{aligned}
M_t f(x,a) &= f(x,a+t)\exp\left(-\int_a^{a+t}b(x,u)du\right) + \int_0^tb(x,a+s)\\
&\times\exp\left(-\int_a^{a+s}b(x,u)du\right)\left[\int_{y\in\mathbb{R}_+^{2k}} M_{t-s}(f)(y,0) 1_{\{x+w_1\in\mathbb{R}_+^{2k}\}}\mathdutchcal{K}(dy,x)\right]ds.
\end{aligned}
$$
In view of Remark \ref{rem:equality_kernel}, it follows directly that the lemma is proved from the above equation. \qed
\end{enumerate}

\subsection{Proof of Lemma \ref{lemma:inequalities_psi}}\label{subsect:proof_inequalities_psi}
Let $\psi\in\Psi$. We prove each inequality one by one.
\begin{enumerate}[$i)$, leftmargin=0.75cm]
\item First, as for all $(x,a)\in \mathcal{X}$: $\psi(x,a) = \mathdutchcal{V}(x)\left(1+a^{d_{\psi}}\right)$, we easily have that
$$
b(x,a)\frac{\mathdutchcal{K}\left(\mathdutchcal{V}\right)(x)}{\psi(x,a)} - \left[b(x,a) - \frac{\frac{\partial}{\partial a}\psi(x,a)}{\psi(x,a)}\right] = b(x,a)\bigg[\frac{\mathdutchcal{K}\left(\mathdutchcal{V}\right)(x)}{\mathdutchcal{V}(x)(1+a^{d_{\psi}})} - 1\bigg] + \frac{d_{\psi}a^{d_{\psi} - 1}}{1+a^{d_{\psi}}}.
$$
It is well-known that there exists a constant $C_{\psi} >0$ such that for all $a\in\mathbb{R}_+$,
\begin{equation}\label{eq:proof_lemma_inequalities_psi_intermediate_first}
\frac{d_{\psi}a^{d_{\psi} - 1}}{1+a^{d_{\psi}}} \leq C_{\psi}.
\end{equation}
Then, using \eqref{eq:birth_rate_assumption}, \eqref{eq:upperbound_fullkernel}  and~\eqref{eq:proof_lemma_inequalities_psi_intermediate_first}, we obtain
$$
b(x,a)\frac{\mathdutchcal{K}\left(\mathdutchcal{V}\right)(x)}{\psi(x,a)} - \left[b(x,a) - \frac{\frac{\partial}{\partial a}\psi(x,a)}{\psi(x,a)}\right] \leq  2\tilde{b}(1+\varepsilon_1)\frac{1+a^{d_b}}{1+a^{d_{\psi}}} + C_{\psi}. 
$$
As $d_{\psi}\geq d_b$ (see \eqref{eq:defintion_all_distortion_functions}), there exists $C'_{\psi}$ such that for all $a\geq 0$
\begin{equation}\label{eq:proof_lemma_inequalities_psi_intermediate_second}
\frac{1+a^{d_b}}{1+a^{d_{\psi}}} \leq C'_{\psi}.
\end{equation} 
Thus, the first inequality of the lemma is proved for $\lambda_{\psi} > 2\tilde{b}(1+\varepsilon_1)C'_{\psi} + C_{\psi}$.

\item The proof of the second statement is very similar to the proof of the first one. Let us fix~$(x,a)\in\mathcal{X}$. First, we apply Eq.~\eqref{eq:birth_rate_assumption} and Eq.~\eqref{eq:proof_lemma_inequalities_psi_intermediate_first}. Then, we use Eq.~\eqref{eq:proof_lemma_inequalities_psi_intermediate_second}. It comes
$$
2\frac{b(x,a)}{\psi(x,a)} - \left[b(x,a) - \frac{\frac{\partial}{\partial a}\psi(x,a)}{\psi(x,a)}\right] \leq  \frac{2\tilde{b}(1+a^{d_b})}{\mathdutchcal{V}(x)(1+a^{d_{\psi}})} + C_{\psi} \leq \frac{2\tilde{b}C'_{\psi}}{\mathdutchcal{V}(x)} + C_{\psi}.
$$
Using the third statement of \hyperlink{paragraph:long_time_behaviour_S2.2}{$(S_{2.2})$}, we now conclude that the first and second statements of this lemma are true for 
$$
\lambda_{\psi} > \max\left[2\tilde{b}C'_{\psi}(1+\varepsilon_1) + C_{\psi}, \frac{2\tilde{b}C'_{\psi}}{\mathdutchcal{V}_{\min}} + C_{\psi}\right].
$$

\item For all $(x,a)\in\mathcal{X}$, $s\geq0$,
$$
\frac{\psi(x,s+a)}{\psi(x,s)} = \frac{1+(a+s)^{d_{\psi}}}{1+ s^{d_{\psi}}} =  1 + \frac{(a+s)^{d_{\psi}} - s^{d_{\psi}}}{1+ s^{d_{\psi}}} \leq  1 + \sum_{j = 1}^{d_{\psi}} \binom{d_{\psi}}{j}\frac{s^{d_{\psi}-j}}{1+ s^{d_{\psi}}}a^j.
$$
As $s\mapsto s^{d_{\psi}-j}\left(1+s^{d_{\psi}}\right)^{-1}$ is bounded when $j\in\llbracket1,d_{\psi}\rrbracket$, we easily obtain from this inequality that the third statement of the lemma is true, which concludes the proof of the lemma.
\end{enumerate}
\qed 
\subsection{Proof of Lemma \ref{lemma:unique_solution_equation_semigroup}}\label{subsect:proof_unique_solution_equation_semigroup}
The proof is based on a fixed point argument. Let $\tilde{T} >0$. We endow the set $M_b\big([0,\tilde{T}]\times\mathcal{X}\big) $ with the supremum norm. By Remark~\ref{rem:equality_kernel} and \eqref{eq:upperbound_fullkernel}, we have that for all~\hbox{$(t,x,a)\in[0,\tilde{T}]\times\mathcal{X}$}, and $(g_1,g_2)\in\Big(M_b\big([0,\tilde{T}]\times\mathcal{X}\big)\Big)^2$,
\begin{equation}\label{eq:proof_semigroup_uniqueness}
\begin{aligned}
\left|\Gamma_f(g_1)(t,x,a) - \Gamma_f(g_2)(t,x,a)\right| &\leq 2(1+\varepsilon_1)||g_1-g_2||_{\infty}\mathdutchcal{V}(x)\int_0^t\frac{b(x,a+s)}{\psi(x,a)}\\
&\times\exp\left(-\int_a^{a+s}b(x,u)du -\lambda_{\psi}s\right) ds.  
\end{aligned}
\end{equation}
In addition, by \eqref{eq:birth_rate_assumption}, the fact that $d_{\psi} \geq d_b$, and then the third statement of Lemma~\ref{lemma:inequalities_psi}, there exists~$C_{\psi} > 0$ such that for all $(x,a,s)\in\mathcal{X}\times\mathbb{R}_+$ it holds 
$$
\frac{b(x,a+s)}{\psi(x,a)} \leq \frac{\tilde{b}\left(1+(a+s)\right)^{d_{\psi}}}{\mathdutchcal{V}(x)(1+a^{d_{\psi}})} \leq \frac{\tilde{b}C_{\psi}\times\overline{\psi}\times(1+s^{d_{\psi}})}{\mathdutchcal{V}(x)}.
$$
Plugging this inequality in~\eqref{eq:proof_semigroup_uniqueness} yields
$$
\left|\left|\Gamma_f(g_1)- \Gamma_f(g_2)\right|\right| _{\infty} \leq 2\tilde{b}C_{\psi}\times\overline{\psi}\times(1+\varepsilon_1)||g_1-g_2||_{\infty}\int_0^{\tilde{T}}(1+s^{d_{\psi}}) ds.
$$
Choosing $\tilde{T}$ such that $\int_0^{\tilde{T}}(1+s^{d_{\psi}}) ds < \left[2\tilde{b}C_{\psi}\times\overline{\psi}\times(1+\varepsilon_1)\right]^{-1}$, we have by the Banach fixed point theorem that there exists a unique solution $\overline{f}$ to the equation $\Gamma_f(\overline{f}) = \overline{f}$. Then, as usual, iterating this procedure on $[\tilde{T},2\tilde{T}]$, $[2\tilde{T},3\tilde{T}]$ etc... yields that for all $T > 0$, we have a unique solution to $\Gamma(\overline{f}) = \overline{f}$ in~$M_b([0,T]\times\mathcal{X})$. The lemma is thus proved. \qed 

\section{Proof of Proposition~\ref{prop:path_type}}\label{sect:proof_prop_path_type}
By~\cite[Theorem $\left(27.8\right)$]{davis1993}, we have that $\left(X_t\right)_{t\geq0}$ corresponds to a Borel right process relative to its augmented filtration, which is a large class of right-continuous Markov processes defined in~\hbox{\cite[$\left(27.7\right)$]{davis1993}}. More generally, $\left(X_t\right)_{t\geq0}$ corresponds to a right process relative to its augmented filtration, a class of right-continuous Markov processes defined in~\hbox{\cite[Def.~$\left(8.1\right)$]{sharpe1988}} that contains Borel right processes (see~\hbox{\cite[$\left(20.6\right)$]{sharpe1988}}). In addition, by \hbox{\cite[Theorem $(18.1)$]{sharpe1988}}, we have that there exists $\Omega'_1 \subset \Omega_1$ such that $\Omega\backslash\Omega_1'$ is negligible, and such that for all $w\in \Omega'_1$, $X_t(\omega)$ is right-continuous in the Ray topology, a topology defined in~\cite[p.~$91$]{sharpe1988}. From these two points, if we introduce
$$
\begin{aligned}
\Omega_2 = 	\left\{w:\mathbb{R}_+ \rightarrow \mathbb{X}\cup\{\partial\}\right. \,|\, &w \text{ is right-continuous in the original topology of $\mathbb{X}\cup\{\partial\}$} \\ 
&\text{and the Ray topology, and verifies}\\
&\left.  \forall(s,t)\in\mathbb{R}_+^2:\,w(s) = \partial \text{ and }s\leq t \implies w(t) = \partial \right\},
\end{aligned}
$$
then we can define a map $\Phi : \Omega'_1 \rightarrow \Omega_2$ and a collection of functions $\big(\Tilde{X}_t\big)_{t \geq 0}$ from $\Omega_2$ to~$\mathbb{X}\cup\{\partial\}$, such that for every $\omega \in \Omega'_1$ and every $t \geq 0$,
$$
\Tilde{X}_t\left(\Phi(\omega)\right) = X_t\left(\omega\right).
$$
If we denote for all $x\in\mathbb{X}\cup\{\partial\}$ the measure $\Tilde{\mathbb{P}}_x = \mathbb{P}_x \circ \Phi^{-1}$ on $\left(\Omega_2,\mathcal{A}_2\right)$, then we have following the proof of~\hbox{\cite[\hbox{Prop.~$19.6$ - $(ii)$}]{sharpe1988}} that under $\big(\Tilde{\mathbb{P}}_x\big)_{x\in\mathbb{X}\cup\{\partial\}}$, $\big(\Tilde{X}_t\big)_{t\geq0}$ is a Markov process whose semigroup is the same as the one of $\left(X_t\right)_{t\geq0}$. In addition, in view of the fact that $\big(\Tilde{X}_t\big)_{t\geq0}$ is a right process relative to its augmented filtration by~\cite[Theorem~$(19.7)$]{sharpe1988}, we have that~$\big(\Tilde{X}_t\big)_{t\geq0}$ satisfies the strong Markov property with respect to its augmented filtration by~\cite[Theorem $(7.4)$]{sharpe1988}. One can get that this augmented filtration is right-continuous by applying \hbox{\cite[Corollary~$(7.7)$]{sharpe1988}}. We thus only have to prove that $\Omega_2$ is of path type. We do this in the following paragraph.

First, the points \cite[Def. $(23.10)$ - $(iii)$,$(iv)$,$(v)$]{sharpe1988} are almost direct to verify. Indeed, we can take as a shift operator (see~\cite[$\left(2.4\right)$]{sharpe1988}) the operator $\theta_t\big(\left(w_s\right)_{s\geq0}\big) = \left(w_{t+s}\right)_{s\geq0}$ for all $t \geq 0$ and $\left(w_s\right)_{s\geq0} \in \Omega_2$, and then obtain the existence of the three other operators from \hbox{\cite[p.~$63$, p.~$110$, p.~$118$]{sharpe1988}}. In addition, the points \cite[Def. $(23.10)$ - $(ii)$,$(vi)$,$(vii)$]{sharpe1988} are trivial to verify from the definition of our sample space~$\Omega_2$. It thus only remains to prove that~\hbox{\cite[Def. $(23.10)$ - $(i)$]{sharpe1988}} is verified for $\Omega_2$, and we will have that it corresponds a sample space of path type. This consists in proving that for all $\omega\in \Omega$, the trajectory~$\left(\Tilde{X}_t(\omega)\right)_{t\geq0}$ is right-continuous in both the original and the Ray topology of~$\mathbb{X}\cup\{\partial\}$, and has left limits in a Ray compactification of $\mathbb{X}\cup\{\partial\}$, see~\hbox{\cite[p.~$91$]{sharpe1988}} for the definition of Ray compactification. To prove these two properties, recall that in view of \cite[Theorem~$(19.7)$]{sharpe1988}, $\big(\Tilde{X}_t\big)_{t\geq0}$~is a right process relative to its augmented filtration. Then, in view of~\hbox{\cite[Theorem $(18.1)$]{sharpe1988}} and the fact that a right process is right-continuous, we have that the two properties of~\cite[Def. $(23.10)$ - $(i)$]{sharpe1988} presented above are verified. We thus conclude that $\Omega_2$ is of path type, so that our proposition is proved. \qed

\section{Proof of Proposition~\ref{prop:to_prove_A1}}\label{appendix:proof_statements_to_obtain_A1}

This section is devoted the proof of the Doeblin condition presented in Section~\ref{subsubsect:assumption_A1}. Let us start by introducing for all $l\geq 1$, $x \in \mathcal{D}_l$, $L\geq l$, $c>0$ the following set that allows us to know where the $(r,l,L,t,c)$-local Doeblin conditions~(see Definition~\ref{dft:local_doeblin_condition}) hold:
$$
R_{l,L}^{x}(c) := \big\{(t,y)\in \mathbb{R}_+\times\mathbb{R}_+^{2k}\,|\,\text{a }(r,l,L,t,c)\text{-local Doeblin condition holds from }x \text{ to }y\big\}.
$$
The plan of the proof of Proposition~\ref{prop:to_prove_A1} is the following. First, in Sections~\ref{subsubsect:inequality_doeblin_velleret} and \ref{subsubsect:proof_transfer_localDoeblin}, we prove the two following statements, that correspond to the first two steps of the proof of \hyperlink{te:assumptions_velleret_A1}{$(A_1)$}, see Section~\ref{subsubsect:assumption_A1}. 
\begin{lemm}[Local Doeblin conditions in balls of radius $r$]\label{lemma:inequality_doeblin_velleret}
Assume that \hyperlink{paragraph:long_time_behaviour_S1.1}{$(S_{1.1})$}, \hyperlink{paragraph:long_time_behaviour_S1.2}{$(S_{1.2})$}, and \hyperlink{paragraph:long_time_behaviour_S2.2}{$(S_{2.2})$} hold. Let us fix $l\geq 1$. Then, there exists $L \geq l+2m_0$, such that for all $t\in\left[(l+2(m_0-1)+1)a_0,(l+2m_0)a_0\right]$, there exists $C(t)>0$ such that for all $x_I\in \mathcal{D}_l$ and $x'\in B(x_I,r)\cap \mathcal{D}_l$, a $\left(r,l,L,t,C(t)\right)$-Doeblin condition holds from $x_I$ to $x'$. 
\end{lemm}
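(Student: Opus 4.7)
The plan is to force exactly $m_0$ successive jumps of the auxiliary process inside $[0,t]$, each one realizing the shortening/lengthening schedule $(\mathbb{I}^n,\mathbb{J}^n,\mathbb{M}^n)_{n=1}^{m_0}$ guaranteed by~\hyperlink{paragraph:long_time_behaviour_S1.2}{$(S_{1.2})$}. Since $\bigcup_n \mathbb{I}^n=\llbracket 1,2k\rrbracket$ and $\mathbb{I}^n\subset\mathbb{J}^n$, every coordinate is both shortened and lengthened at least once along such a trajectory, so the law of $Z_t$ restricted to this event becomes absolutely continuous with respect to Lebesgue on $\mathcal{X}$. Concretely, I would start from Lemma~\ref{lemm:generalized_duhamel} applied with $n=m_0$ and $(i_p,j_p)=(\mathbb{I}^p,\mathbb{J}^p)$, rewriting
$$\mathbb{P}_{(x,a)}\!\left[Z_t\in dy\,ds;\,N_t=m_0,\,\forall p:(I_p,J_p)=(\mathbb{I}^p,\mathbb{J}^p),\,X_p\in \mathcal{D}_L\right]$$
as an iterated integral over the jump times $0<s_1<\dots<s_{m_0}<t$ and the spatial increments drawn against the kernels $d\pi_{\cdot}^{\mathbb{I}^p,\mathbb{J}^p}$, with $\mathdutchcal{G}$ and $\overline{\mathdutchcal{H}}$ controlling the age dynamics.

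The next step is a systematic lower bound of each factor appearing in that integrand. The probability masses $p_{\mathbb{J}^p,\mathbb{M}^p}$ hidden in $d\pi^{\mathbb{I}^p,\mathbb{J}^p}$ are at least $p_{\min}$ on the relevant compact ranges by~\hyperlink{paragraph:long_time_behaviour_S1.2}{$(S_{1.2})$}; the shortening density $g$ and the lengthening densities $h(\cdot,\cdot)$ are bounded below by $g_{\min}$ and $h_{\min}$ on their supports by~\hyperlink{paragraph:long_time_behaviour_S1.1}{$(S_{1.1})$}; the weight $\mathdutchcal{V}$ appearing in the jump kernel is at least $\mathdutchcal{V}_{\min}$ by~\hyperlink{paragraph:long_time_behaviour_S2.2}{$(S_{2.2})$}; and the exponential factors $\overline{\mathdutchcal{H}}$ and the birth-rate factor in $\mathdutchcal{G}$ are controlled by joint continuity of $b, \psi$ together with the bounds~\eqref{eq:birth_rate_assumption}. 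Choosing $L\geq l+2m_0$ ensures that no intermediate configuration exits $\mathcal{D}_L$ (each jump grows any coordinate by at most $\Delta$) and that every age relevant on $[0,t]$ is at most $a_0L$; the time window $t\in[(l+2m_0-1)a_0,(l+2m_0)a_0]$ is exactly what is needed so that $m_0$ inter-jump intervals can each contain a sub-interval of length of order $a_0$ on which the birth rate exceeds $b_0$, producing a strictly positive lower bound for the exponential integrands.

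The main technical obstacle will be extracting the explicit polynomial factor $\prod_{i=1}^{2k}(y_i)^{m_0}$ from this multidimensional integral. My plan is to perform, coordinate by coordinate, the change of variables that replaces the successive shortening and lengthening increments applied to coordinate $i$ by the intermediate values of that coordinate immediately after each jump affecting it. In this parametrization the admissible configurations ending at $y_i$ correspond to sequences of intermediate values that must remain in $\mathbb{R}_+$ (otherwise the particle is sent to the cemetery and no longer contributes to the density), and the Lebesgue volume of this admissible set is of order $y_i^{m_0}$ near the boundary $y_i=0$. Combining the $2k$ coordinate-wise contributions with the uniform positive constants obtained above yields a density lower bound of the announced form $C(t)\prod_i(y_i)^{m_0}$. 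Finally, because every factor depends continuously on the parameters $(x,a,y,s,t)$ on the compact region $\bigl(\overline{B(x_I,r)}\cap\mathcal{D}_l\bigr)\times[0,a_0l]\times\bigl(\overline{B(x',r)}\cap\mathcal{D}_l\bigr)\times[0,a_0l]\times[(l+2m_0-1)a_0,(l+2m_0)a_0]$, the constant $C(t)$ can be taken strictly positive and uniform in the initial condition and in $x'\in B(x_I,r)\cap\mathcal{D}_l$, completing the proof.
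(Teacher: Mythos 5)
Your route is the paper's own: you force exactly $m_0$ consecutive jumps realizing the schedule $(\mathbb{I}^n,\mathbb{J}^n)_{n\le m_0}$ of $(S_{1.2})$, expand the restricted law of $Z_t$ with Lemma~\ref{lemm:generalized_duhamel}, bound every factor below using $p_{\min}$, $g_{\min}$, $h_{\min}$, $\mathdutchcal{V}_{\min}$ and the bounds \eqref{eq:birth_rate_assumption} on $b$ and $\psi$ inside $D_L$, and extract the factor $\prod_i(y_i)^{m_0}$ from the nonnegativity constraints along the trajectory. The paper organizes that last point through an explicit per-coordinate convolution minorant ($F_j$ in Appendix~\ref{subsubsect:inequality_doeblin_velleret}) rather than your change of variables to intermediate values, but it is the same computation; note only that the true vanishing exponent at $y_i=0$ is the excess number of lengthenings over shortenings of coordinate $i$ along the schedule (at most $m_0$), so your claim that the admissible volume is ``of order $y_i^{m_0}$'' is valid only as a lower bound obtained after crudely lowering the exponent on the bounded region, which is all that is needed.

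The genuine gap is your concluding step. Continuity of the integrand on a compact parameter region does not by itself give $C(t)>0$: you must verify that every target point $(y,s')\in\bigl(B(x',r)\cap\mathcal{D}_l\bigr)\times[0,a_0l]$ is reachable with an integrand that is positive on a set of jump times and increments of positive measure, uniformly over $x\in B(x_I,r)\cap\mathcal{D}_l$ and $a\in[0,a_0l]$. This is precisely where the specific choice of $r$ relative to $\delta$ and $\min_{x\in D_l}\Delta_x$ enters: the per-coordinate displacement to be realized is at most $3r$, and the paper checks (the $g_{j,1}$, $g_{j,2}$ manipulation) that such a displacement lies well inside the support of the shortening/lengthening convolution, e.g.\ by imposing one lengthening of size in $\bigl[\tfrac{3}{10}\min(\delta,\kappa),\tfrac{6}{10}\min(\delta,\kappa)\bigr]$ and keeping the paired increments small while preserving nonnegativity of the intermediate values; without this, positivity could simply fail for some targets. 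Similarly, the time window does more than guarantee $b\ge b_0$ on each inter-jump interval: confining the first $m_0-1$ inter-jump times to $[a_0,2a_0]$ and using $t\ge(l+2(m_0-1)+1)a_0$ is what makes the residual age $t-\sum_i s_i$ sweep the whole interval $[0,a_0l]$, so that the minorizing measure charges the full age range required by Definition~\ref{dft:local_doeblin_condition}. Your proposal attributes the window solely to the birth-rate lower bound and then appeals to compactness, which does not deliver these two quantitative verifications.
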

\begin{lemm}[Transfer of local Doeblin conditions]\label{lemma:vicinity_velleret}
Assume that \hyperlink{paragraph:long_time_behaviour_S1.1}{$(S_{1.1})$}, \hyperlink{paragraph:long_time_behaviour_S1.2}{$(S_{1.2})$} and \hyperlink{paragraph:long_time_behaviour_S2.2}{$(S_{2.2})$} hold. Let us fix $l\geq 1$. Then, there exist $L \geq l+2m_0$ and $t_a,\,c_a >0$, such that for all  $(x_I,x_F)\in (\mathcal{D}_l)^2$, $t>0$, if~$(t,x_F)\in R_{l,L}^{x_I}(c)$, then 
$$
\left\{t+t_a\right\}\times \left(B(x_F,r)\cap \mathcal{D}_l\right) \subset R_{l,L}^{x_I}(c.c_a).
$$
\end{lemm}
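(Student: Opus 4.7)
The plan is to chain two local Doeblin lower bounds via the Markov property at the deterministic time $t$: the first is the hypothesis $(t, x_F) \in R_{l,L}^{x_I}(c)$, and the second is supplied by Lemma \ref{lemma:inequality_doeblin_velleret} applied with $x_F$ as its center. As a preliminary step I would fix the $L \geq l+2m_0$ coming from Lemma \ref{lemma:inequality_doeblin_velleret}, choose any $t_a \in [(l+2(m_0-1)+1)a_0, (l+2m_0)a_0]$ in the allowed interval, and set $c_0 := C(t_a) > 0$ to be the associated constant. What enables the chaining to be uniform in $x_F$ is precisely the fact that this $L$ and $c_0$ depend only on $l$, not on the center invoked in Lemma \ref{lemma:inequality_doeblin_velleret}.

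Given $(x_I, x_F) \in (\mathcal{D}_l)^2$ with $(t, x_F) \in R_{l,L}^{x_I}(c)$ and a target $x' \in B(x_F, r) \cap \mathcal{D}_l$, I would then apply the Markov property at time $t$ to any initial condition $(x, a) \in (B(x_I, r) \cap \mathcal{D}_l) \times [0, a_0 l]$:
\begin{align*}
&\mathbb{P}_{(x,a)}\!\left[Z_{t+t_a} \in dy'\, ds',\; t+t_a < \min(\tau_{\partial}, T_{D_L})\right] \\
&\quad \geq \int \mathbb{P}_{(y,s)}\!\left[Z_{t_a} \in dy'\, ds',\; t_a < \min(\tau_{\partial}, T_{D_L})\right]\, \mathbb{P}_{(x,a)}\!\left[Z_{t} \in dy\, ds,\; t < \min(\tau_{\partial}, T_{D_L})\right],
\end{align*}
where the integration is taken over $(y, s) \in (B(x_F, r) \cap \mathcal{D}_l) \times [0, a_0 l]$. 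The first-step Doeblin bound lets me replace the rightmost measure by $c\,\big(\prod_i y_i^{m_0}\big)\, dy\, ds$, and Lemma \ref{lemma:inequality_doeblin_velleret} with center $x_F$ and target $x'$ (both belonging to $B(x_F, r) \cap \mathcal{D}_l$) lets me replace the kernel $\mathbb{P}_{(y,s)}[\cdots]$ by $c_0 \big(\prod_i (y_i')^{m_0}\big) 1_{\{(y',s') \in (B(x', r) \cap \mathcal{D}_l) \times [0, a_0 l]\}}\, dy'\, ds'$. Factoring out the $(y', s')$-dependent terms then produces the desired $(r, l, L, t+t_a, c\cdot c_a)$-local Doeblin lower bound from $x_I$ to $x'$, with
\[
c_a \;=\; c_0 \int_{(B(x_F, r) \cap \mathcal{D}_l) \times [0, a_0 l]} \prod_{i=1}^{2k} y_i^{m_0}\, dy\, ds.
\]

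The only delicate point, and hence the main obstacle, is to bound $c_a$ from below uniformly in $x_F \in \mathcal{D}_l$. For each coordinate $i$, regardless of where $x_{F,i}$ lies in $[0, B_{\max} l]$, the slice $B(x_F, r) \cap [0, B_{\max} l]$ (with $B$ the sup-norm ball) contains an interval of length $r$ inside $\mathbb{R}_+$, and the elementary inequality $(\varepsilon + r)^{m_0+1} - \varepsilon^{m_0+1} \geq r^{m_0+1}$, valid for $\varepsilon \geq 0$, yields $\int y_i^{m_0}\, dy_i \geq r^{m_0+1}/(m_0+1)$ on any such interval. Multiplying over the $2k$ coordinates and integrating the age over $[0, a_0 l]$ then gives $c_a \geq c_0\, a_0\, l\, \big(r^{m_0+1}/(m_0+1)\big)^{2k} > 0$ uniformly in $x_F$, which closes the argument.
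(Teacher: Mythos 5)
Your proof is correct and follows essentially the same route as the paper's: the Markov property at time $t$ chains the assumed Doeblin bound with a fresh application of Lemma \ref{lemma:inequality_doeblin_velleret} at a time $t_a$ in the admissible window, together with a uniform-in-$x_F$ lower bound on $\int_{B(x_F,r)\cap\mathcal{D}_l}\prod_{i}y_i^{m_0}\,dy$ (which the paper packages as an infimum over $y\in\mathcal{D}_l$ inside its definition of $c_a$). The only cosmetic difference is the instantiation of Lemma \ref{lemma:inequality_doeblin_velleret} in the second step: you take center $x_F$ and target $x'\in B(x_F,r)\cap\mathcal{D}_l$, which is valid since the intermediate state ranges over $B(x_F,r)\cap\mathcal{D}_l\times[0,a_0l]$ and directly yields the stated inclusion for every such $x'$.
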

\noindent Then, we prove Proposition~\ref{prop:to_prove_A1} in Section~\ref{subsect:proof_prop_to_prove_A1}.

\subsection{Proof of Lemma \ref{lemma:inequality_doeblin_velleret}}\label{subsubsect:inequality_doeblin_velleret}

In this proof, we mostly use the objects introduced
in Assumption~\hyperlink{paragraph:long_time_behaviour_S1}{$(S_{1})$}. We use the convention that for all sequence $\left(\xi_n\right)_{n\in\mathbb{N}}$ and $(n_1,n_2)\in\mathbb{N}^2$ such that $n_2 < n_1$, we have $\sum_{n_1}^{n_2} \xi_n= 0$. We also use the convention that $[0,\delta]^0 = \{\emptyset\}$ and that for all $n\in\mathbb{N}^*$, $C \in \mathcal{B}(\mathbb{R}^n)$ and $f : [0,\delta]^0\times\mathbb{R}^n \rightarrow \mathbb{R}$ measurable:
$$
\int_{\tilde{\alpha}\in[0,\delta]^0}\int_{\tilde{\beta}\in C} f(\tilde{\alpha},\tilde{\beta}) d\tilde{\alpha}d\tilde{\beta} = \int_{\tilde{\beta}\in C} f(\emptyset,\tilde{\beta}) d\tilde{\beta}.
$$
To simplify notations, we finally denote $\kappa = \min_{x\in D_l}(\Delta_x)$, and notice that with this notation and~\eqref{eq:dft_constant_r}, we have~\hbox{$r = \min\left(\frac{B_{\max}}{2},\frac{\min(\delta,\kappa)}{10}\right)$}. 

Let $l\geq 1$ and $L\geq l+2m_0$ such that $B_{\max}L \geq B_{\max}l + m_0\Delta$. We begin with some preliminaries. We first consider $p_{\min} > 0$ that verifies the statement of \hyperlink{paragraph:long_time_behaviour_S1.2}{$(S_{1.2})$} with $A = B_{\max} L$, and the following constants, for all $t\geq0$:
\begin{equation}\label{eq:constants_doeblin}
\begin{aligned}
C_1(t) &= \left[\frac{2b_0\mathdutchcal{V}_{\min}}{\left(\sup_{y\in\mathcal{D}_L} \mathdutchcal{V}(y)\right)(1+(a_0L)^{d_{\psi}})}\right]^{m_0}\exp\left(-(m_0+1)(\lambda_{\psi} + \tilde{b}(1+(a_0L)^{d_b}))t\right), \\
C_2 &= \left[\frac{p_{\min}(g_{\min})^k\left(h_{\min}\right)^{2k}}{\#\left(\mathcal{I}_k\right)}\right]^{m_0}. 
\end{aligned}  
\end{equation}
These constants are useful to obtain lower bounds. In view of~\hyperlink{paragraph:long_time_behaviour_S1.2}{$(S_{1.2})$}, we then define for all~$j\in \llbracket1,2k\rrbracket$
$$
\gamma_j := \# \left\{n \in \llbracket1,m_0\rrbracket\,|\,j\in \mathbb{I}^n\right\}, \hspace{4mm} \omega_j := \# \left\{n \in \llbracket1,m_0\rrbracket\,|\,j\in \mathbb{J}^n\right\}.
$$
They correspond respectively to the number of times the telomere associated to the coordinate $j$ is shortened and the number of times it is lengthened on the event 
\begin{equation}\label{eq:event_for_doeblin_condition}
\mathdutchcal{E} := \left\{\left(I_l,J_l\right)_{l\in\llbracket1,m_0\rrbracket} = \left(\mathbb{I}^l,\mathbb{J}^l\right)_{l\in\llbracket1,m_0\rrbracket}\right\}.
\end{equation}
By~\hyperlink{paragraph:long_time_behaviour_S1.2}{$(S_{1.2})$}, it holds $\omega_j \geq \gamma_j \geq 1$. We finally define for all~$j\in\llbracket1,2k\rrbracket$,~\hbox{$(y,\theta)\in\mathbb{R}_+\times\mathbb{R}$}, \hbox{$(\tilde{\alpha},\tilde{\beta})\in [0,\delta]^{\gamma_j - 1}\times[0,\kappa]^{\omega_j}$}, the following:
\begin{equation}\label{eq:function_doeblin}
\begin{aligned}
g_{j,1}(\theta,\tilde{\alpha},\tilde{\beta}) &= 1_{\left\{\theta - \sum_{i = 1}^{\gamma_j-1} (\tilde{\beta}_i-\tilde{\alpha}_i) - \sum_{i = \gamma_j}^{\omega_j} \tilde{\beta}_i\in\left[-\delta,0\right]\right\}},\\
g_{j,2}(y,\theta,\tilde{\alpha},\tilde{\beta}) &= \left(1_{\left\{\forall l \in \llbracket1,\gamma_j-1\rrbracket:\, y+\sum_{i = 1}^{l} (\tilde{\beta}_i-\tilde{\alpha}_i)\geq 0\right\}}1_{\{\gamma_j \geq 2\}} + 1_{\{\gamma_j = 1\}}\right)1_{\left\{y+\theta-\sum_{i = \gamma_j +1}^{\omega_j} \tilde{\beta}_i \geq 0\right\}}, \\
F_j(y,\theta) &=
\int_{\overline{\alpha}\in[0,\delta]^{\gamma_j - 1}}\int_{\overline{\beta}\in[0,\kappa]^{\omega_j}} g_{j,1}\left(\theta,\overline{\alpha},\overline{\beta}\right)g_{j,2}\left(y,\theta,\overline{\alpha},\overline{\beta}\right) d\overline{\alpha} d\overline{\beta}.
\end{aligned}
\end{equation}
In the above, the functions $\left(g_{j,1}\right)_{1\leq j \leq 2k}$ and $\left(g_{j,2}\right)_{1\leq j \leq 2k}$ are only useful to define the functions~$\left(F_j\right)_{1\leq j \leq 2k}$. The interest of the functions in this last collection is presented in the two paragraphs below. We mention that the indices of the variables $\left(\tilde{\alpha}_i\right)_{i\in\llbracket1,\gamma_j-1\rrbracket}$ and~$\big(\tilde{\beta}_i\big)_{i\in\llbracket1,\omega_j\rrbracket}$ in the definition of the functions $\left(g_{j,1}\right)_{1\leq j \leq 2k}$ and $\left(g_{j,2}\right)_{1\leq j \leq 2k}$ do not correspond to the order in which the shortenings and elongations occur in the coordinate~$j$ on the event $\mathdutchcal{E}$ defined in~\eqref{eq:event_for_doeblin_condition}. The indices have been chosen in order to write efficiently the constraint presented in the second paragraph below.

For all $j\in\llbracket1,2k\rrbracket$, the function $F_j(y,.)$ defined in~\eqref{eq:function_doeblin} is a lower bound for the density of telomere length variation in the \hbox{$j$-th} coordinate of the particle on the event \hbox{$\left\{N_t = m_0\right\}\cap \mathdutchcal{E}$} 
when the initial length of the $j$-th telomere is $y$. Let us explain why. First, we notice by the change of variable $\tilde{\alpha}' = -\tilde{\alpha}$ that the integral
$$
\begin{aligned}
\int_{\tilde{\alpha}\in[0,\delta]^{\gamma_j - 1}}\int_{\tilde{\beta}\in[0,\kappa]^{\omega_j}} g_{j,1}(\theta,&\tilde{\alpha},\tilde{\beta})d\tilde{\alpha} d\tilde{\beta} = \int_{\tilde{\alpha}'\in\mathbb{R}^{\gamma_j - 1}}\int_{\tilde{\beta}\in\mathbb{R}^{\omega_j}} 1_{\left\{s  - \sum_{i = 1}^{\gamma_j-1} \tilde{\alpha}'_i - \sum_{i = 1}^{\omega_j} \tilde{\beta}_i \in\left[-\delta,0\right]\right\}} \\
&\times 1_{\left\{\tilde{\alpha}'_{1}\in[-\delta,0]\right\}}\hdots1_{\left\{\tilde{\alpha}'_{\gamma_j-1}\in[-\delta,0]\right\}} 1_{\left\{\tilde{\beta}_{1}\in[0,\kappa]\right\}}\hdots1_{\left\{\tilde{\beta}_{\omega_j}\in[0,\kappa]\right\}}d\tilde{\alpha}' d\tilde{\beta}
\end{aligned}
$$
corresponds to the convolution between $\omega_j$ times the function~$1_{[0,\kappa]}(\theta)$ and $\gamma_j$ times the function~$1_{[0,\delta]}(-\theta)$. Second, we have by~\hyperlink{paragraph:long_time_behaviour_S1.2}{$(S_{1.2})$} that the probability density functions for the shortening and lengthening values, namely $g$ and $(h(x,.))_{x\in\mathbb{R}}$, are bounded from below on $[0,\delta]$ and $[0,\kappa]$ respectively. Finally, we recall that the probability density function of the sum of random variables is given by the convolution of their densities. By these three results, we obtain that~$\int_{\tilde{\alpha}\in[0,\delta]^{\gamma_j - 1}}\int_{\tilde{\beta}\in[0,\kappa]^{\omega_j}} g_{j,1}(\theta,\tilde{\alpha},\tilde{\beta})d\tilde{\alpha} d\tilde{\beta}$ is, up to a constant, a lower bound for the probability density function of the sum of~$\gamma_j$ random variables of shortening and~$\omega_j$ random variables of lengthening. 

There is another constraint that we need to take into account to have that $F_j(y,.)$ is the lower bound for the density of telomere length variation in the $j$-th coordinate on the event $\left\{N_t = m_0\right\}\cap \mathdutchcal{E}$. During the first $m_0$ jumps, the length of this telomere must remain non-negative~(otherwise there is extinction of the particle). The conditions for the variables $y$, $\tilde{\alpha}$ and $\tilde{\beta}$ in the indicators of the function $g_{j,2}$ allow to handle this. Indeed, they mean that the length of the telomere must remain non-negative when one of the $\gamma_j$ events where there is a shortening occurs (there is also a lengthening at these events as for all~$i\in\llbracket1,m_0\rrbracket$ it holds~$\mathbb{I}^i \subset \mathbb{J}^i$). We do not need any condition for the other events because the length of the telomere cannot become negative when they occur (as there is no shortening). The latter, the above paragraph, and the fact that $F_j$ is the integral of~$g_{j,1}g_{j,2}$ yield that $F_j(y,.)$ corresponds to the lower bound stated above. 

Now, let $t\in[(l+2(m_0-1)+1)a_0,(l+2m_0)a_0]$, $x_I\in\mathcal{D}_l$, $(x,\,x')\in \left(B(x_I,r)\cap \mathcal{D}_l\right)^2$, \hbox{$a\in[0,a_0l]$}, and $f$ a bounded measurable function on $\mathbb{R}_+$. We know that $L\geq l+2m_0$,  $t \leq (l+2m_0)a_0$, $B_{\max}L \geq B_{\max}l + m_0\Delta$, and that~$\Delta$ is the maximum lengthening value. Thus, if $m_0$ jumps have occurred during the interval of time $[0,t]$, if the time before the first jumps is smaller than $2a_0$, and if the particle has not jumped to the cemetery, then the particle $(Z_t)_{t\geq 0}$ has stayed in $D_L$ during all the interval of time~$[0,t]$. In particular, the following holds
\begin{equation}\label{eq:sequence_events_later_proof}
\begin{aligned}
\mathbb{E}_{(x,a)}\left[f(Z_t);\,t < \min(\tau_{\partial},T_{D_L})\right] &\geq \mathbb{E}_{(x,a)}\Big[f(X_{m_0}, t-\mathdutchcal{T}_{m_0});\,N_t = m_0,\\
&\forall i \in \llbracket1,m_0\rrbracket:\,(I_i,J_i) = \left(\mathbb{I}^i,\mathbb{J}^i\right),\\
&\forall i \in \llbracket1, m_0-1 \rrbracket: \mathdutchcal{T}_i - \mathdutchcal{T}_{i-1} \in[a_0,2a_0],\\
&\,\mathdutchcal{T}_{m_0} - \mathdutchcal{T}_{m_0-1} \in[a_0, t- \mathdutchcal{T}_1]\Big]. 
\end{aligned}
\end{equation}
Now, we use Lemma~\ref{lemm:generalized_duhamel} to develop the right-hand side term of the above equation and obtain
\begin{equation}\label{eq:simpilfication_psuforward_measure}
\begin{aligned}
&\mathbb{E}_{(x,a)}\left[f(Z_t);\,t < \min(\tau_{\partial},T_{D_L})\right] \geq \int_{[a_0,2a_0]\times \mathbb{R}^{2k}} \hdots \int_{[a_0,2a_0]\times \mathbb{R}^{2k}}\int_{\left[a_0,t - \sum_{i = 1}^{m_0-1} s_i\right]\times\mathbb{R}^{2k}_+} \\
& \times f\left(x + \sum_{i = 1}^{m_0} u_i,t - \sum_{i = 1}^{m_0} s_i\right)\mathdutchcal{V}(x+u_1) \hdots \mathdutchcal{V}\left(x+\sum_{i = 1}^{m_0}u_i\right)\mathdutchcal{G}_{a}(x,s_1)\mathdutchcal{G}_0(x+ u_1,s_2)\hdots  \\
&\times \mathdutchcal{G}_0\left(x+ \sum_{i = 1}^{m_0-1}u_i,s_m\right)\overline{\mathdutchcal{H}_0}\left(x+\sum_{j = 1}^{m_0}u_j,t- \sum_{j = 1}^{m_0}s_{j}\right)1_{\left\{\forall j \in\llbracket1,m_0\rrbracket:\,x+\sum_{i = 1}^j u_j\in\mathcal{D}_l\right\}}\\
&\times \left(ds_1d\pi_{x}^{\mathbb{I}^1,\mathbb{J}^1}(u_1)\right)\hdots \left(ds_{m_0}d\pi_{x+\sum_{i = 1}^{m_0-1}u_i}^{\mathbb{I}^{m_0},\mathbb{J}^{m_0}}\left(u_{m_0}\right)\right). \\
\end{aligned}
\end{equation}
By \eqref{eq:birth_rate_assumption}, Assumption \hyperlink{paragraph:long_time_behaviour_S2.2}{$(S_{2.2})$} and the definition of $\psi \in \Psi$ given in \eqref{eq:defintion_all_distortion_functions}, we know that for all $(w,s)\in D_L$
$$
\begin{aligned}
b(w,s) \leq \tilde{b}(1+(a_0L)^{d_b}), \text{      }\psi(w,s) \leq \sup_{y \in \mathcal{D}_L}(\mathdutchcal{V}(y))\left(1+(a_0L)^{d_{\psi}}\right), \text{   and   }\mathdutchcal{V}(w) \geq \mathdutchcal{V}_{\min},
\end{aligned}
$$
and that $b(w,s) \geq b_0$ when $s\geq a_0$. Thus, in view of the expression of $\mathdutchcal{G}_a$ and $\mathdutchcal{G}_0$ given in \eqref{eq:useful_notation_third}, the expression of $\overline{\mathdutchcal{H}}_0$ given in \eqref{eq:tail_events}, and the definition of the constant $C_1(t)$ introduced in \eqref{eq:constants_doeblin}, one can bound from below \eqref{eq:simpilfication_psuforward_measure} to get
\begin{equation}\label{eq:intermediate_doeblin}
\begin{aligned}
\mathbb{E}_{(x,a)}\left[f(Z_t);\,t < \min(\tau_{\partial},T_{D_L})\right] &\geq C_1(t)\int_{[a_0,2a_0]\times \mathbb{R}^{2k}} \hdots \int_{[a_0,2a_0]\times \mathbb{R}^{2k}}\int_{\left[a_0,t - \sum_{i = 1}^{m_0-1} s_i\right]\times\mathbb{R}^{2k}_+} \\
&\times f\left(x + \sum_{i = 1}^{m_0} u_i,t - \sum_{i = 1}^{m_0} s_i\right)1_{\left\{\forall j \in\llbracket1,m_0\rrbracket:\,x+\sum_{i = 1}^j u_j\in\mathcal{D}_l\right\}}\\
&\times\left(ds_1d\pi_{x}^{\mathbb{I}^1,\mathbb{J}^1}(u_1)\right)\hdots \left(ds_{m_0}d\pi_{x+\sum_{i = 1}^{m_0-1}u_i}^{\mathbb{I}^{m_0},\mathbb{J}^{m_0}}\left(u_{m_0}\right)\right). \\ 
\end{aligned}
\end{equation}
We recall that the definition of $(\pi_y^{I,J})_{y\in\mathbb{R}_+^{2k},(I,J)\in\mathcal{Q}_k}$ is given in \eqref{eq:measure_by_event}, and that the measures used to define it are given in~\hbox{\eqref{eq:measure_shortening} and~\eqref{eq:measure_elongation}}. We continue to bound from below the left-hand side term of~\eqref{eq:intermediate_doeblin}. First, we use \hyperlink{paragraph:long_time_behaviour_S1.1}{$(S_{1.1})$} (without loss of generality, we can assume that $h_{\min} < 1$) and the second statement of~\hyperlink{paragraph:long_time_behaviour_S1.2}{$(S_{1.2})$}, and do the change of variables $v = \sum_{i = 1}^{m_0} u_i$ to bound from below the measures $\left(\pi_{x+\sum_{i = 1}^{p-1}u_i}^{\mathbb{I}^p,\mathbb{J}^p}(u_p)\right)_{p \in \llbracket1,m_0\rrbracket}$ in the right-hand side term of~\eqref{eq:intermediate_doeblin} by the measure~\hbox{$C_2.\prod_{i = 1}^{2k} F_i\left(x_i,v_i\right)dv_i$}~(see~\eqref{eq:constants_doeblin} and~\eqref{eq:function_doeblin}). Then, we do the change of variables $w = v + x - x'$. Finally, we restrict the domain of integration to~$[-r,r]^{2k}$. It comes
\begin{equation}\label{eq:doeblin_k=N_intermediate_second}
\begin{aligned}
&\mathbb{E}_{(x,a)}\left[f(Z_t);\,t < \min(\tau_{\partial},T_{D_L})\right]
\geq C_1(t) C_2\int_{w\in[-r,r]^{2k}}\int_{[a_0,2a_0]} \hdots \int_{[a_0,2a_0]}\int_{\left[a_0,t - \sum_{i = 1}^{m_0-1} s_i\right]}  \\ 
&\times f\bigg(x' + w, t - \sum_{i = 1}^{m_0} s_i\bigg)1_{\{x'+w\in\mathbb{R}_+^{2k}\}}F_1(x_1,(w + x'-x)_1)\hdots F_{2k}(x_{2k},(w + x'-x)_{2k})dwds.
\end{aligned}
\end{equation}
To continue our computations, we now obtain lower bounds for the functions $(F_j)_{j\in\llbracket1,2k\rrbracket}$. Let us consider \hbox{$j\in\llbracket1,2k\rrbracket$,~$y\in[0,B_{\max}l]$},~\hbox{$\theta\in[\max(-3r,-y),3r]$}, \hbox{$\tilde{\alpha}\in[0,\delta]^{\gamma_j-1}$} and finally \hbox{$\tilde{\beta}\in[0,\kappa]^{\omega_j}$}. Assume that~$\sum_{i = 1}^{\gamma_j-1}(\tilde{\beta}_i-\tilde{\alpha}_i) \in  \left[0,\frac{\min(\delta,\kappa)}{20}\right]$, that \hbox{$\tilde{\beta}_{\gamma_j}\in \left[\frac{3\min(\delta,\kappa)}{10},\frac{6\min(\delta,\kappa)}{10}\right]$} and finally that~\hbox{$\sum_{\gamma_j+1}^{\omega_j}\tilde{\beta}_i \leq \frac{\min(\delta,\kappa)}{20}$}. Then, by using that $|\theta| \leq 3r \leq \frac{3\min(\delta,\kappa)}{10}$, one can easily obtain~that 
$$
-\min\left(\delta,\kappa\right) \leq \theta - \sum_{i = 1}^{\gamma_j-1} (\tilde{\beta}_i-\tilde{\alpha}_i) - \sum_{i = \gamma_j}^{\omega_j} \tilde{\beta}_i \leq 0,
$$
so that $g_{j,1}(y,\theta, \tilde{\alpha}, \tilde{\beta}) = 1$ by the first line of~\eqref{eq:function_doeblin}. Combining this with the fact that $y\geq 0$ (to bound from below the first indicator in $g_{j,2}$), we obtain
$$
\begin{aligned}
F_j(y,\theta) &\geq \int_{\tilde{\alpha}\in[0,\delta]^{\gamma_j - 1}}\int_{\tilde{\beta}\in[0,\kappa]^{\omega_j}}\left(1_{\left\{\forall l \in \llbracket1,\gamma_j-1\rrbracket:\, \sum_{i = 1}^{l} (\tilde{\beta}_i-\tilde{\alpha}_i)\in \left[0,\frac{\min(\delta,\kappa)}{20}\right]\right\}}1_{\{\gamma_j \geq 2\}} + 1_{\{\gamma_j = 1\}}\right) \\
&\times 1_{\left\{\tilde{\beta}_{\gamma_j}\in \left[\frac{3\min(\delta,\kappa)}{10},\frac{6\min(\delta,\kappa)}{10}\right]\right\}}1_{\left\{\sum_{i = \gamma_j +1}^{\omega_j} \tilde{\beta}_i \leq \min(y+s,\frac{\min(\delta,\kappa)}{20})\right\}}d\tilde{\alpha} d\tilde{\beta}.
\end{aligned}
$$
Now, first observe that 
$$
\int_{\tilde{\alpha}\in[0,\delta]^{\gamma_j - 1}}\int_{\tilde{\beta}\in[0,\kappa]^{\gamma_j-1}}\Big(1_{\left\{\forall l \in \llbracket1,\gamma_j-1\rrbracket:\, \sum_{i = 1}^{l} (\tilde{\beta}_i-\tilde{\alpha}_i)\in \left[0,\frac{\min(\delta,\kappa)}{20}\right]\right\}}1_{\{\gamma_j \geq 2\}}+ 1_{\{\gamma_j = 1\}}\Big)  d\tilde{\alpha}d\tilde{\beta} > 0 
$$ 
because it is the integral of a non-negative function, positive on a non negligible set. Thereafter, notice that
$$
\int_{\tilde{\beta}_{\gamma_j}\in\mathbb{R}}1_{\left\{\tilde{\beta}_{\gamma_j}\in \left[\frac{3\min(\delta,\kappa)}{10},\frac{6\min(\delta,\kappa)}{10}\right]\right\}} d\tilde{\beta}_{\gamma_j} >0
$$
for the same reason. Finally, notice that if \hbox{$\omega_j - \gamma_j \geq 1$} and for all $i\in \llbracket \gamma_j+1,\omega_j\rrbracket$ we have
$$
\tilde{\beta}_i \leq \frac{1}{\omega_j - \gamma_j}\min\left(y+\theta,\frac{\min(\delta,\kappa)}{20}\right),
$$
then it holds that
$$
\sum_{i = \gamma_j +1}^{\omega_j} \tilde{\beta}_i \leq \min\left(y+\theta,\frac{\min(\delta,\kappa)}{20}\right). 
$$
We get from these two results that there exists $c_{j,1} > 0$, independent of $y$ and $\theta$, such that 
\begin{equation}\label{eq:lower_bound_Fj_intermediate}
\begin{aligned}
F_j(y,\theta)&\geq c_{j,1} \left(\left(\int_0^{\frac{1}{\omega_j - \gamma_j}\min\left(y+\theta,\frac{\min(\delta,\kappa)}{20}\right)}du\right)^{\omega_j - \gamma_j} 1_{\{\omega_j - \gamma_j \geq 1\}} + 1_{\{\omega_j - \gamma_j = 0\}}\right)\\
&= c_{j,1}\left[\frac{1}{\omega_j - \gamma_j}\min\left(y+\theta,\frac{\min(\delta,\kappa)}{20}\right)\right]^{\omega_j - \gamma_j}1_{\{\omega_j - \gamma_j \geq 1\}} + 1_{\{\omega_j - \gamma_j = 0\}}.
\end{aligned}
\end{equation}
In addition, as $y + \theta \in[0,B_{\max}l+3r]$, we have that 
$$
\min\left(y+\theta,\frac{\min(\delta,\kappa)}{20}\right) \geq (y+\theta)\min\left(1, \frac{\min(\delta,\kappa)}{20(B_{\max}l+3r)}\right).
$$
We also have by classical computations that as $\omega_j - \gamma_j \leq m_0$, there exists $c_{j,2} > 0$ such that for all~\hbox{$x \in[0,B_{\max}l+3r]$ it holds $x^{\omega_j - \gamma_j} \geq c_{j,2}x^{m_0}$}. Combining these two results with~\eqref{eq:lower_bound_Fj_intermediate}, we finally obtain that there exists~$c_{j,3} > 0$~such that for all $y\in [0,B_{\max}l]$ and~$\theta\in[\max(-3r,y),3r]$ we have~\hbox{$F_j(y,\theta) \geq c_{j,3}(y+\theta)^{m_0}$}.

We now conclude. Let us denote $\underline{C}(t) = C_1(t)C_2\left(\prod_{j = 1}^{2k}c_{j,3}\right)$ and $C(t) = (a_0)^{m-1}\underline{C}(t)$. First, we bound from below the functions $(F_j)_{j \in \llbracket1,2k\rrbracket}$ in~\eqref{eq:doeblin_k=N_intermediate_second} by the bounds obtained above. Then, we do the changes of variables $w' = x' + w$ and $s' = t - \sum_{i = 1}^{m_0} s_i$. Finally, we use the fact that $t - 2(m_0-1) a_0 - a_0 \geq l a_0$. The following comes to end the proof:
$$
\begin{aligned}
\mathbb{E}_{(x,a)}\big[f(Z_t);t < \min(\tau_{\partial},T_{D_L})] &\geq \underline{C}(t)\int_{w'\in B(x',r)} \int_{[a_0,2a_0]} \hdots \int_{[a_0,2a_0]}\int_{\left[0, t - \sum_{i = 1}^{m_0-1} s_i - a_0\right]}\\
&\times f(w', s')\left(\prod_{j = 1}^{2k}(w'_{j})^{m_0}\right)dw'ds_1\hdots ds_{m_0-1}ds'  \\ 
&\geq C(t) \int_{w'\in B(x',r)}\int_{0}^{la_0} f(w', s')\left(\prod_{j = 1}^{2k} (w'_{j})^{m_0}\right)dw'ds'.
\end{aligned}
$$
\qed

\subsection{Proof of Lemma \ref{lemma:vicinity_velleret}}\label{subsubsect:proof_transfer_localDoeblin}

Let $l\geq 1$, $t_a = \left(l+2(m_0-1)+1\right)a_0$, and $C(t_a)$ its associated constant obtained by Lemma \ref{lemma:inequality_doeblin_velleret} (also associated to a certain $L \geq l$). We consider the constant
\begin{equation}\label{eq:doeblin_constant_diminution}
c_a = a_0.l.C(t_a)\inf_{y\in \mathcal{D}_l}\left[\int_{u\in B(y,r)\cap \mathcal{D}_l} \left(\prod_{i = 1}^{2k} (u_i)^{m_0}\right)du\right].
\end{equation}
Let $x_I\in\mathcal{D}_l$, $c > 0$, $(t,x_F)\in\mathbb{R}_+^*\times\mathcal{D}_l$ such that $(t,x_F)\in R_{l,L}^{x_I}(c)$. By the Markov property and the definition of $R_{l,L}^{x_I}(c)$, we have for all $(x,a) \in \left(B(x_I,r)\cap \mathcal{D}_l\right)\times[0,a_0l]$
$$
\begin{aligned}
&\mathbb{E}_{(x,a)}\left[f(Z_{t+t_a});\,t + t_a < \min(\tau_{\partial},T_{D_L})\right] \geq c\int_{u\in B(x_F,r)\cap \mathcal{D}_l}\int_0^{a_0l}\\
&\times \left[\int_{z\in\mathcal{X}} f(z) \mathbb{P}_{(u,s)}\left[Z_{t_a} \in dz;t_a \leq \min(\tau_{\partial},T_{D_L})\right]\right]\left(\prod_{i = 1}^{2k} (u_i)^{m_0}\right) du ds. \\
\end{aligned}
$$
Now, first apply Lemma \ref{lemma:inequality_doeblin_velleret} with $x_I = u$ and $x' = x_F$. Then, integrate in $\mathrm{d}u$ and $\mathrm{d}s$. Finally, take the infimum. It comes
$$
\begin{aligned}
&\mathbb{E}_{(x,a)}\left[f(Z_{t+t_a});\,t + t_a < \min(\tau_{\partial},T_{D_L})\right] \\ &\geq c.c_a\int_{w\in B(x_F,r)\cap\mathcal{D}_l}\int_0^{a_0l} f(w,S) \left(\prod_{i = 1}^{2k} (w_i)^{m_0}\right) dwdS.\\
\end{aligned}
$$
It remains to prove that $c_a >0$ to end the proof. Let $y\in \mathcal{D}_l$. For all $i\in\llbracket1,2k\rrbracket$, we consider $u_i\in[0,r]$. When $y_i \leq \frac{B_{\max}}{2}l$, as $r \leq \frac{B_{\max}}{2}$, we have
$$
y_i \leq y_i + u_i \leq \frac{B_{\max}}{2}l + r \leq lB_{\max}.
$$
Conversely, when $y_i > \frac{B_{\max}}{2}l$, as $r \leq \frac{B_{\max}}{2}$, we have
$$
y_i - u_i \geq y_i - r \geq \frac{B_{\max}}{2}l - r \geq 0.
$$
Hence, as $\mathcal{D}_l = [0,B_{\max}l]^{2k}$, one can easily obtain
$$
\begin{aligned}
\int_{u\in B(y,r)\cap \mathcal{D}_l} \left(\prod_{i = 1}^{2k} (u_i)^{m_0}\right)du &\geq \hspace{-0.75mm}  \prod_{\substack{i\in\llbracket 1,2k\rrbracket,\\y_i\in\left[0,\frac{B_{\max}}{2}l\right]}}\left[\int_{y_i}^{y_i+r}(u_i)^{m_0}du_i\right]\prod_{\substack{j\in\llbracket 1,2k\rrbracket,\\y_j\in\left(\frac{B_{\max}}{2}l,l\right]}}\left[\int_{y_j-r}^{y_j}(u_j)^{m_0}du_j\right]\\
&\geq \left(\int_0^r v^{m_0} dv\right)^{2k} = \left(\frac{r^{m_0+1}}{m_0+1}\right)^{2k} >0.
\end{aligned}
$$
Then, in view of \eqref{eq:doeblin_constant_diminution}, we have that 
$$
c_a \geq a_0l C(t_a)\left(\frac{r^{m_0+1}}{m_0+1}\right)^{2k} >0,
$$
which concludes the proof of this lemma.
\qed

\subsection{Proof of Proposition \ref{prop:to_prove_A1}}\label{subsect:proof_prop_to_prove_A1}

Let $l\in\mathbb{N}^*$ and $L\geq l$ sufficiently large to apply Lemma~\ref{lemma:vicinity_velleret}. We begin with two preliminary results. First, by compactness, there exist $J\in\mathbb{N}^*$ and $(x^j)_{1\leq j\leq J}\in (\mathcal{D}_l)^J$ such that
$$
\bigcup_{j\in\llbracket1,J\rrbracket}\left[B(x^j,r)\cap \mathcal{D}_l\right] = \mathcal{D}_l.
$$
Second, by Lemma \ref{lemma:inequality_doeblin_velleret}, there exists $(t_I,c_I)\in\left(\mathbb{R}_+^*\right)^2$ such that for all $j\in\llbracket1,J\rrbracket$ it holds~ $(t_I,x^{j})\in R^{x^{j}}_{l,L}(c_I)$. 
\smallskip

\noindent Now, we fix $(j,j')\in\llbracket 1, J\rrbracket^2$ such that $j\neq j'$. As $(t_I,x^{j})\in R^{x^{j}}_{l,L}(c_I)$, we need to successively apply Lemma~\ref{lemma:vicinity_velleret} to obtain that there exist $t_F,\,c_F >0$ such that $(t_F,x^{j'})\in R^{x^{j}}_{l,L}(c_F)$. Let~$N\in \mathbb{N}^*$ satisfying 
$$
N > \underset{(j,j')\in\llbracket1,J\rrbracket^2}{\max}\frac{||x^{j'} - x^j||_{\infty}}{r},
$$
and $(u_\mathbb{k})_{\mathbb{k}\in\llbracket0,N\rrbracket}$ a sequence defined as
$$
\begin{cases}
u_0 = x^{j}, &  \\
u_{\mathbb{k}+1} = u_\mathbb{k} + \frac{1}{N}(x^{j'} - x^{j}), & \forall \mathbb{k}\in\llbracket0,N-1\rrbracket.
\end{cases}
$$
One can easily see that as $(u_{\mathbb{k}})_{\mathbb{k}\in\llbracket0,N\rrbracket}$ is an arithmetic sequence: $u_{N} = x_{j'}$. 

For all $\mathbb{k}\in\llbracket0,N-1\rrbracket$, we successively apply Lemma~\ref{lemma:vicinity_velleret} with $x_I = u_{\mathbb{k}}$ and $x_F = u_{\mathbb{k}+1}$. At each iteration, as $u_{\mathbb{k}}\in R^{x^{j}}_{l,L}\left(c_I(c_a)^{\mathbb{k}}\right)$ and $u_{\mathbb{k}+1}\in (B(u_\mathbb{k},r)\cap \mathcal{D}_l)$, we have 
$$
(t_0 + (\mathbb{k}+1)t_a,u_{\mathbb{k}+1})\in R^{x^{j}}_{l,L}\left(c_I(c_a)^{\mathbb{k}+1}\right). 
$$
As $u_{N} = x^{j'}$, we finally obtain that $\big(t_0 + Nt_a,x^{j'}\big)\in R^{x^{j}}_{l,L}\left(c_I(c_a)^{N}\right)$. Hence, there exist two positive constants $t_F = t_I + Nt_a$ and $c_F = c_I(c_a)^{N}$ such that for all $(j,j')\in\llbracket1,J\rrbracket^2$ it holds $\big(t_F,x^{j'}\big)\in R_{l,L}^{x^{j}}(c_F)$. In particular, for all $(j,j')\in\llbracket 1,J\rrbracket^2$, \hbox{$(x,a)\in \left(B(x^{j},r)\cap \mathcal{D}_l\right)\times[0,a_0l]$}, we have
$$
\mathbb{P}_{(x,a)}\left[Z_{t_F}\in dx'da';\,\min\left(\tau_{\partial},\,T_{D_L}\right)\right] \geq c_F\left(\prod_{i = 1}^{2k} (x'_i)^{m_0}\right)1_{\{B(x^{j'},r)\cap\mathcal{D}_l\times[0,a_0l]\}}(x',a')dx'da'.
$$
As $\underset{j'\in\llbracket1,J\rrbracket}{\bigcup}\big[B(x^{j'},r)\cap \mathcal{D}_l\big] = \mathcal{D}_l$, summing in $j'$ in the above equation yields that Proposition~\ref{prop:to_prove_A1} is proved.
\qed 

\section{Proof of Proposition~\ref{prop:prop_to_prove_(A3)}}\label{appendix:proof_A3_stepB}

We consider in this section $\left(\mathcal{H}_{n}\right)_{n\geq 0}$ the filtration that corresponds to the augmented filtration generated by the process $\left(X_n,A_n,\mathdutchcal{T}_n,I_n,J_n\right)_{n\in\mathbb{N}}$. It contains all the information we have at the $n$-th generation. Each time we condition with respect to this filtration in this section, we work on an event of the form $\{\mathdutchcal{T}_l \leq t\}$, where $l\in\mathbb{N}$ and $t >0$, that belongs to $\mathcal{G}_t$. Then, this conditioning does not pose any issues with the fact that we work with the filtration $(\mathcal{G}_t)_{t\geq0}$. 

The proof of Proposition~\ref{prop:prop_to_prove_(A3)} requires auxiliary statements to be introduced. In Section~\ref{subsect:proofA3_give_auxliary_and_proof}, we present these auxiliary statements and prove the proposition. Then, we prove all the auxiliary statements in Section~\ref{subsect:proof_auxiliaries_complicatedpart}.

\subsection{Auxiliary statements and proof of Proposition~\ref{prop:prop_to_prove_(A3)}}\label{subsect:proofA3_give_auxliary_and_proof}

\subsubsection{Auxiliary statements}
\noindent \textit{1. Decomposition of the measure}\ 

To obtain Proposition~\ref{prop:prop_to_prove_(A3)}, we first use the following statement, that allows us to decompose the measure on the left-hand side term of~\eqref{eq:eq_to_prove_(A3)}. We prove this statement in Section~\ref{subsubsect:proof_auxiliary_complicatedpart_first}.
\begin{lemm}[Decomposition of the measure]\label{lemma:preliminaries_prop_to_prove_(A3)}
Assume that \hyperlink{paragraph:long_time_behaviour_S1.1}{$(S_{1.1})$} and \hyperlink{paragraph:long_time_behaviour_S1.3}{$(S_{1.3})$} hold. Then the following statement holds for all non-negative function $f\in M_b(\mathcal{X})$, $t >0$, $(x,a)\in E$
\begin{equation}\label{eq:to_prove_A3_intermediate_second}
\begin{aligned}
\mathbb{E}_{(x,a)}\left[f\left(Z_{t}\right);\,T_{all} \leq t < \tau_{\partial}, N_{t} \leq n(t)\right] &\leq \sum_{l = 1}^{n(t)}\sum_{(i_1,i_2,\hdots,i_{2k})\in\llbracket1,l\rrbracket^{2k}} \mathbb{E}_{(x,a)}\Big[f\left(X_l,t-\mathdutchcal{T}_l\right);\\
&\,X_l\in [0,B_{\max}L_1+l\Delta]^{2k},\,\mathdutchcal{T}_l \leq t,\\
&\,\forall p\in\llbracket1,l\rrbracket: (I_{p},J_p)\neq \left(\partial,\partial\right),\\ 
&\forall j\in\llbracket1,2k\rrbracket : j \in I_{i_j}\cup J_{i_j}\Big].
\end{aligned}
\end{equation} 
\end{lemm}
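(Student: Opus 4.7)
The plan is to slice the event $\{T_{all} \leq t < \tau_\partial,\, N_t \leq n(t)\}$ according to the value of $N_t \in \llbracket 1, n(t)\rrbracket$ (the lower bound $N_t \geq 1$ follows from $T_{all} \leq t$ forcing at least one jump to occur before time $t$), then on each slice $\{N_t = l\}$ to replace the combinatorial constraint $\{T_{all} \leq t\}$ by a union bound over choices of which jump first touched each coordinate, and finally to weaken the slicing constraint $\{N_t = l\}$ to the larger event $\{\mathdutchcal{T}_l \leq t\}$.

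The key combinatorial step is the following pointwise inequality. On $\{T_{all} \leq t,\, N_t = l\}$, for every $j \in \llbracket 1, 2k\rrbracket$ the integer $\mathcal{N}_j$ satisfies $\mathcal{N}_j \leq l$ and $j \in I_{\mathcal{N}_j} \cup J_{\mathcal{N}_j}$, so the choice $i_j = \mathcal{N}_j$ witnesses the inclusion
$$
1_{\{T_{all} \leq t,\, N_t = l\}} \;\leq\; \sum_{(i_1,\ldots,i_{2k}) \in \llbracket 1,l\rrbracket^{2k}} 1_{\{\forall j \in \llbracket 1,2k\rrbracket,\, j \in I_{i_j} \cup J_{i_j}\}}\, 1_{\{N_t = l\}}.
$$
On $\{N_t = l,\, t < \tau_\partial\}$, formula~\eqref{eq:expression_particle} gives $Z_t = (X_l,\, t - \mathdutchcal{T}_l)$; non-extinction forces $(I_p,J_p) \neq (\partial,\partial)$ for every $p \in \llbracket 1, l\rrbracket$; and each coordinate of $X_l$ lies in $[0, B_{\max}L_1 + l\Delta]$, the lower bound $0$ coming from non-extinction and the upper bound from the fact that $x \in E = D_{L_1} = [0, B_{\max} L_1]^{2k} \times [0, a_0 L_1]$ combined with the observation that at each jump every coordinate grows by at most $\Delta$, thanks to~\hyperlink{paragraph:long_time_behaviour_S1.1}{$(S_{1.1})$}.

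Putting these ingredients together, using $\{N_t = l\} \subset \{\mathdutchcal{T}_l \leq t\}$ to enlarge the slicing event, and summing over $l$ from $1$ to $n(t)$ yields exactly the right-hand side of~\eqref{eq:to_prove_A3_intermediate_second}. The argument is pure measure-theoretic and combinatorial bookkeeping, so there is no real analytic obstacle; the only point that needs care is to verify that each of the constraints $\{X_l \in [0, B_{\max}L_1 + l\Delta]^{2k}\}$, $\{\mathdutchcal{T}_l \leq t\}$, $\{(I_p,J_p) \neq (\partial,\partial)\ \forall p \leq l\}$ and $\{\forall j,\, j \in I_{i_j} \cup J_{i_j}\}$ is already implied by the original event or by the indicator inequality above, so that no information is lost when passing from the slices $\{N_t = l\}$ back to the enlarged events $\{\mathdutchcal{T}_l \leq t\}$.
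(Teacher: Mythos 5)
Your proposal is correct and follows essentially the same route as the paper: slice over the value of $N_t=l$, record for each coordinate $j$ an index $i_j\leq l$ of a jump that touched it, and then enlarge $\{N_t=l,\,t<\tau_{\partial},\dots\}$ to the event on the right-hand side using $Z_t=(X_l,t-\mathdutchcal{T}_l)$, non-extinction, the bound $X_l\in[0,B_{\max}L_1+l\Delta]^{2k}$ and $\{N_t=l\}\subset\{\mathdutchcal{T}_l\leq t\}$. The only (harmless) difference is that the paper decomposes exactly over the \emph{last} jump in each coordinate, $\mathcal{W}_{t,j}=\max\{p\leq N_t\,|\,j\in I_p\cup J_p\}$, which yields a disjoint decomposition before the final inclusion, whereas you use a union bound witnessed by the \emph{first} jump $\mathcal{N}_j$; since only an upper bound is needed, both are valid.
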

\noindent We now give statements allowing to bound from above the right-hand side term of \eqref{eq:to_prove_A3_intermediate_second}. 

\medskip

\noindent \textit{2. Bound from above jump by jump}\

Let $t\geq0$. Bounding from above the right-hand side term of \eqref{eq:to_prove_A3_intermediate_second} can be done by successively conditioning with respect to $\mathcal{H}_{n-1}$, for all $n\in\llbracket1,n(t)\rrbracket$. This reduces the problem to obtaining an upper bound jump by jump. Let $l\in\llbracket1,n(t)\rrbracket$ and \hbox{$i=(i_1,\hdots, i_{2k})\in\llbracket1,l\rrbracket^{2k}$} that we keep fixed until the end of the subsection. We introduce:
\begin{itemize}[leftmargin=*]
\item The following set for all $m\in\llbracket1,l\rrbracket$
$$
\mathcal{C}_{m,i} := \{j\in\llbracket1,2k\rrbracket,\,i_j = m\}. 
$$
This set contains the coordinates for which we are sure that on the event 
$$
\left\{\forall j\in\llbracket1,2k\rrbracket : j \in I_{i_j}\cup J_{i_j}\right\},
$$
there is a shortening or a lengthening at the $m$-th jump.

\item For all $(I,J,j)\in\mathcal{Q}_{k}\times \llbracket1,2k\rrbracket$, the measure $\nu_{I,J,j}$, defined for all $A\in\mathcal{B}\left(\mathbb{R}\right)$ as 
$$
\begin{aligned}
\nu_{I,J,j}(A) := \begin{cases}
\delta_0(A), &\text{ if }j\notin I\cup J,  \\ 
\int_{u\in[-\delta,\Delta]} 1_A(u)du, & \text{ otherwise.} 
\end{cases}
\end{aligned}
$$
This measure partially composes the measure we use to bound from above the right-hand side term of~\eqref{eq:to_prove_A3_intermediate_second}.

\item For all $m\in\llbracket1,l\rrbracket$, the measures $\mu_{m,i}^1$, $\mu_{m}^2$ and $\mu_{m,i}$, defined such that for all $A~\in~\mathcal{B}(\mathbb{R}_+^{2k})$  
\begin{equation}\label{eq:dft_measure_bound_A3F}
\begin{aligned}
\mu_{m,i}^1(A) &:= \sum_{\substack{(I,J)\in\mathcal{Q}_{k},\\ \mathcal{C}_{m,i} \subset \left(I\cup J\right)}}\left[\int_A \left(\prod_{j \in \mathcal{C}_{m,i}} d\omega_j\right)\left(\prod_{j \notin \mathcal{C}_{m,i}} d\nu_{I,J,j}(w_{j})\right)\right], \\ 
\mu_m^2(A) &:= \sum_{(I,J)\in\mathcal{Q}_{k}} \int_A \prod_{j \in \llbracket1,2k\rrbracket} d\nu_{I,J,j}(w_{j}),
\end{aligned}
\end{equation}
and
$$
\mu_{m,i}(A) := \begin{cases}
\mu_{m,i}^1(A), &\text{ if }\mathcal{C}_{m,i} \neq \emptyset, \\
\mu_m^2(A), &\text{ otherwise.}
\end{cases}
$$
These measures are used to bound from above the right-hand side term of~\eqref{eq:to_prove_A3_intermediate_second}.
\end{itemize}
The following two lemmas are useful to show how we obtain an upper bound at one jump. The proof of Lemma~\ref{lemma:inequality_control_mass} is given in Section~\ref{subsubsect:proof_auxiliary_complicatedpart_second}. The proof of Lemma \ref{lemma:inequality_control_mass_empty} is not given as it follows the same lines of proof.
\begin{lemm}[Upper bound when $\mathcal{C}_{m,i} \neq\emptyset$]\label{lemma:inequality_control_mass}
Assume that \hyperlink{paragraph:long_time_behaviour_S1.1}{$(S_{1.1})$} and \hyperlink{paragraph:long_time_behaviour_S1.3}{$(S_{1.3})$} hold. Then for all~$l\in\mathbb{N}^*$, there exists $\overline{C_1}(l) >0$ such that for all $t\geq0$, $(m,i)\in\llbracket1, l\rrbracket\times\llbracket1,l\rrbracket^{2k}$ verifying~$\mathcal{C}_{m,i} \neq \emptyset$, $v\in\left[-\delta l, \Delta l\right]^{2k}$ and $f\in M_b(\mathcal{X})$ non-negative
$$
\begin{aligned}
&\mathbb{E}_{(x,a)}\Big[f\left(X_m  + v,t-\mathdutchcal{T}_m\right);\,X_m + v\in [0,B_{\max}L_1+l\Delta]^{2k}, \mathdutchcal{T}_m \leq t,\\
& \left(I_{m},J_m\right) \neq \left(\partial,\partial\right),\,\mathcal{C}_{m,i}\subset (I_m\cup J_m)\,|\,\mathcal{H}_{m-1}\Big]\\
&\leq \overline{C_1}(l)\int_{u\in [-\delta,\Delta]^{2k}}\int_{s\in[0,t-\mathdutchcal{T}_{m-1}]} f\left(X_{m-1} + u + v,t-s- \mathdutchcal{T}_{m-1}\right)\\
&\times 1_{\left\{X_{m-1} + u + v\in[0,B_{\max}L_1+l\Delta]^{2k}\right\}} d\mu_{m,i}^1(u) ds.
\end{aligned}
$$
\end{lemm}
\begin{lemm}[Upper bound when $\mathcal{C}_{m,i} =\emptyset$]\label{lemma:inequality_control_mass_empty}
Assume that \hyperlink{paragraph:long_time_behaviour_S1.1}{$(S_{1.1})$} and \hyperlink{paragraph:long_time_behaviour_S1.3}{$(S_{1.3})$} hold. Then, for all~$l\in\mathbb{N}^*$, there exists $\overline{C_2}(l) >0$ such that for all $t\geq 0$, $(m,i)\in\llbracket1, l\rrbracket\times\llbracket1,l\rrbracket^{2k}$ verifying~$\mathcal{C}_{m,i} =\emptyset$, $v\in\left[-\delta l, \Delta l\right]^{2k}$, and $f\in M_b(\mathcal{X})$ non-negative
$$
\begin{aligned}
&\mathbb{E}\Big[f\left(X_m + v,t -\mathdutchcal{T}_m \right);X_m + v\in [0,B_{\max}L_1+l\Delta]^{2k},\mathdutchcal{T}_m \leq t,\,\left(I_{m},J_m\right) \neq \left(\partial,\partial\right)\,|\,\mathcal{H}_{m-1}\Big]\\
&\leq \overline{C_2}(l) \int_{u\in [-\delta,\Delta]^{2k}} \int_{s\in[0,t-\mathdutchcal{T}_{m-1}]}f\left(X_{m-1} +u+v,t-s- \mathdutchcal{T}_{m-1}\right)\\
&\times 1_{\left\{X_{m-1} + u + v\in[0,B_{\max}L_1+l\Delta]^{2k}\right\}} d\mu_m^2(u) ds.
\end{aligned}
$$
\end{lemm}
\noindent We now have all the auxiliary statements required to prove Proposition~\ref{prop:prop_to_prove_(A3)}.
\subsubsection{Proof of Proposition~\ref{prop:prop_to_prove_(A3)}}
Let $t\geq 0$, $l\in\llbracket1,n(t)\rrbracket$, $i = (i_1,\hdots,i_{2k})\in\llbracket1,l\rrbracket^{2k}$. To obtain an upper bound for the right-hand side term of \eqref{eq:to_prove_A3_intermediate_second}, we successively condition with respect to $\mathcal{H}_{n-1}$ ($n\in \llbracket1,l\rrbracket$). At each conditioning, we apply Lemma \ref{lemma:inequality_control_mass} or Lemma \ref{lemma:inequality_control_mass_empty}. When \hbox{$\mathcal{C}_{n,i}\neq \emptyset$}, we apply Lemma~\ref{lemma:inequality_control_mass}, and when $\mathcal{C}_{n,i} = \emptyset$, we apply Lemma \ref{lemma:inequality_control_mass_empty}. For example, if we suppose that~$\mathcal{C}_{l,i} \neq \emptyset$, then applying Lemma~\ref{lemma:inequality_control_mass} and using Tonelli's theorem yields
\begin{align}
& \mathbb{E}_{(x,a)}\Big[f\left(X_{l},t-\mathdutchcal{T}_l\right);\,X_l\in[0,B_{\max}L_1+l\Delta]^{2k},\,\mathdutchcal{T}_l \leq t,\,\forall p\in\llbracket1,l\rrbracket: \left(I_{p},J_{p}\right) \neq \left(\partial,\partial\right), \nonumber\\
&\forall j\in\llbracket1,2k\rrbracket:\,j \in I_{i_j}\cup J_{i_j}\Big] \leq \overline{C_1}(l)\int_{(u,s)\in [-\delta,\Delta]^{2k}\times\mathbb{R}_+}  \nonumber\\
&\times \mathbb{E}_{(x,a)}\Big[f\left(X_{l-1} + u,t - s -\mathdutchcal{T}_{l-1}\right)1_{\left\{X_{l-1} + u\in[0,B_{\max}L_1+l\Delta]^{2k}\right\}}\,;\,s+\mathdutchcal{T}_{l-1}\leq t, \nonumber\\
&\forall p\in\llbracket1,l-1\rrbracket:\,\left(I_{p},J_{p}\right) \neq \left(\partial,\partial\right),\,\forall j\in\llbracket1,2k\rrbracket\text{ s.t. }\,i_j\leq l-1:\,j \in I_{i_j}\cup J_{i_j}\Big] d\mu_{l,i}^1(u) ds. \label{eq:proof_prop_A3_intermediate}
\end{align}
We continue to iterate this for $X_{l-1}$, $X_{l-2}$ $\cdots$ We obtain at the end an upper bound for the left-hand side term of \eqref{eq:proof_prop_A3_intermediate}. We plug this upper bound in~\eqref{eq:to_prove_A3_intermediate_second}. We get
$$
\begin{aligned}
&\mathbb{E}_{(x,a)}\Big[f\left(Z_{t}\right);\,T_{all}  < t < \tau_{\partial}, N_{t} \leq n(t)\Big] \leq \sum_{l = 1}^{n(t)}\sum_{i\in\llbracket1,l\rrbracket^{2k}}\left(\max_{m_1,m_2 \in\llbracket1,l\rrbracket} \overline{C_1}(l)^{m_1}\overline{C_2}(l)^{m_2}\right)  \\
&\times\int_{u_1\in [-\delta,\Delta]^{2k}}\hdots\int_{u_l\in [-\delta,\Delta]^{2k}}\int_{s_1\in [0,t]}\hdots\int_{s_l\in [t - \sum_{r = 1}^{l-1} s_i]}f\left(x +\sum_{r = 1}^{l} u_r,t - \sum_{r = 1}^{l} s_r\right) \\
&\times 1_{\left\{x +\sum_{r = 1}^{l} u_r\in[0,B_{\max}L_1+l\Delta]^{2k}\right\}} d\mu_{1,i}(u_1)\hdots d\mu_{l,i}(u_l)ds_1\hdots ds_l. \\
\end{aligned}
$$
For each coordinate, we have, up to a constant, the convolution of Dirac measures and Lebesgue measures with at least one Lebesgue measure. A convolution of this type has a bounded density with respect to the Lebesgue measure. The number of possible combinations we have for these convolutions is finite. Therefore, we take the maximum among all the bounds of these convolutions with respect to the Lebesgue measure. We obtain that there exists $\underline{C}(n(t)) >0$ such that
\begin{equation}\label{eq:proof_prop_A3_intermediate_second}
\begin{aligned}
\mathbb{E}_{(x,a)}[f\left(Z_{t}\right);T_{all}  < t < \tau_{\partial}, N_{t} \leq n(t)] &\leq \underline{C}(n(t))\sum_{l = 1}^{n(t)}\sum_{i\in\llbracket1,l\rrbracket^{2k}}\left(\max_{m_1,m_2 \in\llbracket1,l\rrbracket} \overline{C_1}(l)^{m_1}\overline{C_2}(l)^{m_2}\right)  \\
&\times\int_{v\in [-n(t)\delta,n(t)\Delta]^{2k}}\int_{s_1\in [0,t]}\hdots\int_{s_l\in [t - \sum_{r = 1}^{l-1} s_i]}\\
&\times f\left(x + v,t - \sum_{r = 1}^{l} s_r\right) 1_{\left\{x +v\in[0,B_{\max}L_1+l\Delta]^{2k}\right\}} dv ds. 
\end{aligned}
\end{equation}
The value of $\underline{C}(n(t))$ increases with the value of $n(t)$, as the cardinal of the set of all the possible cases increases when $n(t)$ increases. We now first denote 
$$
\overline{C}(n(t)) = \underline{C}(n(t))(n(t))^{2k}\left(\max_{l\in\llbracket1,n(t)\rrbracket}\max_{m_1,m_2 \in\llbracket1,l\rrbracket} \overline{C_1}(l)^{m_1}\overline{C_2}(l)^{m_2}\right),
$$
that increases when $n(t)$ increases. Then, we bound from above the term before the integrals in~\eqref{eq:proof_prop_A3_intermediate_second} by~$\sum_{l = 1}^{n(t)}\overline{C}(n(t))$. Thereafter, we do the changes of variables~$w = x+v$ and $s' = t-\sum_{r = 1}^{l} s_r$ (for the variable~$s_l$), and extend every domain of integration of the integral with respect to the variables~$(s_i)_{i\in\llbracket1,l-1\rrbracket}$ to $[0,t]$. Finally, we compute the integrals with respect to the measures $(ds_i)_{i\in\llbracket1,l-1\rrbracket}$. It comes, denoting the constant~$C(t) = \overline{C}(n(t))\left(\sum_{l = 1}^{n(t)}t^{l-1}\right)$,
$$
\begin{aligned}
\mathbb{E}_{(x,a)}\big[f\left(Z_{t}\right);T_{all}  < t < \tau_{\partial}, N_{t} \leq n(t)\big] \leq C(t)\int_{[0,B_{\max}L_1+n(t)\Delta]^{2k}} \int_{[0, t]} f\left(w,s'\right) dwds'.
\end{aligned}
$$
From the above, the proposition is proved because $C(t)$ increases when~$t$ increases~(classical result combined with Lemma~\ref{lemma:inequality_generation_mixing}).
\qed

\subsection{Proof of the auxiliary statements}\label{subsect:proof_auxiliaries_complicatedpart}
\subsubsection{Proof of Lemma \ref{lemma:preliminaries_prop_to_prove_(A3)}}\label{subsubsect:proof_auxiliary_complicatedpart_first}
Let $t >0$, $(x,a)\in E = D_{L_1}$ the initial condition of our process and let $f$ a non-negative measurable function. In the left-hand side term of \eqref{eq:to_prove_A3_intermediate_second}, first use the representation of~$Z_t$ presented in \eqref{eq:expression_particle}. Then, use the fact that 
$$
t < \tau_{\partial} \Longleftrightarrow \forall p\in\llbracket1,N_t\rrbracket: \left(I_{p},J_p\right) \neq \left(\partial,\partial\right).
$$
Finally, develop all the values that $N_t$ can take (necessarily, it holds $N_t \geq  1$ on the event~$\{T_{all} \leq t\}$). It comes
$$
\begin{aligned}
\mathbb{E}_{(x,a)}\Big[f\left(Z_{t}\right);\,T_{all} \leq t < \tau_{\partial}, N_{t} \leq n(t)\Big] &= \sum_{l = 1}^{n(t)} \mathbb{E}_{(x,a)}\Big[f\left(X_{l}, t-\mathdutchcal{T}_{l}\right)\,;\,T_{all} \leq t,\,N_{t} = l,\, \\
&\forall p\in\llbracket1,l\rrbracket: \left(I_{p},J_p\right) \neq \left(\partial,\partial\right)\Big]. \\
\end{aligned}
$$
On the event $\{T_{all} \leq t \}$, we introduce for every $j\in\llbracket1,2k\rrbracket$ the random variable $\mathdutchcal{P}_{t,j}$ that represents the last event where a jump has occured in the coordinate $j$ during the first~$N_{t}$ jumps. Formally, we define it as
$$
\mathdutchcal{P}_{t,j} = \max\left\{p\in\llbracket1,N_{t}\rrbracket,\, j\in I_p\cup J_p\right\}.
$$
By decomposing all the possible values that $(\mathdutchcal{P}_{t,j})_{j\in\llbracket1,2k\rrbracket}$ can take, we have
\begin{equation}\label{eq:to_prove_A3_intermediate_first}
\begin{aligned}
\mathbb{E}_{(x,a)}\big[f\left(Z_{t}\right);\,T_{all} \leq t < \tau_{\partial}, N_{t} \leq n(t)\big] &= \sum_{l = 1}^{n(t)}\sum_{(i_1,\hdots,i_{2k})\in\llbracket1,l\rrbracket^{2k}} \mathbb{E}_{(x,a)}\Big[f\left(X_{l}, t-\mathdutchcal{T}_{l}\right);\,T_{all} \leq t,\,\\
&\,N_{t} = l,\,\forall p\in\llbracket1,l\rrbracket: \left(I_{p},J_p\right) \neq \left(\partial,\partial\right),\\
&\,\forall j\in\llbracket1,2k\rrbracket\,:\,\mathdutchcal{P}_{t,j} = i_j\,\Big]. 
\end{aligned}
\end{equation}
We need to prove the following inclusion for all $l\in\llbracket1,n(t)\rrbracket$, $(i_1,i_2,\hdots,i_{2k})\in\llbracket1,l\rrbracket^{2k}$:
$$
\begin{aligned}
&\left\{T_{all} \leq t,\,N_{t} = l,\,\forall p\in\llbracket1,l\rrbracket: \left(I_{p},J_p\right) \neq \left(\partial,\partial\right),\,\forall j\in\llbracket1,2k\rrbracket\,:\,\mathdutchcal{P}_{t,j} = i_j\right\} \\
&\,\subset\left\{X_{l}\in [0,B_{\max}L_1+l\Delta]^{2k},\,\,\mathdutchcal{T}_l \leq t,\,\forall p\in\llbracket1,l\rrbracket:\left(I_{p},J_p\right) \neq \left(\partial,\partial\right),\right.\\
&\left.\hspace{6.95mm}\forall j\in\llbracket1,2k\rrbracket:\,j \in (I_{i_j}\cup J_{i_j})\,\right\}.
\end{aligned}
$$
Indeed, the latter allows us to easily bound from above the second term in \eqref{eq:to_prove_A3_intermediate_first} to obtain~\eqref{eq:to_prove_A3_intermediate_second}. 

Let us say why the inclusion is true. First, as the initial condition for telomere length is $x\in[0,B_{\max}L_1]^{2k}$ and as the maximum lengthening value is $\Delta$, we have \hbox{$X_{l}\in[0,B_{\max}L_1+l\Delta]^{2k}$}. Second, as $N_t = l$, we easily have that $\mathdutchcal{T}_l \leq t$. Finally, as~$\mathdutchcal{P}_{t,j}$ corresponds to  the last jump in the $j-$th coordinate, we necessarily have a jump in the $j-th$ coordinate on the event $\{\mathdutchcal{P}_{t,j} = i_j\}$ (or formally, $\mathdutchcal{P}_{t,j} = i_j \Longrightarrow j \in (I_{i_j}\cup J_{i_j})$). From these three points, it comes that the inclusion is true, and the lemma is proved.
\qed 
\subsubsection{Proof of Lemma \ref{lemma:inequality_control_mass}}\label{subsubsect:proof_auxiliary_complicatedpart_second}
Let $m\leq l\in\mathbb{N}^*$, $i=(i_1,\,i_2,\hdots,\,i_{2k})\in\llbracket1,2k\rrbracket^{l}$ such that $\mathcal{C}_{m,i} = \{j\in\llbracket1,2k\rrbracket,\,i_j = m\}$ is not empty. Throughout the proof, $f$ is a non-negative measurable function. We consider~$(\tilde{Z}_t)_{t\geq 0}$ a process with the same distribution as $(Z_t)_{t\geq 0}$, and independent of it. We also introduce the random variables $\tilde{X}_1$, $\tilde{\mathdutchcal{T}}_1$ and~$(\tilde{I}_1,\tilde{J}_1)$, that are the equivalents of $X_1$, $\mathdutchcal{T}_1$ and $(I_1,J_1)$ for the process $(\tilde{Z}_t)_{t\geq 0}$. We finally define for all~$(x,w)\in\mathbb{R}_+^{2k}\times\mathbb{R}_+$
$$
\begin{aligned}
&h_{\text{cond}}(x,w) = \mathbb{E}_{\left(x,1_{m = 1}a\right)}\left[f\big(\tilde{X}_{1}+ v,t-w-\tilde{\mathdutchcal{T}}_1\big);\tilde{X}_{1} + v\in[0,B_{\max}L_1+l\Delta]^{2k},\right.\\
&\left. w+\tilde{\mathdutchcal{T}}_1 \leq t,\,\left(I_{1},J_1\right) \neq \left(\partial,\partial\right),\,\mathcal{C}_{m,i} \subset (I_1\cup J_1)\right].
\end{aligned}
$$
Thanks to the Markov property we have
\begin{equation}\label{eq:markov_bound_lebesgue}
\begin{aligned}
&\mathbb{E}_{(x,a)}\left[f\left(X_m  + v,t - \mathdutchcal{T}_{m}\right);\,X_m + v\in[0,B_{\max}L_1+l\Delta]^{2k}, \mathdutchcal{T}_{m} \leq t,\,\left(I_{m},J_m\right) \neq \left(\partial,\partial\right),\right.\\
&\,\left.\forall j\in\llbracket1,2k\rrbracket\text{ s.t. }i_j = m \,:\,j \in I_m\cup J_m\,|\,\mathcal{H}_{m-1}\right] = h_{\text{cond}}\left(X_{m-1},\mathdutchcal{T}_{m-1}\right).
\end{aligned}
\end{equation}
Thus, our goal is to obtain an upper bound for $h_{\text{cond}}$. By summing the different events that can occur, we first have
$$
\begin{aligned}
h_{\text{cond}}(x,w) &= \sum_{\substack{(I,J)\in\mathcal{Q}_{k},\\\mathcal{C}_{m,i} \subset \left(I\cup J\right)}} \mathbb{E}_{\left(x,1_{m = 1}a\right)}\left[f\big(\tilde{X}_{1}+ v,t-w-\tilde{\mathdutchcal{T}}_1\big);\right.\\
&\left.\tilde{X}_{1} + v\in[0,B_{\max}L_1+l\Delta]^{2k},\,w+\tilde{\mathdutchcal{T}}_1 \leq t,\,\big(\tilde{I}_{1},\tilde{J}_1\big)= (I,J)\right].
\end{aligned}
$$
Now, we develop the expectation by slightly reajusting Lemma~\ref{lemm:generalized_duhamel_n=1} (we do not have a term $\overline{\mathdutchcal{H}}_0$, as we do not have a condition for $\tilde{\mathdutchcal{T}}_2 - \tilde{\mathdutchcal{T}}_1$). Then, in view of~\eqref{eq:density_events} and~\eqref{eq:useful_notation_third}, we use the second statement of Lemma~\ref{lemma:inequalities_psi} to bound from above $\mathdutchcal{G}_a$ by $\mathdutchcal{H}_a$. It comes
$$
\begin{aligned}
h_{\text{cond}}(x,w) &\leq \sum_{\substack{(I,J)\in\mathcal{Q}_{k},\\ \mathcal{C}_{m,i} \subset \left(I\cup J\right)}} \int_{u\in\mathbb{R}^{2k}}\int_{s\in[0,t-w]}f(x+u+v,t-w-s)\\
&\times1_{\{x+u+v\in[0,B_{\max}L_1+l\Delta]^{2k}\}}\mathdutchcal{V}(x+u)d\pi^{I,J}_x(u)\mathdutchcal{H}_{1_{m = 1}a}(x,s)ds. 
\end{aligned}
$$
In view of \eqref{eq:density_events}, \eqref{eq:birth_rate_assumption}, the inequalities 
$$
\frac{\frac{\partial \psi}{\partial a}(x,1_{m = 1}a+s)}{\psi(x,1_{m = 1}a+s)} \geq 0 \hspace{2mm}\text{ and }\hspace{2mm}a \leq a_0L_1,
$$
the fact that 
$$
\int_0^t \frac{\frac{\partial \psi}{\partial a}(x,1_{m = 1}a+s)}{\psi(x,1_{m = 1}a+s)}ds = \ln\left(\frac{\psi(x,1_{m = 1}a+t)}{\psi(x,1_{m = 1}a)}\right),
$$
and the third statement of Lemma \ref{lemma:inequalities_psi}, we have
$$
\begin{aligned}
\mathdutchcal{H}_{1_{m = 1}a}(x,s) &\leq \left[\lambda_{\psi} + \tilde{b}(1+(a_0L_1+s)^{d_b})\right]\left[\overline{\psi}\times\left(1+s^{d_{\psi}}\right)\right]\exp\left(-\lambda_{\psi}s\right). \\
\end{aligned}
$$
Thus, there exists a constant $C >0$ such that $\mathdutchcal{H}_{1_{m = 1}a}(x,s) \leq C$ and 
\begin{equation}\label{eq:intermediate_bound_lebesgue}
\begin{aligned}
h_{\text{cond}}(x,w) &\leq C\sum_{\substack{(I,J)\in\mathcal{Q}_{k},\\ \mathcal{C}_{m,i} \subset \left(I\cup J\right)}} \int_{u\in\mathbb{R}^{2k}}\int_{s\in[0,t-w]}f(x+u+v,t-w-s)\\
&\times1_{\{x+u+v\in[0,B_{\max}L_1+l\Delta]^{2k}\}}\mathdutchcal{V}(x+u)d\pi^{I,J}_x(u)ds. 
\end{aligned}
\end{equation}
First, we bound from above the term $\mathdutchcal{V}(x+u)$ in \eqref{eq:intermediate_bound_lebesgue} using the third statement of \hyperlink{paragraph:long_time_behaviour_S2.2}{$(S_{2.2})$}. Second, we use the fact that by the definition of $\pi_x^{I,J}$ given in~\eqref{eq:measure_by_event} and Assumption \hyperlink{paragraph:long_time_behaviour_S1.3}{$(S_{1.3})$}, we have in each coordinate of the measure $\pi_x^{I,J}$ either the Dirac measure or, up to a constant, the Lebesgue measure. Finally, we use the fact that for all $j\in \mathcal{C}_{m,i}$, $(I,J)\in\mathcal{Q}_{k}$ such that $j \in I\cup J$, we are sure that we have a jump in the coordinate $j$, so we are sure that the measure $\pi_x^{I,J}$ in the coordinate $j$ is, up to a constant, the Lebesgue measure. Hence, in view of the definition of $\mu_{m,i}^1$ given in~\eqref{eq:dft_measure_bound_A3F}, there exists $C' > 0$ such that
$$
\begin{aligned}
h_{\text{cond}}(x,w)  &\leq C' \sum_{\substack{(I,J)\in\mathcal{Q}_{k},\\ \mathcal{C}_{m,i} \subset \left(I\cup J\right)}}\int_{u\in\mathbb{R}^{2k}}\int_{s\in[0,t-w]}f(x+u+v,t-w-s)\\
&\times1_{\{x+u+v\in[0,B_{\max}L_1+l\Delta]^{2k}\}}d\mu_{m,i}^1(u) ds.
\end{aligned} 
$$
Plugging the above equation in \eqref{eq:markov_bound_lebesgue} ends the proof.
\qed

% add below the content of your .bbl file produced by bibtex.
\providecommand{\bysame}{\leavevmode\hbox to3em{\hrulefill}\thinspace}
\providecommand{\MR}{\relax\ifhmode\unskip\space\fi MR }
% \MRhref is called by the amsart/book/proc definition of \MR.
\providecommand{\MRhref}[2]{%
\href{http://www.ams.org/mathscinet-getitem?mr=#1}{#2}
}
\providecommand{\href}[2]{#2}

\paragraph{Acknowledgments.} This work was partially funded by the Fondation Mathématique Jacques Hadamard, and the European Union ERC-2024-COG MUSEUM-101170884. Views and opinions expressed are however those of the author(s) only and do not necessarily reflect those of the European Union or the European Research Council Executive Agency (ERCEA). Neither the European Union nor the granting authority can be held responsible for them. We warmly thank Marie Doumic for discussions and proofreadings linked to this work. We also thank Aurélien Velleret for discussions and clarifications about Assumption \hyperlink{te:assumptions_velleret_A3}{$(A_3)_F$}, and Denis Villemonais for discussions that allowed us to relax conditions for the existence of a Lyapunov function. We finally thank the anonymous reviewers for their numerous and insightful comments which helped us to improve the quality of the manuscript.

\end{document}